\title{Spherical Tiling by $12$ Congruent Pentagons}
\author{Honghao Gao, Nan Shi, Min Yan\thanks{Research was supported by Hong Kong RGC General Research Fund 605610 and 606311.} \\ 
Hong Kong University of Science and Technology}
\newtheorem{theorem}{Theorem}
\newtheorem{lemma}[theorem]{Lemma}
\newtheorem{proposition}[theorem]{Proposition}
\theoremstyle{definition}
\newtheorem{definition}[theorem]{Definition}
\theoremstyle{remark}
\numberwithin{equation}{section}
\begin{document}

\maketitle

\begin{abstract}
The tilings of the $2$-dimensional sphere by congruent triangles have been extensively studied, and the edge-to-edge tilings have been completely classified. However, not much is known about the tilings by other congruent polygons. In this paper, we classify the simplest case, which is the edge-to-edge tilings of the $2$-dimensional sphere by $12$ congruent pentagons. We find one major class allowing two independent continuous parameters and four classes of isolated examples. The classification is done by first separately classifying the combinatorial, edge length, and angle aspects, and then combining the respective classifications together.
\end{abstract}

\section{Introduction}

Tilings have been studied by mathematicians for more than 100 years. Some major achievements include the solution to Hilbert's 18th problem \cite{b, h, r}, the classification of wallpaper groups \cite{f1,p} and crystallographic groups \cite{ba,f2,s1}, the classification of isohedral tilings of the plane \cite{gs2}, and the classification of edge-to-edge monohedral tilings of the 2-sphere by triangles \cite{d,paper_orin,paper_tri}. We refer the reader for a more complete history of the subject to the excellent 1987 monograph by Gr\"unbaum and Shephard \cite{gs}.

In this paper, we study {\em monohedral} tilings, which are tilings that all tiles are geometrically congruent. For monohedral tilings of the plane by polygons, it is easy to see that any triangle or quadrilateral can be the tile. It is also known that convex tiles cannot have more than six edges, and there are exactly three classes of convex hexagonal tiles \cite{r1} and at least $14$ classes of convex pentagonal tiles \cite{gs,sc}. Our main result assumes straight line edges (actually great arcs) but not the convexity. Moreover, most of the other results in this paper allow any reasonably nice curves to be the edges. 

The tilings we study are also {\em edge-to-edge}, which means that no vertex of a tile lies in the interior of an edge of another tile. The assumption simplifies the classification of tiling patterns, or all the possible ways the tiles fit together to cover the whole sphere. It is easy to see that any triangle and quadrilateral can be the tile of an edge-to-edge monohedral tiling of the plane. Moreover, $8$ of the $14$ convex pentagon classes can be used for edge-to-edge tilings \cite{ba1}. On the other hand, there is no general classification of plane tiling patterns, even for edge-to-edge and monohedral tilings by convex polygons. The known classifications often assume certain symmetry \cite{gs2}, or certain special geometric property of the tiles \cite{so1, so2}. The difficulty with classifying the plane tiling patterns is that the number of tiles is infinite. By restricting to spherical tilings, the problem becomes finite and more manageable. Our classification does not assume any symmetry or special geometric property.

We also restrict the study to the tilings such that all vertices have {\em degree $\ge 3$}. This avoids some trivial examples obtained by artificially adding extra vertices to the edges, or getting new tilings by ``wiggling modifications'' of the edges. The assumption further simplifies the classification.

Now we come to the subject of this paper, the edge-to-edge monohedral tilings of the sphere by polygons, subject to degree $\ge 3$ vertex condition. It is not hard to see that the tile can only be triangle, quadrilateral, or pentagon \cite[Proposition 4]{paper_quadr}. The first work in this direction is Sommerville's 1923 partial classification of edge-to-edge monohedral spherical tilings by triangles \cite{paper_orin}. Davies \cite{d} outlined a complete classification in 1967, and the complete proof was finally given by Ueno and Agaoka \cite{paper_tri} in 2002. See \cite{db} for the further study of monohedral non-edge-to-edge spherical tilings by triangles. As for quadrilateral spherical tilings, Ueno and Agaoka \cite{paper_quadr} showed that the classification can be very complicated. For works on the spherical tilings by special types of quadrilaterals, see \cite{ab,as}.

As far as we know, there have been virtually no study of tilings of the sphere by congruent pentagons. However, we are of the opinion that the spherical pentagon tilings should be easier to study than the quadrilateral ones, because among triangle, quadrilateral and pentagon, pentagon is the ``other extreme''. We feel that the spherical pentagon should be compared to the planar hexagon, and the spherical quadrilateral should be compared to the planar pentagon. To test our conviction, we classify the minimal case of the edge-to-edge monohedral tiling of the sphere by pentagons. 

\bigskip

\noindent {\bf Main Theorem}
{\em Any edge-to-edge spherical tiling by $12$ congruent pentagons must belong to one of the five classes in Figure \ref{completeclassify}, subject to the condition that the sum of angles at any vertex is $2\pi$.}

\bigskip

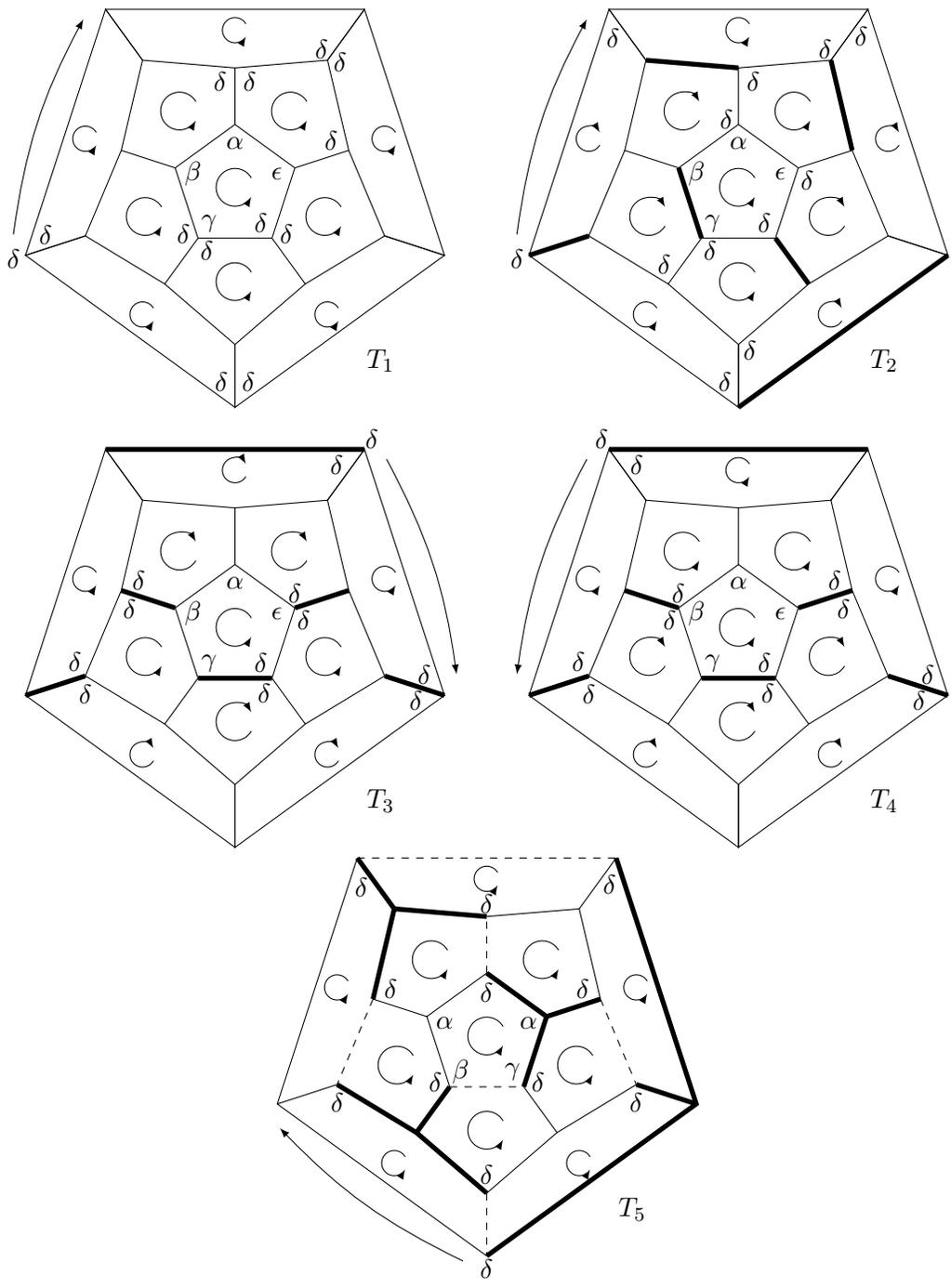
\begin{figure}[htp]
\centering
    \begin{tikzpicture}[>=latex, scale=0.9]

\foreach \x in {1,...,5}
    \draw 
    		(-54+72*\x:1) -- (18+72*\x:1)
    		(-54+72*\x:1) -- (-54+72*\x:1.9) -- (-18+72*\x:2.5) -- (18+72*\x:1.9)
		(-18+72*\x:2.5) -- (-18+72*\x:3.5) -- (54+72*\x:3.5) -- (54+72*\x:2.5);

    \node at (90:0.7) {\small $\alpha$};    
    \node at (162:0.7) {\small $\beta$};    
    \node at (234:0.7) {\small $\gamma$};
    \node at (18:0.7) {\small $\epsilon$};
    \node at (-54:0.7) {\small $\delta$};    
    \node at (58:2.6) {\small $\delta$};    
    \node at (50:2.6) {\small $\delta$};       
    \node at (-86:3.1) {\small $\delta$};        
    \node at (-94:3.1) {\small $\delta$};
    \node at (-114:1.1) {\small $\delta$};  
    \node at (222:1.1) {\small $\delta$};   
    \node at (-42:1.1) {\small $\delta$};
    \node at (82:1.7) {\small $\delta$};  
    \node at (98:1.7) {\small $\delta$};     
    \node at (26:1.7) {\small $\delta$};        
    \node at (194:3.1) {\small $\delta$};
	\node at (198:3.7) {\small $\delta$};
        
    \node at (310:3.6) {\small $T_1$};

	\draw[->]
		(30:0.3) arc (30:330:0.3);
	\draw[rotate=72,->]
		(120:3.6) to[out=10, in=170] (60:3.6);
	\foreach \x in {0,...,4}
	{
	\draw[shift={(54+72*\x:1.5)},->]
		(30:0.3) arc (30:330:0.3);
	\draw[shift={(90+72*\x:2.5)},->]
		(30:0.2) arc (30:330:0.2);
	}


\begin{scope}[xshift=8cm]

	\foreach \x in {1,...,5}
    \draw 
    		(-54+72*\x:1) -- (18+72*\x:1)
    		(-54+72*\x:1) -- (-54+72*\x:1.9) -- (-18+72*\x:2.5) -- (18+72*\x:1.9)
		(-18+72*\x:2.5) -- (-18+72*\x:3.5) -- (54+72*\x:3.5) -- (54+72*\x:2.5);

	\draw[line width=0.7mm]
		(162:1) -- (234:1)
		(-54:1) -- (-54:1.9)
		(18:1.9) -- (54:2.5)
		(90:1.9) -- (126:2.5)
		(198:2.5) -- (198:3.5)
		(-18:3.5) -- (-90:3.5);
  
    \node at (90:0.7) {\small $\alpha$};    
    \node at (162:0.7) {\small $\beta$};    
    \node at (234:0.7) {\small $\gamma$};
    \node at (18:0.7) {\small $\epsilon$};
    \node at (-54:0.7) {\small $\delta$}; 
    \node at (6:1.1) {\small $\delta$};   
    \node at (-114:1.1) {\small $\delta$};         
    \node at (51:3.1) {\small $\delta$};  
    \node at (130:3.1) {\small $\delta$}; 
    \node at (82:1.7) {\small $\delta$};    
    \node at (100:1.1) {\small $\delta$};
    \node at (226:1.7) {\small $\delta$};      
    \node at (-86:2.6) {\small $\delta$};     
    \node at (-94:3.1) {\small $\delta$};     
    \node at (58:2.6) {\small $\delta$};
	\node at (198:3.7) {\small $\delta$};   
        
    \node at (310:3.6) {\small $T_2$};

	\draw[->]
		(30:0.3) arc (30:330:0.3);
	\draw[rotate=72,->]
		(120:3.6) to[out=10, in=170] (60:3.6);
	\foreach \x in {0,2}
	{
	\draw[shift={(-90+72*\x:1.5)},->]
		(30:0.3) arc (30:330:0.3);
	\draw[shift={(90+72*\x:2.5)},->]
		(30:0.2) arc (30:330:0.2);
	}
	\foreach \x in {0,2,3}
	{
	\draw[shift={(-18+72*\x:1.5)},<-]
		(30:0.3) arc (30:330:0.3);
	\draw[shift={(162+72*\x:2.5)},<-]
		(30:0.2) arc (30:330:0.2);
	}
	
\end{scope}

    
\begin{scope}[yshift=-7cm]

	\foreach \x in {1,...,5}
    \draw 
    		(-54+72*\x:1) -- (18+72*\x:1)
    		(-54+72*\x:1) -- (-54+72*\x:1.9) -- (-18+72*\x:2.5) -- (18+72*\x:1.9)
		(-18+72*\x:2.5) -- (-18+72*\x:3.5) -- (54+72*\x:3.5) -- (54+72*\x:2.5);

	\draw[line width=0.7mm]
		(234:1) -- (-54:1)
		(18:1) -- (18:1.9)
		(162:1) -- (162:1.9)
		(-18:2.5) -- (-18:3.5)
		(198:2.5) -- (198:3.5)
		(126:3.5) -- (54:3.5);
    
    \node at (310:3.6) {\small $T_3$};
    
    \node at (90:0.7) {\small $\alpha$};    
    \node at (162:0.7) {\small $\beta$};    
    \node at (234:0.7) {\small $\gamma$};
    \node at (18:0.7) {\small $\epsilon$};
    \node at (-54:0.7) {\small $\delta$};       
    \node at (-22:3.1) {\small $\delta$};        
    \node at (-14:3.1) {\small $\delta$};   
    \node at (154:1.7) {\small $\delta$};  
    \node at (170:1.7) {\small $\delta$};   
    \node at (30:1.1) {\small $\delta$};  
    \node at (6:1.1) {\small $\delta$};        
    \node at (58:3.05) {\small $\delta$};     
    \node at (-66:1.15) {\small $\delta$};          
    \node at (192:2.6) {\small $\delta$};     
    \node at (204:2.6) {\small $\delta$};   
	\node at (54:3.7) {\small $\delta$};
    
  	\draw[->]
		(30:0.3) arc (30:330:0.3);
	\draw[rotate=-72,->]
		(120:3.6) to[out=10, in=170] (60:3.6);
	\foreach \x in {0,2}
	{
	\draw[shift={(198+72*\x:1.5)},->]
		(30:0.3) arc (30:330:0.3);
	\draw[shift={(18+72*\x:2.5)},->]
		(30:0.2) arc (30:330:0.2);
	}
	\foreach \x in {0,2,3}
	{
	\draw[shift={(-90+72*\x:1.5)},<-]
		(30:0.3) arc (30:330:0.3);
	\draw[shift={(90+72*\x:2.5)},<-]
		(30:0.2) arc (30:330:0.2);
	}
	
\end{scope}  
    

\begin{scope}[shift={(8cm,-7cm)}]

	\foreach \x in {1,...,5}
    \draw 
    		(-54+72*\x:1) -- (18+72*\x:1)
    		(-54+72*\x:1) -- (-54+72*\x:1.9) -- (-18+72*\x:2.5) -- (18+72*\x:1.9)
		(-18+72*\x:2.5) -- (-18+72*\x:3.5) -- (54+72*\x:3.5) -- (54+72*\x:2.5);

	\draw[line width=0.7mm]
		(234:1) -- (-54:1)
		(18:1) -- (18:1.9)
		(162:1) -- (162:1.9)
		(-18:2.5) -- (-18:3.5)
		(198:2.5) -- (198:3.5)
		(126:3.5) -- (54:3.5);

    \node at (310:3.6) {\small $T_4$};
   
    \node at (90:0.7) {\small $\alpha$};    
    \node at (162:0.7) {\small $\beta$};    
    \node at (234:0.7) {\small $\gamma$};
    \node at (18:0.7) {\small $\epsilon$};
    \node at (-54:0.7) {\small $\delta$};   
    \node at (26:1.7) {\small $\delta$};  
    \node at (10:1.7) {\small $\delta$};  
    \node at (150:1.1) {\small $\delta$};  
    \node at (174:1.1) {\small $\delta$};  
    \node at (-66:1.15) {\small $\delta$};       
    \node at (-22:3.1) {\small $\delta$};        
    \node at (-14:3.1) {\small $\delta$};        
    \node at (122:3.05) {\small $\delta$};       
    \node at (192:2.6) {\small $\delta$};     
    \node at (204:2.6) {\small $\delta$}; 
	\node at (126:3.7) {\small $\delta$};

	\draw[->]
		(30:0.3) arc (30:330:0.3);
	\draw[rotate=72,<-]
		(120:3.6) to[out=10, in=170] (60:3.6);
	\foreach \x in {0,1}
	{
	\draw[shift={(54+72*\x:1.5)},->]
		(30:0.3) arc (30:330:0.3);
	\draw[shift={(234+72*\x:2.5)},<-]
		(30:0.2) arc (30:330:0.2);
	}
	\foreach \x in {0,1,2}
	{
	\draw[shift={(198+72*\x:1.5)},<-]
		(30:0.3) arc (30:330:0.3);
	\draw[shift={(18+72*\x:2.5)},->]
		(30:0.2) arc (30:330:0.2);
	}

\end{scope}


\begin{scope}[shift={(4cm,-13.5cm)}]
    
	\draw
    		(234:1) -- (162:1) -- (90:1)
		(162:1) -- (162:1.9)
		(18:1.9) -- (54:2.5) -- (90:1.9)
		(-54:1) -- (-54:1.9)
		(-18:2.5) -- (-54:1.9) -- (270:2.5)
		(54:2.5) -- (54:3.5)
		(198:2.5) -- (198:3.5)
		(126:3.5) -- (198:3.5) -- (270:3.5);
	\draw[line width=0.7mm] 
    		(-54:1) -- (18:1) -- (90:1)
		(18:1) -- (18:1.9)
		(90:1.9) -- (126:2.5) -- (162:1.9)
		(234:1) -- (234:1.9)
		(198:2.5) -- (234:1.9) -- (270:2.5)
		(-18:2.5) -- (-18:3.5)
		(126:2.5) -- (126:3.5)
		(54:3.5) -- (-18:3.5) -- (270:3.5);
	\draw[dashed] 
    		(234:1) -- (-54:1)
		(90:1) -- (90:1.9)
		(18:1.9) -- (-18:2.5)
		(162:1.9) -- (198:2.5)
		(270:2.5) -- (270:3.5)
		(126:3.5) -- (54:3.5);
    
    \node at (310:3.6) {\small $T_5$};

    \node at (18:0.7) {\small $\alpha$};    
    \node at (162:0.7) {\small $\alpha$};    
    \node at (234:0.7) {\small $\beta$};
    \node at (-54:0.7) {\small $\gamma$};     
    \node at (90:0.7) {\small $\delta$};     
    \node at (26:1.7) {\small $\delta$};   
    \node at (154:1.7) {\small $\delta$};   
    \node at (-42:1.1) {\small $\delta$};   
    \node at (222:1.1) {\small $\delta$};     
    \node at (-90:2.2) {\small $\delta$};      
    \node at (90:2.1) {\small $\delta$};       
    \node at (51:3.1) {\small $\delta$};       
    \node at (130:3.1) {\small $\delta$};      
    \node at (-24:2.6) {\small $\delta$};      
    \node at (204:2.6) {\small $\delta$}; 
	\node at (-90:3.7) {\small $\delta$};

	\draw[->]
		(30:0.3) arc (30:330:0.3);
	\draw[rotate=144,->]
		(120:3.6) to[out=10, in=170] (60:3.6);
	\foreach \x in {0,...,4}
	{
	\draw[shift={(54+72*\x:1.5)},->]
		(30:0.3) arc (30:330:0.3);
	\draw[shift={(90+72*\x:2.5)},->]
		(30:0.2) arc (30:330:0.2);
	}
    
    \end{scope}

    \end{tikzpicture}
\caption{All the possible spherical tilings by $12$ congruent pentagons.}
\label{completeclassify}
\end{figure}

Here is how to read the tilings in Figure \ref{completeclassify}: Combinatorially, the tilings are the dodecahedron, and are illustrated on the flat plane after punching a hole in one tile. The center tile shows the angles in a tile. The angles in the other tiles are determined by the location of $\delta$ and the angle arrangement orientation. As for the edge lengths, the usual solid lines have the same edge length $a$, the thick lines have the same length $b$, and the dotted lines have the same length $c$.

Although the classification contains five classes, only the fifth class is the essential one, because the other four classes only provide isolated examples (and perhaps the regular dodecahedron only). Figure \ref{generalclass} is this essential class in its full glory.

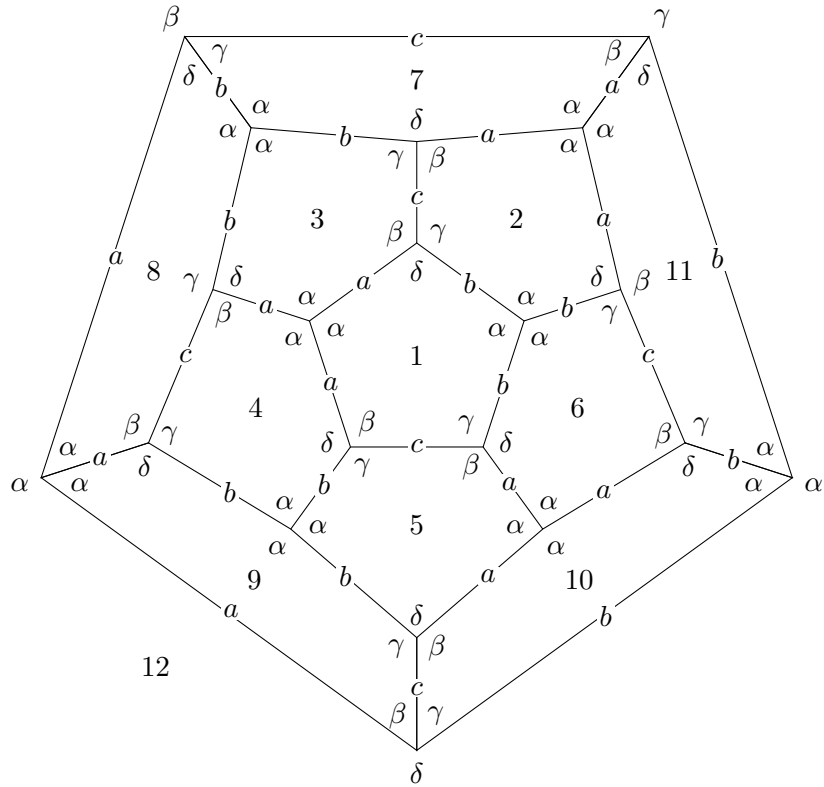
\begin{figure}[htp]
\centering
    \begin{tikzpicture}[>=latex, scale=1.5]
    
    \foreach \x in {1,...,5}
    \draw 
    		(-54+72*\x:1) -- (18+72*\x:1)
    		(-54+72*\x:1) -- (-54+72*\x:1.9) -- (-18+72*\x:2.5) -- (18+72*\x:1.9)
		(-18+72*\x:2.5) -- (-18+72*\x:3.5) -- (54+72*\x:3.5) -- (54+72*\x:2.5);

    \node at (0:0) {\small $1$};
    \node at (54:1.5) {\small $2$};
    \node at (126:1.5) {\small $3$};
    \node at (198:1.5) {\small $4$};
    \node at (-90:1.5) {\small $5$};
    \node at (-18:1.5) {\small $6$};
    \node at (90:2.45) {\small $7$};
    \node at (162:2.45) {\small $8$};
    \node at (234:2.45) {\small $9$};
    \node at (-54:2.45) {\small $10$};
    \node at (18:2.45) {\small $11$};
    \node at (230:3.6) {\small $12$};


	\node[fill=white,inner sep=1] at (54:0.8) {\small $b$};
	\node[fill=white,inner sep=1] at (126:0.8) {\small $a$};
	\node[fill=white,inner sep=1] at (198:0.8) {\small $a$};
	\node[fill=white,inner sep=1] at (-90:0.8) {\small $c$};
	\node[fill=white,inner sep=1] at (-18:0.8) {\small $b$};
	

	\node[fill=white,inner sep=1] at (18:1.4) {\small $b$};
	\node[fill=white,inner sep=1] at (90:1.4) {\small $c$};
	\node[fill=white,inner sep=1] at (162:1.4) {\small $a$};
	\node[fill=white,inner sep=1] at (234:1.4) {\small $b$};
	\node[fill=white,inner sep=1] at (-54:1.4) {\small $a$};
	

	\node[fill=white,inner sep=1] at (0:2.05) {\small $c$};
	\node[fill=white,inner sep=1] at (36:2.05) {\small $a$};
	\node[fill=white,inner sep=1] at (72:2.05) {\small $a$};
	\node[fill=white,inner sep=1] at (108:2.05) {\small $b$};
	\node[fill=white,inner sep=1] at (144:2.05) {\small $b$};
	\node[fill=white,inner sep=1] at (180:2.05) {\small $c$};
	\node[fill=white,inner sep=1] at (-144:2.05) {\small $b$};
	\node[fill=white,inner sep=1] at (-108:2.05) {\small $b$};
	\node[fill=white,inner sep=1] at (-72:2.05) {\small $a$};
	\node[fill=white,inner sep=1] at (-36:2.05) {\small $a$};
	

	\node[fill=white,inner sep=1] at (54:2.95) {\small $a$};
	\node[fill=white,inner sep=1] at (126:2.95) {\small $b$};
	\node[fill=white,inner sep=1] at (198:2.95) {\small $a$};
	\node[fill=white,inner sep=1] at (-90:2.95) {\small $c$};
	\node[fill=white,inner sep=1] at (-18:2.95) {\small $b$};

	
	\node[fill=white,inner sep=1] at (18:2.8) {\small $b$};
	\node[fill=white,inner sep=1] at (90:2.8) {\small $c$};
	\node[fill=white,inner sep=1] at (162:2.8) {\small $a$};
	\node[fill=white,inner sep=1] at (234:2.8) {\small $a$};
	\node[fill=white,inner sep=1] at (-54:2.85) {\small $b$};

    \node at (18:0.75) {\small $\alpha$};
    \node at (8:1.1) {\small $\alpha$};
    \node at (28:1.1) {\small $\alpha$};
    
    \node at (90:0.75) {\small $\delta$};
    \node at (80:1.1) {\small $\gamma$};
    \node at (100:1.1) {\small $\beta$};
    
    \node at (162:0.75) {\small $\alpha$};
    \node at (152:1.1) {\small $\alpha$};
    \node at (172:1.1) {\small $\alpha$};
    
    \node at (234:0.75) {\small $\beta$};
    \node at (224:1.1) {\small $\delta$};
    \node at (244:1.1) {\small $\gamma$};
    
    \node at (-54:0.75) {\small $\gamma$};
    \node at (-64:1.1) {\small $\beta$};
    \node at (-44:1.1) {\small $\delta$};

    \node at (18:2.1) {\small $\beta$};
    \node at (12:1.75) {\small $\gamma$};
    \node at (24:1.75) {\small $\delta$};
    
    \node at (90:2.1) {\small $\delta$};
    \node at (84:1.75) {\small $\beta$};
    \node at (96:1.75) {\small $\gamma$};
    
    \node at (162:2.1) {\small $\gamma$};
    \node at (156:1.75) {\small $\delta$};
    \node at (168:1.75) {\small $\beta$};
    
    \node at (234:2.1) {\small $\alpha$};
    \node at (228:1.75) {\small $\alpha$};
    \node at (240:1.75) {\small $\alpha$};
    
    \node at (-54:2.1) {\small $\alpha$};
    \node at (-48:1.75) {\small $\alpha$};
    \node at (-60:1.75) {\small $\alpha$};

    \node at (54:2.3) {\small $\alpha$};
    \node at (50:2.6) {\small $\alpha$};
    \node at (58:2.6) {\small $\alpha$};
    
    \node at (126:2.3) {\small $\alpha$};
    \node at (122:2.6) {\small $\alpha$};
    \node at (130:2.6) {\small $\alpha$};
    
    \node at (198:2.3) {\small $\gamma$};
    \node at (194:2.6) {\small $\beta$};
    \node at (202:2.6) {\small $\delta$};

    \node at (-90:2.3) {\small $\delta$};
    \node at (-94:2.6) {\small $\gamma$};
    \node at (-86:2.6) {\small $\beta$};
    
    \node at (-18:2.3) {\small $\beta$};
    \node at (-22:2.6) {\small $\delta$};
    \node at (-14:2.6) {\small $\gamma$};

    \node at (54:3.7) {\small $\gamma$};
    \node at (51:3.2) {\small $\delta$};
    \node at (57:3.2) {\small $\beta$};

    \node at (126:3.7) {\small $\beta$};
    \node at (123:3.2) {\small $\gamma$};
    \node at (129:3.2) {\small $\delta$};

    \node at (198:3.7) {\small $\alpha$};
    \node at (195:3.2) {\small $\alpha$};
    \node at (201:3.2) {\small $\alpha$};

    \node at (-90:3.7) {\small $\delta$};
    \node at (-93:3.2) {\small $\beta$};
    \node at (-87:3.2) {\small $\gamma$};

    \node at (-18:3.7) {\small $\alpha$};
    \node at (-15:3.2) {\small $\alpha$};
    \node at (-21:3.2) {\small $\alpha$};

    \end{tikzpicture}
\caption{The general class $T_5$ of spherical tilings by $12$ congruent pentagons.}
\label{generalclass}
\end{figure}

The tile in the ``essential class'' $T_5$ is illustrated on the left of Figure \ref{pentagontile}. The angle sum condition is
\[
3\alpha=\beta+\gamma+\delta=2\pi.
\]
This implies that the area of the pentagonal tile is $\frac{\pi}{3}$, or one twelfth of the area of the sphere.

It is easy to see that the tile in $T_5$ allows two free parameters. Divide the tile as in the middle of Figure \ref{pentagontile}. Since $\alpha=\frac{2\pi}{3}$ is already fixed, the area $A(a)$ of the triangle $\triangle ABC$ is completely determined by $a$. The function $A(a)$ is strictly increasing, and $A(b)$ is the area of $\triangle ADE$. When $A(a)+A(b)\le \frac{\pi}{3}$ and $A(a)+A(b)$ is sufficiently close to $\frac{\pi}{3}$, we can always find a suitable angle $\angle CAD$ (equivalent to finding suitable $\delta=\angle BAE$) so that the area of the pentagon is $\frac{\pi}{3}$. This shows that we can freely choose $a$ and $b$ within certain range to get a tile that fits into the tiling, and the tile is completely determined by the choice.

\begin{figure}[htp]
\centering
    \begin{tikzpicture}[scale=2]

	\draw
		(0,0) -- node[fill=white,inner sep=2] {\small $c$} (0.735,0)
		(0,0) -- node[fill=white,inner sep=2] {\small $a$}  ++(100:1) -- node[fill=white,inner sep=1] {\small $a$} ++(100-72:1)
		(0.735,0) -- node[fill=white,inner sep=1] {\small $b$} ++(55:0.9) -- node[fill=white,inner sep=1] {\small $b$} ++(55+72:0.9);

    \node at (-0.05,0.95) {\small $\alpha$};
    \node at (0.67,1.3) {\small $\delta$};
    \node at (1.1,0.75) {\small $\alpha$};
    \node at (0.1,0.1) {\small $\beta$};
    \node at (0.7,0.1) {\small $\gamma$};

	\begin{scope}[xshift=2cm]
	
	\draw
		(0,0) -- (0.735,0)
		(0,0) node[below] {\small $C$} -- node[fill=white,inner sep=2] {\small $a$}  ++(100:1) node[left] {\small $B$} -- node[fill=white,inner sep=1] {\small $a$} ++(100-72:1) node[above] {\small $A$} -- cycle
		(0.735,0) node[below] {\small $D$} -- node[fill=white,inner sep=1] {\small $b$} ++(55:0.9) node[right] {\small $E$} -- node[fill=white,inner sep=1] {\small $b$} ++(55+72:0.9) -- cycle;
	
	\node at (-0.05,0.95) {\small $\alpha$};
    \node at (1.1,0.75) {\small $\alpha$};

	\end{scope}

	\begin{scope}[shift={(4.5cm,1.1cm)}]
	
	\draw
		(0,0) node[above] {\small $A$}
		-- node[fill=white,inner sep=2] {\small $a$}
		++(20:0.8) node[below] {\small $\alpha$} 
			node[above] {\small $B$}  
		-- node[fill=white,inner sep=2] {\small $a$} 
		++(-50:0.8) node[left] {\small $\epsilon$} 
			node[right] {\small $C$}
		-- node[fill=white,inner sep=2] {\small $a$}
		++(-110:0.8) node[above left=-1] {\small $\delta$} 
			node[below] {\small $D$}
		-- node[fill=white,inner sep=2] {\small $a$}
		++(-180:0.8) node[above right=-1] {\small $\gamma$} 
			node[below] {\small $E$}
		-- ++(110:1.3)  
			node[left] {\small $A'$};

	\end{scope}

    \end{tikzpicture}
\caption{The pentagonal tile.}
\label{pentagontile}
\end{figure}
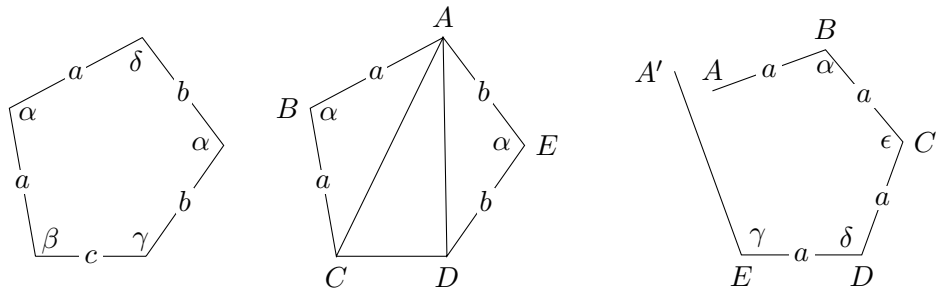

Suppose we put the cube in Figure \ref{example} inside a sphere, so that the cube and the sphere have the same center. If we put a light source at the center, then the projection of the cube to the sphere is a $T_5$ class tiling with $\beta+\gamma=\delta=\pi$.

\begin{figure}[htp]
\centering
    \begin{tikzpicture}[scale=1]
	
	\draw
		(3,4) -- (1,6) -- (-2,5) -- (-2,2) -- (0,0) -- (3,1)  -- (3,4) -- (0,3) -- (-2,5)
		(0,0) -- (0,3) node[below right] {\small $\alpha$}
		(0,1) -- (3,3) 
		(-0.66,0.66) -- (-1.67,4.67)
		(2,3.66) node[below] {\small $\pi$} -- (-1,5.33);
		
	\node[fill=white,inner sep=1] at (0,2) {\small $a$};
	\node[fill=white,inner sep=1] at (1,3.33) {\small $a$};
	\node[fill=white,inner sep=1] at (2.5,3.85) {\small $b$};
	\node[fill=white,inner sep=1] at (3,3.45) {\small $b$};
	\node[fill=white,inner sep=1] at (1.5,2) {\small $c$};
	\node at (2.8,3.7) {\small $\alpha$};
	\node at (0.25,1.4) {\small $\beta$};
	\node at (2.75,3.1) {\small $\gamma$};

    \end{tikzpicture}
\caption{A spherical pentagon tiling.}
\label{example}
\end{figure}
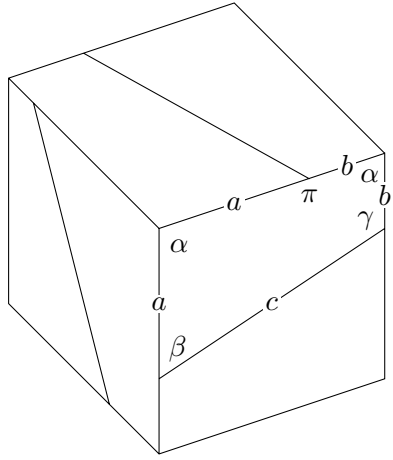

For the tiling class $T_2$, the angle sum condition is $3\alpha=2\beta+\epsilon=2\gamma+\delta=\alpha+\delta+\epsilon=2\pi$. This means
\[
\alpha=\frac{2\pi}{3},\quad
\beta=\pi-\frac{1}{2}\epsilon,\quad
\gamma=\frac{\pi}{3}+\frac{1}{2}\epsilon,\quad
\delta=\frac{4\pi}{3}-\epsilon,
\]
and should lead to only isolated examples. The reason is illustrated on the right of Figure \ref{pentagontile}. Suppose $a$ and $\epsilon$ are given. Fix a point $A$ on the sphere and a direction for the edge $AB$. Starting from $A$ and travelling along the given direction by distance $a$, we arrive at $B$. Turning an angle $\alpha$ at $B$ and travelling by distance $a$, we arrive at $C$. Turning an angle $\epsilon$ at $C$ and travelling by distance $a$, we arrive at $D$. Turning an angle $\delta=\frac{4\pi}{3}-\epsilon$ at $D$ and travelling by distance $a$, we arrive at $E$. Turning an angle $\gamma=\frac{\pi}{3}+\frac{1}{2}\epsilon$ at $E$ and travelling along, we get a line (actually a great arc) $EA'$. Then we want $A$ to lie on the line $EA'$, which imposes one relation between $a$ and $\epsilon$, and reduces the number of free parameters to one. Once $A$ lies on $EA'$, we further need the pentagon to have area $\frac{\pi}{3}$, which is the same as $\angle EAB=\beta=\pi-\frac{1}{2}\epsilon$. This imposes one more relation and we are left with no free parameter. Therefore the pentagonal tiles fitting into the tiling class $T_2$ must appear in isolated way. 

Similar discussion can be made for $T_3$ and $T_4$. For $T_1$, we have even one less degree of freedom. The problem is crystalized below (the angles are renamed in four cases in order to get the same expressions for the angle sum equations).

\medskip

\noindent {\bf Problem.} Find spherical pentagons in Figure \ref{regularpentagon} satisfying $3\alpha=2\beta+\gamma=2\delta+\epsilon=\alpha+\gamma+\epsilon=2\pi$. Are there any examples besides the pentagon in the regular dodecahedron?

\medskip

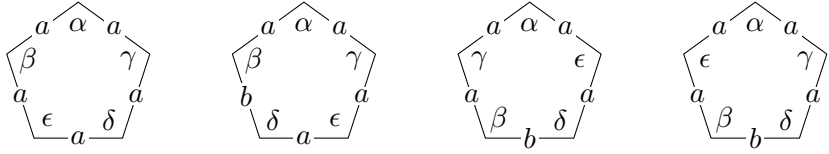
\begin{figure}[htp]
\centering
    \begin{tikzpicture}[scale=1]


	\draw (18:1) -- node[fill =white,inner sep=1] {\small $a$}
		(90:1) -- node[fill=white,inner sep=1] {\small $a$}
		(162:1) -- node[fill=white,inner sep=1] {\small $a$}
		(234:1) -- node[fill=white,inner sep=1] {\small $a$}
		(306:1) -- node[fill=white,inner sep=1] {\small $a$} (18:1);

    \node at (18:0.7) {\small $\gamma$};
    \node at (90:0.7) {\small $\alpha$};
    \node at (162:0.7) {\small $\beta$};
    \node at (234:0.7) {\small $\epsilon$};
    \node at (306:0.7) {\small $\delta$};

       
    \begin{scope}[xshift=3cm] 

	\draw (18:1) -- node[fill =white,inner sep=1] {\small $a$}
		(90:1) -- node[fill=white,inner sep=1] {\small $a$}
		(162:1) -- node[fill=white,inner sep=1] {\small $b$}
		(234:1) -- node[fill=white,inner sep=1] {\small $a$}
		(306:1) -- node[fill=white,inner sep=1] {\small $a$} (18:1);
    
    \node at (18:0.7) {\small $\gamma$};
    \node at (90:0.7) {\small $\alpha$};
    \node at (162:0.7) {\small $\beta$};
    \node at (234:0.7) {\small $\delta$};
    \node at (306:0.7) {\small $\epsilon$};
    
    \end{scope}

       
    \begin{scope}[xshift=6cm] 

	\draw (18:1) -- node[fill =white,inner sep=1] {\small $a$}
		(90:1) -- node[fill=white,inner sep=1] {\small $a$}
		(162:1) -- node[fill=white,inner sep=1] {\small $a$}
		(234:1) -- node[fill=white,inner sep=1] {\small $b$}
		(306:1) -- node[fill=white,inner sep=1] {\small $a$} (18:1);
    
    \node at (18:0.7) {\small $\epsilon$};
    \node at (90:0.7) {\small $\alpha$};
    \node at (162:0.7) {\small $\gamma$};
    \node at (234:0.7) {\small $\beta$};
    \node at (306:0.7) {\small $\delta$};
    
  	\end{scope}

       
    \begin{scope}[xshift=9cm] 

	\draw (18:1) -- node[fill =white,inner sep=1] {\small $a$}
		(90:1) -- node[fill=white,inner sep=1] {\small $a$}
		(162:1) -- node[fill=white,inner sep=1] {\small $a$}
		(234:1) -- node[fill=white,inner sep=1] {\small $b$}
		(306:1) -- node[fill=white,inner sep=1] {\small $a$} (18:1);
    
    \node at (18:0.7) {\small $\gamma$};
    \node at (90:0.7) {\small $\alpha$};
    \node at (162:0.7) {\small $\epsilon$};
    \node at (234:0.7) {\small $\beta$};
    \node at (306:0.7) {\small $\delta$};
    
    \end{scope}

    \end{tikzpicture}
\caption{Must these spherical pentagons be regular?}
\label{regularpentagon}
\end{figure}

To prove the main theorem, we first separate and classify different aspects of tilings. At the most crude level, we study tilings by combinatorially congruent tiles in Section \ref{combinatorial}, which means that all tiles have the same number of edges, and edge lengths and angles are ignored. Proposition \ref{tile_pattern} says that the dodecahedron is the unique spherical tiling by $12$ combinatorially congruent pentagons. Combinatorial tilings have been extensively studied. See \cite{s2} for a recent survey. Our study is actually a topological one, in the sense that the lines do not even need to be straight. See \cite{yan1} for our further work for the non-minimal case.

In Sections \ref{edgetile1} and \ref{edgetile2}, we study tilings by edge congruent tiles, which means that all tiles have the same edge length combination and arrangement. Propositions \ref{edge_pattern2}, \ref{edge_pattern3}, \ref{edge_pattern5}, \ref{edge_pattern4} (plus the case all edges have the same length) completely classify spherical tilings by $12$ edge congruent pentagons. Our further work in the non-minimal case can be found in \cite{ccy1}.

In Sections \ref{angletile1} and \ref{angletile2}, we study tilings by angle congruent tiles, which means that all tiles have the same angle combination and arrangement. Proposition \ref{angle} classifies the numerics in spherical tilings by $12$ angle congruent pentagons. Propositions \ref{angle_pattern3}, \ref{angle_pattern4}, \ref{angle_pattern5} then further classify the locations of the angles in three major cases. The complete classification would involve another major but rather complicated case. Since this case will not be needed in the proof of the main classification theorem, we only present some examples instead of the whole proof.

In Section \ref{proofmain}, we combine the edge congruent and angle congruent classifications to prove the main theorem. While our edge congruent and angle congruent results do not require geometry, in that the edges are not necessarily great arcs, in combining the two, we use geometry to cut down the possible number of edge-angle combinations. So our main theorem does require the edges to be great arcs. For the non-minimal case, see \cite{ccy2} for further partial results.

Finally, we would like to thank the referees for the careful reading of the paper and many helpful suggestions.

\section{Combinatorial Conditions}
\label{combinatorial}

In this section, we study the combinatorial aspect of tilings. This means that we ignore the edge lengths and the angles, and only require the tiles to have the same number of edges. Most results of the section are well known.

The results certainly apply to the edge-to-edge tilings as defined in \cite{gs}. However, some results can be applied to more general tilings. Specifically, we consider a connected graph nicely embedded in the sphere. We assume that all vertices in the graph have degree $\ge 3$. The interior of a tile is a connected component of the complement of the graph. A tile is the closure of its interior. The boundary of a tile is what is between the tile and its interior, and is a union of some edges in the graph.

The exclusion of vertices of degree $2$ means that we do not view an $n$-gon as an $(n+k)$-gon by artificially choosing another $k$ points on the boundary as extra vertices. Therefore all tiles have naturally defined vertices and edges that are already given in the graph, and the tiling is naturally edge-to-edge. 

Figure \ref{tile} shows some possible tiles. The first one does not have any boundary points identified. The others have some boundary points (even boundary intervals) identified. In case some boundary intervals are identified, the interior of the tile is different from the topological version of the interior.

\begin{figure}[htp]
\centering
    \begin{tikzpicture}[scale=1]

    \filldraw[fill=gray!30]
    		(-1.2,0) to[out=80,in=200] (0,1.2)
			to[out=-50,in=180] (1.2,0.5)
			to[out=-90,in=50] (0.9,-1)
			to[out=150,in=40] (-0.7,-1)
			to[out=90,in=-20] (-1.2,0);

    \begin{scope}[xshift=3cm]
    
    \filldraw[fill=gray!30]
		(1,0) to[out=30,in=0] (0.3,1.2)
			to[out=180,in=90] (-1.2,0)
			to[out=-90,in=180] (0.3,-1.2)
			to[out=0,in=-30] (1,0);
	\filldraw[fill=white]
		(0.5,0) circle (0.5);
	
	\end{scope}

	\begin{scope}[xshift=6cm]
    
    \filldraw[fill=gray!30]
		(1,0) to[out=30,in=0] (0.3,1.2)
			to[out=180,in=90] (-1.2,0)
			to[out=-90,in=180] (0.3,-1.2)
			to[out=0,in=-30] (1,0);
	\filldraw[fill=white]
		(0,0) circle (0.5);
	\draw
		(0.5,0) -- (1,0);
	
	\end{scope}

	\begin{scope}[xshift=9cm]
    
    \filldraw[fill=gray!30]
		(1,0) to[out=30,in=0] (0.3,1.2)
			to[out=180,in=90] (-1.2,0)
			to[out=-90,in=180] (0.3,-1.2)
			to[out=0,in=-30] (1,0);
	\filldraw[fill=white]
		(0.6,0) circle (0.4)
		(-0.5,0) circle (0.4);
	\draw
		(-0.1,0) -- (0.2,0);
	
	\end{scope}

    \end{tikzpicture}
\caption{Tiles.}
\label{tile}
\end{figure}

Figure \ref{notile} shows some examples that are not tiles. They are unions of several tiles at finitely many boundary points.

\begin{figure}[htp]
\centering
    \begin{tikzpicture}[scale=1]

    \filldraw[fill=gray!30]
    		(0,0) to[out=160,in=-70] (-1,1.2)
			to[out=-110,in=20] (-2,0)
			to[out=-20,in=110] (-1,-1.2)
			to[out=70,in=200] (0,0)
		(1,0) ellipse (1 and 1.2);
	\filldraw[rounded corners, fill=white]
		(0.7,0) ellipse (0.7 and 0.5);

    \begin{scope}[xshift=4.5cm]
    
    \filldraw[fill=gray!30]
		(0,0) ellipse (2 and 1.2);
	\filldraw[fill=white]
		(0,0) ellipse (0.5 and 1.2)
		(1.1,0) circle (0.4);
	\draw 
		(0.5,0) -- (0.7,0);
	
	\end{scope}

	\begin{scope}[xshift=9cm]
    
    \filldraw[fill=gray!30]
		(0,0) ellipse (2 and 1.2);
	\filldraw[fill=white]
		(0,1.2) to[out=-50,in=50] (0,-1.2)
			to[out=120,in=-20] (-2,0)
			to[out=20,in=240] (0,1.2);
	
	\end{scope}

    \end{tikzpicture}
\caption{Not tiles.}
\label{notile}
\end{figure}
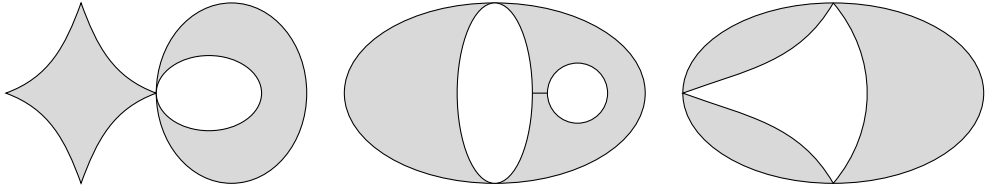

\begin{lemma}\label{noself}
In any (naturally generated) spherical tiling, there is at least one tile for which no boundary points are identified.
\end{lemma}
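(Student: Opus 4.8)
\emph{Proof plan.} The idea is to produce an ``innermost'' tile directly by an extremal choice. First I would record the two elementary facts that make the argument run: (i) the tiling graph $G$ is connected and every vertex has degree $\ge 3$, so $G$ is not a tree (a tree with an edge has a vertex of degree $1$) and hence contains a simple cycle; since $G$ is nicely embedded, the image of such a cycle is a simple closed curve, which by the Jordan curve theorem bounds two closed disks in $S^2$; and (ii) for any simple cycle $C$ of $G$ and either closed disk $D$ it bounds, every tile is a connected subset of $S^2\setminus C$ and therefore lies wholly in the open disk $\mathring D$ or wholly in the complement of $\overline D$, and at least one tile lies in $\mathring D$ because $\mathring D$ is a nonempty open set and $G$ is $1$-dimensional.

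Using (ii) I would choose, among all pairs $(C,D)$ with $C$ a simple cycle of $G$ and $D$ a closed disk bounded by $C$, one that minimizes the number $N(C,D)\ (\ge 1)$ of tiles contained in $\mathring D$; a minimizer exists since there are finitely many tiles. The claim is that for such a minimizer $N(C,D)=1$, and that the unique tile $T$ with interior $\mathring D$ has no two boundary points identified.

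To prove the claim I would argue by contradiction: if $G$ met $\mathring D$ in any vertex or edge, then $G\cap\overline D$ would be a \emph{connected} graph strictly larger than $C$ (connectedness because any path in $G$ leaving $\overline D$ must first cross $C$, so a path from an interior point of $G$ to $C$ can be taken inside $\overline D$), in which every vertex of $C$ has degree $\ge 2$ and every vertex lying in $\mathring D$ has degree $\ge 3$ (its $G$-edges cannot cross $C$). Such a graph has minimum degree $\ge 2$ and is not a single cycle, hence is not unicyclic, so its cycle space has dimension $\ge 2$ and it contains a simple cycle $C'\ne C$. Since $C'$ uses a vertex or edge in $\mathring D$, it bounds a closed disk $D'\subseteq\overline D$ with $D'\ne D$; then some tile lies in $\mathring D\setminus D'$, whereas every tile in $\mathring{D'}$ already lies in $\mathring D$, so $1\le N(C',D')\le N(C,D)-1$, contradicting minimality. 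Therefore $G\cap\overline D=C$, so $\mathring D$ is the interior of a single tile $T$ whose closure is the closed disk $D$ and whose boundary walk traverses the simple closed curve $C$ exactly once, because $T$ lies on only one side of each edge of $C$. Hence no boundary points of $T$ are identified.

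I expect the only genuine work to be in the minimality step: checking that the smaller cycle $C'$ really does bound a disk strictly inside $D$ that omits at least one tile of $\mathring D$, and dealing with the degenerate configurations in which $C$ or $C'$ is a single loop or a bigon — these are permitted, and in those cases the tile $T$ (a monogon, or a bigon with distinct endpoints) still has embedded closure, so the conclusion is unaffected. Everything else is elementary point-set topology on $S^2$ together with the two facts isolated above.
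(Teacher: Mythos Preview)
Your argument is correct and complete; the graph-theoretic details (connectedness of $G\cap\overline D$, the cycle-space count giving a second simple cycle, the verification that $D'\subsetneq D$ and that $\mathring D\setminus D'$ contains a tile) all go through as you sketched. But the route is genuinely different from the paper's. The paper argues by iterated descent on \emph{tiles}: start with any tile $P_1$ whose boundary has an identification, invoke the topological fact that some complementary component of $P_1$ (within the current simple region) has closure a closed disk, and repeat inside that smaller disk; finiteness of the tiling forces the process to terminate at a tile that is itself a simple region. Your argument instead makes a single extremal choice among \emph{cycles of $G$}, minimizing the number of enclosed tiles, and uses the structure of $G\cap\overline D$ (minimum degree $\ge 2$, not unicyclic) to manufacture a strictly smaller disk if the minimum exceeds~$1$.

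What each buys: the paper's proof is shorter and conceptually tile-centric, but it leans on a topological claim (``the closure of some component of $R-P$ is a simple region'') that is only asserted, not proved. Your proof is more self-contained---it replaces that claim with elementary graph theory and the Jordan curve theorem---and it makes transparent exactly where the degree $\ge 3$ hypothesis enters (to guarantee the first cycle and the $\ge 3$ degree at interior vertices). The trade-off is that you need to verify more combinatorics, but nothing you wrote is hand-waving; the only spot worth expanding in a final write-up is the sentence ``it bounds a closed disk $D'\subseteq\overline D$ with $D'\ne D$,'' where you should say explicitly that $S^2\setminus D$ is connected and misses $C'$, hence lies in one Jordan side of $C'$, forcing the other side into $\overline D$.
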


\begin{proof}
We call a closed subset of the sphere a {\em simple region} if it is enclosed by a nicely embedded simple closed curve. The boundary points of a simple region are not identified. The lemma basically says that some tile is a simple region.

It is a topological fact that if $R$ is the whole sphere or a simple region, and $P$ is a tile in $R$ with some boundary points identified, then the closure of some connected component of $R-P$ is a simple region. 

Let $P_1$ be a tile, such that some boundary points are identified. Then the complement $S^2-P_1$ has a connected component $C_1$, such that the closure $\bar{C}_1$ is a simple region, and is actually a union of tiles. If $\bar{C}_1$ is a tile, then we are done. Otherwise, $\bar{C}_1$ contains a tile $P_2$ as a proper subset. If $P_2$ is a simple region, then we are also done. If $P_2$ is not a simple region, then some boundary points of $P_2$ are identified, and the complement $\bar{C}_1-P_2$ has a connected component $C_2$, such that the closure $\bar{C}_2$ is a simple region. Keep going. As long as we do not encounter tiles that are simple regions along the way, we find a strictly decreasing sequence of simple regions $\bar{C}_1\supset \bar{C}_2\supset \bar{C}_3\supset \dotsb$. Since there are only finitely many tiles in the tiling, the sequence must stop. This means that some tile found in the process must be a simple region. 
\end{proof}

An immediate consequence of the lemma is that in case all tiles are congruent (full congruence, including edge lengths and angles), the boundary points of any tile are never identified. Since the property is needed in the subsequent discussion, we include it in the following definition.

\begin{definition}
A tiling is {\em combinatorially monohedral} if all tiles have the same number of edges and every tile has no boundary points identified. The tiles in such a tiling are {\em combinatorially congruent}.
\end{definition}

Let $f$, $e$, $v$ be the numbers of tiles (or faces), edges and vertices in a combinatorially monohedral tiling. Let $n$ be the number of edges in a tile. Then we have the Euler equation and the Dehn-Sommerville equation
\[
v - e + f = 2, \quad
2e = nf.
\]
Note that the Dehn-Sommerville equation assumes that each edge is shared by two {\em different} faces, which follows from the condition that no boundary points of a tile are identified. (Actually we only need that no boundary intervals are identified.) Moreover, if $v_i$ is the number of vertices of degree $i$, then
\[
v = v_3 + v_4 + v_5 + \dotsb, \quad
2e = 3v_3+4v_4+5v_5 +\dotsb. 
\]
It is a simple consequence of all these equations that $n=3,4,5$. See Proposition 4 of \cite{paper_quadr}.

Now we concentrate on the case $n=5$. The equations become
\begin{equation}\label{pent_eq}
2e = 5f, \quad
2v = 3f + 4, \quad
v_3 - 20 = 2v_4+5v_5+8v_6 +\dotsb. 
\end{equation}

\begin{lemma}\label{deg3_4}
Any spherical tiling by combinatorially congruent pentagons contains a tile in which at least four vertices have degree $3$.
\end{lemma}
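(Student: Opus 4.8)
The plan is to argue by double counting the incidences between tiles and their degree-$3$ vertices, using the numerical identities already recorded in \eqref{pent_eq}.

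The first step is to compare $v_3$ with $f$. From the last equation in \eqref{pent_eq} we have $v_3 = 20 + 2v_4 + 5v_5 + 8v_6 + \cdots$. Since $2v = 3f+4$ and $v = v_3 + v_4 + v_5 + \cdots$, substituting the expression for $v_3$ gives $v = 20 + 3v_4 + 6v_5 + 9v_6 + \cdots$, and hence $f = 12 + 2v_4 + 4v_5 + 6v_6 + \cdots$. Subtracting,
\[
v_3 - f = 8 + v_5 + 2v_6 + 3v_7 + \cdots \ge 8 ,
\]
so in particular $v_3 > f$.

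The second step is the counting itself. Consider the set of pairs $(T,P)$ in which $T$ is a tile and $P$ is a vertex of degree $3$ lying on the boundary of $T$. Because the tiling is combinatorially monohedral, no boundary points of any tile are identified, so each tile meets a given vertex in at most one corner; consequently every degree-$3$ vertex lies on exactly three distinct tiles. Summing over the $v_3$ vertices of degree $3$, the number of such pairs equals $3v_3$. On the other hand, if every tile had at most three vertices of degree $3$, then summing over the $f$ tiles would bound the number of pairs by $3f$, forcing $v_3 \le f$ and contradicting $v_3 > f$. Hence some tile has at least four vertices of degree $3$.

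I do not anticipate a serious obstacle: once the identity $v_3 - f \ge 8$ is in hand, the rest is an elementary averaging argument (indeed the average number of degree-$3$ vertices per tile is $3v_3/f > 3$). The only point requiring care is the claim that a degree-$3$ vertex is incident to exactly three \emph{distinct} tiles, which is precisely where the non-identification of boundary points --- part of the definition of combinatorially monohedral, guaranteed by Lemma \ref{noself} in the fully congruent case --- is used; without it a tile could wrap around a vertex and the incidence count would collapse.
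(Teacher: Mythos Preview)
Your proof is correct and follows essentially the same approach as the paper: both establish $3v_3 > 3f$ from the identities in \eqref{pent_eq} and then use the double count $\sum_j t_j = 3v_3$ to force some tile to have $t_j \ge 4$. Your derivation of the sharper identity $v_3 - f = 8 + v_5 + 2v_6 + \cdots$ is a minor cosmetic variation on the paper's chain of inequalities, and your remark about needing distinct tiles at a vertex makes explicit an assumption the paper uses silently.
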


Figure \ref{highdeg} gives a combinatorial pentagon tiling of the sphere in which each tile has a degree $4$ vertex.

\begin{figure}[htp]
\begin{center}
\begin{tikzpicture}[>=latex,scale=1.2]

\foreach \x in {0,1,2,3}
\draw[scale=1.07,rotate=90*\x]
	(0,0) -- (0.3,0) -- (0.4,0.2) -- (0.2,0.4) -- (0,0.3)
	(0.4,0.2) -- (0.6,0.2) -- (0.8,-0.4) -- (0.4,-0.2)
	(0.6,0.2) -- (0.8,0.4) -- (0.4,0.8)
	(0.8,0.4) -- (1,0.4) -- (1,-0.4) -- (0.8,-0.4)
	(1,0.4) -- (1.1,1.1) -- (0.4,1)
	(1.1,1.1) -- (1.4,1.4);

\end{tikzpicture}
\end{center}
\caption{Every tile has a vertex of degree $>3$.}
\label{highdeg}
\end{figure}
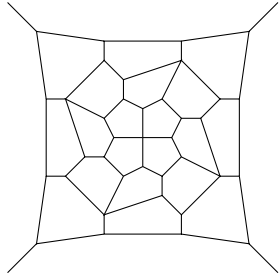

\begin{proof}
Let $t_j$ be the number of degree $3$ vertices in the $j$-th tile. Then $3v_3=\sum t_j$. On the other hand, we have 
\begin{align*}
3f=2v-4
&=2v_3+2v_4+2v_5+2v_6+\dotsb-4 \\
&\le 2v_3+2v_4+5v_5+8v_6+\dotsb-4 \\
&= 2v_3 + (v_3-20) - 4 
<3v_3=\sum t_j.
\end{align*}
This implies that some $t_i>3$.
\end{proof}

\begin{lemma}\label{vertex3}
A spherical tiling by combinatorially congruent pentagons has at least $12$ tiles. Moreover, the minimum $12$ is reached if and only if all vertices have degree $3$, and if and only if the number of vertices is $20$.
\end{lemma}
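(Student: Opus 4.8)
The plan is to read the bound straight off the counting relations recorded in \eqref{pent_eq}. First I would note that the right-hand side of the third equation in \eqref{pent_eq}, namely $2v_4+5v_5+8v_6+\dotsb$, is a sum of nonnegative terms, so $v_3\ge 20$. Since $v=v_3+v_4+v_5+\dotsb\ge v_3$, we get $v\ge 20$. Substituting into the second equation $2v=3f+4$ gives $3f=2v-4\ge 36$, so $f\ge 12$. This is the ``at least $12$ tiles'' assertion.

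For the equality case I would argue both directions. If $f=12$, then $2v=3\cdot 12+4=40$, so $v=20$; combined with $20=v=v_3+v_4+v_5+\dotsb\ge v_3\ge 20$, this forces $v_3=20$ and $v_4=v_5=\dotsb=0$, i.e. every vertex has degree $3$. Conversely, if every vertex has degree $3$, then $v_i=0$ for all $i\ge 4$, so the third equation in \eqref{pent_eq} gives $v_3=20$, hence $v=20$ and $f=\tfrac13(2v-4)=12$.

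There is essentially no serious obstacle: the lemma is a purely formal consequence of the Euler and Dehn--Sommerville relations of \eqref{pent_eq}, and the argument above is complete as stated. The only point worth a word of care is the sign of the coefficients $2,5,8,\dotsc$ on the right-hand side of the third equation in \eqref{pent_eq} (the coefficient of $v_i$ is $3i-10>0$ for $i\ge 4$), which is exactly what makes $v_3\ge 20$ hold with equality precisely when there are no vertices of degree $\ge 4$; this is the mechanism behind the ``if and only if''.
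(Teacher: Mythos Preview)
Your proof is correct and follows essentially the same approach as the paper's: both derive $v_3\ge 20$ from the nonnegativity of the right-hand side of the third equation in \eqref{pent_eq}, deduce $v\ge 20$ and hence $f\ge 12$ from the second equation, and characterize equality by the vanishing of all $v_i$ with $i\ge 4$. Your version simply spells out more explicitly the two directions of the ``if and only if'' that the paper compresses into one sentence.
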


\begin{proof}
The third equation in \eqref{pent_eq} implies that $v\ge v_3\ge 20$, and $v=20$ if and only if all vertices have degree $3$. On the other hand, by the second equation, $v\ge 20$ is the same as $f\ge 12$, and $v=20$ is the same as $f=12$.
\end{proof}

\begin{proposition}\label{tile_pattern}
The spherical tiling by $12$ combinatorially congruent pentagons is uniquely given by Figure \ref{tile_pattern_pic}. 
\end{proposition}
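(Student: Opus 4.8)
The plan is to build the tiling outward from a single tile, exploiting at each step the rigidity forced by the degree-$3$ hypothesis. By Lemma \ref{vertex3} (together with the third identity in \eqref{pent_eq}), having exactly $f=12$ tiles forces \emph{every} vertex to have degree $3$, so that $v=20$ and $e=30$; moreover every edge lies on the boundary of two distinct pentagons and every vertex lies on exactly three pentagons. The local fact I will use repeatedly is: if a vertex $w$ of degree $3$ is already incident to two tiles that meet along an edge at $w$, then the third edge at $w$ and the third tile at $w$ are uniquely determined.

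First I would fix a tile $P$; by the definition of combinatorial monohedrality (and Lemma \ref{noself}) its boundary is an honest pentagon with five distinct vertices $w_1,\dots,w_5$ and five distinct edges. At each $w_i$ the two edges of $P$ are joined by a unique third edge $s_i$ (a ``spoke''), and the two tiles at $w_i$ other than $P$ are adjacent along $s_i$. Consequently the tiles sharing an edge with $P$ form a cyclic ring $Q_1,\dots,Q_5$, with $Q_i$ and $Q_{i+1}$ adjacent along $s_i$. I then check that $P,Q_1,\dots,Q_5$ are six distinct tiles: $s_i$ cannot be a loop, which rules out $Q_i=Q_{i+1}$, and a tile meeting $P$ along two non-adjacent edges is excluded by a short argument using that all tiles are pentagons and all vertices have degree $3$ (the putative common tile, together with $P$, would over-constrain the degrees of the shared vertices). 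This produces a combinatorial disk $D=P\cup Q_1\cup\dots\cup Q_5$ using $6$ of the $12$ tiles; a direct count then shows $\partial D$ is a cycle whose length and vertex set are completely determined, with each $Q_i$ contributing a prescribed number of still-free edges.

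Next I would run the same local analysis along $\partial D$. The remaining $12-6=6$ tiles all lie in the complementary disk $S^2\setminus D$. Walking around $\partial D$, the degree-$3$ condition again forces the new tiles touching $\partial D$ to appear in a ring, each one pinned down as the unique third tile at a boundary vertex; tracking how many edges of $\partial D$ each new tile must absorb (again a pentagon has only five edges) shows the second ring consists of exactly five tiles $Q_1',\dots,Q_5'$, positioned so that $Q_i'$ straddles the part of $\partial D$ over $w_i$, and that after these five are placed the remaining boundary is a pentagon, filled by the single last tile $P'$. Matching up incidences step by step shows the resulting incidence structure is forced to be exactly that of the dodecahedron in Figure \ref{tile_pattern_pic}, which gives uniqueness.

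The main obstacle is the combinatorial bookkeeping in the second ring: one must rule out a priori possibilities in which a tile of the second ring wraps around and meets $\partial D$ in two separated arcs, or in which two of the $Q_i'$ coincide, before the count ``five in the ring plus one to close'' can be concluded. The degree-$3$ hypothesis is precisely what prevents such behaviour — whenever two already-placed tiles share a vertex, the third tile there is determined — and the pentagon condition caps how far any tile can reach, so the argument is a finite, if slightly tedious, chase. One should also note in passing that $D$ and $D\cup(\text{second ring})$ are genuine disks; this is automatic from the explicit construction together with the no-identification clause in the definition of combinatorial monohedrality, so no separate topological input is needed.
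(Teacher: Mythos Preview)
Your approach is essentially the same as the paper's: start from one pentagon, use the degree-$3$ condition (via Lemma~\ref{vertex3}) to force the first ring of five neighbours, then a second ring of five, then close with the antipodal tile, checking along the way that the local rigidity at every degree-$3$ vertex leaves no freedom. The paper organises the bookkeeping slightly differently---it first establishes all the incidence relations of Figure~\ref{tile_pattern_pic} without yet claiming the twelve tiles are distinct, and only at the very end argues that any extra identification of vertices, edges, or tiles would raise a vertex degree above $3$ or make an edge border more than two tiles---whereas you aim to prove distinctness incrementally at each ring; but the substance is the same and your plan is sound.
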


\begin{figure}[htp]
\centering
    \begin{tikzpicture}[scale=0.7]

    \foreach \x in {1,...,5}
      \draw (-54+\x*72:1) -- (18+\x*72:1) -- (18+\x*72:1.75) -- (-18+\x*72:2.4) -- (-54+\x*72:1.75) -- cycle
      (-18+\x*72:2.4) -- (-18+\x*72:3.25) -- (-90+\x*72:3.25) -- (-90+\x*72:2.4);

    \node at(0:0){\small $1$};
    \foreach \y in {2,...,6}
      \node at (-90+\y*72:1.4){\small $\y$};
    \foreach \z in {7,...,11}
      \node at (-54+\z*72:2.2){\small $\z$};
      
     \node at (240:3.5){\small $12$};

    \end{tikzpicture}
\caption{Combinatorial structure of the minimal pentagon tiling.}
\label{tile_pattern_pic}
\end{figure}
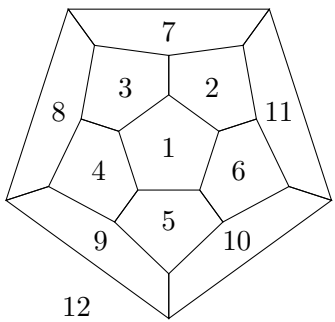

\begin{proof}
By Lemma \ref{vertex3}, all vertices have degree $3$.

We start with a tile $P_1$. Each of the five edges of $P_1$ is shared by $P_1$ and exactly another tile. We denote these tiles by $P_2$, $P_3$, $P_4$, $P_5$, $P_6$ and assume that they are arranged in counterclockwise order. 

Consider the vertex $A$ of $P_1$ on the left of Figure \ref{deg3fit}. It is also a vertex of $P_2$ and $P_3$. Since $A$ has degree $3$, besides the two edges of $P_1$ connected to $A$, there is exactly one more edge $x$ also connected to $A$. The three edges divide the neighborhood of $A$ into three corners, each belonging to one tile. This local picture implies that $x$ is a common edge of $P_2$ and $P_3$. The similar situation happens between $P_3$ and $P_4$, between $P_4$ and $P_5$, between $P_5$ and $P_6$, and between $P_6$ and $P_2$. Therefore the first six tiles are glued together in the way described in Figure \ref{tile_pattern_pic}. Of course this does not yet exclude the possibility that additional identification may happen among these tiles. But at least the relations described in Figure \ref{tile_pattern_pic} already exist between these tiles.

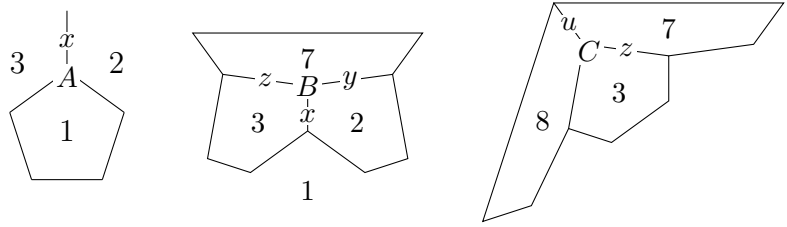
\begin{figure}[htp]
\centering
\begin{tikzpicture}[scale=0.8]

    \foreach \x in {1,...,5}
	\draw (18+72*\x:1) -- (90+72*\x:1);
	\draw (90:1) -- node[fill=white,inner sep=1] {\small $x$} (90:2);

	\node at (0:0) {$1$};
	\node at (54:1.4) {$2$};
	\node at (126:1.4) {$3$};	
	\node[fill=white,inner sep=1] at (90:0.9) {\small $A$};
	
\begin{scope}[shift={(4cm,-1cm)}]
	
	\draw (90:1) -- 
      	(90:1.75) -- node[fill=white,inner sep=1] {\small $y$}
        (54:2.4) -- (18:1.75) -- (18:1) -- cycle;
    \draw (90:1.75) -- node[fill=white,inner sep=1] {\small $z$}
      	(126:2.4) -- (162:1.75) -- (162:1) -- (90:1);
    \draw (54:2.4) -- (54:3.25) -- (126:3.25) -- (126:2.4);

    \node at (0:0) {\small $1$};
    \node at (54:1.4) {\small $2$};
    \node at (126:1.4) {\small $3$};
    \node at (90:2.2) {\small $7$};
	\node[fill=white,inner sep=1] at (90:1.75) {\small $B$};
	\node[fill=white,inner sep=1] at (90:1.25) {\small $x$};

\end{scope}

\begin{scope}[shift={(10cm,-0.5cm)}]

	\draw (90:1.75) -- node[fill=white,inner sep=1] {\small $z$} (126:2.4) -- (162:1.75) -- (162:1) -- (90:1) -- cycle;
    \draw (90:1.75) -- (54:2.4) -- (54:3.25) -- (126:3.25);
    \draw (126:2.4) -- node[fill=white,inner sep=1] {\small $u$} (126:3.25) -- (198:3.25) -- (198:2.4) -- (162:1.75);

    \node at (126:1.4) {\small $3$};
    \node at (90:2.2) {\small $7$};
    \node at (162:2.2) {\small $8$};
	\node[fill=white,inner sep=1] at (126:2.25) {\small $C$};
	  
\end{scope}	  

\end{tikzpicture}
\caption{Neighborhood of degree $3$ vertex.}
\label{deg3fit}
\end{figure}

The edge $x$ has another end vertex $B$ besides $A$. The three edges connected to $B$ are $x$, $y$, $z$, where $y$ is an edge of $P_2$ adjacent to $x$ and $z$ is an edge of $P_3$ adjacent to $x$. See middle of Figure \ref{deg3fit}. Now $y$ is shared by $P_2$ and another tile $P_7$, and $z$ is shared by $P_3$ and another tile $P_7'$. The three edges $x,y,z$ divide the neighborhood of $B$ into three corners, each belonging to one tile. Since two corners already belong to $P_2$ and $P_3$, the third corner belongs to the same tile. This implies that $P_7=P_7'$. Thus the $7$-th tile is established and is related to $P_2$ and $P_3$ as in Figure \ref{tile_pattern_pic}. Similarly, $P_8$ can be defined from $P_3$ and $P_4$, $P_9$ can be defined from $P_4$ and $P_5$, $P_{10}$ can be defined from $P_5$ and $P_6$, and $P_{11}$ can be defined from $P_6$ and $P_2$. 

The edge $z$ has another end vertex $C$ besides $B$. It is also a vertex of $P_7$ and $P_8$. There are three edges connected to $C$. Two are edges of $P_3$ and the third one is $u$ on the right of Figure \ref{deg3fit}. The situation is the same as the left of Figure \ref{deg3fit}, with $P_3$, $P_7$, $P_8$, $C$, $u$ in place of $P_1$, $P_2$, $P_3$, $A$, $x$. We get the similar conclusion that the edge $u$ is shared by $P_7$ and $P_8$. We can deduce similar relations between $P_8$ and $P_9$, etc. Then the tiles $P_1$ through $P_{11}$ are glued together as in Figure \ref{tile_pattern_pic}. 

After constructing the first eleven tiles, we find that $P_7$, $P_8$, $P_9$, $P_{10}$, $P_{11}$ have one ``free'' edge each. The free edge of $P_7$ is shared by $P_7$ and another tile $P_{12}$. The free edge of $P_8$ is shared by $P_8$ and another tile $P_{12}'$. The situation is similar to the relation between $P_2$, $P_3$, $P_7$, $P_7'$ as decribed in the middle of Figure \ref{deg3fit}. We get the similar conclusion that $P_{12}=P_{12}'$. Therefore the five possible $12$-th tiles are all the same. 

Now we find twelve tiles that are related as in Figure \ref{tile_pattern_pic}. If there are more tiles, then the additional tiles have to be glued to the existing twelve along some vertices or edges. This will either increase the degree of some vertex to be $>3$, or cause some edge to be shared by more than two tiles. Therefore the twelve are all the tiles in the tiling. Moreover, if there are additional identifications among these twelve tiles, then the identification of vertices will increase the degree of some vertex to be $>3$, the identification of edges will cause some edge to be shared by more than two tiles, and the identification of tiles will reduce the total number of tiles to be $<12$. Since all these should not happen, there is no more additional identification. We conclude that Figure \ref{tile_pattern_pic} gives the complete description of the tiling.
\end{proof}

\section{Edge Congruent Tiling}
\label{edgetile1}

We study the distribution of edge lengths in a spherical tiling by congruent pentagons. Since angles are ignored, we introduce the following concept.

\begin{definition}
Two polygons are {\em edge congruent}, if there is a correspondence between the edges, such that the adjacencies of the edges are preserved and the edge lengths are preserved.
\end{definition}

Two spherical (or planar) triangles with great arc (or straight line) edges are congruent if and only if they are edge congruent. However, this is no longer true for quadrilaterals and pentagons. Moreover, our results actually do not even require the edges to be great arcs (or straight). 

\begin{proposition}\label{edge1}
Let $a,b,c,\dotsc$, denote distinct numbers. Then in a spherical tiling by edge congruent pentagons, the edge lengths of the tile must form one of the following five combinations:
\begin{align*}
    a^5 &=\{a,a,a,a,a\}, & a^4b &= \{a,a,a,a,b\}, && \\
    a^3b^2 &= \{a,a,a,b,b\}, & a^3bc &= \{a,a,a,b,c\}, & a^2b^2c &= \{a,a,b,b,c\}.
\end{align*}
\end{proposition}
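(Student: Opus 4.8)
The plan is to exploit the combinatorics at degree‑$3$ vertices. By Lemma~\ref{deg3_4} there is a tile $P_0$ in which at least four of the five vertices have degree $3$; in particular the tiling has degree‑$3$ vertices, and since at most one of the five vertices of $P_0$ fails to have degree $3$, every edge of $P_0$ has at least one degree‑$3$ endpoint. At a degree‑$3$ vertex the three incident edges cut the neighborhood into three corners, each occupied by a tile in which the two edges at that vertex are consecutive edges of the pentagon. As all tiles are edge congruent, this gives the key constraint: whenever $x,y,z$ are the lengths of the three edges at a degree‑$3$ vertex, each of the three pairs $\{x,y\},\{y,z\},\{z,x\}$ (a coincidence such as $x=y$ being read as the pair $\{x,x\}$) must occur as the lengths of two consecutive edges of the tile.

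It is convenient to record the ``consecutive pair'' relation as a graph $G$: the vertices of $G$ are the distinct edge lengths of the tile, with an edge joining $x\ne y$ when two consecutive edges of the tile have lengths $x$ and $y$, and a loop at $x$ when two consecutive edges both have length $x$. By edge congruence the same $G$ serves for every tile. The key constraint then says that at each degree‑$3$ vertex the three edge lengths pair up into edges or loops of $G$. Now the edge‑length multiset of a pentagon is one of the seven partitions of $5$: $a^5$, $a^4b$, $a^3b^2$, $a^3bc$, $a^2b^2c$, $a^2bcd$, $abcde$. The first five are exactly the asserted list, so it remains to rule out $a^2bcd$ and $abcde$.

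For $abcde$ the graph $G$ is a $5$-cycle, which contains neither a loop nor a triangle; hence no choice of $x,y,z$ satisfies the constraint, contradicting the existence of a degree‑$3$ vertex. For $a^2bcd$ there are, up to relabeling the lengths and reflecting, exactly two cyclic arrangements of the five edges around the tile: $a\,a\,b\,c\,d$, in which the two $a$-edges are adjacent, and $a\,b\,a\,c\,d$, in which they are not. In the first case $G$ is a loop at $a$ together with the $4$-cycle $a\,b\,c\,d\,a$, and one checks directly that the only length‑triples allowed by the constraint are $\{a,a,a\}$, $\{a,a,b\}$ and $\{a,a,d\}$; thus no degree‑$3$ vertex is incident to a $c$-edge. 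In the second case $G$ consists of the edges $ab,ac,ad,cd$ with no loop, and the only allowed triple is $\{a,c,d\}$; thus no degree‑$3$ vertex is incident to a $b$-edge. Either conclusion contradicts the fact noted above that the $c$-edge, respectively $b$-edge, of $P_0$ has a degree‑$3$ endpoint. This eliminates $a^2bcd$ and $abcde$, and the proposition follows.

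The main obstacle is handling $a^2bcd$ cleanly: one must verify that there are only the two cyclic arrangements claimed, compute $G$ and the admissible triples in each case without slips, and — most importantly — keep firmly separate the notion of an edge length that merely occurs twice in the tile from the stronger notion of one realized by two \emph{adjacent} equal edges, since the argument turns entirely on which lengths do or do not carry a loop in $G$. The case $abcde$ and the bookkeeping of the partitions of $5$ are routine.
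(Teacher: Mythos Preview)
Your proof is correct and follows essentially the same approach as the paper: both invoke Lemma~\ref{deg3_4} and exploit the constraint that at a degree-$3$ vertex each pair of incident edge lengths must occur as consecutive edges of the pentagon. Your graph $G$ is simply a clean bookkeeping device for this constraint, whereas the paper argues the identical contradictions directly at specific vertices of the tile.
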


\begin{proof}
By purely numerical consideration, there are seven possible combinations of five numbers. What we need to show is that the combinations 
\[
a^2bcd=\{a,a,b,c,d\},\quad
abcde=\{a,b,c,d,e\},
\]
are impossible. Up to symmetry, all the possible edge length arrangements of these two combinations are listed in Figure \ref{impossibleedge}.

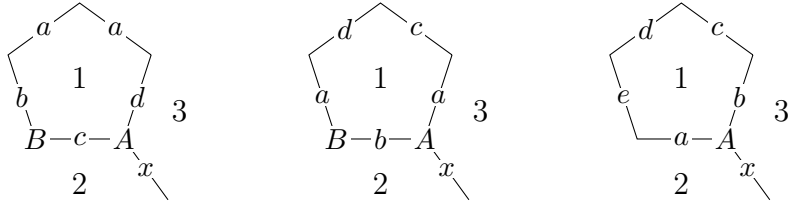
\begin{figure}[htp]
\centering
    \begin{tikzpicture}[scale=1]

    \draw (18:1) -- 	node[fill=white,inner sep=1] {\small $a$}
		(90:1) -- node[fill=white,inner sep=1] {\small $a$}
		(162:1) -- node[fill=white,inner sep=1] {\small $b$}
		(234:1) -- node[fill=white,inner sep=1] {\small $c$}
		(306:1) -- node[fill=white,inner sep=1] {\small $d$} (18:1);
	\draw
		(306:1) -- node[fill=white,inner sep=1] {\small $x$} (306:2);

	\node at (0:0) {$1$};
	\node at (-90:1.4) {$2$};
	\node at (-18:1.4) {$3$};
	
	\node[fill=white,inner sep=1] at (306:1) {\small $A$};
	\node[fill=white,inner sep=1] at (234:1) {\small $B$};
	
	\begin{scope}[xshift=4cm]
	
	\draw (18:1) -- 	node[fill=white,inner sep=1] {\small $c$}
		(90:1) -- node[fill=white,inner sep=1] {\small $d$}
		(162:1) -- node[fill=white,inner sep=1] {\small $a$}
		(234:1) -- node[fill=white,inner sep=1] {\small $b$}
		(306:1) -- node[fill=white,inner sep=1] {\small $a$} (18:1);
	\draw
		(306:1) -- node[fill=white,inner sep=1] {\small $x$} (306:2);

	\node at (0:0) {$1$};
	\node at (-90:1.4) {$2$};
	\node at (-18:1.4) {$3$};
	
	\node[fill=white,inner sep=1] at (306:1) {\small $A$};
	\node[fill=white,inner sep=1] at (234:1) {\small $B$};
	
	\end{scope}

	\begin{scope}[xshift=8cm]

	\draw (18:1) -- 	node[fill=white,inner sep=1] {\small $c$}
		(90:1) -- node[fill=white,inner sep=1] {\small $d$}
		(162:1) -- node[fill=white,inner sep=1] {\small $e$}
		(234:1) -- node[fill=white,inner sep=1] {\small $a$}
		(306:1) -- node[fill=white,inner sep=1] {\small $b$} (18:1);
	\draw (306:1) -- node[fill=white,inner sep=1] {\small $x$} (306:2);

	\node at (0:0) {$1$};
	\node at (-90:1.4) {$2$};
	\node at (-18:1.4) {$3$};
	
	\node[fill=white,inner sep=1] at (306:1) {\small $A$};
	
	\end{scope}

    \end{tikzpicture}
\caption{Impossible edge length arrangements.}
\label{impossibleedge}
\end{figure}

In the first arrangement, by Lemma \ref{deg3_4}, one of the vertices $A$ or $B$ must have degree $3$. By symmetry, we may assume $\deg A=3$ without loss of generality. Let $x$ be the only other edge at $A$ besides $c$ and $d$. Then we have tiles $P_2$ and $P_3$ described in the picture. The edge $x$ is adjacent to an edge of length $c$ in $P_2$. Since $P_2$ is edge congruent to $P_1$, and the edges adjacent to the unique edge of length $c$ in $P_1$ are the edges of lengths $b$ and $d$. Therefore, we have $x=b$ or $x=d$ as far as the edge length is concerned. Similarly, the adjacency of $x$ and $d$ in $P_3$ leads to $x=a$ or $x=c$ as far as the edge length is concerned. Since $a,b,c,d$ are distinct, we get a contradiction. 

We have been very careful with the wording in the argument above. We say ``an edge of length $c$'' instead of ``the edge $c$'' because this allows the possibility that the tile may have several edges of the same length $c$. This may happen when $a,b,c,\dotsc$ are not assumed to be distinct. Since distinct letters indeed denote distinct values for the length in the current proposition, there is actually no confusion about the wording ``the edge $c$''. The wording in the subsequent discussion will be more casual.

In the second arrangement, by Lemma \ref{deg3_4} and symmetry, we may still assume $\deg A=3$. Then we may introduce $x$, $P_2$, $P_3$ as before. By considering $x$ in $P_2$ and $P_3$, we see that $x$ is adjacent to $a$ and $b$. Since there is no such edge in $P_1$, we get a contradiction.

In the third arrangement, we may again assume $\deg A=3$ and introduce $x$, $P_2$, $P_3$. Then $x$ is adjacent to both $a$ and $b$ from the viewpoint of $P_2$ and $P_3$, yet there is no such edge in $P_1$.
\end{proof}

\begin{proposition}\label{edge2}
If a spherical tiling by edge congruent pentagons contains a tile with all vertices having degree $3$, then the edge lengths must be arranged in one of the five ways in Figure \ref{edges}.
\end{proposition}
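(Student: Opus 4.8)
The plan is to use Proposition~\ref{edge1} to pin down the edge-length \emph{combination} of the tile, and then a fitting argument at the degree-$3$ vertices to pin down the \emph{arrangement}. By Proposition~\ref{edge1} the combination is one of $a^5$, $a^4b$, $a^3b^2$, $a^3bc$, $a^2b^2c$. Up to the dihedral symmetry of a pentagon these give ten cyclic arrangements: one each for $a^5$ and $a^4b$; two each for $a^3b^2$ and $a^3bc$, according to whether the two non-$a$ edges are adjacent or are separated by an $a$-edge; and four for $a^2b^2c$. The goal is to show that only five survive, namely the five in Figure~\ref{edges}.

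The heart of the argument is the following local constraint. Let $P_1$ be a tile all of whose vertices have degree $3$; label its edges $E_1,\dots,E_5$ and vertices $V_1,\dots,V_5$ cyclically so that $V_i$ lies on $E_{i-1}$ and $E_i$, and let $x_i$ be the unique third edge at $V_i$. Exactly as in the proof of Proposition~\ref{tile_pattern}, the tile $Q_i$ on the other side of $E_i$ fills the corner at $V_i$ bounded by $E_i$ and $x_i$ and the corner at $V_{i+1}$ bounded by $E_i$ and $x_{i+1}$; hence $x_i$, $E_i$, $x_{i+1}$ are three consecutive edges of $Q_i$. Since $Q_i$ is edge congruent to $P_1$, and since $|x_i|$ is in any case an edge length of $Q_i$ and hence an edge length of $P_1$, the triple $(|x_i|,|E_i|,|x_{i+1}|)$ must appear as three consecutive entries, read forwards or backwards, in the cyclic length sequence of $P_1$. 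Writing that sequence as $(e_1,\dots,e_5)$, putting $y_i=|x_i|$, and calling such a triple a \emph{window} of $(e_1,\dots,e_5)$, the constraint becomes: there exist lengths $y_1,\dots,y_5$, each equal to some $e_j$, such that $(y_i,e_i,y_{i+1})$ is a window for every $i$.

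I would then check, arrangement by arrangement, whether this system is solvable, listing the windows of each arrangement sorted by their middle entry (which has to equal $e_i$). For $a^5$, $a^4b$, the arrangement $(a,a,a,b,b)$ of $a^3b^2$, the arrangement $(a,a,a,b,c)$ of $a^3bc$, and the arrangement $(c,a,a,b,b)$ of $a^2b^2c$ one exhibits an explicit solution $(y_1,\dots,y_5)$; these are exactly the five arrangements of Figure~\ref{edges}. For each of the remaining five arrangements the window set is too poor: the window whose middle entry is the rarest length is essentially unique and forces two of the $y_i$, and this propagates around the pentagon to a contradiction. For instance, in the arrangement $(a,a,b,a,b)$ of $a^3b^2$ the only window with middle entry $b$ is $(a,b,a)$, so the window at the first $b$-edge forces $y_3=y_4=a$; then the window $(y_4,a,y_5)=(a,a,y_5)$ forces $y_5=b$, since $(a,a,a)$ is not a window; but the window $(y_5,b,y_1)$ at the second $b$-edge again forces $y_5=a$, contradicting $y_5=b$. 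The separated arrangement of $a^3bc$ and the three non-surviving arrangements of $a^2b^2c$ are eliminated in the same fashion.

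The only real difficulty is organizational. One must enumerate the windows of each arrangement correctly --- remembering to include reversals and to wrap around cyclically --- and then carry out the ten-way case check without slips; but no individual case is deep, and each exclusion closes after one or two forced choices.
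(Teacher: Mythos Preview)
Your approach is correct and is essentially the paper's own argument, reorganized. The paper also reduces to the five combinations via Proposition~\ref{edge1}, lists the remaining cyclic arrangements, and for each excluded arrangement picks one or two degree-$3$ vertices of the special tile, looks at the unique third edge there, and uses that this edge must sit next to the given $P_1$-edges inside the neighboring tiles (which are edge congruent to $P_1$) to force its length and reach a contradiction. Your ``window'' formalism --- requiring $(y_i,e_i,y_{i+1})$ to occur as three consecutive entries of the cyclic length word of $P_1$ --- is exactly this constraint packaged uniformly over all five vertices, and your sample exclusion of $(a,a,b,a,b)$ matches the paper's first case verbatim.

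One small bookkeeping remark: you count four arrangements for $a^2b^2c$, while the paper counts three. The difference is that the paper silently uses the relabeling symmetry $a\leftrightarrow b$ (the multiset $a^2b^2c$ is invariant under it), which identifies the arrangement with $c$ flanked by two $a$'s with the one where $c$ is flanked by two $b$'s. Your extra case is therefore redundant --- it falls by the mirror of the argument for its partner --- but listing it does no harm. Otherwise the two proofs coincide; your contribution is a cleaner, more uniform way to state the local constraint, at the cost of having to tabulate the window sets carefully.
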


\begin{figure}[htp]
\centering
    \begin{tikzpicture}[scale=1]

    \draw (18:1) -- 	node[fill=white,inner sep=1] {\small $a$}
		(90:1) -- node[fill=white,inner sep=1] {\small $a$}
		(162:1) -- node[fill=white,inner sep=1] {\small $a$}
		(234:1) -- node[fill=white,inner sep=1] {\small $a$}
		(306:1) -- node[fill=white,inner sep=1] {\small $a$} (18:1);
	
	\begin{scope}[xshift=2.5cm]
	
	\draw (18:1) -- 	node[fill=white,inner sep=1] {\small $a$}
		(90:1) -- node[fill=white,inner sep=1] {\small $a$}
		(162:1) -- node[fill=white,inner sep=1] {\small $a$}
		(234:1) -- node[fill=white,inner sep=1] {\small $b$}
		(306:1) -- node[fill=white,inner sep=1] {\small $a$} (18:1);
	
	\end{scope}
	
	\begin{scope}[xshift=5cm]
	
	\draw (18:1) -- 	node[fill=white,inner sep=1] {\small $b$}
		(90:1) -- node[fill=white,inner sep=1] {\small $b$}
		(162:1) -- node[fill=white,inner sep=1] {\small $a$}
		(234:1) -- node[fill=white,inner sep=1] {\small $a$}
		(306:1) -- node[fill=white,inner sep=1] {\small $a$} (18:1);
	
	\end{scope}

    
    \begin{scope}[xshift=7.5cm]
    
    \draw (18:1) -- 	node[fill=white,inner sep=1] {\small $c$}
		(90:1) -- node[fill=white,inner sep=1] {\small $b$}
		(162:1) -- node[fill=white,inner sep=1] {\small $a$}
		(234:1) -- node[fill=white,inner sep=1] {\small $a$}
		(306:1) -- node[fill=white,inner sep=1] {\small $a$} (18:1);

	\end{scope}
		
	\begin{scope}[xshift=10cm]
	
	\draw (18:1) -- 	node[fill=white,inner sep=1] {\small $b$}
		(90:1) -- node[fill=white,inner sep=1] {\small $a$}
		(162:1) -- node[fill=white,inner sep=1] {\small $a$}
		(234:1) -- node[fill=white,inner sep=1] {\small $c$}
		(306:1) -- node[fill=white,inner sep=1] {\small $b$} (18:1);
	
	\end{scope}

    \end{tikzpicture}
\caption{Edge length arrangements when all vertices have degree $3$.}
\label{edges}
\end{figure}
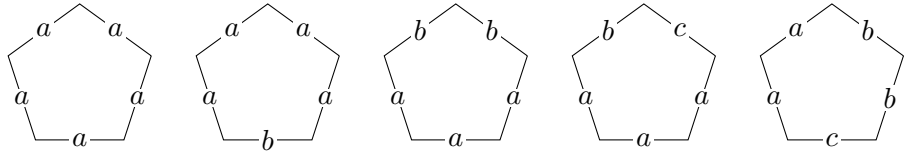

\begin{proof}
By purely numerical consideration, the five possible edge length combinations $a^5$, $a^4b$, $a^3b^2$, $a^3bc$, $a^2b^2c$ in Proposition \ref{edge1} have respectively $1$, $1$, $2$, $2$, $3$ possible arrangements. Besides the five in Figure \ref{edges}, the remaining four are given in Figure \ref{impossibleedge2}. We need to argue that these four are impossible.

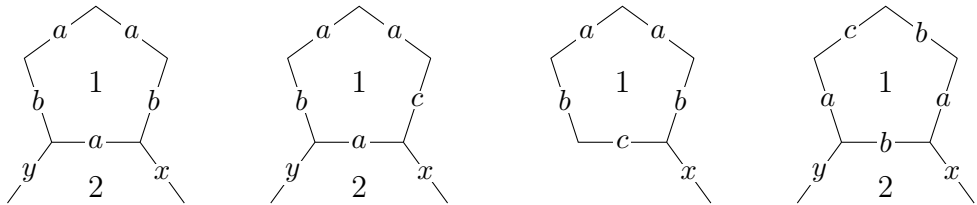
\begin{figure}[htp]
\centering
    \begin{tikzpicture}[scale=1]

	\draw (18:1) -- 	node[fill=white,inner sep=1] {\small $a$}
		(90:1) -- node[fill=white,inner sep=1] {\small $a$}
		(162:1) -- node[fill=white,inner sep=1] {\small $b$}
		(234:1) -- node[fill=white,inner sep=1] {\small $a$}
		(306:1) -- node[fill=white,inner sep=1] {\small $b$} (18:1);
	\draw
		(306:1) -- node[fill=white,inner sep=1] {\small $x$} (306:2)
		(234:1) -- node[fill=white,inner sep=1] {\small $y$} (234:2);
		
	\node at (0:0) {$1$};
	\node at (-90:1.4) {$2$};

	\begin{scope}[xshift=3.5cm]
	
	\draw (18:1) -- 	node[fill=white,inner sep=1] {\small $a$}
		(90:1) -- node[fill=white,inner sep=1] {\small $a$}
		(162:1) -- node[fill=white,inner sep=1] {\small $b$}
		(234:1) -- node[fill=white,inner sep=1] {\small $a$}
		(306:1) -- node[fill=white,inner sep=1] {\small $c$} (18:1);
	\draw
		(306:1) -- node[fill=white,inner sep=1] {\small $x$} (306:2)
		(234:1) -- node[fill=white,inner sep=1] {\small $y$} (234:2);

	\node at (0:0) {$1$};
	\node at (-90:1.4) {$2$};
	
	\end{scope}
		
	\begin{scope}[xshift=7cm]
	
	\draw (18:1) -- 	node[fill=white,inner sep=1] {\small $a$}
		(90:1) -- node[fill=white,inner sep=1] {\small $a$}
		(162:1) -- node[fill=white,inner sep=1] {\small $b$}
		(234:1) -- node[fill=white,inner sep=1] {\small $c$}
		(306:1) -- node[fill=white,inner sep=1] {\small $b$} (18:1);
	\draw
		(306:1) -- node[fill=white,inner sep=1] {\small $x$} (306:2);

	\node at (0:0) {$1$};
	
	\end{scope}

	\begin{scope}[xshift=10.5cm]
	
	\draw (18:1) -- 	node[fill=white,inner sep=1] {\small $b$}
		(90:1) -- node[fill=white,inner sep=1] {\small $c$}
		(162:1) -- node[fill=white,inner sep=1] {\small $a$}
		(234:1) -- node[fill=white,inner sep=1] {\small $b$}
		(306:1) -- node[fill=white,inner sep=1] {\small $a$} (18:1);
	\draw
		(306:1) -- node[fill=white,inner sep=1] {\small $x$} (306:2)
		(234:1) -- node[fill=white,inner sep=1] {\small $y$} (234:2);

	\node at (0:0) {$1$};
	\node at (-90:1.4) {$2$};
	
	\end{scope}

    \end{tikzpicture}
\caption{Impossible edge length arrangements.}
\label{impossibleedge2}
\end{figure}

In the first arrangement, $x$ and $y$ are adjacent to $a$ and $b$. Since the only such edge in $P_1$ is $a$, we have $x=y=a$. Then in $P_2$, we get a row $xay$ of three consecutive edges of the same length $a$. Since there is no such row in $P_1$, we get a contradiction.

In the second arrangement, $x$ is adjacent to $c$, and $y$ is adjacent to $b$. By the adjacency in $P_1$, we have $x=y=a$. Then the edge row $xby=aaa$ appears in $P_2$ but not in $P_1$. We get a contradiction.

In the third arrangement, $x$ is adjacent to $b$ and $c$. Since there is no such edge in $P_1$, we get a contradiction. 

In the fourth arrangement, $x$ and $y$ are adjacent to $a$ and $b$. By the adjacency in $P_1$, we have $x=y=c$. Then $c$ appears twice in $P_2$ but only once in $P_1$. We get a contradiction.
\end{proof}

\section{Classification of Edge Congruent Tiling}
\label{edgetile2}

The discussion in Section \ref{edgetile1} is local, because we ignored the global combinatorial structure given by Propositions \ref{tile_pattern}. In this section, we discuss how the classification in Proposition \ref{edge2} can fit the global structure.

We denote the tiles in Figure \ref{tile_pattern_pic} by $P_1,P_2,\dotsc,P_{12}$. We denote by $E_{ij}$ the edge shared by the tiles $P_i$, $P_j$, and by $V_{ijk}$ the vertex shared by $P_i$, $P_j$, $P_k$. 

We also name an edge by its length and a vertex by the length of the edges at the vertex. For example, in Figure \ref{generalclass}, the edge $E_{12}$ is a $b$-edge, and the edge $E_{13}$ is an $a$-edge. Moreover, the vertex $V_{123}$ is an $abc$-vertex, and the vertex $V_{134}$ is an $a^3$-vertex. The tiling has total of $12$ $a$-edges, $12$ $b$-edges, $6$ $c$-edges, $4$ $a^3$-vertices, $4$ $b^3$-vertices, and $12$ $abc$-vertices. We denote the {\em edgewise vertex combination} of the tiling by $\{4a^3,4b^3,12abc\}$.

For the edge length combination $a^5$ (the first pentagon in Figure \ref{edges}), we basically assign $a$ to all edges in Figure \ref{tile_pattern_pic}. All vertices are $a^3$-vertices, and the edgewise vertex combination is $\{20a^3\}$. This can be realized by the regular dodecahedron tiling.

\begin{proposition}\label{edge_pattern2}
The spherical tilings by $12$ edge congruent pentagons with
edge lengths $a,a,a,a,b$, where $a\ne b$, are given by the tilings in Figure \ref{case4a1b} up to symmetries. The unlabeled edges have length $a$.
\end{proposition}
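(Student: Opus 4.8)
The plan is to reduce the statement to a purely combinatorial enumeration on the dodecahedron of Proposition~\ref{tile_pattern}, and then to carry out that enumeration starting from one tile.

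Since the tiling has exactly $12$ tiles, Lemma~\ref{vertex3} forces every vertex to have degree $3$; hence Proposition~\ref{edge2} applies, and the five edges of each tile are arranged as in the second pentagon of Figure~\ref{edges}: one $b$-edge and four $a$-edges in a row. So each of the $12$ tiles has a single $b$-edge, and because the tiles are edge congruent, an edge that is the $b$-edge of one of its two tiles must be the $b$-edge of the other as well. Consequently the $b$-edges form a perfect matching of the $12$ tiles (equivalently, of the dual icosahedron graph): six pairwise disjoint edges, one touching every tile. Conversely, any such set of six edges yields a consistent edge-congruent labelling, because $aaaab$ is the unique cyclic arrangement of a single $b$ among five edges (and, ignoring angles, a spherical pentagon with such edges is available). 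Thus the proposition amounts to: up to the symmetry group of Figure~\ref{tile_pattern_pic}, the perfect matchings of the dual icosahedron are precisely the configurations of Figure~\ref{case4a1b}. As a preliminary remark, since no two $b$-edges share a vertex, every vertex is an $a^3$- or $a^2b$-vertex, and a count shows the edgewise vertex combination is $\{8a^3,\,12a^2b\}$ for every such tiling; hence this invariant does not distinguish the cases and a genuine global enumeration is unavoidable.

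For the enumeration I would work in the labelling $P_1,\dots,P_{12}$, $E_{ij}$ of Figure~\ref{tile_pattern_pic}. The dihedral group of order $10$ fixing $P_1$ acts transitively on the five edges of $P_1$, so up to symmetry the $b$-edge of $P_1$ may be taken to be $E_{12}$; the residual symmetries preserving $E_{12}$ — the reflection fixing both $P_1$ and $P_2$, and the half-turn about the $E_{12}$-axis swapping $P_1$ with $P_2$ — remain available for identifying cases. This forces every other edge of $P_1$ and every other edge of $P_2$ to have length $a$, which already pins down many edges; the $b$-edge of $P_3$ is then confined to its few remaining edges, and likewise for $P_4,P_5,P_6$ around the inner ring. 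I would branch on these choices, propagate the forced consequences outward into $P_7,\dots,P_{11}$ and finally into $P_{12}$ — at each step the $b$-edge of a not-yet-decided tile is narrowed to one or two edges — and use the residual symmetry to discard equivalent branches. Each surviving branch is then read off as one of the pictures in Figure~\ref{case4a1b}; conversely each such picture is visibly a perfect matching, which gives completeness and irredundancy of the list.

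The main obstacle is exactly this case analysis: the dual icosahedron has many perfect matchings, and the work lies in organizing the branching so that completeness is transparent and in quotienting carefully by the order-$120$ symmetry group so that the list in Figure~\ref{case4a1b} contains each tiling once and all of them. The remaining ingredients — the reduction to the matching condition, and the triviality that a single $b$-edge occupies a unique cyclic slot so edge congruence imposes nothing further — are routine.
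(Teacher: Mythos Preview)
Your reduction is correct and coincides with the paper's: the $b$-edges of an edge-congruent $a^4b$ tiling of Figure~\ref{tile_pattern_pic} are exactly a perfect matching of the dual icosahedron, and conversely every such matching gives a tiling because a lone $b$ among four $a$'s has a unique cyclic slot. The edgewise vertex combination $\{8a^3,12a^2b\}$ is also what the paper records (and, as the paper remarks, is the only consequence of this proposition actually used later).

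Where the two proofs diverge is in how the enumeration is organized. You propose to fix $E_{12}=b$ and branch tile by tile, quotienting by the residual symmetries as you go. The paper instead introduces a structural invariant: call a matching \emph{splitting} if some ``half sphere'' $T_P$ (a tile and its five neighbours) contains all of its own $b$-edges. In the splitting case the three $b$-edges inside $T_P$ are forced by the single choice for $P$, so the whole matching is a pair of independent choices for $T_P$ and $T_{P'}$; up to symmetry this yields exactly three matchings, distinguished by their counts of splitting centres ($6$, $2$, $4$). The non-splitting matchings are then split again by whether any edge connects two $b$-edges, giving the fourth and fifth tilings with short direct arguments. This two-level dichotomy (splitting vs.\ not; if not, adjacent $b$-edges or not) replaces your branch-and-bound by a handful of forced deductions, and the splitting-centre count is a cleaner way to certify that the three splitting tilings are mutually inequivalent than tracking residual symmetries through a tree.

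Your approach would work, but the bookkeeping you flag as ``the main obstacle'' is real: without an organizing principle like the paper's, showing that the five pictures are pairwise non-isomorphic and that the branch list is complete modulo the full $120$-element group is tedious. If you want to carry your plan through, it would help to import at least the non-equivalence argument (counting splitting centres, or equivalently counting how many tiles have their $b$-edge ``parallel'' to the $b$-edges of all neighbours) rather than relying on ad hoc symmetry checks.
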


In proving the main theorem, we will only use the edgewise vertex combination $\{8a^3,12a^2b\}$ derived in the proof.

\begin{figure}[ht]
\centering
\begin{tikzpicture}[scale=0.7]

\foreach \a / \b in {0/0, 7/0, 14/0, 3.5/-7, 10.5/-7}
{
\begin{scope}[shift={(\a ,\b)}]

    \foreach \x in {1,...,5}
      \draw (-54+\x*72:1) -- (18+\x*72:1) -- (18+\x*72:1.75) -- (-18+\x*72:2.4) -- (-54+\x*72:1.75) -- cycle
      (-18+\x*72:2.5) -- (-18+\x*72:3.25) -- (-90+\x*72:3.25) -- (-90+\x*72:2.4);
      
    \node at(0:0){\small $1$};
    \foreach \y in {2,...,6}
      \node at (-90+\y*72:1.4){\small $\y$};
    \foreach \z in {7,...,11}
      \node at (-54+\z*72:2.2){\small $\z$};
	\node at (240:3.5){\small $12$};
	
\end{scope}
}


    \node[fill=white,inner sep=1] at (-90:0.8) {\small $b$};
    \node[fill=white,inner sep=1] at (18:1.35) {\small $b$};
	\node[fill=white,inner sep=1] at (162:1.35) {\small $b$};
	\node[fill=white,inner sep=1] at (90:2.7) {\small $b$};
	\node[fill=white,inner sep=1] at (198:2.8) {\small $b$};
	\node[fill=white,inner sep=1] at (-18:2.8) {\small $b$};


\begin{scope}[xshift=7cm]

	\node[fill=white,inner sep=1] at (-90:0.8) {\small $b$};
    \node[fill=white,inner sep=1] at (18:1.35) {\small $b$};
	\node[fill=white,inner sep=1] at (162:1.35) {\small $b$};
	\node[fill=white,inner sep=1] at (18:2.7) {\small $b$};
	\node[fill=white,inner sep=1] at (126:2.8) {\small $b$};
	\node[fill=white,inner sep=1] at (-90:2.8) {\small $b$};

\end{scope}


\begin{scope}[xshift=14cm]

	\node[fill=white,inner sep=1] at (-90:0.8) {\small $b$};
    \node[fill=white,inner sep=1] at (18:1.35) {\small $b$};
	\node[fill=white,inner sep=1] at (162:1.35) {\small $b$};
	\node[fill=white,inner sep=1] at (-54:2.75) {\small $b$};
	\node[fill=white,inner sep=1] at (54:2.8) {\small $b$};
	\node[fill=white,inner sep=1] at (198:2.8) {\small $b$};
	
\end{scope}


\begin{scope}[shift={(3.5,-7)}]

	\node[fill=white,inner sep=1] at (-90:0.8) {\small $b$};
    \node[fill=white,inner sep=1] at (18:1.35) {\small $b$};
	\node[fill=white,inner sep=1] at (108:2) {\small $b$};
	\node[fill=white,inner sep=1] at (180:2) {\small $b$};
	\node[fill=white,inner sep=1] at (18:2.7) {\small $b$};
	\node[fill=white,inner sep=1] at (-90:2.8) {\small $b$};

\end{scope}


\begin{scope}[shift={(10.5,-7)}]

	\node[fill=white,inner sep=1] at (-90:0.8) {\small $b$};
    \node[fill=white,inner sep=1] at (90:1.35) {\small $b$};
	\node[fill=white,inner sep=1] at (0:2) {\small $b$};
	\node[fill=white,inner sep=1] at (180:2) {\small $b$};
	\node[fill=white,inner sep=1] at (90:2.7) {\small $b$};
	\node[fill=white,inner sep=1] at (-90:2.8) {\small $b$};

\end{scope}

\end{tikzpicture}
\caption{Tilings for the edge length combination $a^4b$.}
\label{case4a1b}
\end{figure}
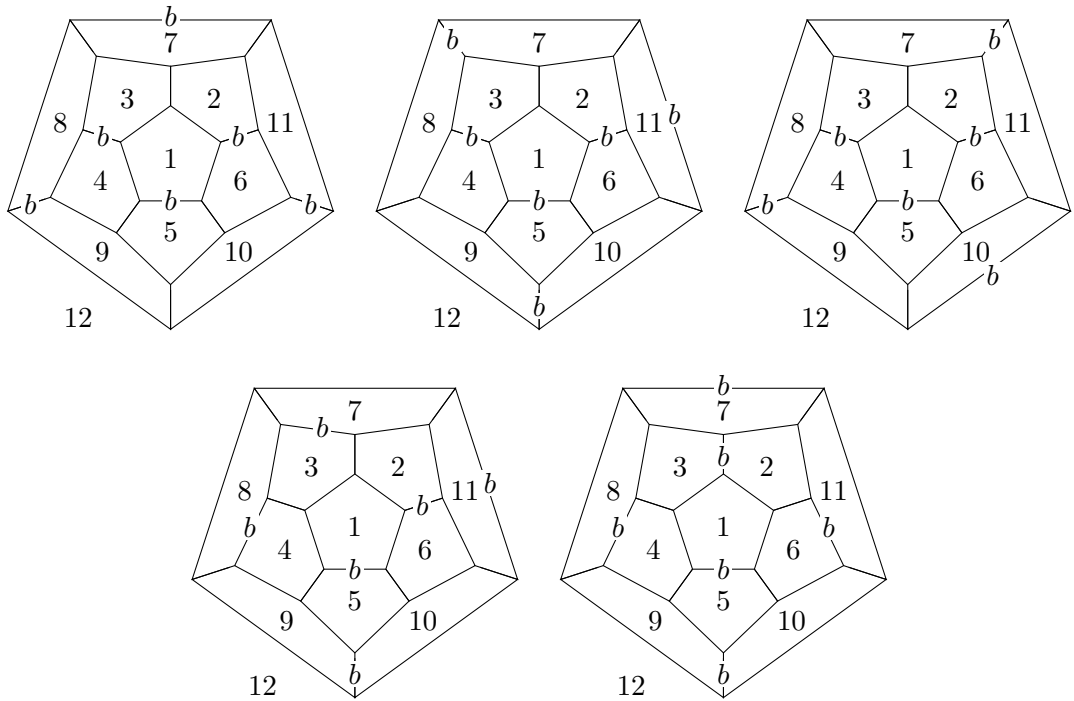

\begin{proof}
We need to assign $6$ $b$-edges to Figure \ref{tile_pattern_pic}, such that each tile has exactly one $b$-edge. In such an assignment, there is no $ab^2$-vertex and no $b^3$-vertex. So the edgewise vertex combination is $\{ma^3,na^2b\}$. On the other hand, $\frac{1}{5}$ of all the $30$ edges are $b$-edges. Therefore there are $N_a=24$ $a$-edges and $N_b=6$ $b$-edges. Then from the edgewise vertex combination, we get $3m+2n=2N_a=48$ and $n=2N_b=12$. The solution is $m=8$ and $n=12$.

Any tile $P$ in Figure \ref{tile_pattern_pic} and the five tiles around $P$ form a tiling $T_P$ of a ``half sphere'' (with wiggled boundary). Moreover, the remaining six tiles also form a tiling of the complementary ``half sphere''. In fact, $P$ has an ``antipodal tile'' $P'$ (the antipodal of $P_1$ is $P_{12}$, for example), and the remaining six tiles form the tiling $T_{P'}$.

We say an assignment of $b$-edges is {\em splitting}, if there is a half sphere tiling $T_P$, such that if a $b$-edge is shared by $P_i$ and $P_j$, then $P_i\in T_P$ if and only if $P_j\in T_P$. This implies that $P_i\in T_{P'}$ if and only if $P_j\in T_{P'}$. We also call the tile $P$ a {\em splitting center}.

A splitting assignment is then the union of two independent assignments of $3$ $b$-edges to $T_P$ and the other $3$ $b$-edges to $T_{P'}$. In this case, it is easy to see that the choice of the $b$-edge for $P$ completely determines the other two $b$-edges in $T_P$. See $T_{P_1}$ in the first three tilings in Figure \ref{case4a1b}. Up to symmetry, we have three possible combinations of the choice of $b$-edges for $P_1$ and $P_{12}$, which give the first three tilings in Figure \ref{case4a1b}. They are not equivalent because they have respectively $6$, $2$ and $4$ splitting centers.

Next we search for non-splitting assignments. Assume there are edges connecting two $b$-edges. Without loss of generality, we may assume $E_{15}=E_{26}=b$ as in the fourth tiling in Figure \ref{case4a1b}. Since $P_1$ and $P_6$ are not splitting centers, we get $E_{34}\ne b$ and $E_{\overline{10}\,\overline{11}}\ne b$. If $E_{7\overline{11}}=b$, then the only possible $b$-edge of $P_3$ is $E_{38}=b$. This further implies that the $b$-edge of $P_4$ is $E_{49}=b$. Then $P_4$ becomes a splitting center. Therefore $E_{7\overline{11}}\ne b$, and the only possible $b$-edge of $P_{11}$ is $E_{\overline{11}\,\overline{12}}=b$. This further successively implies that the $b$-edge of $P_{10}$ is $E_{9\overline{10}}$, the $b$-edge of $P_4$ is $E_{48}$, and $E_{37}=b$.

Finally we look for non-splitting assignments such that no edge connects two $b$-edges. Without loss of generality, we may assume $E_{15}=b$. Since $E_{26}$, $E_{34}$, $E_{49}$, $E_{6\overline{10}}$ are connected to $E_{15}$ by one edge, they are not $b$-edges. This implies that $E_{48}=E_{6\overline{11}}=b$. Then the $8$ edges connected to $E_{48}$ and $E_{6\overline{11}}$ by one edge cannot be $b$-edges. This is enough for us to determine the remaining $3$ $b$-edges and get the fifth tiling in Figure \ref{case4a1b}.
\end{proof}

\begin{proposition}\label{edge_pattern3}
The spherical tiling by $12$ edge congruent pentagons with edge lengths $a,a,a,b,b$, where $a\ne b$, is given by the left of Figure \ref{case3a2b} up to symmetries.
\end{proposition}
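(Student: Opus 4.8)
The plan is to overlay the local edge constraints onto the rigid combinatorial skeleton, in the spirit of the proofs of Propositions \ref{edge2} and \ref{edge_pattern2}. By Proposition \ref{tile_pattern} the tiling is combinatorially the dodecahedron of Figure \ref{tile_pattern_pic}, with every vertex of degree $3$; by Proposition \ref{edge2} the only admissible arrangement for the combination $a^3b^2$ is the third pentagon of Figure \ref{edges}, so the two $b$-edges of each tile are adjacent. Counting edges then gives exactly $\tfrac{2}{5}\cdot 30=12$ $b$-edges and $18$ $a$-edges. I would record the vertex bookkeeping next: every vertex has type $a^3,a^2b,ab^2$ or $b^3$, and at each of its three tiles it forms an $aa$-, $ab$- or $bb$-corner, these being the corner types of the pentagon $bbaaa$ in the amounts $2,2,1$ per tile. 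Counting $bb$-, $ab$- and $aa$-corners ($12$, $24$ and $24$ respectively) against the vertex types shows that if $x$ is the number of $b^3$-vertices then there are $12-3x$ of type $ab^2$, $3x$ of type $a^2b$ and $8-x$ of type $a^3$, so $x\in\{0,1,2,3,4\}$; moreover the $bb$-corner of a tile (where its two $b$-edges meet) is either a $b^3$-vertex, shared as the $bb$-corner of the three tiles around it, or else an $ab^2$-vertex, the $bb$-corner of that one tile only.

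The core of the argument is to follow the $b$-edges globally. Pairing, inside each tile, its two $b$-edges produces a $2$-regular graph on the twelve $b$-edges, hence a disjoint union of cyclic chains of total length $12$, where each chain is a cyclic sequence of tiles glued along successive $b$-edges. A local analysis at the junction vertices --- the same kind used in the proof of Proposition \ref{tile_pattern} --- shows that a chain of length $3$ is precisely the three tiles around a $b^3$-vertex, a chain of length $4$ would produce a $4$-cycle in the dodecahedron (impossible, girth $5$), and a chain of length $\ge 5$ passes only through $ab^2$-vertices and traces an honest simple cycle in the $1$-skeleton of the dodecahedron. Since the dodecahedron has no cycle of length $6$ or $7$, the only chain-length multisets are $\{12\}$, $\{3,9\}$ and $\{3,3,3,3\}$, i.e. $x\in\{0,1,4\}$.

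I would then rule out $x=0$ and $x=1$. In each of these two cases there is a single long chain, a $12$- or $9$-cycle $C$, and every tile meets $C$ in exactly $2$ edges (its pair of $b$-edges) or in $0$ edges; passing to the dual icosahedron, $C$ becomes a minimal edge-cut, and computing the two sides of the cut ($4$ and $8$ vertices for a $12$-cut, $3$ and $9$ for a $9$-cut) forces some dodecahedron face to have $3$ of its edges on $C$, a contradiction. Hence $x=4$: there are four $b^3$-vertices, no two on a common tile, exactly one on each tile, and the twelve $b$-edges are exactly the edges incident to these four vertices. Dually this is a partition of the twelve vertices of the icosahedron into four of its triangular faces, which is unique up to the symmetry group of the icosahedron; since the four $b^3$-vertices determine the whole edge labelling (every remaining edge is an $a$-edge) and conversely any such partition yields a valid $bbaaa$-tiling, the tiling is unique up to symmetry, which is the configuration on the left of Figure \ref{case3a2b}.

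The hard part is this case elimination: one must verify the ``purity'' of the $b$-edge chains (a chain of length $\ge 4$ cannot mix $b^3$- and $ab^2$-corners), confirm that the dodecahedron has no $6$- or $7$-cycle, carry out the dual-cut count carefully, and pin down the uniqueness of the icosahedral vertex partition. A more pedestrian but self-contained alternative, closer to the earlier proofs, is to fix the two $b$-edges of $P_1$ in Figure \ref{tile_pattern_pic} up to symmetry, observe that the second $b$-edge of each neighbour is one of two choices, and propagate the ``two adjacent $b$-edges per tile'' rule through $P_2,\dots,P_{12}$, branching whenever an edge is still undetermined and checking that every branch either closes up to the claimed tiling or violates the $a^3b^2$-pattern at some vertex.
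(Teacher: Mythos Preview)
Your global/structural approach is genuinely different from the paper's proof, which is a direct two-branch propagation: the paper fixes $P_1$ as the third pentagon of Figure~\ref{edges}, branches on whether $E_{23}=a$ or $E_{23}=b$, and in each case reads off successive tiles until either the left picture of Figure~\ref{case3a2b} is completed or a tile acquires an isolated $b$-edge. That argument is about ten lines; yours trades brevity for a cleaner explanation of \emph{why} the answer is what it is (four $b^3$-vertices, dually four disjoint icosahedral faces).

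However, your elimination of $x=0$ and $x=1$ has a real gap. For the $12$-chain, the simple $12$-cycle $C$ does \emph{not} give a $4{+}8$ cut: since every one of the twelve tiles has exactly two edges on $C$, each side of $C$ contributes exactly $12$ face--edge incidences, forcing a $6{+}6$ split, and then ``some face has three edges on $C$'' fails. Likewise for the $9$-chain you have not pinned down the side sizes. The fix is short, and in fact unifies the whole case analysis: in any chain of length $k\ge 4$, consecutive tiles $T_i,T_{i+1}$ lie on opposite sides of $C$ (they share the $C$-edge $f_i$), so the faces between consecutive edge pairs alternate --- equivalently, no three consecutive edges of $C$ lie on a common face. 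This is precisely the Petrie (zig-zag) condition, and the Petrie polygons of the dodecahedron have length $10$. Hence the only admissible chain lengths are $3$ and $10$, and the unique multiset summing to $12$ is $\{3,3,3,3\}$. (The parity half of this is already visible: an odd-length chain cannot alternate sides, killing $9$ immediately.) With this correction your argument goes through; your remaining claims --- no $6$- or $7$-cycles in the dodecahedron, and uniqueness up to symmetry of a partition of the icosahedron's vertices into four triangular faces --- are true but should be stated as the facts you are invoking.
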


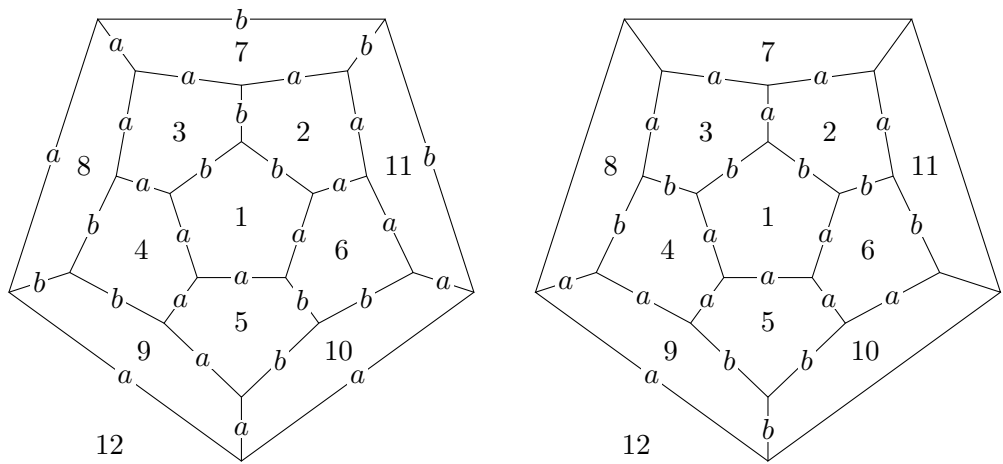
\begin{figure}[htp]
\centering
\begin{tikzpicture}[scale=1]


	\draw (18:1) 
		--node[fill=white,inner sep=1] {\small $b$} (90:1)
		--node[fill=white,inner sep=1] {\small $b$} (162:1)
		--node[fill=white,inner sep=1] {\small $a$} (234:1)
		--node[fill=white,inner sep=1] {\small $a$} (306:1)
		--node[fill=white,inner sep=1] {\small $a$} (18:1);

	\draw (18:1) 
		--node[fill=white,inner sep=1] {\small $a$} (18:1.75)
		--node[fill=white,inner sep=1] {\small $a$} (54:2.4)
		--node[fill=white,inner sep=1] {\small $a$} (90:1.75);
		
	\draw (90:1) 
		--node[fill=white,inner sep=1] {\small $b$} (90:1.75)
		--node[fill=white,inner sep=1] {\small $a$} (126:2.4)
		--node[fill=white,inner sep=1] {\small $a$} (162:1.75);

	\draw (162:1) 
		--node[fill=white,inner sep=1] {\small $a$} (162:1.75)
		--node[fill=white,inner sep=1] {\small $b$} (198:2.4)
		--node[fill=white,inner sep=1] {\small $b$} (234:1.75);

	\draw (234:1) 
		--node[fill=white,inner sep=1] {\small $a$} (234:1.75)
		--node[fill=white,inner sep=1] {\small $a$} (-90:2.4)
		--node[fill=white,inner sep=1] {\small $b$} (-54:1.75);		

	\draw (-54:1) 
		--node[fill=white,inner sep=1] {\small $b$} (-54:1.75)
		--node[fill=white,inner sep=1] {\small $b$} (-18:2.4)
		--node[fill=white,inner sep=1] {\small $a$} (18:1.75);	
				
	\draw (54:2.4) 
		--node[fill=white,inner sep=1] {\small $b$} (54:3.25)
		--node[fill=white,inner sep=1] {\small $b$} (126:3.25);

	\draw (126:2.4) 
		--node[fill=white,inner sep=1] {\small $a$} (126:3.25)
		--node[fill=white,inner sep=1] {\small $a$} (198:3.25);

	\draw (198:2.4) 
		--node[fill=white,inner sep=1] {\small $b$} (198:3.25)
		--node[fill=white,inner sep=1] {\small $a$} (-90:3.25);

	\draw (-90:2.4) 
		--node[fill=white,inner sep=1] {\small $a$} (-90:3.25)
		--node[fill=white,inner sep=1] {\small $a$} (-18:3.25);

	\draw (-18:2.4) 
		--node[fill=white,inner sep=1] {\small $a$} (-18:3.25)
		--node[fill=white,inner sep=1] {\small $b$} (54:3.25);
						
    \node at (0:0) {\small $1$};
    \foreach \y in {2,...,6}
      \node at (-90+\y*72:1.4) {\small $\y$};
    \foreach \z in {7,...,11}
      \node at (-54+\z*72:2.2) {\small $\z$};
      
      \node at (240:3.5){\small $12$};

      
    \begin{scope}[xshift=7cm]

	\draw (18:1) 
		--node[fill=white,inner sep=1] {\small $b$} (90:1)
		--node[fill=white,inner sep=1] {\small $b$} (162:1)
		--node[fill=white,inner sep=1] {\small $a$} (234:1)
		--node[fill=white,inner sep=1] {\small $a$} (306:1)
		--node[fill=white,inner sep=1] {\small $a$} (18:1);

	\draw (18:1) 
		--node[fill=white,inner sep=1] {\small $b$} (18:1.75)
		--node[fill=white,inner sep=1] {\small $a$} (54:2.4)
		--node[fill=white,inner sep=1] {\small $a$} (90:1.75);
		
	\draw (90:1) 
		--node[fill=white,inner sep=1] {\small $a$} (90:1.75)
		--node[fill=white,inner sep=1] {\small $a$} (126:2.4)
		--node[fill=white,inner sep=1] {\small $a$} (162:1.75);

	\draw (162:1) 
		--node[fill=white,inner sep=1] {\small $b$} (162:1.75)
		--node[fill=white,inner sep=1] {\small $b$} (198:2.4)
		--node[fill=white,inner sep=1] {\small $a$} (234:1.75);

	\draw (234:1) 
		--node[fill=white,inner sep=1] {\small $a$} (234:1.75)
		--node[fill=white,inner sep=1] {\small $b$} (-90:2.4)
		--node[fill=white,inner sep=1] {\small $b$} (-54:1.75);		
	\draw (-54:1) 
		--node[fill=white,inner sep=1] {\small $a$} (-54:1.75)
		--node[fill=white,inner sep=1] {\small $a$} (-18:2.4)
		--node[fill=white,inner sep=1] {\small $b$} (18:1.75);	
				
	\draw (54:2.4) 
		-- (54:3.25)
		-- (126:3.25);

	\draw (126:2.4) 
		-- (126:3.25)
		-- (198:3.25);

	\draw (198:2.4) 
		--node[fill=white,inner sep=1] {\small $a$} (198:3.25)
		--node[fill=white,inner sep=1] {\small $a$} (-90:3.25);

	\draw (-90:2.4) 
		--node[fill=white,inner sep=1] {\small $b$} (-90:3.25)
		-- (-18:3.25);

	\draw (-18:2.4) 
		-- (-18:3.25)
		-- (54:3.25);
						
    \node at (0:0) {\small $1$};
    \foreach \y in {2,...,6}
      \node at (-90+\y*72:1.4) {\small $\y$};
    \foreach \z in {7,...,11}
      \node at (-54+\z*72:2.2) {\small $\z$};
      
      \node at (240:3.5){\small $12$};
      
      \end{scope}

    \end{tikzpicture}
\caption{Tilings for the edge length combination $a^3b^2$.}
\label{case3a2b}
\end{figure}

\begin{proof}
By Proposition \ref{edge2}, we may start with $P_1$, with the edges arranged as the third in Figure \ref{edges}. See Figure \ref{case3a2b}. The length of $E_{23}$ is either $a$ or $b$.

The case $E_{23}=b$ is described on the left of Figure \ref{case3a2b}. From $E_{12}=E_{23}=b$, we see that the other three edges of $P_2$ have length $a$. By the same reason, we get all the edge lengths of $P_3$. Then among $E_{78}$ and $E_{7\overline{11}}$, one is $a$ and the other is $b$. By symmetry, we may assume $E_{78}=a$ and $E_{7\overline{11}}=b$. Then $E_{7\overline{12}}=b$. From $E_{2\overline{11}}=a$, $E_{7\overline{11}}=b$, we get all the edges of $P_{11}$. From $E_{16}=E_{26}=E_{6\overline{11}}=a$, we get all the edges of $P_6$. From $E_{6\overline{10}}=b$, $E_{\overline{10}\,\overline{11}}=a$, we get all the edges of $P_{10}$. From $E_{56}=E_{5\overline{10}}=b$, we get all the edges of $P_5$. From $E_{14}=E_{34}=E_{45}=a$, we get all the edges of $P_4$. From $E_{49}=b,E_{59}=a$, we get all the edges of $P_9$. From $E_{38}=a$, $E_{48}=b$, we get all the edges of $P_8$. This concludes all the edge lengths.

The case $E_{23}=a$ is the right of Figure \ref{case3a2b}. We can immediately get all the edges of $P_2$, $P_3$. Afterwards, we get all the edges of $P_4$, $P_6$. Now we know three edges of $P_5$ to be $a$, so that the other two edges are $b$. Then we get all the edges of $P_9$, and find that $P_8$ has one $b$-edge adjacent to two $a$-edges, a contradiction. 
\end{proof}

\begin{proposition}\label{edge_pattern5}
The spherical tiling by $12$ edge congruent pentagons cannot have edge lengths $a,a,a,b,c$, for distinct $a,b,c$.
\end{proposition}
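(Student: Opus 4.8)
The plan is to show that the edge length combination $a^3bc$ forces a unique ``edgewise vertex combination'' $\{8a^3,12\,abc\}$ together with a rigid local picture in each tile, and that this picture is self-contradictory.

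First I would use the global structure. Since there are $12$ tiles, Lemma \ref{vertex3} gives that every vertex has degree $3$, so by Proposition \ref{edge2} the edges of every tile read cyclically $a,a,a,b,c$. Two consequences are immediate: in each tile the unique $b$-edge and the unique $c$-edge are adjacent, and no tile has a corner bounded by two $b$-edges or by two $c$-edges. From the latter, a degree-$3$ vertex carries at most one $b$-edge and at most one $c$-edge, so every vertex has edge-length type $a^3$, $a^2b$, $a^2c$, or $abc$; write $n_1,n_2,n_3,n_4$ for the respective counts, so $n_1+n_2+n_3+n_4=20$.

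Next I would pin these numbers down by counting. Each tile has one $b$- and one $c$-edge, so there are $6$ edges of length $b$, $6$ of length $c$, and $18$ of length $a$; counting the endpoints of the $b$-edges (resp. $c$-edges) gives $n_2+n_4=12$ (resp. $n_3+n_4=12$). The key observation is that the vertex where the $b$-edge and $c$-edge of a tile meet carries both a $b$- and a $c$-edge, hence is an $abc$-vertex, while conversely an $abc$-vertex (three edges $a,b,c$ in cyclic order) has exactly one ``$bc$-corner'' among its three corners, so is the meeting vertex for at most one tile. This produces an injection from the $12$ tiles into the set of $abc$-vertices, so $n_4\ge 12$; together with $n_2+n_4=n_3+n_4=12$ this forces $n_4=12$, $n_2=n_3=0$, $n_1=8$. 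A short refinement (for each tile, the vertex on the $c$-edge-side, the vertex on the $b$-edge-side, and the $b$-$c$ meeting vertex are all of type $abc$, and a counting equality then makes the remaining two vertices of type $a^3$) shows that in every tile the vertices read cyclically $abc,a^3,a^3,abc,abc$; in particular the two $a^3$-vertices of a tile are adjacent, and the two edges of a tile joining two $abc$-vertices have lengths $b$ and $c$, never $a$.

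The finishing step is a local count. Since $n_2=n_3=0$ and, by the per-tile pattern, no $a$-edge joins two $abc$-vertices, the unique $a$-edge at each of the $12$ $abc$-vertices runs to an $a^3$-vertex, yielding $12$ distinct $a$-edges from $abc$-vertices to $a^3$-vertices. On the other hand, at an $a^3$-vertex $v$ with incident edges $e_1,e_2,e_3$ and the three surrounding tiles $f_1,f_2,f_3$, the requirement that in each $f_i$ the vertex $v$ be adjacent within $f_i$ to the other $a^3$-vertex of $f_i$ forces: whenever the far endpoint of one $e_i$ is an $abc$-vertex, the far endpoints of the other two are $a^3$-vertices. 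Hence at most one edge at each $a^3$-vertex reaches an $abc$-vertex, so there are at most $8$ $a$-edges from $a^3$-vertices to $abc$-vertices, contradicting the $12$ found above. I expect the main obstacle to be the middle stage, namely extracting the rigid combination $\{8a^3,12\,abc\}$ and the per-tile vertex pattern; once that structure is in hand the contradiction is immediate. (An alternative ending, should the local count be awkward to phrase, is to encode the forced pattern as a choice of $8$ vertices of the dodecahedron meeting every face in exactly two adjacent vertices, dualize to the icosahedron so that the $a^3$-$a^3$ edges become a perfect matching, and observe that a perfect matching of the icosahedron touches exactly $12\ne 8$ of its triangular faces.)
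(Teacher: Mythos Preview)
Your proof is correct and takes a genuinely different route from the paper's.

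The paper argues by direct propagation on the dodecahedral graph: placing $P_1$ with the arrangement $b,a,a,a,c$ from Proposition~\ref{edge2}, it observes that the edge $E_{23}$ is adjacent to both $b$ and $c$, hence must be $a$, and then reads off the edge labels of $P_2,P_3,P_4,P_6,P_8,P_{11}$ one by one until $P_7$ is forced to have four $a$-edges. This is short and concrete but tied to the specific combinatorics of the picture.

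Your argument instead extracts the global structure $\{8a^3,12\,abc\}$ and the rigid per-tile vertex pattern $abc,a^3,a^3,abc,abc$ by counting, and then gets a contradiction by double-counting the $a$-edges joining an $a^3$-vertex to an $abc$-vertex. The key local step---that at an $a^3$-vertex, each of the three surrounding tiles forces exactly one of its two incident edges to the partner $a^3$-vertex and the other to $abc$---is exactly right; in fact the three resulting parity conditions $x_i+x_{i+1}=1$ already sum to $3$, so the configuration is locally impossible, which is slightly stronger than the ``at most one'' you use. Your icosahedral alternative is also correct: the six $a^3$--$a^3$ edges dualise to a perfect matching, whose $12$ distinct adjacent faces would all have to lie among the $8$ faces dual to $a^3$-vertices.

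What your approach buys is a structural explanation (the obstruction is a parity/matching phenomenon, not an accident of how labels propagate) that would transfer more readily to related edge-length combinations; what the paper's approach buys is brevity, since the contradiction surfaces after six tiles with no preliminary counting. One small point: your ``counting equality'' in the refinement is not needed---once $n_2=n_3=0$, the two $aa$-corners of each tile are forced to be $a^3$ directly, since the third edge at such a vertex can be neither $b$ nor $c$.
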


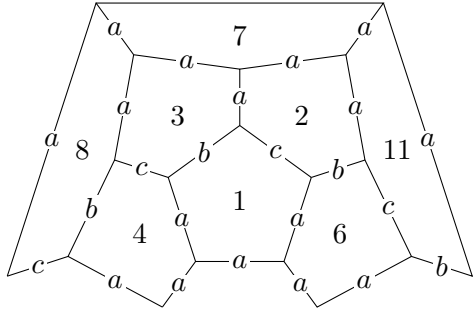
\begin{figure}[htp]
\centering
\begin{tikzpicture}[scale=1]


	\draw (18:1)
		--node[fill=white,inner sep=1] {\small $c$} (90:1)
		--node[fill=white,inner sep=1] {\small $b$} (162:1)
		--node[fill=white,inner sep=1] {\small $a$} (234:1)
		--node[fill=white,inner sep=1] {\small $a$} (306:1)
		--node[fill=white,inner sep=1] {\small $a$} (18:1);

	\draw (18:1)
		--node[fill=white,inner sep=1] {\small $b$} (18:1.75)
		--node[fill=white,inner sep=1] {\small $a$} (54:2.4)
		--node[fill=white,inner sep=1] {\small $a$} (90:1.75);
		
	\draw (90:1)
		--node[fill=white,inner sep=1] {\small $a$} (90:1.75)
		--node[fill=white,inner sep=1] {\small $a$} (126:2.4)
		--node[fill=white,inner sep=1] {\small $a$} (162:1.75);

	\draw (162:1) 
		--node[fill=white,inner sep=1] {\small $c$} (162:1.75)
		--node[fill=white,inner sep=1] {\small $b$} (198:2.4)
		--node[fill=white,inner sep=1] {\small $a$} (234:1.75);

	\draw (234:1)
		--node[fill=white,inner sep=1] {\small $a$} (234:1.75);	

	\draw (-54:1)
		--node[fill=white,inner sep=1] {\small $a$} (-54:1.75)
		--node[fill=white,inner sep=1] {\small $a$} (-18:2.4)
		--node[fill=white,inner sep=1] {\small $c$} (18:1.75);	 
				
	\draw (54:2.4)
		--node[fill=white,inner sep=1] {\small $a$} (54:3.25)
		-- (126:3.25);

	\draw (126:2.4)
		--node[fill=white,inner sep=1] {\small $a$} (126:3.25)
		--node[fill=white,inner sep=1] {\small $a$} (198:3.25);

	\draw (198:2.4)
		--node[fill=white,inner sep=1] {\small $c$} (198:3.25);

	\draw (-18:2.4)
		--node[fill=white,inner sep=1] {\small $b$} (-18:3.25)
		--node[fill=white,inner sep=1] {\small $a$} (54:3.25);
						
    \node at (0:0) {\small $1$};
    \foreach \y in {2,3,4,6}
      \node at (-90+\y*72:1.4) {\small $\y$};
    \foreach \z in {7,8,11}
      \node at (-54+\z*72:2.2) {\small $\z$};

    \end{tikzpicture}
\caption{Tilings for the edge length combination $a^3bc$.}
\label{case2abcd}
\end{figure}

\begin{proof}
By Proposition \ref{edge2}, we may start with $P_1$, with the edges arranged as the fourth in Figure \ref{edges}. See Figure \ref{case2abcd}. Since $E_{23}$ is adjacent to $b$ and $c$, we get $E_{23}=a$. Then we successively get all the edges of $P_2$, $P_3$, $P_4$, $P_6$, $P_8$, $P_{11}$. Now we have four $a$-edges in $P_7$, a contradiction. 
\end{proof}

\begin{proposition}\label{edge_pattern4}
The spherical tiling by $12$ edge congruent pentagons with edge lengths $a,a,b,b,c$, where $a,b,c$ are distinct, is given by the left of Figure \ref{case2a2bc} up to symmetries.
\end{proposition}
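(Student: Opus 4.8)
The plan is to mimic the propagation arguments already used for Propositions~\ref{edge_pattern2}, \ref{edge_pattern3} and \ref{edge_pattern5}, only with a somewhat larger case tree. Since the tiling has $12$ tiles, Lemma~\ref{vertex3} forces every vertex to have degree $3$, so Proposition~\ref{edge2} applies: the edge lengths around every tile occur, up to rotation and reflection, in the cyclic order $(b,b,a,a,c)$ (the fifth pentagon of Figure~\ref{edges}, which is exactly the tile of the class $T_5$ in Figures~\ref{generalclass} and~\ref{pentagontile}). First I record the \emph{local obstructions} that will later produce the contradictions: in each tile the two $a$-edges are adjacent, the two $b$-edges are adjacent, and the $c$-edge is flanked by one $a$-edge and one $b$-edge; hence no tile contains two $c$-edges, a $c$-edge adjacent to two edges of equal length, or three consecutive edges of equal length. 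Looking at one degree-$3$ vertex, the corner of each incident tile is one of $\{aa\},\{bb\},\{ab\},\{bc\},\{ac\}$, so no vertex carries two $c$-edges and the possible vertex types are among $a^3,a^2b,a^2c,ab^2,abc,b^3,b^2c$.

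Next I normalize the reference tile $P_1$. By Proposition~\ref{tile_pattern} the combinatorial type is the dodecahedron of Figure~\ref{tile_pattern_pic}. Using the order-$10$ dihedral group of symmetries fixing $P_1$ I place the $c$-edge of $P_1$ in the slot $E_{15}$, and the reflection fixing that slot interchanges the two remaining admissible labelings, so I may assume
\[ E_{12}=b,\quad E_{13}=a,\quad E_{14}=a,\quad E_{15}=c,\quad E_{16}=b. \]

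Then I propagate the edge lengths tile by tile, in the order $P_2,\dots,P_6$, then the second layer $P_7,\dots,P_{11}$, then $P_{12}$, exactly as in the proof of Proposition~\ref{edge_pattern3}. At each vertex reached for the first time, two of its three edges are already known; requiring the corner of each incident tile to be one of the five admissible corner types, together with the requirement that each tile's \emph{full} edge sequence be a rotation or reflection of $(b,b,a,a,c)$, either determines the remaining edge or leaves a short list of possibilities, on which I branch. The first branch is the length of $E_{23}$ (a priori $a$, $b$ or $c$), playing the role of the initial split $E_{23}\in\{a,b\}$ in the proof of Proposition~\ref{edge_pattern3}; in each branch I continue propagating until every edge is fixed or some tile is forced into one of the local obstructions above. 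I expect exactly one branch to survive; I then check that its assignment is globally consistent, realizes the edgewise vertex combination $\{4a^3,4b^3,12abc\}$, and agrees, up to symmetries, with the left tiling of Figure~\ref{case2a2bc}, and that the dead branches really are contradictory rather than new solutions.

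The main difficulty is the bookkeeping in the case tree: in Propositions~\ref{edge_pattern3} and~\ref{edge_pattern5} an unknown edge is usually pinned down because it is forced to be adjacent to two \emph{distinct} known lengths, but here the $c$-edge meets both $a$ and $b$, so several vertices leave two choices open and the branches nest a few levels deep before the local obstructions close them off. I expect to control this by exploiting the dodecahedral symmetry afresh at each branch point, and possibly the antipodal ``half-sphere'' decomposition used in the proof of Proposition~\ref{edge_pattern2}, to keep the number of essentially different branches small. No geometry enters; the whole argument is finite combinatorics built on the single arrangement $(b,b,a,a,c)$ and the structure of Figure~\ref{tile_pattern_pic}.
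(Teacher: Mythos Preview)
Your proposal is correct and follows essentially the same propagation strategy as the paper, with the same normalization of $P_1$. The one tactical difference is the choice of the initial branching edge: the paper branches on $E_{45}$, which is adjacent to the $c$-edge $E_{15}$ in $P_5$ and hence can only be $a$ or $b$ (two cases), whereas you propose to branch on $E_{23}$, which is adjacent to a $b$-edge in $P_2$ and an $a$-edge in $P_3$ and so a priori takes all three values. The paper's choice keeps the tree shallower: the case $E_{45}=b$ propagates directly to the solution (in which, incidentally, $E_{23}=c$), and the case $E_{45}=a$ dies after one further symmetry reduction on $P_6$. Your plan would work too, just with a bit more branching before the local obstructions close things off; if you want the shortest argument, start at an edge adjacent to $c$.
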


\begin{figure}[htp]
\centering
\begin{tikzpicture}[scale=1]


	\draw (18:1)
		--node[fill=white,inner sep=1] {\small $b$} (90:1)
		--node[fill=white,inner sep=1] {\small $a$} (162:1)
		--node[fill=white,inner sep=1] {\small $a$} (234:1)
		--node[fill=white,inner sep=1] {\small $c$} (306:1)
		--node[fill=white,inner sep=1] {\small $b$} (18:1);

	\draw (18:1)
		--node[fill=white,inner sep=1] {\small $b$} (18:1.75)
		--node[fill=white,inner sep=1] {\small $a$} (54:2.4)
		--node[fill=white,inner sep=1] {\small $a$} (90:1.75);
		
	\draw (90:1)
		--node[fill=white,inner sep=1] {\small $c$} (90:1.75)
		--node[fill=white,inner sep=1] {\small $b$} (126:2.4)
		--node[fill=white,inner sep=1] {\small $b$} (162:1.75);

	\draw (162:1) 
		--node[fill=white,inner sep=1] {\small $a$} (162:1.75)
		--node[fill=white,inner sep=1] {\small $c$} (198:2.4)
		--node[fill=white,inner sep=1] {\small $b$} (234:1.75);

	\draw (234:1)
		--node[fill=white,inner sep=1] {\small $b$} (234:1.75)
		--node[fill=white,inner sep=1] {\small $b$} (-90:2.4)
		--node[fill=white,inner sep=1] {\small $a$} (-54:1.75);		

	\draw (-54:1)
		--node[fill=white,inner sep=1] {\small $a$} (-54:1.75)
		--node[fill=white,inner sep=1] {\small $a$} (-18:2.4)
		--node[fill=white,inner sep=1] {\small $c$} (18:1.75);	 
				
	\draw (54:2.4)
		--node[fill=white,inner sep=1] {\small $a$} (54:3.25)
		--node[fill=white,inner sep=1] {\small $c$} (126:3.25);

	\draw (126:2.4)
		--node[fill=white,inner sep=1] {\small $b$} (126:3.25)
		--node[fill=white,inner sep=1] {\small $a$} (198:3.25);

	\draw (198:2.4)
		--node[fill=white,inner sep=1] {\small $a$} (198:3.25)
		--node[fill=white,inner sep=1] {\small $a$} (-90:3.25);

	\draw (-90:2.4)
		--node[fill=white,inner sep=1] {\small $c$} (-90:3.25)
		--node[fill=white,inner sep=1] {\small $b$} (-18:3.25);

	\draw (-18:2.4)
		--node[fill=white,inner sep=1] {\small $b$} (-18:3.25)
		--node[fill=white,inner sep=1] {\small $b$} (54:3.25);
						
    \node at (0:0) {\small $1$};
    \foreach \y in {2,...,6}
      \node at (-90+\y*72:1.4) {\small $\y$};
    \foreach \z in {7,...,11}
      \node at (-54+\z*72:2.2) {\small $\z$};

      \node at (240:3.5){\small $12$};


\begin{scope}[xshift=7cm]

	\draw (18:1)
		--node[fill=white,inner sep=1] {\small $b$} (90:1)
		--node[fill=white,inner sep=1] {\small $a$} (162:1)
		--node[fill=white,inner sep=1] {\small $a$} (234:1)
		--node[fill=white,inner sep=1] {\small $c$} (306:1)
		--node[fill=white,inner sep=1] {\small $b$} (18:1);

	\draw (18:1)
		--node[fill=white,inner sep=1] {\small $c$} (18:1.75)
		--node[fill=white,inner sep=1] {\small $a$} (54:2.4)
		--node[fill=white,inner sep=1] {\small $a$} (90:1.75);
		
	\draw (90:1)
		--node[fill=white,inner sep=1] {\small $b$} (90:1.75)
		--node[fill=white,inner sep=1] {\small $b$} (126:2.4)
		--node[fill=white,inner sep=1] {\small $c$} (162:1.75);

	\draw (162:1) 
		--node[fill=white,inner sep=1] {\small $a$} (162:1.75)
		-- (198:2.4)
		-- (234:1.75);
		
	\draw (234:1)
		--node[fill=white,inner sep=1] {\small $a$} (234:1.75)
		--node[fill=white,inner sep=1] {\small $a$} (-90:2.4)
		--node[fill=white,inner sep=1] {\small $b$} (-54:1.75);		

	\draw (-54:1)
		--node[fill=white,inner sep=1] {\small $b$} (-54:1.75)
		--node[fill=white,inner sep=1] {\small $a$} (-18:2.4)
		--node[fill=white,inner sep=1] {\small $a$} (18:1.75);	 
				
	\draw (54:2.4)
		-- (54:3.25)
		--  (126:3.25);

	\draw (126:2.4)
		-- (126:3.25)
		-- (198:3.25);

	\draw (198:2.4)
		-- (198:3.25)
		-- (-90:3.25);

	\draw (-90:2.4)
		-- (-90:3.25)
		-- (-18:3.25);

	\draw (-18:2.4)
		-- (-18:3.25)
		-- (54:3.25);
						
    \node at (0:0) {\small $1$};
    \foreach \y in {2,...,6}
      \node at (-90+\y*72:1.4) {\small $\y$};

    \node at (-54:2.2) {\small $10$};
      
      \end{scope}

    \end{tikzpicture}
\caption{Tilings for the edge length combination $a^2b^2c$.}
\label{case2a2bc}
\end{figure}
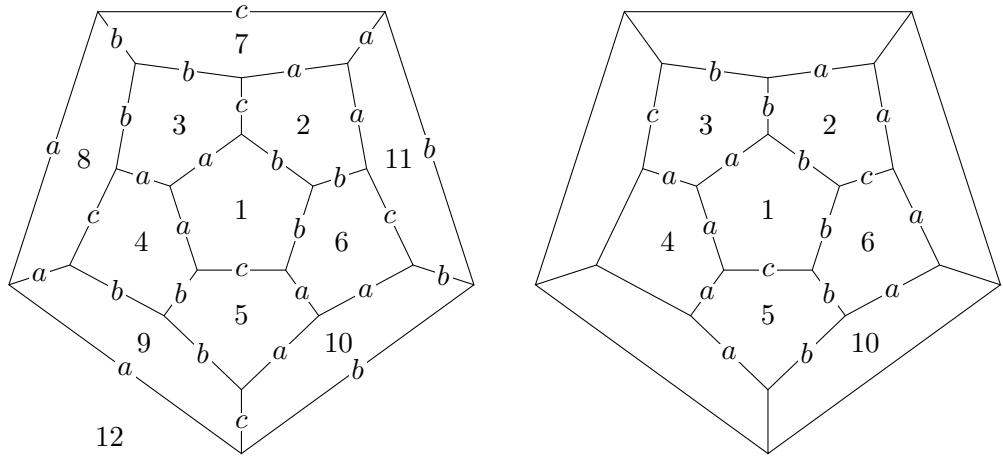

\begin{proof}
By Proposition \ref{edge2}, we may start with $P_1$, with the edges arranged as the fifth in Figure \ref{edges}. See Figure \ref{case2a2bc}. Since $E_{45}$ is adjacent to $c$, its length is either $a$ or $b$.

The case $E_{45}=b$ is the left of Figure \ref{case2a2bc}. From $E_{14}=a$, $E_{15}=c$, $E_{45}=b$, we get all the edges of $P_4$, $P_5$. Then we further get all the edges of $P_6$. Since we cannot have three $a$-edges or three $b$-edges in a tile, we must have $E_{23}=c$ and then get all the edges of $P_2$, $P_3$. By the same reason, we get $E_{9\overline{10}}=c$ and all the edges of $P_9$, $P_{10}$. Then we may get all the edges of $P_7$, $P_8$, $P_{11}$. This concludes all the edge lengths.

The case $E_{45}=a$ is the right of Figure \ref{case2a2bc}. We immediately get all the edges of $P_5$. Then in $P_6$, either $E_{26}$ or $E_{6\overline{10}}$ is $c$. The two assumptions are actually equivalent by symmetry, so that we may assume $E_{26}=c$ without loss of generality. Then we get all the edges of $P_2$, $P_6$. After further getting all the edges of $P_3$, we find three $a$-edges in $P_4$, a contradiction. 
\end{proof}

\section{Angle Congruent Tiling}
\label{angletile1}

Now we turn to the distribution of angles in a spherical tiling by congruent pentagons. Since edge lengths are ignored, we introduce the following concept.

\begin{definition}
Two polygons are {\em angle congruent}, if there is a correspondence between the edges, such that the adjacencies of edges are preserved and the angles between the adjacent edges are preserved.
\end{definition}

Two spherical triangles with great arc edges are congruent if and only if they are angle congruent. Two planar triangles with straight line edges are similar if and only if they are angle congruent. However, this is no longer true for quadrilaterals and pentagons. 

In the regular dodecahedron tiling, all the angles are 
\[
\alpha=\frac{2\pi}{3}.
\]
In what follows, we will always reserve $\alpha$ for this special angle, and let $\beta,\gamma,\dotsc$ be angles not equal to $\alpha$. In this section, we may occasionally allow some from $\beta,\gamma,\dotsc$ to be equal. In the later sections, we will assume $\beta,\gamma,\dotsc$ to be distinct.

\begin{lemma}\label{anglesum}
In a spherical tiling by $12$ angle congruent pentagons, the sum of five angles in a tile is $\frac{10 \pi}{3}=5\alpha$.
\end{lemma}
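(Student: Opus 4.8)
The plan is to evaluate the total angular content of the tiling in two different ways and compare. By the results of Section~\ref{combinatorial} (Proposition~\ref{tile_pattern}, or directly the second equation of \eqref{pent_eq} with $f=12$), the tiling is combinatorially the dodecahedron: it has $f=12$ tiles, $e=30$ edges, and $v=20$ vertices, all of degree $3$. Only the counts $f=12$ and $v=20$ will actually be used below.

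\emph{First count.} Let $\Sigma$ be the sum of the five interior angles of a single tile. Since the $12$ tiles are angle congruent, they all carry the same list of angles, hence the same angle sum $\Sigma$. Summing over all tiles, the total of all $60$ tile-angles appearing in the tiling equals $12\,\Sigma$.

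\emph{Second count.} Reorganize the same $60$ angles according to the vertex at which each one sits. Fix a vertex $P$. The edges issuing from $P$ divide a small spherical disc around $P$ into corners, one corner for each tile containing $P$; here one uses that no boundary point of a tile is identified (part of the notion of a combinatorially monohedral tiling), so these corners belong to pairwise distinct tiles and account for exactly the tile-angles located at $P$. As the corners tile the disc, their angles add up to the full angle $2\pi$ about $P$ — a purely local fact requiring no assumption that the edges be great arcs. Summing over the $20$ vertices, the total of all angles equals $20\cdot 2\pi=40\pi$.

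Equating the two counts gives $12\,\Sigma=40\pi$, so $\Sigma=\tfrac{10\pi}{3}$; and since $\alpha=\tfrac{2\pi}{3}$ by our standing convention, $\Sigma=5\alpha$, as claimed. I do not anticipate a genuine obstacle here; the only step deserving a careful word is the assertion that the tile-angles around each vertex sum to $2\pi$, which follows from the vertex having a neighborhood partitioned into corners by its incident edges, one corner per incident tile.
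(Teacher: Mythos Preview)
Your proof is correct and follows essentially the same double-counting argument as the paper: sum all tile-angles as $12\Sigma$ on one hand and as $20\cdot 2\pi$ over the vertices on the other, then equate. Your version is slightly more detailed in justifying why the corners at a vertex partition a neighborhood and sum to $2\pi$, but the core idea is identical.
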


In case the edges are great arcs, the lemma is a consequence of the formula for the area of a spherical pentagon. However, the lemma holds even if the edges are not great arcs. 

\begin{proof}
The sum of angles at each vertex is $2\pi$. By Lemma \ref{vertex3}, there are $20$ vertices. Therefore the total sum of all the angles in the tiling is $40\pi$. On the other hand, the sum $\Sigma$ of five angles in a tile is the same for all tiles because of the angle congruence. Since there are $12$ tiles, we get $40\pi=12\Sigma$. This implies $\Sigma=\frac{10 \pi}{3}$.
\end{proof}

In Section \ref{edgetile2}, we named a vertex by the lengths of edges at the vertex. Similarly, we can also name a vertex by the angles at the vertex. For example, in Figure \ref{generalclass}, $V_{123}$ is a $\beta\gamma\delta$-vertex, and $V_{126}$ is an $\alpha^3$-vertex. The tiling has $8$ $\alpha^3$-vertices and $12$ $\beta\gamma\delta$-vertices, and has the {\em anglewise vertex combination} $\{8\alpha^3,12\beta\gamma\delta\}$. We also say that $\alpha^3$ and $\beta\gamma\delta$ are vertices in the tiling, and the other combinations are not vertices.

We say a vertex is of {\em $\alpha\beta\gamma$-type}, if the vertex is an  $\alpha\beta'\gamma'$-vertex, where $\beta'$ and $\gamma'$ may or may not be $\beta$ and $\gamma$. Thus an $\alpha\beta\gamma$-type vertex can be an $\alpha\beta\gamma$-vertex, or an $\alpha\beta\delta$-vertex, or an $\alpha\gamma\delta$-vertex, etc. From purely symbolical viewpoint (recall that the symbol $\alpha$ is special), a degree $3$ vertex must be one of the following types:
\[
\alpha^3,\alpha^2\beta,\alpha\beta^2,\alpha\beta\gamma,\beta^3,\beta^2\gamma,\beta\gamma\delta.
\]

\begin{lemma}\label{atype}
A vertex of degree $3$ must be one of the following types:
\begin{enumerate}
\item $\alpha^3$.
\item $\alpha\beta\gamma$ with $\beta\ne\gamma$.
\item $\beta^2\gamma$ with $\beta\ne\gamma$.
\item $\beta\gamma\delta$ with $\beta,\gamma,\delta$ distinct.
\end{enumerate}
\end{lemma}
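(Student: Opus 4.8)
The plan is to use only two ingredients: the sum of the angles at every vertex is $2\pi$, and $\alpha=\frac{2\pi}{3}$. Consequently the three angles meeting at a degree $3$ vertex add up to exactly $3\alpha$. I would begin from the list of seven symbolic types of a degree $3$ vertex recorded just before the lemma, namely
\[
\alpha^3,\ \alpha^2\beta,\ \alpha\beta^2,\ \alpha\beta\gamma,\ \beta^3,\ \beta^2\gamma,\ \beta\gamma\delta,
\]
and simply eliminate the three ``bad'' ones using the angle-sum equation.

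For $\alpha^2\beta$ the equation $2\alpha+\beta=3\alpha$ forces $\beta=\alpha$; for $\alpha\beta^2$ the equation $\alpha+2\beta=3\alpha$ forces $\beta=\alpha$; and for $\beta^3$ the equation $3\beta=3\alpha$ forces $\beta=\alpha$. Each of these contradicts the standing convention that the symbols $\beta,\gamma,\dotsc$ denote angles different from $\alpha$. Hence none of $\alpha^2\beta$, $\alpha\beta^2$, $\beta^3$ can occur, so the only surviving types are $\alpha^3$, $\alpha\beta\gamma$, $\beta^2\gamma$, $\beta\gamma\delta$, matching the four cases of the lemma.

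To finish I would observe that the inequalities asserted in cases (2)--(4) are automatic once the bad types are excluded: if an $\alpha\beta\gamma$-type vertex had $\beta=\gamma$ it would be of type $\alpha\beta^2$, already ruled out; if a $\beta^2\gamma$-type vertex had $\beta=\gamma$ it would be of type $\beta^3$, already ruled out; and if a $\beta\gamma\delta$-type vertex had two of its three angles equal it would instead be of type $\beta^2\gamma$, i.e.\ case (3). Thus the distinctness conditions in (2)--(4) are forced, and the four listed types are exhaustive. I do not expect any genuine obstacle here, as the argument is a single linear equation per case; the only thing requiring a little care is the bookkeeping that shows each degenerate sub-case of one type collapses onto an earlier entry of the list rather than producing a new possibility.
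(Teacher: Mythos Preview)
Your proposal is correct and follows essentially the same approach as the paper: start from the seven symbolic types, and eliminate $\alpha^2\beta$, $\alpha\beta^2$, $\beta^3$ via the single linear equation $\text{(angle sum)}=3\alpha$, each time forcing $\beta=\alpha$. Your added paragraph explaining why the distinctness conditions in (2)--(4) are automatic is a nice bit of housekeeping that the paper leaves implicit.
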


\begin{proof}
We need to show that, if $\alpha\ne\beta$, then $\alpha^2\beta$, $\alpha\beta^2$, $\beta^3$ cannot be vertices. If $\alpha^2\beta$ is a vertex, then $2\alpha+\beta=2\pi=3\alpha$, which implies $\beta=\alpha$, a contradiction. If $\alpha\beta^2$ is a vertex, then $\alpha+2\beta=2\pi=3\alpha$, which again implies $\beta=\alpha$. If $\beta^3$ is a vertex, then $3\beta=2\pi=3\alpha$, which still implies $\beta=\alpha$.
\end{proof}

\begin{lemma}\label{aexclude}
Suppose $\beta,\gamma,\delta,\epsilon$ are distinct angles.
\begin{enumerate}
\item If $\beta^2\gamma$ is a vertex, then $\beta^2\delta$, $\beta\gamma^2$, $\beta\gamma\delta$ are not vertices.
\item If $\beta\gamma\delta$ is a vertex, then $\beta^2\gamma$, $\beta\gamma\epsilon$ are not vertices.
\end{enumerate}
\end{lemma}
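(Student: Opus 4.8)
The plan is to use a single fact: the angles at any vertex of a spherical tiling sum to $2\pi$ (this is exactly the observation that opens the proof of Lemma~\ref{anglesum}). Every hypothesized vertex type translates into one linear equation in the angles, and subtracting any two such equations produces an equality between two of $\beta,\gamma,\delta,\epsilon$, which contradicts the standing hypothesis that these four angles are distinct. So the whole argument is a short finite case check with no geometry beyond the vertex angle sum; in particular Lemma~\ref{anglesum} itself is not invoked, only the elementary fact used inside its proof, and there is no need to use the pentagon area or the value $\alpha=\tfrac{2\pi}{3}$.

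For part~(1), I would start by writing $2\beta+\gamma=2\pi$ from the assumption that $\beta^2\gamma$ is a vertex. If $\beta^2\delta$ were also a vertex, then $2\beta+\delta=2\pi$, and subtracting the two equations gives $\gamma=\delta$, a contradiction. If $\beta\gamma^2$ were a vertex, then $\beta+2\gamma=2\pi$, and subtracting from $2\beta+\gamma=2\pi$ gives $\beta=\gamma$, again a contradiction. If $\beta\gamma\delta$ were a vertex, then $\beta+\gamma+\delta=2\pi$, and comparing with $2\beta+\gamma=2\pi$ gives $\beta=\delta$, a contradiction. This exhausts the three cases of part~(1).

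For part~(2), I would write $\beta+\gamma+\delta=2\pi$ from the assumption that $\beta\gamma\delta$ is a vertex. If $\beta^2\gamma$ were a vertex, then $2\beta+\gamma=2\pi$, and subtracting yields $\delta=\beta$, contradicting distinctness. If $\beta\gamma\epsilon$ were a vertex, then $\beta+\gamma+\epsilon=2\pi$, and subtracting yields $\delta=\epsilon$, again a contradiction. This covers both cases of part~(2).

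I do not expect a genuine obstacle here; the only thing to be careful about is bookkeeping. One should check that each symbol $\beta^2\gamma$, $\beta^2\delta$, $\beta\gamma^2$, $\beta\gamma\delta$, $\beta\gamma\epsilon$ really names a degree-$3$ vertex, so that the "$=2\pi$" equation legitimately applies — but this is immediate, since each such symbol by definition lists exactly the three angles at that vertex — and that in every case the derived equality ($\gamma=\delta$, $\beta=\gamma$, $\beta=\delta$, or $\delta=\epsilon$) is indeed excluded by the hypothesis that $\beta,\gamma,\delta,\epsilon$ are distinct. If desired, each equation can be rewritten with $2\pi$ replaced by $3\alpha$, but that substitution is not needed for any of the cancellations.
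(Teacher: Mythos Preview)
Your proof is correct and essentially identical to the paper's: both arguments set up the vertex angle-sum equation for each hypothetical vertex and subtract to force two of the angles to coincide. The only cosmetic differences are that the paper writes $3\alpha$ where you write $2\pi$, and the paper omits the explicit verification of part~(2), saying only that it is similar.
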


The lemma actually allows some angles to be $\alpha$. Moreover, we get more exclusions by symmetry. For example, if $\beta\gamma\delta$ is a vertex, then the following cannot be vertices:
\[
\beta^2\gamma,\beta^2\delta,\beta\gamma^2,\beta\delta^2,\gamma^2\delta,\gamma\delta^2,\beta\gamma\epsilon,\beta\delta\epsilon,\gamma\delta\epsilon.
\]

\begin{proof}
If both $\beta^2\gamma$ and $\beta^2\delta$ are vertices, then $2\beta+\gamma=3\alpha$ and $2\beta+\delta=3\alpha$. This implies $\gamma=\delta$, a contradiction. 

If both $\beta^2\gamma$ and $\beta\gamma^2$ are vertices, then $2\beta+\gamma=3\alpha$ and $\beta+2\gamma=3\alpha$. This implies $\beta=\gamma$, a contradiction. 

If both $\beta^2\gamma$ and $\beta\gamma\delta$ are vertices, then $2\beta+\gamma=3\alpha$ and $\beta+\gamma+\delta=3\alpha$. This implies $\beta=\delta$, again a contradiction. 

The proof of the second statement is similar.
\end{proof}

\begin{proposition}\label{angle}
Let $\alpha=\frac{2}{3}\pi$ and let $\alpha,\beta,\gamma,\dotsc$ denote distinct angles. Then a spherical tiling by $12$ angle congruent pentagons must have one of the following (unordered) angle combinations:
\begin{enumerate}
\item $\alpha^5$: The angles in the pentagon are $\alpha,\alpha,\alpha,\alpha,\alpha$. The anglewise vertex combination is $\{20\alpha^3\}$.
\item $\alpha^3\beta\gamma$: The angles in the pentagon are $\alpha,\alpha,\alpha,\beta,\gamma$, satisfying $\beta+\gamma=2\alpha$. The anglewise vertex combination is $\{8\alpha^3,12\alpha\beta\gamma\}$.
\item $\alpha^2\beta^2\gamma$: The angles in the pentagon are $\alpha,\alpha,\beta,\beta,\gamma$, satisfying $2\beta+\gamma=3\alpha$. The anglewise vertex combination is $\{8\alpha^3,12\beta^2\gamma\}$.
\item $\alpha^2\beta\gamma\delta$: The angles in the pentagon are $\alpha,\alpha,\beta,\gamma,\delta$, satisfying $\beta+\gamma+\delta=3\alpha$. The anglewise vertex combination is $\{8\alpha^3,12\beta\gamma\delta\}$.
\item $\alpha\beta\gamma\delta\epsilon$: The angles in the pentagon are $\alpha,\beta,\gamma,\delta,\epsilon$, satisfying $\gamma=3\alpha-2\beta$, $\delta=2\alpha-\beta$, $\epsilon=2\beta-\alpha$. There are three possible anglewise vertex combinations: $\{4\alpha\beta\delta,8\alpha\gamma\epsilon,4\beta^2\gamma,4\delta^2\epsilon\}$, $\{\alpha^3,2\alpha\beta\delta,7\alpha\gamma\epsilon,5\beta^2\gamma,5\delta^2\epsilon\}$,
\newline
$\{2\alpha^3,6\alpha\gamma\epsilon,6\beta^2\gamma,6\delta^2\epsilon\}$.
\end{enumerate}
\end{proposition}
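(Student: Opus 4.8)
The plan is to run a finite case analysis on the unordered multiset of the five angles of the pentagon, organized by how many of them equal the special angle $\alpha=\frac23\pi$, combining three ingredients throughout. The first is the pentagon angle-sum identity $\frac{10\pi}{3}=5\alpha$ of Lemma~\ref{anglesum}. The second is the list of admissible degree-$3$ vertex types and their incompatibilities, Lemmas~\ref{atype} and~\ref{aexclude}. The third is the following \emph{slot-counting} identities: since all $20$ vertices have degree $3$ (Lemma~\ref{vertex3}), writing $n_V$ for the number of vertices of type $V$, we have $\sum_V n_V=20$, and for every angle $\theta$ occurring in the pentagon
\[
\sum_V m_\theta(V)\,n_V=12\,m_\theta ,
\]
where $m_\theta(V)$ counts the occurrences of $\theta$ at a vertex of type $V$ and $m_\theta$ is the multiplicity of $\theta$ in the pentagon (each of the $12$ congruent pentagons contributes $\theta$ exactly $m_\theta$ times).

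First I would dispose of most multiplicity patterns using only Lemma~\ref{anglesum}: for $\alpha^4\beta$, $\alpha^3\beta^2$, $\alpha^2\beta^3$, $\alpha\beta^4$, $\beta^5$ the identity $5\alpha=\sum(\text{angles})$ forces a repeated non-$\alpha$ angle to be $\alpha$, against the distinctness convention; and for $\beta^4\gamma$, $\beta^3\gamma^2$ every relation $2\theta+\theta'=3\alpha$ or $\theta+2\theta'=3\alpha$ that a degree-$3$ vertex could impose, combined with the pentagon identity, again forces $\theta=\alpha$, so no degree-$3$ vertex can exist and the pattern is vacuous. This leaves the five patterns in the statement together with the ``stubborn'' patterns $\alpha\beta^3\gamma$, $\alpha\beta^2\gamma^2$, $\alpha\beta^2\gamma\delta$, $\beta^3\gamma\delta$, $\beta^2\gamma^2\delta$, $\beta^2\gamma\delta\epsilon$, $\beta\gamma\delta\epsilon\zeta$. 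I would exclude these one at a time by the same recipe: enumerate the finitely many degree-$3$ vertex types whose angles sum to $3\alpha$, prune with Lemma~\ref{atype} and (once a vertex type is forced) Lemma~\ref{aexclude}, note that each pentagon angle occurs at some vertex because $m_\theta>0$, and reach a contradiction from the slot-counting identities --- typically an over-count, where a high-multiplicity angle $\beta$ forces so many ``$\beta^2\theta$''-vertices that the $12\,m_\theta$ slots of some $\theta$ are exceeded, and for the all-distinct patterns additionally using Lemma~\ref{aexclude} to limit how angles may be paired at vertices before counting.

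For the five surviving patterns the same machinery is run positively. For $\alpha^3\beta\gamma$, $\alpha^2\beta^2\gamma$, $\alpha^2\beta\gamma\delta$, Lemmas~\ref{atype} and~\ref{aexclude} together with the pentagon identity leave only the vertex types $\{\alpha^3,\alpha\beta\gamma\}$, $\{\alpha^3,\beta^2\gamma\}$, $\{\alpha^3,\beta\gamma\delta\}$ respectively (the second member must actually occur, else $\beta$ has no slot), and the slot-counting identities then have the unique solutions $\{8\alpha^3,12\alpha\beta\gamma\}$, $\{8\alpha^3,12\beta^2\gamma\}$, $\{8\alpha^3,12\beta\gamma\delta\}$. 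For $\alpha\beta\gamma\delta\epsilon$ one first argues that, since $\beta,\gamma,\delta,\epsilon$ all occur at vertices, at least two non-$\alpha^3$ vertex types are present; reading off their defining relations and checking independence modulo $\beta+\gamma+\delta+\epsilon=4\alpha$ pins the angles to the one-parameter family $\gamma=3\alpha-2\beta$, $\delta=2\alpha-\beta$, $\epsilon=2\beta-\alpha$. Re-solving $a\alpha+b\beta+c\gamma+d\delta+e\epsilon=3\alpha$ with $a+b+c+d+e=3$ under these relations shows the only feasible degree-$3$ vertex types are $\alpha^3$, $\alpha\beta\delta$, $\beta^2\gamma$, $\delta^2\epsilon$, $\alpha\gamma\epsilon$; the slot-counting identities then reduce, after noting they force $n_{\beta^2\gamma}=n_{\delta^2\epsilon}=:t$, to the single free integer $t$ with $4\le t\le6$, giving precisely the three listed anglewise vertex combinations.

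I expect the $\alpha\beta\gamma\delta\epsilon$ case to be the main obstacle, and the difficulty there is essentially a circularity: the admissible vertex types depend on the angle relations, while the angle relations are only obtained from the vertex types actually present. Breaking the circle needs the preliminary argument that enough distinct non-$\alpha^3$ vertex types must appear to cut the three-dimensional space of $(\beta,\gamma,\delta,\epsilon)$ (constrained by their sum) down to the claimed one-parameter family, followed by a careful verification that the integer solutions of the slot-counting system are compatible with the coexistence restrictions of Lemma~\ref{aexclude}. The remaining difficulty is just the bookkeeping in the seven stubborn exclusions, each of which needs its own short over-count.
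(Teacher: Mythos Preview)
Your plan is the paper's plan: partition by the multiset of angles, then combine Lemma~\ref{anglesum}, Lemmas~\ref{atype}--\ref{aexclude}, and slot-counting over the $20$ degree-$3$ vertices. Your disposals of the repeated-angle patterns and your slot-count solutions for $\alpha^3\beta\gamma$, $\alpha^2\beta^2\gamma$, $\alpha^2\beta\gamma\delta$, and the final $t\in\{4,5,6\}$ count for $\alpha\beta\gamma\delta\epsilon$, are all correct and agree with the paper.

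The one real imprecision is the sentence ``reading off their defining relations \ldots\ pins the angles to the one-parameter family.'' Two non-$\alpha^3$ vertex types do \emph{not} by themselves force the displayed relations. For example, $\beta^2\gamma$ together with $\gamma^2\delta$ (and the pentagon sum) yields a different one-parameter family in which no admissible vertex contains $\epsilon$; and $\alpha\beta\gamma$ together with $\alpha\delta\epsilon$ gives relations dependent modulo the sum, hence a two-parameter family that only dies by counting. The paper resolves this not by an independence check but by a dichotomy (its Cases~4.5.1 and~4.5.2): either some non-$\alpha$ angle lies in two distinct $\beta^2\gamma$-type vertices, and then a remaining angle has no vertex; or, up to relabeling, the only $\beta^2\gamma$-type vertices are $\beta^2\gamma$ and $\delta^2\epsilon$, whereupon Lemma~\ref{aexclude} leaves just the two lists $\{\alpha^3,\alpha\beta\delta,\alpha\gamma\epsilon,\beta^2\gamma,\delta^2\epsilon\}$ and $\{\alpha^3,\alpha\beta\epsilon,\alpha\gamma\delta,\beta^2\gamma,\delta^2\epsilon\}$, the second of which is killed by slot-counting. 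You will need this (or an equivalent finite enumeration of coexisting vertex sets) rather than a direct ``pin to the family'' step. Likewise, the no-$\alpha$ patterns $\beta^2\gamma\delta\epsilon$ and $\beta\gamma\delta\epsilon\zeta$ (paper Cases~5.6 and~5.7) each split into subcases on whether a $\beta\gamma\delta$-type vertex is present and on how $\beta^2\gamma$-type vertices share angles, so plan for a few paragraphs of casework there rather than a single over-count.
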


The fourth case degenerates into the second (when $\delta=\alpha$) and the third (when $\delta=\beta$) cases, which further degenerate into the first case. The tiling $T_5$ in Figure \ref{completeclassify} and Figure \ref{generalclass} is the fourth case and its degenerate cases. The tilings $T_1$, $T_2$, $T_3$, $T_4$ in Figure \ref{completeclassify} belong to the third anglewise vertex combination in the fifth case.

\begin{proof}
We divide the proof by considering how many times the angle $\alpha$ appears in the pentagon.

\medskip

{\bf Case 1}: If the angles in the pentagon are $\alpha,\alpha,\alpha,\alpha,\beta$, then Lemma \ref{anglesum} tells us $4\alpha+\beta=5\alpha$, so that the angles are really $\alpha,\alpha,\alpha,\alpha,\alpha$. This is the first case in the proposition.

\medskip

{\bf Case 2}: The angles in the pentagon are $\alpha,\alpha,\alpha,\beta,\gamma$, with $\beta,\gamma\ne\alpha$. Note that $\beta,\gamma$ are not yet assumed to be distinct.

Lemma \ref{anglesum} becomes $\beta+\gamma=2\alpha$. If $\beta^2\gamma$ is a vertex, then $2\beta+\gamma=2\pi=3\alpha$, and we get $\beta=\gamma=\alpha$, a contradiction. By the similar reason, $\beta\gamma^2$ is not a vertex. By Lemma \ref{atype}, the only possible vertices are $\alpha^3$ and $\alpha\beta\gamma$, and $\beta,\gamma$ must be distinct. 

Let $\{m\alpha^3,n\alpha\beta\gamma\}$ be the anglewise vertex combination. Since $12$ tiles with angles $\alpha,\alpha,\alpha,\beta,\gamma$ in each tile have total of $36$ angle $\alpha$, $12$ angle $\beta$ and $12$ angle $\gamma$, we get $3m+n=36$ and $n=12$. Therefore $m=8$ and $n=12$. This is the second case of the proposition.

\medskip

{\bf Case 3}: The angles in the pentagon are $\alpha,\alpha,\beta,\gamma,\delta$, with $\beta,\gamma,\delta\ne\alpha$. Note that $\beta,\gamma,\delta$ are not yet assumed to be distinct.

Lemma \ref{anglesum} becomes $\beta+\gamma+\delta= 3\alpha$. If $\alpha\beta\gamma$ is a vertex, then $\alpha+\beta+\gamma=3\alpha$, and we get $\delta=\alpha$, a contradiction. The argument actually shows that there is no $\alpha\beta\gamma$-type vertex.

If $\beta^2\gamma$ is a vertex, then $2\beta+\gamma=3\alpha$ and $\beta=3\alpha-\beta-\gamma=\delta$. Thus the $\beta^2\gamma$-vertex is really a $\beta\gamma\delta$-vertex. The argument actually applies to any $\beta^2\gamma$-type vertex. By Lemma \ref{atype}, we conclude that any vertex is either an $\alpha^3$-vertex or a $\beta\gamma\delta$-vertex. 

Let $\{m\alpha^3,n\beta\gamma\delta\}$ be the anglewise vertex combination. Without loss of generality, we may further assume that $\gamma$ is different from $\alpha,\beta,\delta$. Since $12$ tiles with angles $\alpha,\alpha,\beta,\gamma,\delta$ have total of $24$ angle $\alpha$ and $12$ angle $\gamma$, we get $3m=24$, $n=12$. Therefore $m=8$ and $n=12$. Depending on whether $\beta=\delta$, we get the third and the fourth cases of the proposition.

\medskip

{\bf Case 4}: The angles in the pentagon are $\alpha,\beta,\gamma,\delta,\epsilon$, with $\beta,\gamma,\delta,\epsilon\ne\alpha$. Again $\beta,\gamma,\delta,\epsilon$ are not yet assumed to be distinct.

Lemma \ref{anglesum} becomes $\beta+\gamma+\delta+\epsilon=4\alpha$. We further divide the case by considering whether some among $\beta,\gamma,\delta,\epsilon$ are equal.

\medskip

{\bf Case 4.1}: $\beta=\gamma=\delta=\epsilon$. 

Lemma \ref{anglesum} becomes $4\beta=4\alpha$, and we get $\beta=\alpha$, a contradiction.

\medskip

{\bf Case 4.2}: $\beta=\delta=\epsilon\ne\gamma$. 

Lemma \ref{anglesum} becomes $3\beta+\gamma=4\alpha$. There is no $\alpha\beta\gamma$-vertex because this implies $\alpha+\beta+\gamma=3\alpha$, and we get $\beta=\gamma=\alpha$. Similar argument shows that $\beta^2\gamma$, $\beta\gamma^2$ are not vertices. By Lemma \ref{atype}, the only vertex is $\alpha^3$, which is impossible.

\medskip

{\bf Case 4.3}: $\beta=\delta\ne \gamma=\epsilon$.

Lemma \ref{anglesum} becomes $\beta+\gamma=2\alpha$. An argument similar to Case 4.2 shows that $\beta^2\gamma$, $\beta\gamma^2$ are not vertices. By Lemma \ref{atype}, the only vertices are $\alpha^3$ and $\alpha\beta\gamma$, so that the anglewise vertex combination is $\{m\alpha^3,n\alpha\beta\gamma\}$. Since $12$ tiles with angles $\alpha,\beta,\beta,\gamma,\gamma$ have total of $12$ angle $\alpha$ and $24$ angle $\beta$, we get $3m+n=12$ and $n=24$. The system has no non-negative solution, a contradiction.

\medskip

{\bf Case 4.4}: $\beta=\epsilon$, and $\beta,\gamma,\delta$ are distinct. 

Similar to Case 4.2, $\alpha\gamma\delta$, $\beta^2\gamma$, $\beta^2\delta$, $\beta\gamma\delta$ are not vertices. By the symmetry of $\gamma$ and $\delta$ in the current case and the exclusion between $\alpha\beta\gamma$ and $\alpha\beta\delta$ (see Lemma \ref{aexclude}), we may assume that the only possible $\alpha\beta\gamma$-type vertex is $\alpha\beta\gamma$. Then by Lemma \ref{atype}, the anglewise vertex combination is
\[
\{m_1\alpha^3,m_2\alpha\beta\gamma,n_1\beta\gamma^2,n_2\beta\delta^2,k_1\gamma^2\delta,k_2\gamma\delta^2\}.
\] 
Since $12$ tiles with angles $\alpha,\beta,\beta,\gamma,\delta$ have total of $12$ angle $\alpha$, $24$ angle $\beta$, $12$ angle $\gamma$ and $12$ angle $\delta$, we get
\begin{align*}
3m_1+m_2 &= 12, \\
m_2+n_1+n_2 &= 24, \\
m_2+2n_1+2k_1+k_2 &= 12, \\
2n_2+k_1+2k_2 &= 12.
\end{align*}
Adding the third and the fourth equations together, we get $m_2+2(n_1+n_2)+3(k_1+k_2)=24$. Compared with the second equation, we get $n_1=n_2=k_1=k_2=0$. Then the second equation becomes $m_2=24$, contradicting to the first equation.

\medskip

{\bf Case 4.5}: $\beta,\gamma,\delta,\epsilon$ are distinct. 

Similar to Case 4.2, there is no $\beta\gamma\delta$-type vertex. By Lemma \ref{atype}, the only possible types are $\alpha^3$, $\alpha\beta\gamma$, $\beta^2\gamma$. We further divide into two subcases.

\medskip

{\bf Case 4.5.1}: $\beta$ appears in two vertices not involving $\alpha$ and with different angle combinations. 

Without loss of generality, we may assume that the two vertices are $\beta^i\gamma^{3-i}$ and $\beta^j\delta^{3-j}$, $0<i,j<3$. By Lemma \ref{aexclude}, we have $i\ne j$. Therefore up to symmetry, we may assume $\beta^2\gamma$, $\beta\delta^2$ are vertices. Solving $2\beta+\gamma=\beta+2\delta=3\alpha$ together with $\beta+\gamma+\delta+\epsilon=4\alpha$ from Lemma \ref{anglesum}, we get
\[
3\beta-2\epsilon=\alpha,\quad
3\gamma+4\epsilon=7\alpha,\quad
3\delta+\epsilon=4\alpha.
\]

Now the angle $\epsilon$ must appear at some vertex. Since there is no $\beta\gamma\delta$-type vertex, by Lemmas \ref{atype} and \ref{aexclude}, one of $\alpha\beta\epsilon$, $\alpha\gamma\epsilon$, $\alpha\delta\epsilon$, $\gamma^2\epsilon$, $\delta^2\epsilon$, $\gamma\epsilon^2$, $\delta\epsilon^2$ must appear as the vertex involving $\epsilon$. If $\alpha\beta\epsilon$ is a vertex, then $\delta+\epsilon=2\alpha$ and $3\beta-2\epsilon=\alpha$ imply $\beta=\epsilon$, a contradiction. All other possibilities lead to similar contradictions.

\medskip

{\bf Case 4.5.2}: The opposite of Case 4.5.1 and its symmetric permutations. 

This means that, without loss of generality, we may assume that the only possible vertices not involving $\alpha$ are $\beta^2\gamma$ and $\delta^2\epsilon$. Then the anglewise vertex combination is $\alpha^3$, $\beta^2\gamma$, $\delta^2\epsilon$ together with some $\alpha\beta\gamma$-type vertices. By Lemma \ref{aexclude}, we find only two possibilities: 
\[
\{m\alpha^3,n_1\alpha\beta\delta,n_2\alpha\gamma\epsilon,k_1\beta^2\gamma,k_2\delta^2\epsilon\},\quad 
\{m\alpha^3,n_1\alpha\beta\epsilon,n_2\alpha\gamma\delta,k_1\beta^2\gamma,k_2\delta^2\epsilon\}.
\]

In the first combination, counting the total number of each angle gives
\[
3m+n_1+n_2 
=n_1+2k_1 
=n_2+k_1
=n_1+2k_2
=n_2+k_2
= 12.
\]
We get $k_1=k_2=m+4$, $n_1=4-2m$, $n_2=8-m$. In particular, we have $n_2,k_1,k_2>0$, so that $\alpha\gamma\epsilon$, $\beta^2\gamma$, $\delta^2\epsilon$ are vertices, and we get 
\[
\gamma+\epsilon=2\alpha,\quad
2\beta+\gamma=3\alpha,\quad
2\delta+\epsilon=3\alpha.
\]
The solution is
\[
\gamma=3\alpha-2\beta,\quad
\delta=2\alpha-\beta,\quad
\epsilon=2\beta-\alpha.
\]
On the other hand, to maintain $n_1\ge 0$, the possible values for $m$ are $0,1,2$. Then we get the corresponding anglewise vertex combinations, which  are the three combinations in the fifth case of the proposition.

The second combination gives
\[
3m+n_1+n_2
=n_1+2k_1
=n_2+k_1
=n_2+2k_2
=n_1+k_2
=12.
\]
The system has no non-negative solution.

\medskip

{\bf Case 5}:  The angles in the pentagon are $\beta,\gamma,\delta,\epsilon,\zeta$, all different from $\alpha$. But the five angles are not yet assumed to be distinct.

Lemma \ref{anglesum} becomes $\beta+\gamma+\delta+\epsilon+\zeta= 5\alpha$. We further divide the case by considering whether some angles are equal.

\medskip

{\bf Case 5.1}: $\beta=\gamma=\delta=\epsilon=\zeta$. 

Lemma \ref{anglesum} implies $\beta=\alpha$, a contradiction.

\medskip

{\bf Case 5.2}: $\beta=\delta=\epsilon=\zeta\ne\gamma$.

The vertices must be $\beta^2\gamma$ or $\beta\gamma^2$, and Lemma \ref{anglesum} becomes $4\beta+\gamma=5\alpha$. If $\beta^2\gamma$ is a vertex, then $2\beta+\gamma=3\alpha$, and we get $\beta=\gamma$, a contradiction. If $\beta\gamma^2$ is a vertex, then we get the similar contradiction.

\medskip

{\bf Case 5.3}: $\beta=\delta=\epsilon\ne\gamma=\zeta$.

The vertices must be $\beta^2\gamma$ or $\beta\gamma^2$. We get contradiction similar to Case 5.2.

\medskip

{\bf Case 5.4}: $\beta=\epsilon=\zeta$, and $\beta,\gamma,\delta$ are distinct. 

Lemma \ref{anglesum} becomes $3\beta+\gamma+\delta=5\alpha$. This implies that $\beta\gamma\delta$ is not a vertex, so that all vertices are $\beta^2\gamma$-type. Since there are three distinct angles, one angle must appear in two vertices with different angle combinations. By Lemma \ref{aexclude} and the symmetry between $\gamma$ and $\delta$, we only need to consider both $\beta^2\gamma$, $\beta\delta^2$ appear, or both $\beta^2\gamma$, $\gamma^2\delta$ appear, or both $\beta\gamma^2$, $\gamma\delta^2$ appear.

If $\beta^2\gamma$, $\beta\delta^2$ are vertices, then $2\beta+\gamma=3\alpha$ and $\beta+2\delta=3\alpha$. Combined with $3\beta+\gamma+\delta=5\alpha$, we get all angles equal, a contradiction. The other pairs of vertices lead to similar contradictions. 

\medskip

{\bf Case 5.5}: $\beta=\epsilon$, $\gamma=\zeta$, and $\beta,\gamma,\delta$ are distinct.

Lemma \ref{anglesum} becomes $2\beta+2\gamma+\delta=5\alpha$. This implies that $\beta^2\delta$, $\gamma^2\delta$, $\beta\gamma\delta$ are not vertices. Since there are three distinct angles and all vertices are $\beta^2\gamma$-type, one angle must appear in two vertices with different angle combinations. By Lemma \ref{aexclude}, the symmetry between $\beta$ and $\gamma$, and the fact that $\beta^2\delta$ and $\gamma^2\delta$ are not vertices, we may assume that $\beta^2\gamma$, $\beta\delta^2$ are vertices. This implies $2\beta+\gamma=3\alpha$ and $\beta+2\delta=3\alpha$. Combined with $2\beta+2\gamma+\delta=5\alpha$, we get all angles equal, a contradiction.

\medskip

{\bf Case 5.6}: $\beta=\zeta$, and $\beta,\gamma,\delta,\epsilon$ are distinct. 

Lemma \ref{anglesum} becomes $2\beta+\gamma+\delta+\epsilon=5\alpha$. This implies that $\gamma\delta\epsilon$ is not a vertex. 

If there is a vertex of $\beta\gamma\delta$-type, then up to symmetry, we may assume $\beta\gamma\delta$ is a vertex. Then we get $\beta+\gamma+\delta=3\alpha$ and $
\beta+\epsilon=5\alpha-(\beta+\gamma+\delta)=2\alpha$. This implies that $\beta,\gamma,\delta$ cannot form $\beta^2\gamma$-type vertices, and $\beta^2\epsilon$, $\beta\epsilon^2$ are not vertices. So the only possible vertices of $\beta^2\gamma$-type are $\gamma^2\epsilon$, $\gamma\epsilon^2$, $\delta^2\epsilon$, $\delta\epsilon^2$. Moreover, by Lemma \ref{aexclude}, the appearance of $\gamma^2\epsilon$ excludes $\gamma\epsilon^2$ and $\delta^2\epsilon$. Then by the symmetry between $\gamma$ and $\delta$ in our current situation, we only need to consider the anglewise vertex combination $\{n\beta\gamma\delta,k_1\gamma^2\epsilon,k_2\delta\epsilon^2\}$. Counting the total number of each angle gives
\[
n=24,\quad
n+2k_1
=n+k_2
=k_1+2k_2
=12.
\]
The system has no non-negative solution.

So all vertices are of $\beta^2\gamma$-type, and the anglewise vertex combination is
\[
\{m_1\beta^2\gamma,m_2\beta^2\delta,m_3\beta^2\epsilon,n_1\beta\gamma^2,n_2\beta\delta^2,n_3\beta\epsilon^2,k_1\gamma^2\delta,k_2\gamma^2\epsilon,k_3\gamma\delta^2,k_4\gamma\epsilon^2,k_5\delta^2\epsilon,k_6\delta\epsilon^2\}.
\]
Counting the total number of the angle $\beta$ gives $2(m_1+m_2+m_3)+n_1+n_2+n_3=24$.

If $m_1=m_2=m_3=0$ and $n_1>0$, then by Lemma \ref{aexclude}, we get $n_2=n_3=0$ and $n_1=24$. Therefore the total number of the angle $\gamma$ is $\ge 2n_1=48$, a contradiction. Similar argument rules out $n_2>0$ and $n_3>0$. Therefore $m_1,m_2,m_3$ cannot be all $0$, and we may assume $m_1>0$ without loss of generality. By Lemma \ref{aexclude}, this implies $m_2=m_3=n_1=k_3=k_4=0$, and we get $2m_1+n_2+n_3=24$. 

If $n_2=n_3=0$, then we get $m_1=12$. Thus the $12$ $\beta^2\gamma$-vertices already contain all $12$ appearances of the angle $\gamma$. Therefore $\gamma$ does not appear in any other vertex, and we are left with the anglewise vertex combination $\{12\beta^2\gamma,k_5\delta^2\epsilon, k_6\delta\epsilon^2\}$. By counting the total numbers of the angles $\delta$ and $\epsilon$, we get $2k_5+k_6=k_5+2k_6=12$. This implies $k_5=k_6=4>0$. Therefore both $\delta^2\epsilon$ and $\delta\epsilon^2$ are vertices, a contradiction to Lemma \ref{aexclude}. 

So we may assume $n_2>0$. By Lemma \ref{aexclude}, we get $n_3=k_5=0$, so that the anglewise vertex combination becomes $\{m_1\beta^2\gamma,n_2\beta\delta^2,k_1\gamma^2\delta,k_2\gamma^2\epsilon,k_6\delta\epsilon^2\}$. By counting the total number of each angle, we get
\[
2m_1+n_2=24, \quad
m_1+2k_1+2k_2
=2n_2+k_1+k_6
=k_2+2k_6
=12.
\]
By Lemma \ref{aexclude}, we must have either $k_1=0$ or $k_2=0$. If $k_1=0$, the system gives $k_2=\frac{4}{5}$. If $k_2=0$, the system gives $k_1=\frac{2}{3}$. Since the numbers are not integers, we get a contradiction.

\medskip

{\bf Case 5.7}: $\beta,\gamma,\delta,\epsilon,\zeta$ are distinct.  

\medskip

{\bf Case 5.7.1}: There are $\beta\gamma\delta$-type vertices. 

We may assume $\beta\gamma\delta$ is a vertex. This implies $\beta+\gamma+\delta=3\alpha$, so that $\beta,\gamma,\delta$ cannot form $\beta^2\gamma$-type vertices. By Lemma \ref{anglesum}, we have $\epsilon+\zeta=5\alpha-(\beta+\gamma+\delta)=2\alpha$, so that $\epsilon,\zeta$ cannot appear at the same vertex. By Lemma \ref{aexclude}, the only $\beta\gamma\delta$-type vertex is $\beta\gamma\delta$, and the only way for $\epsilon,\zeta$ to appear is to get combined with angles from $\beta,\gamma,\delta$.

If $\epsilon,\zeta$ can be combined with the same angle from $\beta,\gamma,\delta$, then up to symmetry, we may assume $\beta\gamma\delta$, $\beta^2\epsilon$, $\beta\zeta^2$ are vertices. We get $\beta+\gamma+\delta=2\beta+\epsilon=\beta+2\zeta=3\alpha$. Together with $\epsilon+\zeta=2\alpha$, we get $\delta=\alpha$, a contradiction.

So $\epsilon,\zeta$ must be combined with different angles from $\beta,\gamma,\delta$. By Lemma \ref{aexclude} and up to symmetry, there are three possible combinations. 

In the first combination, $\beta\gamma\delta$, $\beta^2\epsilon$, $\gamma^2\zeta$ are vertices. We get $\beta+\gamma+\delta=2\beta+\epsilon=2\gamma+\zeta=3\alpha$. Together with $\epsilon+\zeta=2\alpha$, we get $\delta=\alpha$, a contradiction.

In the second combination, $\beta\gamma\delta$, $\beta\epsilon^2$, $\gamma\zeta^2$ are vertices. Similar to the first combination, we also get the contradiction $\delta=\alpha$.

In the third combination, $\beta\gamma\delta$, $\beta^2\epsilon$, $\gamma\zeta^2$ are vertices. We get $\beta+\gamma+\delta=2\beta+\epsilon=\gamma+2\zeta=3\alpha$. Together with $\epsilon+\zeta=2\alpha$, we do not get a contradiction as before. However, if $\beta\gamma\delta$, $\beta^2\epsilon$, $\gamma\zeta^2$ are the only vertices, then the total number of $\beta$ will be strictly bigger than the total number of $\delta$. Since both total numbers are actually $12$, we conclude that these cannot be all the vertices. By Lemma \ref{aexclude}, the only other possible vertices are  $\delta\epsilon^2$, $\delta^2\zeta$. If $\delta\epsilon^2$ is a vertex, then $\delta+2\epsilon=3\alpha$. Adding this to the equations above, we get all angles equal, a contradiction. If $\delta^2\zeta$ is a vertex, then we get a similar contradiction. 

\medskip

{\bf Case 5.7.2}: There is no $\beta\gamma\delta$-type vertex. 

We may assume $\beta^2\gamma$ is a vertex. Since there is no $\beta\gamma\delta$-type vertex, by Lemma \ref{aexclude}, the only other vertex involving $\beta$ is $\beta\theta^2$ for a unique $\theta$, and the only other vertex involving $\gamma$ is $\gamma^2\rho$ for a unique $\rho$. Since each of three distinct angles $\delta,\epsilon,\zeta$ must appear at some vertex, there must be some pairing among $\delta,\epsilon,\zeta$. Without loss of generality, therefore, we may assume both $\beta^2\gamma$, $\delta^2\epsilon$ are vertices. Moreover, up to symmetry, we may further assume that $\zeta$ is paired with $\beta$ or $\gamma$. Given the existence of $\beta^2\gamma$, the only possibilities are $\beta\zeta^2$ and $\gamma^2\zeta$. So either $\beta^2\gamma$, $\delta^2\epsilon$, $\beta\zeta^2$ are vertices, or $\beta^2\gamma$, $\delta^2\epsilon$, $\gamma^2\zeta$ are vertices.

If $\beta^2\gamma$, $\delta^2\epsilon$, $\beta\zeta^2$ are vertices, then $2\beta+\gamma=2\delta+\epsilon=\beta+2\zeta=3\alpha$. Combined with $\beta+\gamma+\delta+\epsilon+\zeta=5\alpha$ from Lemma \ref{anglesum}, we get the following pairwise relations
\begin{align*}
3\beta+2\delta&=5\alpha,&
3\beta-\epsilon&=2\alpha,&
3\gamma-4\delta&=-\alpha,&
3\gamma+2\epsilon&=5\alpha,\\
\gamma-4\zeta&=-3\alpha,&
-\delta+3\zeta&=2\alpha,&
\epsilon+6\zeta&=7\alpha.
\end{align*}
The relations exclude all other vertices, which must be of $\beta^2\gamma$-type. Therefore the anglewise vertex combination is $\{m\beta^2\gamma,n\delta^2\epsilon,k\beta\zeta^2\}$. This implies that the total number of $\beta$ is strictly bigger than the total number of $\gamma$. Since both numbers are $12$, we get a contradiction.

If $\beta^2\gamma$, $\delta^2\epsilon$, $\gamma^2\zeta$ are vertices, then $2\beta+\gamma=2\delta+\epsilon=2\gamma+\zeta=3\alpha$. Similar to the previous case, the equations together with $\beta+\gamma+\delta+\epsilon+\zeta=5\alpha$ from Lemma \ref{anglesum} exclude all other vertices. Therefore the anglewise vertex combination is $\{m\beta^2\gamma, n\delta^2\epsilon, k\gamma^2\zeta\}$. Then the total number of $\delta$ is more than the total number of $\epsilon$, a contradiction.
\end{proof}

\section{Classification of Angle Congruent Tiling}
\label{angletile2}

Proposition \ref{angle} classifies the numerics of spherical tilings by $12$ angle congruent pentagons. In this section, we study how the numerical information can fit the combinatorial structure in Figure \ref{tile_pattern_pic}, similar to what we have done for edges in Section \ref{edgetile2}.

In Section \ref{edgetile2}, we introduced the notations $P_i$, $E_{ij}$, $V_{ijk}$ for the tiles, the edges and the vertices in Figure \ref{tile_pattern_pic}. In this section, we denote by $A_{i,jk}$ the angle of the tile $P_i$ at the vertex $V_{ijk}$. We will even write $V_{ijk}=\alpha\beta\gamma$ if we know $V_{ijk}$ is an $\alpha\beta\gamma$-vertex.

Inside a specified tile, we can also name a vertex by its angle and an edge by the two angles at the ends. For example, in Figure \ref{generalclass}, $A_{1,23}$ is a $\delta$-angle, $A_{1,26}$ and $A_{1,34}$ are $\alpha$-angles. Moreover, $E_{12}$ and $E_{13}$ are $\alpha\delta$-edges of $P_1$.

In this section, we let $\alpha=\frac{2}{3}\pi$ and assume $\alpha, \beta,\gamma,\dotsc$ are always distinct.

There is not much to classify for the angle combination $\alpha^5$. Basically, we assign $\alpha$ to all angles in Figure \ref{tile_pattern_pic}. This can be realized by the regular dodecahedron tiling.

There are many angle congruent tilings for the angle combination $\alpha^3\beta\gamma$. Fortunately, the classification is not needed in the proof of the main theorem. We simply present some examples in Figure \ref{angleexamples}. Note that for any thick line in any tiling, we can exchange the angles $\beta$ and $\gamma$ along the line and still get an angle congruent tiling. 

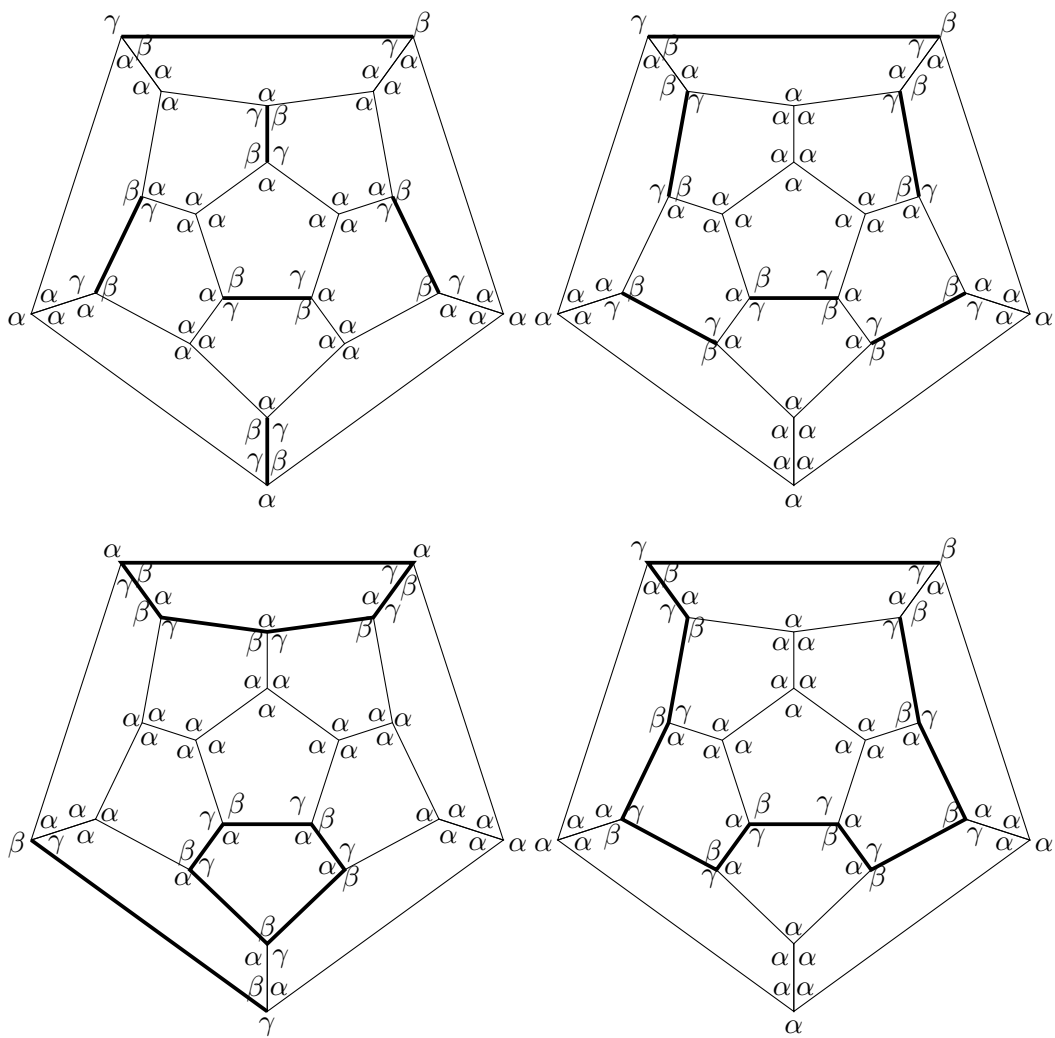
\begin{figure}[htp]
\centering
    \begin{tikzpicture}[scale=1]


\foreach \y in {0,7}
\foreach \z in {0,-7}
{
\begin{scope}[shift={(\y,\z)}]
	\foreach \x in {1,...,5}
    \draw 
    		(-54+72*\x:1) -- (18+72*\x:1)
    		(-54+72*\x:1) -- (-54+72*\x:1.75) -- (-18+72*\x:2.4) -- (18+72*\x:1.75)
		(-18+72*\x:2.4) -- (-18+72*\x:3.3) -- (54+72*\x:3.3) -- (54+72*\x:2.4);
    
\end{scope}
}


    \draw[line width=0.05cm]
		(234:1) -- (-54:1)
		(90:1) -- (90:1.75)
		(54:3.3) -- (126:3.3)
		(18:1.75) -- (-18:2.4)
		(162:1.75) -- (198:2.4)
		(-90:2.4) -- (-90:3.3);

    \node at (18:0.7) {\small $\alpha$};
    \node at (8:1.1) {\small $\alpha$};
    \node at (28:1.1) {\small $\alpha$};
    
    \node at (90:0.7) {\small $\alpha$};
    \node at (80:1.1) {\small $\gamma$};
    \node at (100:1.1) {\small $\beta$};
    
    \node at (162:0.7) {\small $\alpha$};
    \node at (152:1.1) {\small $\alpha$};
    \node at (172:1.1) {\small $\alpha$};
    
    \node at (234:0.7) {\small $\beta$};
    \node at (224:1.1) {\small $\alpha$};
    \node at (244:1.1) {\small $\gamma$};
    
    \node at (-54:0.7) {\small $\gamma$};
    \node at (-44:1.1) {\small $\alpha$};
    \node at (-64:1.1) {\small $\beta$};

    \node at (18:1.9) {\small $\beta$};
    \node at (12:1.6) {\small $\gamma$};
    \node at (24:1.6) {\small $\alpha$};
    
    \node at (90:1.9) {\small $\alpha$};
    \node at (84:1.6) {\small $\beta$};
    \node at (96:1.6) {\small $\gamma$};
    
    \node at (162:1.9) {\small $\beta$};
    \node at (156:1.6) {\small $\alpha$};
    \node at (168:1.6) {\small $\gamma$};
    
    \node at (234:1.9) {\small $\alpha$};
    \node at (228:1.6) {\small $\alpha$};
    \node at (240:1.6) {\small $\alpha$};
    
    \node at (-54:1.9) {\small $\alpha$};
    \node at (-48:1.6) {\small $\alpha$};
    \node at (-60:1.6) {\small $\alpha$};

    \node at (54:2.2) {\small $\alpha$};
    \node at (50:2.6) {\small $\alpha$};
    \node at (58:2.6) {\small $\alpha$};
    
    \node at (126:2.2) {\small $\alpha$};
    \node at (122:2.6) {\small $\alpha$};
    \node at (130:2.6) {\small $\alpha$};
    
    \node at (198:2.2) {\small $\beta$};
    \node at (194:2.6) {\small $\gamma$};
    \node at (202:2.6) {\small $\alpha$};

    \node at (-90:2.2) {\small $\alpha$};
    \node at (-94:2.6) {\small $\beta$};
    \node at (-86:2.6) {\small $\gamma$};
    
    \node at (-18:2.2) {\small $\beta$};
    \node at (-22:2.6) {\small $\alpha$};
    \node at (-14:2.6) {\small $\gamma$};

    \node at (54:3.5) {\small $\beta$};
    \node at (51:3) {\small $\alpha$};
    \node at (57:3) {\small $\gamma$};

    \node at (126:3.5) {\small $\gamma$};
    \node at (123:3) {\small $\beta$};
    \node at (129:3) {\small $\alpha$};

    \node at (198:3.5) {\small $\alpha$};
    \node at (195:3) {\small $\alpha$};
    \node at (201:3) {\small $\alpha$};

    \node at (-90:3.5) {\small $\alpha$};
    \node at (-93:3) {\small $\gamma$};
    \node at (-87:3) {\small $\beta$};

    \node at (-18:3.5) {\small $\alpha$};
    \node at (-15:3) {\small $\alpha$};
    \node at (-21:3) {\small $\alpha$};


\begin{scope}[xshift=7cm]

\draw[line width=0.05cm]
		(234:1) -- (-54:1)
		(18:1.75) -- (54:2.4)
		(162:1.75) -- (126:2.4)
		(54:3.3) -- (126:3.3)
		(234:1.75) -- (198:2.4)
		(-54:1.75) -- (-18:2.4);

    \node at (18:0.7) {\small $\alpha$};
    \node at (8:1.1) {\small $\alpha$};
    \node at (28:1.1) {\small $\alpha$};
    
    \node at (90:0.7) {\small $\alpha$};
    \node at (80:1.1) {\small $\alpha$};
    \node at (100:1.1) {\small $\alpha$};
    
    \node at (162:0.7) {\small $\alpha$};
    \node at (152:1.1) {\small $\alpha$};
    \node at (172:1.1) {\small $\alpha$};
    
    \node at (234:0.7) {\small $\beta$};
    \node at (224:1.1) {\small $\alpha$};
    \node at (244:1.1) {\small $\gamma$};
    
    \node at (-54:0.7) {\small $\gamma$};
    \node at (-44:1.1) {\small $\alpha$};
    \node at (-64:1.1) {\small $\beta$};

    \node at (18:1.9) {\small $\gamma$};
    \node at (12:1.6) {\small $\alpha$};
    \node at (24:1.6) {\small $\beta$};
    
    \node at (90:1.9) {\small $\alpha$};
    \node at (84:1.6) {\small $\alpha$};
    \node at (96:1.6) {\small $\alpha$};
    
    \node at (162:1.9) {\small $\gamma$};
    \node at (156:1.6) {\small $\beta$};
    \node at (168:1.6) {\small $\alpha$};
    
    \node at (234:1.9) {\small $\beta$};
    \node at (228:1.6) {\small $\gamma$};
    \node at (240:1.6) {\small $\alpha$};
    
    \node at (-54:1.9) {\small $\beta$};
    \node at (-48:1.6) {\small $\gamma$};
    \node at (-60:1.6) {\small $\alpha$};

    \node at (54:2.2) {\small $\gamma$};
    \node at (50:2.6) {\small $\beta$};
    \node at (58:2.6) {\small $\alpha$};
    
    \node at (126:2.2) {\small $\gamma$};
    \node at (122:2.6) {\small $\alpha$};
    \node at (130:2.6) {\small $\beta$};
    
    \node at (198:2.2) {\small $\beta$};
    \node at (194:2.6) {\small $\alpha$};
    \node at (202:2.6) {\small $\gamma$};

    \node at (-90:2.2) {\small $\alpha$};
    \node at (-94:2.6) {\small $\alpha$};
    \node at (-86:2.6) {\small $\alpha$};
    
    \node at (-18:2.2) {\small $\beta$};
    \node at (-22:2.6) {\small $\gamma$};
    \node at (-14:2.6) {\small $\alpha$};

    \node at (54:3.5) {\small $\beta$};
    \node at (51:3) {\small $\alpha$};
    \node at (57:3) {\small $\gamma$};

    \node at (126:3.5) {\small $\gamma$};
    \node at (123:3) {\small $\beta$};
    \node at (129:3) {\small $\alpha$};

    \node at (198:3.5) {\small $\alpha$};
    \node at (195:3) {\small $\alpha$};
    \node at (201:3) {\small $\alpha$};

    \node at (-90:3.5) {\small $\alpha$};
    \node at (-93:3) {\small $\alpha$};
    \node at (-87:3) {\small $\alpha$};

    \node at (-18:3.5) {\small $\alpha$};
    \node at (-15:3) {\small $\alpha$};
    \node at (-21:3) {\small $\alpha$};
    
    \end{scope}


    \begin{scope}[yshift=-7cm]

\draw[line width=0.05cm]
		(54:3.3) -- (54:2.4) -- (90:1.75) -- (126:2.4) -- (126:3.3) -- cycle
		(234:1.75) -- (234:1) -- (-54:1) -- (-54:1.75) -- (-90:2.4) -- cycle
		(-90:3.3) -- (198:3.3);

    \node at (18:0.7) {\small $\alpha$};
    \node at (8:1.1) {\small $\alpha$};
    \node at (28:1.1) {\small $\alpha$};
    
    \node at (90:0.7) {\small $\alpha$};
    \node at (80:1.1) {\small $\alpha$};
    \node at (100:1.1) {\small $\alpha$};
    
    \node at (162:0.7) {\small $\alpha$};
    \node at (152:1.1) {\small $\alpha$};
    \node at (172:1.1) {\small $\alpha$};
    
    \node at (234:0.7) {\small $\beta$};
    \node at (224:1.1) {\small $\gamma$};
    \node at (244:1.1) {\small $\alpha$};
    
    \node at (-54:0.7) {\small $\gamma$};
    \node at (-44:1.1) {\small $\beta$};
    \node at (-64:1.1) {\small $\alpha$};

    \node at (18:1.9) {\small $\alpha$};
    \node at (12:1.6) {\small $\alpha$};
    \node at (24:1.6) {\small $\alpha$};
    
    \node at (90:1.9) {\small $\alpha$};
    \node at (84:1.6) {\small $\gamma$};
    \node at (96:1.6) {\small $\beta$};
    
    \node at (162:1.9) {\small $\alpha$};
    \node at (156:1.6) {\small $\alpha$};
    \node at (168:1.6) {\small $\alpha$};
    
    \node at (234:1.9) {\small $\alpha$};
    \node at (228:1.6) {\small $\beta$};
    \node at (240:1.6) {\small $\gamma$};
    
    \node at (-54:1.9) {\small $\beta$};
    \node at (-48:1.6) {\small $\gamma$};
    \node at (-60:1.6) {\small $\alpha$};

    \node at (54:2.2) {\small $\beta$};
    \node at (50:2.6) {\small $\gamma$};
    \node at (58:2.6) {\small $\alpha$};
    
    \node at (126:2.2) {\small $\gamma$};
    \node at (122:2.6) {\small $\alpha$};
    \node at (130:2.6) {\small $\beta$};
    
    \node at (198:2.2) {\small $\alpha$};
    \node at (194:2.6) {\small $\alpha$};
    \node at (202:2.6) {\small $\alpha$};

    \node at (-90:2.2) {\small $\beta$};
    \node at (-94:2.6) {\small $\alpha$};
    \node at (-86:2.6) {\small $\gamma$};
    
    \node at (-18:2.2) {\small $\alpha$};
    \node at (-22:2.6) {\small $\alpha$};
    \node at (-14:2.6) {\small $\alpha$};

    \node at (54:3.5) {\small $\alpha$};
    \node at (51:3) {\small $\beta$};
    \node at (57:3) {\small $\gamma$};

    \node at (126:3.5) {\small $\alpha$};
    \node at (123:3) {\small $\beta$};
    \node at (129:3) {\small $\gamma$};

    \node at (198:3.5) {\small $\beta$};
    \node at (195:3) {\small $\alpha$};
    \node at (201:3) {\small $\gamma$};

    \node at (-90:3.5) {\small $\gamma$};
    \node at (-93:3) {\small $\beta$};
    \node at (-87:3) {\small $\alpha$};

    \node at (-18:3.5) {\small $\alpha$};
    \node at (-15:3) {\small $\alpha$};
    \node at (-21:3) {\small $\alpha$};
    
    \end{scope}


\begin{scope}[shift={(7cm,-7cm)}]

\draw[line width=0.05cm]
		(54:2.4) -- (18:1.75) -- (-18:2.4) -- (-54:1.75) -- (-54:1) -- (234:1) -- (234:1.75) -- (198:2.4) -- (162:1.75) -- (126:2.4) -- (126:3.3) -- (54:3.3);

    \node at (18:0.7) {\small $\alpha$};
    \node at (8:1.1) {\small $\alpha$};
    \node at (28:1.1) {\small $\alpha$};
    
    \node at (90:0.7) {\small $\alpha$};
    \node at (80:1.1) {\small $\alpha$};
    \node at (100:1.1) {\small $\alpha$};
    
    \node at (162:0.7) {\small $\alpha$};
    \node at (152:1.1) {\small $\alpha$};
    \node at (172:1.1) {\small $\alpha$};
    
    \node at (234:0.7) {\small $\beta$};
    \node at (224:1.1) {\small $\alpha$};
    \node at (244:1.1) {\small $\gamma$};
    
    \node at (-54:0.7) {\small $\gamma$};
    \node at (-44:1.1) {\small $\alpha$};
    \node at (-64:1.1) {\small $\beta$};

    \node at (18:1.9) {\small $\gamma$};
    \node at (12:1.6) {\small $\alpha$};
    \node at (24:1.6) {\small $\beta$};
    
    \node at (90:1.9) {\small $\alpha$};
    \node at (84:1.6) {\small $\alpha$};
    \node at (96:1.6) {\small $\alpha$};
    
    \node at (162:1.9) {\small $\beta$};
    \node at (156:1.6) {\small $\gamma$};
    \node at (168:1.6) {\small $\alpha$};
    
    \node at (234:1.9) {\small $\gamma$};
    \node at (228:1.6) {\small $\beta$};
    \node at (240:1.6) {\small $\alpha$};
    
    \node at (-54:1.9) {\small $\beta$};
    \node at (-48:1.6) {\small $\gamma$};
    \node at (-60:1.6) {\small $\alpha$};

    \node at (54:2.2) {\small $\gamma$};
    \node at (50:2.6) {\small $\beta$};
    \node at (58:2.6) {\small $\alpha$};
    
    \node at (126:2.2) {\small $\beta$};
    \node at (122:2.6) {\small $\alpha$};
    \node at (130:2.6) {\small $\gamma$};
    
    \node at (198:2.2) {\small $\gamma$};
    \node at (194:2.6) {\small $\alpha$};
    \node at (202:2.6) {\small $\beta$};

    \node at (-90:2.2) {\small $\alpha$};
    \node at (-94:2.6) {\small $\alpha$};
    \node at (-86:2.6) {\small $\alpha$};
    
    \node at (-18:2.2) {\small $\beta$};
    \node at (-22:2.6) {\small $\gamma$};
    \node at (-14:2.6) {\small $\alpha$};

    \node at (54:3.5) {\small $\beta$};
    \node at (51:3) {\small $\alpha$};
    \node at (57:3) {\small $\gamma$};

    \node at (126:3.5) {\small $\gamma$};
    \node at (123:3) {\small $\beta$};
    \node at (129:3) {\small $\alpha$};

    \node at (198:3.5) {\small $\alpha$};
    \node at (195:3) {\small $\alpha$};
    \node at (201:3) {\small $\alpha$};

    \node at (-90:3.5) {\small $\alpha$};
    \node at (-93:3) {\small $\alpha$};
    \node at (-87:3) {\small $\alpha$};

    \node at (-18:3.5) {\small $\alpha$};
    \node at (-15:3) {\small $\alpha$};
    \node at (-21:3) {\small $\alpha$};
    
    \end{scope}

    \end{tikzpicture}
\caption{Some tilings for the angle combination $\alpha^3\beta\gamma$.}
\label{angleexamples}
\end{figure}

\begin{proposition}\label{angle_pattern3}
The spherical tilings by $12$ angle congruent pentagons with angles $\alpha,\alpha,\beta,\beta,\gamma$, where $\alpha=\frac{2\pi}{3}$ and $\alpha,\beta,\gamma$ are distinct, are given by Figure \ref{2a2b1cB} up to symmetry.
Note that in the tiling on the right, we can exchange the four angles $\beta$ and $\gamma$ around any thick line and still get a tiling. 
\end{proposition}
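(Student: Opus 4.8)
The plan is to combine Proposition~\ref{angle} with the rigid combinatorial skeleton of Proposition~\ref{tile_pattern}. By the case $\alpha^2\beta^2\gamma$ of Proposition~\ref{angle}, the angles satisfy $2\beta+\gamma=3\alpha$ and every vertex of the tiling is either $\alpha^3$ or $\beta^2\gamma$, with anglewise vertex combination $\{8\alpha^3,12\beta^2\gamma\}$; by Proposition~\ref{tile_pattern} the tiling is the dodecahedron of Figure~\ref{tile_pattern_pic} with all vertices of degree $3$. Consequently each $\alpha$-angle of a tile lies at an $\alpha^3$-vertex and each $\beta$- or $\gamma$-angle lies at a $\beta^2\gamma$-vertex, and counting shows these fillings are exact: the $24$ copies of $\alpha$ fill the eight $\alpha^3$-vertices, while the $24$ copies of $\beta$ together with the $12$ copies of $\gamma$ fill the twelve $\beta^2\gamma$-vertices. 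In particular, since every tile has a unique $\gamma$-angle and every $\beta^2\gamma$-vertex is the $\gamma$-vertex of exactly one of its three tiles, the assignment ``$\gamma$-vertex of $P_i$'' is a bijection between the twelve tiles and the twelve $\beta^2\gamma$-vertices; this will be the main bookkeeping device.

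Next I would classify the local angle arrangement inside one pentagon. Up to rotation and reflection there are exactly four cyclic arrangements of $\{\alpha,\alpha,\beta,\beta,\gamma\}$, distinguished by the two angles adjacent to $\gamma$: $(\alpha,\alpha)$; $(\beta,\beta)$; $(\alpha,\beta)$ with the remaining $\alpha$ and $\beta$ adjacent; and $(\alpha,\beta)$ with the remaining $\alpha$ and $\beta$ separated. I would then record the two propagation rules supplied by the vertex conditions: an $\alpha$-angle of a tile forces both other tiles at that vertex to have $\alpha$ there (the vertex is $\alpha^3$), while a $\gamma$- or $\beta$-angle forces the remaining two angles at that vertex to be $\beta,\beta$ or $\beta,\gamma$ (the vertex is $\beta^2\gamma$). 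Exactly as in the proofs of Propositions~\ref{edge_pattern3} and~\ref{edge_pattern4}, I would fix $P_1$ in each of the four arrangements, using the symmetry of the dodecahedron to normalize the rotation, and push the forced angles out to the five neighbours $P_2,\dots,P_6$ and then to $P_7,\dots,P_{12}$, invoking Lemmas~\ref{atype} and~\ref{aexclude} to discard any branch that forces a vertex with an illegal combination such as an $\alpha\beta\gamma$-type or $\beta\gamma\delta$-type vertex. I expect the surviving arrangements to close up into precisely the two tilings of Figure~\ref{2a2b1cB}, and the others to be eliminated. The ``swap $\beta\leftrightarrow\gamma$ along a thick line'' statement should then come for free: along such a chain of edges the identity $2\beta+\gamma=3\alpha$ allows the incident $\beta^2\gamma$-vertices to be relabelled consistently, so each swap again satisfies every vertex condition.

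The substantive difficulty is the bookkeeping in this propagation. For each of the four starting arrangements the argument branches according to the orientation of $P_1$ and the matching of its two $\beta$-angles (and two $\alpha$-angles) with specific neighbouring tiles, so care is needed both to be sure that no legal completion is missed and that two completions differing only by a symmetry of the dodecahedron---or by the $\beta\leftrightarrow\gamma$ swap---are not counted as distinct. I would keep this under control by always anchoring the analysis at the $\gamma$-vertex of $P_1$ or at an $\alpha^3$-vertex, and by repeatedly using the tile-to-$\beta^2\gamma$-vertex bijection together with Lemma~\ref{aexclude}, so that most branches already terminate within the first two layers of tiles around $P_1$.
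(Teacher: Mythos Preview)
Your proposal is correct and follows essentially the same approach as the paper: derive the AVC condition $\{\alpha^3,\beta^2\gamma\}$ from Proposition~\ref{angle}, list the four cyclic arrangements of $\alpha,\alpha,\beta,\beta,\gamma$ in a tile, and propagate from $P_1$ through the dodecahedral skeleton to eliminate two arrangements and show the other two close up as in Figure~\ref{2a2b1cB}. Your extra bookkeeping device (the bijection between tiles and $\beta^2\gamma$-vertices via the $\gamma$-angle) is a nice observation but is not needed in the paper's argument, and note that the thick lines in the right-hand tiling are single edges rather than chains, so the $\beta\leftrightarrow\gamma$ swap is a purely local verification at two adjacent $\beta^2\gamma$-vertices.
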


As an example of the exchange, on the right of Figure \ref{2a2b1cB}, we can exchange $\beta$ and $\gamma$ around the edge $E_{15}$ to get $A_{1,45}=A_{5,16}=\gamma$ and $A_{1,56}=A_{5,14}=\beta$. But we should keep $A_{4,15}=A_{6,15}=\beta$. The result is also an angle congruent tiling.

\begin{proof}
We are in the third case of Proposition \ref{angle}. The anglewise vertex combination is $\{8\alpha^3,12\beta^2\gamma\}$, which implies the following AVC (for anglewise vertex combination) condition.

\medskip

{\bf AVC}(for $\alpha^2\beta^2\gamma$): Any vertex is either $\alpha^3$ or $\beta^2\gamma$. 

\medskip

Up to symmetry, there are four possible ways of arranging the angles in the pentagon, described in Figure \ref{2a2b1c}. We start by assuming the angles of the tile $P_1$ in Figure \ref{tile_pattern_pic} are arranged as in Figure \ref{2a2b1c}.

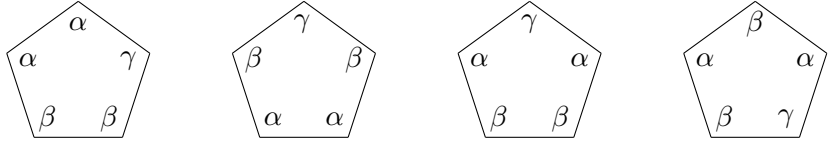
\begin{figure}[htp]
\centering
    \begin{tikzpicture}[scale=1]
    
   
    \foreach \x in {1,...,5}
    \draw (-54+72*\x:1) -- (18+72*\x:1);
	
    \node at (18:0.7) {\small$\gamma$};
    \node at (90:0.7) {\small$\alpha$};
    \node at (162:0.7) {\small$\alpha$};
    \node at (234:0.7) {\small$\beta$};
    \node at (-54:0.7) {\small$\beta$};


    \begin{scope}[xshift = 3cm]
    
    \foreach \x in {1,...,5}
    \draw (-54+72*\x:1) -- (18+72*\x:1);
	
	\node at (18:0.7) {\small$\beta$};
    \node at (92:0.7) {\small$\gamma$};
    \node at (162:0.7) {\small$\beta$};
    \node at (234:0.7) {\small$\alpha$};
    \node at (-54:0.7) {\small$\alpha$};
    
    \end{scope}


    \begin{scope}[xshift = 6cm]
    
    \foreach \x in {1,...,5}
    \draw (-54+72*\x:1) -- (18+72*\x:1);
	
	\node at (18:0.7) {\small$\alpha$};
    \node at (90:0.7) {\small$\gamma$};
    \node at (162:0.7) {\small$\alpha$};
    \node at (234:0.7) {\small$\beta$};
    \node at (-54:0.7) {\small$\beta$};
    
    \end{scope}

    
    \begin{scope}[xshift = 9cm]
    
    \foreach \x in {1,...,5}
    \draw (-54+72*\x:1) -- (18+72*\x:1);
	
	\node at (18:0.7) {\small$\alpha$};
    \node at (90:0.7) {\small$\beta$};
    \node at (162:0.7) {\small$\alpha$};
    \node at (234:0.7) {\small$\beta$};
    \node at (-54:0.7) {\small$\gamma$};
    
    \end{scope}

    \end{tikzpicture}
\caption{Four possible arrangements for the angle combination $\alpha^2\beta^2\gamma$.}
\label{2a2b1c}
\end{figure}

\medskip

{\bf Arrangement 1}

The case is the left of Figure \ref{2a2b1cA}. By $A_{1,26}=\gamma$ and the AVC condition, we get $V_{126}=\beta^2\gamma$, so that $A_{2,16}=A_{6,12}=\beta$. By $A_{1,23}=A_{1,34}=\alpha$ and the AVC condition, we get $V_{123}=V_{134}=\alpha^3$, so that $A_{2,13}=A_{3,12}=A_{3,14}=A_{4,13}=\alpha$. By $A_{2,13}=\alpha$, $A_{2,16}=\beta$, we get all the angles of $P_2$. Then by $A_{2,37}=\alpha$ and the AVC condition, we get $V_{237}=\alpha^3$, so that there are three $\alpha$ in $P_3$, a contradiction.

\begin{figure}[htp]
\centering
    \begin{tikzpicture}[scale=1]

	\foreach \x in {1,...,5}
    \draw (-54+72*\x:1) -- (18+72*\x:1);
    
    \draw 
    		(90:1) -- (90:1.75) -- (54:2.5) -- (18:1.75) -- (18:1)
    		(90:1.75) -- (126:2.5) -- (162:1.75) -- (162:1)
		(234:1) -- (234:1.75) -- (198:2.5) -- (162:1.75)
		(-54:1) -- (-54:1.75) -- (-18:2.5) -- (18:1.75)
		(54:2.5) -- (54:3.3) -- (126:3.3) -- (126:2.5);

    \node at (0:0) {\small $1$};
    \node at (54:1.5) {\small $2$};
    \node at (126:1.5) {\small $3$};
    \node at (198:1.5) {\small $4$};
    \node at (-18:1.5) {\small $6$};
    \node at (90:2.4) {\small $7$};
    
    \node at (18:0.7) {\small $\gamma$};
    \node at (8:1.1) {\small $\beta$};
    \node at (28:1.1) {\small $\beta$};
    
    \node at (90:0.7) {\small $\alpha$};
    \node at (80:1.1) {\small $\alpha$};
    \node at (100:1.1) {\small $\alpha$};
    
    \node at (162:0.7) {\small $\alpha$};
    \node at (152:1.1) {\small $\alpha$};
    \node at (172:1.1) {\small $\alpha$};
    
    \node at (234:0.7) {\small $\beta$};
    
    \node at (-54:0.7) {\small $\beta$};
    
    \node at (90:1.9) {\small $\alpha$};
    \node at (84:1.6) {\small $\alpha$};
    \node at (96:1.6) {\small $\alpha$};
    
    \node at (54:2.2) {\small $\gamma$};
    
    \node at (28:1.6) {\small $\beta$};

	\begin{scope}[xshift=6cm]
	
	\foreach \x in {1,...,5}
    \draw (-54+72*\x:1) -- (18+72*\x:1);
    
    \draw 
    		(90:1) -- (90:1.75) -- (54:2.5) -- (18:1.75) -- (18:1)
    		(90:1.75) -- (126:2.5) -- (162:1.75) -- (162:1)
		(234:1) -- (234:1.75) -- (198:2.5) -- (162:1.75)
		(-54:1) -- (-54:1.75) -- (-18:2.5) -- (18:1.75)
		(54:2.5) -- (54:3.3) -- (126:3.3) -- (126:2.5);

    \node at (0:0) {\small $1$};
    \node at (54:1.5) {\small $2$};
    \node at (126:1.5) {\small $3$};
    \node at (198:1.5) {\small $4$};
    \node at (-18:1.5) {\small $6$};
    \node at (90:2.4) {\small $7$};
    \node at (162:2.4) {\small $8$};
    \node at (18:2.4) {\small $11$};
    
    \node at (18:0.7) {\small $\beta$};
    \node at (8:1.1) {\small $\beta$};
    \node at (28:1.1) {\small $\gamma$};
    
    \node at (90:0.7) {\small $\gamma$};
    \node at (80:1.1) {\small $\beta$};
    \node at (100:1.1) {\small $\beta$};
    
    \node at (162:0.7) {\small $\beta$};
    \node at (152:1.1) {\small $\gamma$};
    \node at (172:1.1) {\small $\beta$};
    
    \node at (234:0.7) {\small $\alpha$};
    
    \node at (-54:0.7) {\small $\alpha$};
    
    \node at (90:1.9) {\small $\alpha$};
    \node at (84:1.6) {\small $\alpha$};
    \node at (96:1.6) {\small $\alpha$};
    
    \node at (28:1.6) {\small $\beta$};
    
    \node at (152:1.6) {\small $\beta$};
    
    \node at (54:2.2) {\small $\alpha$};
    \node at (50:2.6) {\small $\alpha$};
    \node at (58:2.6) {\small $\alpha$};
    
    \node at (126:2.2) {\small $\alpha$};
    \node at (122:2.6) {\small $\alpha$};
    \node at (130:2.6) {\small $\alpha$};

    \end{scope}

    \end{tikzpicture}
\caption{Tilings for the angle combination $\alpha^2\beta^2\gamma$.}
\label{2a2b1cA}
\end{figure}
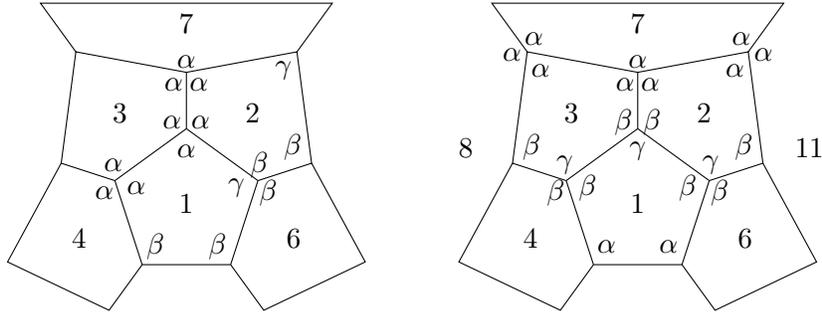

\medskip

{\bf Arrangement 2}

The case is the right of Figure \ref{2a2b1cA}. By $A_{1,23}=\gamma$ and AVC, we get $A_{2,13}=A_{3,12}=\beta$. By $A_{1,26}=\beta$ and AVC, we get $V_{126}=\beta^2\gamma$, so that $A_{2,16}=\beta$ or $\gamma$. Since the two $\beta$ in $P_2$ are not adjacent, we get $A_{2,16}=\gamma$. Together with $A_{2,13}=\beta$, we get all the angles of $P_2$. By the same reason, we get all the angles of $P_3$. Then by AVC, we get $V_{237}=V_{378}=V_{27\overline{11}}=\alpha^3$, so that there are three $\alpha$ in $P_7$, a contradiction.

\medskip

{\bf Arrangement 3}

The case is the left of Figure \ref{2a2b1cB}. By AVC, we get all the angles at $V_{123}$, $V_{134}$, $V_{126}$. By $A_{2,16}=A_{3,14}=\alpha$, $A_{2,13}=A_{3,12}=\beta$, we get all the angles of $P_2$, $P_3$. By AVC, we further get all the angles at all the vertices of $P_2$, $P_3$. The angles at these vertices then further determine all the angles of $P_4$, $P_6$, $P_7$, $P_8$, $P_{\overline{11}}$. By AVC, we again get all the angles at all the vertices of these tiles. Then we further get all the angles of $P_5$, $P_9$, $P_{10}$. Finally we get all the angles of $P_{12}$ by AVC.

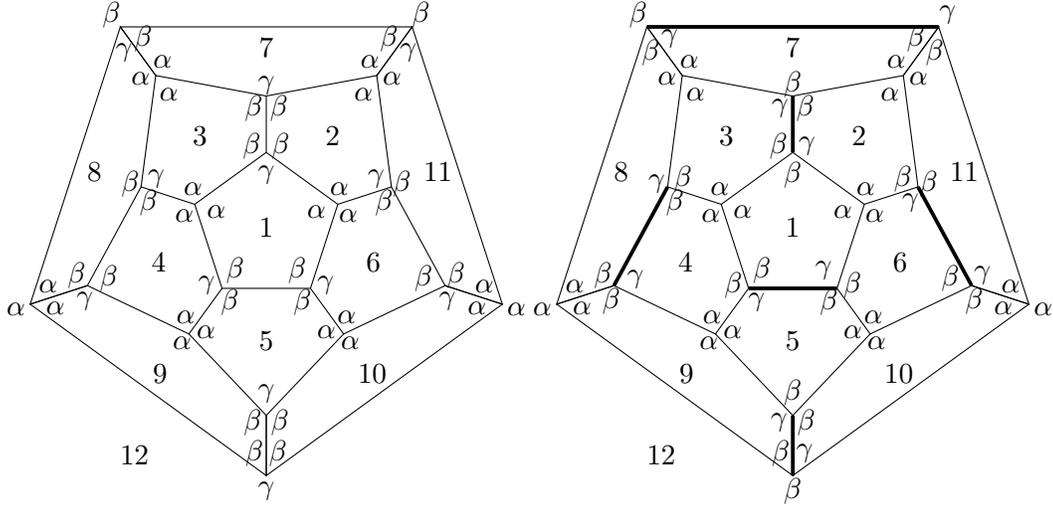
\begin{figure}[htp]
\centering
    \begin{tikzpicture}[scale=1]


	\foreach \x in {1,...,5}
    \draw 
    		(-54+72*\x:1) -- (18+72*\x:1)
    		(-54+72*\x:1) -- (-54+72*\x:1.75) -- (-18+72*\x:2.5) -- (18+72*\x:1.75)
		(-18+72*\x:2.5) -- (-18+72*\x:3.3) -- (54+72*\x:3.3) -- (54+72*\x:2.5);

    \node at (0:0) {\small $1$};
    \node at (54:1.5) {\small $2$};
    \node at (126:1.5) {\small $3$};
    \node at (198:1.5) {\small $4$};
    \node at (-90:1.5) {\small $5$};
    \node at (-18:1.5) {\small $6$};
    \node at (90:2.4) {\small $7$};
    \node at (162:2.4) {\small $8$};
    \node at (234:2.4) {\small $9$};
    \node at (-54:2.4) {\small $10$};
    \node at (18:2.4) {\small $11$};
    \node at (240:3.5){\small $12$};

    \node at (18:0.7) {\small $\alpha$};
    \node at (8:1.1) {\small $\alpha$};
    \node at (28:1.1) {\small $\alpha$};
    
    \node at (90:0.7) {\small $\gamma$};
    \node at (80:1.1) {\small $\beta$};
    \node at (100:1.1) {\small $\beta$};
    
    \node at (162:0.7) {\small $\alpha$};
    \node at (152:1.1) {\small $\alpha$};
    \node at (172:1.1) {\small $\alpha$};
    
    \node at (234:0.7) {\small $\beta$};
    \node at (224:1.1) {\small $\gamma$};
    \node at (244:1.1) {\small $\beta$};
    
    \node at (-54:0.7) {\small $\beta$};
    \node at (-44:1.1) {\small $\gamma$};
    \node at (-64:1.1) {\small $\beta$};

    \node at (18:1.9) {\small $\beta$};
    \node at (12:1.6) {\small $\beta$};
    \node at (24:1.6) {\small $\gamma$};
    
    \node at (90:1.9) {\small $\gamma$};
    \node at (84:1.6) {\small $\beta$};
    \node at (96:1.6) {\small $\beta$};
    
    \node at (162:1.9) {\small $\beta$};
    \node at (156:1.6) {\small $\gamma$};
    \node at (168:1.6) {\small $\beta$};
    
    \node at (234:1.9) {\small $\alpha$};
    \node at (228:1.6) {\small $\alpha$};
    \node at (240:1.6) {\small $\alpha$};
    
    \node at (-54:1.9) {\small $\alpha$};
    \node at (-48:1.6) {\small $\alpha$};
    \node at (-60:1.6) {\small $\alpha$};

    \node at (54:2.2) {\small $\alpha$};
    \node at (50:2.6) {\small $\alpha$};
    \node at (58:2.6) {\small $\alpha$};
    
    \node at (126:2.2) {\small $\alpha$};
    \node at (122:2.6) {\small $\alpha$};
    \node at (130:2.6) {\small $\alpha$};
    
    \node at (198:2.2) {\small $\beta$};
    \node at (194:2.6) {\small $\beta$};
    \node at (202:2.6) {\small $\gamma$};

    \node at (-90:2.2) {\small $\gamma$};
    \node at (-94:2.6) {\small $\beta$};
    \node at (-86:2.6) {\small $\beta$};
    
    \node at (-18:2.2) {\small $\beta$};
    \node at (-22:2.6) {\small $\gamma$};
    \node at (-14:2.6) {\small $\beta$};

    \node at (54:3.5) {\small $\beta$};
    \node at (51:3) {\small $\gamma$};
    \node at (57:3) {\small $\beta$};

    \node at (126:3.5) {\small $\beta$};
    \node at (123:3) {\small $\beta$};
    \node at (129:3) {\small $\gamma$};

    \node at (198:3.5) {\small $\alpha$};
    \node at (195:3) {\small $\alpha$};
    \node at (201:3) {\small $\alpha$};

    \node at (-90:3.5) {\small $\gamma$};
    \node at (-93:3) {\small $\beta$};
    \node at (-87:3) {\small $\beta$};

    \node at (-18:3.5) {\small $\alpha$};
    \node at (-15:3) {\small $\alpha$};
    \node at (-21:3) {\small $\alpha$};

    \begin{scope}[xshift=7cm]
    
    \foreach \x in {1,...,5}
    \draw 
    		(-54+72*\x:1) -- (18+72*\x:1)
    		(-54+72*\x:1) -- (-54+72*\x:1.75) -- (-18+72*\x:2.5) -- (18+72*\x:1.75)
		(-18+72*\x:2.5) -- (-18+72*\x:3.3) -- (54+72*\x:3.3) -- (54+72*\x:2.5);

    \node at (0:0) {\small $1$};
    \node at (54:1.5) {\small $2$};
    \node at (126:1.5) {\small $3$};
    \node at (198:1.5) {\small $4$};
    \node at (-90:1.5) {\small $5$};
    \node at (-18:1.5) {\small $6$};
    \node at (90:2.4) {\small $7$};
    \node at (162:2.4) {\small $8$};
    \node at (234:2.4) {\small $9$};
    \node at (-54:2.4) {\small $10$};
    \node at (18:2.4) {\small $11$};
    \node at (240:3.5){\small $12$};

    \node at (18:0.7) {\small $\alpha$};
    \node at (8:1.1) {\small $\alpha$};
    \node at (28:1.1) {\small $\alpha$};
    
    \node at (90:0.7) {\small $\beta$};
    \node at (80:1.1) {\small $\gamma$};
    \node at (100:1.1) {\small $\beta$};
    
    \node at (162:0.7) {\small $\alpha$};
    \node at (152:1.1) {\small $\alpha$};
    \node at (172:1.1) {\small $\alpha$};
    
    \node at (234:0.7) {\small $\beta$};
    \node at (224:1.1) {\small $\beta$};
    \node at (244:1.1) {\small $\gamma$};
    
    \node at (-54:0.7) {\small $\gamma$};
    \node at (-64:1.1) {\small $\beta$};
    \node at (-44:1.1) {\small $\beta$};

    \node at (18:1.9) {\small $\beta$};
    \node at (12:1.6) {\small $\gamma$};
    \node at (24:1.6) {\small $\beta$};
    
    \node at (90:1.9) {\small $\beta$};
    \node at (84:1.6) {\small $\beta$};
    \node at (96:1.6) {\small $\gamma$};
    
    \node at (162:1.9) {\small $\gamma$};
    \node at (156:1.6) {\small $\beta$};
    \node at (168:1.6) {\small $\beta$};
    
    \node at (234:1.9) {\small $\alpha$};
    \node at (228:1.6) {\small $\alpha$};
    \node at (240:1.6) {\small $\alpha$};
    
    \node at (-54:1.9) {\small $\alpha$};
    \node at (-48:1.6) {\small $\alpha$};
    \node at (-60:1.6) {\small $\alpha$};

    \node at (54:2.2) {\small $\alpha$};
    \node at (50:2.6) {\small $\alpha$};
    \node at (58:2.6) {\small $\alpha$};
    
    \node at (126:2.2) {\small $\alpha$};
    \node at (122:2.6) {\small $\alpha$};
    \node at (130:2.6) {\small $\alpha$};
    
    \node at (198:2.2) {\small $\gamma$};
    \node at (194:2.6) {\small $\beta$};
    \node at (202:2.6) {\small $\beta$};

    \node at (-90:2.2) {\small $\beta$};
    \node at (-94:2.6) {\small $\gamma$};
    \node at (-86:2.6) {\small $\beta$};
    
    \node at (-18:2.2) {\small $\beta$};
    \node at (-22:2.6) {\small $\beta$};
    \node at (-14:2.6) {\small $\gamma$};

    \node at (54:3.5) {\small $\gamma$};
    \node at (51:3) {\small $\beta$};
    \node at (57:3) {\small $\beta$};

    \node at (126:3.5) {\small $\beta$};
    \node at (123:3) {\small $\gamma$};
    \node at (129:3) {\small $\beta$};

    \node at (198:3.5) {\small $\alpha$};
    \node at (195:3) {\small $\alpha$};
    \node at (201:3) {\small $\alpha$};

    \node at (-90:3.5) {\small $\beta$};
    \node at (-93:3) {\small $\beta$};
    \node at (-87:3) {\small $\gamma$};

    \node at (-18:3.5) {\small $\alpha$};
    \node at (-15:3) {\small $\alpha$};
    \node at (-21:3) {\small $\alpha$};


	\draw[line width=0.05cm]
		(90:1) -- (90:1.75)
		(54:3.3) -- (126:3.3)
		(18:1.75) -- (-18:2.5)
		(162:1.75) -- (198:2.5)
		(-90:2.5) -- (-90:3.3)
        (-54:1) -- (234:1);
			
	\end{scope}

    \end{tikzpicture}
\caption{Tilings for the angle combination $\alpha^2\beta^2\gamma$, continued.}
\label{2a2b1cB}
\end{figure}

\medskip

{\bf Arrangement 4}

The case is the right of Figure \ref{2a2b1cB}. By AVC, we get all the angles at $V_{134}$, $V_{126}$, $V_{156}$. We also find $V_{145}=\beta^2\gamma$, so that $A_{5,14}=\beta$ or $\gamma$. Since $A_{5,16}=\beta$ and the two $\beta$ in $P_5$ are not adjacent, we get $A_{5,14}=\gamma$ and then all the angles of $P_5$. By AVC, we further get all the angles at $V_{145}$, $V_{459}$, $V_{56\overline{10}}$. We note that we may exchange the four angles $\beta,\gamma$ around $E_{15}$ without affecting the subsequent discussion.

Now we experiment with the angle $A_{2,13}$. By AVC, it must be either $\beta$ or $\gamma$. If $A_{2,13}=\gamma$, then $A_{3,12}=\beta$ and we get all the angles of $P_2$. By AVC, we get $V_{237}=\beta^2\gamma$. Since $A_{3,12}=\beta$ and the two $\beta$ in $P_3$ are not adjacent, we get $A_{3,27}=\gamma$ and then all the angles of $P_3$. By AVC, we further get all the angles at $V_{237}$, $V_{27\overline{11}}$, $V_{378}$. If $A_{2,13}=\beta$, then $A_{3,12}=\gamma$ and we may carry out the similar argument (first on $P_3$ and then on $P_2$). The result is the same picture with the four angles $\beta,\gamma$ around $E_{23}$ exchanged. 

We may successively carry out the same experiment around each of $E_{48}$, $E_{6\overline{11}}$, $E_{9\overline{10}}$, $E_{7\overline{12}}$. We find that we can make similar independent choices of $\beta,\gamma$, and each choice determines all the angles of the pair of tiles on the two sides of the edge. 
\end{proof}

\begin{proposition}\label{angle_pattern4}
The spherical tiling by $12$ angle congruent pentagons with angles $\alpha,\alpha,\beta,\gamma,\delta$, where $\alpha=\frac{2\pi}{3}$ and $\alpha,\beta,\gamma,\delta$ are distinct, is given by the right of Figure \ref{2a1b1c1dA} up to symmetry. Note that in the tiling on the right, we can exchange the four angles $\beta$ and $\gamma$ around any thick line and still get a tiling.
\end{proposition}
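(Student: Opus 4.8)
The plan is to mirror the argument of Proposition~\ref{angle_pattern3}. By the fourth case of Proposition~\ref{angle} the anglewise vertex combination is $\{8\alpha^3,12\beta\gamma\delta\}$, which gives the condition (call it AVC) that every vertex is either $\alpha^3$ or $\beta\gamma\delta$. Two consequences drive everything. First, if a tile has an $\alpha$ at a vertex $V$, then $V=\alpha^3$, so the other two tiles at $V$ also have $\alpha$ there. Second, if a tile has a $\beta$ (resp.\ $\gamma$, $\delta$) at $V$, then $V=\beta\gamma\delta$, so the other two tiles at $V$ carry exactly the remaining two of $\{\beta,\gamma,\delta\}$. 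In addition, the fixed intra-tile arrangement constrains which angles are adjacent inside a tile. Up to the reflections of the pentagon and relabelling $\beta,\gamma,\delta$ (harmless, since the hypothesis and AVC treat them symmetrically), there are only two arrangements of $\alpha,\alpha,\beta,\gamma,\delta$ in a tile: the two $\alpha$'s adjacent, or the two $\alpha$'s separated by one vertex; in the separated case we may name the angle between the two $\alpha$'s to be $\delta$, so $\beta$ and $\gamma$ occupy an adjacent pair of vertices.

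First I would rule out the adjacent arrangement. Assume $P_1$ in Figure~\ref{tile_pattern_pic} has its two $\alpha$'s at the adjacent vertices $V_{126},V_{123}$, with $\delta$ at the middle vertex $V_{145}$ of the non-$\alpha$ triple and, say, $\beta$ at $V_{134}$, $\gamma$ at $V_{156}$. Then $V_{126}=V_{123}=\alpha^3$ forces $\alpha$ at these vertices in $P_2$ and $P_3$ as well; since $P_2,P_3$ must again use the adjacent arrangement with their two $\alpha$'s at these vertices, their three non-$\alpha$ angles sit at the remaining three consecutive vertices with $\delta$ in the middle. Tracking this together with the $\alpha^3$ forced at $V_{237}$ (from $P_3$'s arrangement) and the $\beta\gamma\delta$ forced at $V_{134}$, one is quickly pushed to a vertex simultaneously required to be $\alpha^3$ and to carry a non-$\alpha$ angle, a contradiction. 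I expect this case to close in a handful of steps.

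The bulk of the work is the separated arrangement, which I would propagate as in Proposition~\ref{angle_pattern3}: place $\alpha$ at $V_{126}$ and $V_{134}$, $\delta$ at $V_{123}$, and $\beta,\gamma$ at $V_{145},V_{156}$, then repeatedly apply the two AVC consequences and the fixed arrangement to determine the angles of $P_2,\dots,P_{12}$ in turn. The recurring move is that once one $\alpha$-vertex of a tile is known, the facts that its two $\alpha$'s are non-adjacent and that $\delta$ sits between them, combined with the non-$\alpha$ angles already forced by neighbouring $\beta\gamma\delta$-vertices, pin the whole tile down except for a possible swap of $\beta$ and $\gamma$ across certain edges. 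I would then show that these $\beta$/$\gamma$ swaps are precisely the exchanges across the thick edges claimed in the statement, that every other angle is forced, and hence that the tiling is unique up to symmetry and these exchanges; the final check that each exchange yields a valid tiling (all affected vertices remain $\beta\gamma\delta$ and all affected tiles keep the correct arrangement) is immediate from the local picture. The result is the tiling on the right of Figure~\ref{2a1b1c1dA}.

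\textbf{Main obstacle.} The hard part is the bookkeeping in the separated case: choosing an order of tiles in which the propagation never stalls, keeping track of the partially-determined tiles (those still carrying the unresolved $\beta$/$\gamma$ ambiguity), and verifying that these ambiguities are genuinely local and mutually independent rather than accumulating into a contradiction or into a second, inequivalent tiling. As in Proposition~\ref{angle_pattern3}, some care is needed at the end to confirm that the few binary choices all correspond to the stated edge-exchange symmetry, so that Figure~\ref{2a1b1c1dA} really does capture every tiling in this case.
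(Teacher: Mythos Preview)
Your proposal is correct and follows essentially the same approach as the paper's proof: both split into the two arrangements (the two $\alpha$'s adjacent versus separated), use the AVC condition $\{8\alpha^3,12\beta\gamma\delta\}$ to propagate angles from $P_1$ outward, obtain a quick contradiction in the adjacent case via a tile forced to carry three $\alpha$'s, and in the separated case recover the tiling on the right of Figure~\ref{2a1b1c1dA} together with the independent $\beta/\gamma$ swaps along the thick edges. Your placement of $\delta$ within $P_1$ differs cosmetically from the paper's, but since $\beta,\gamma,\delta$ are symmetric in the AVC this is immaterial.
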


\begin{proof}
We are in the fourth case of Proposition \ref{angle}. The anglewise vertex combination is $\{8\alpha^3,12\beta\gamma\delta\}$, which implies the following AVC condition.

\medskip

{\bf AVC}(for $\alpha^2\beta\gamma\delta$): Any vertex is either $\alpha^3$ or $\beta\gamma\delta$. 

\medskip

Up to symmetry, there are two possible ways of arranging the angles in the pentagon. 

\medskip

{\bf Arrangement 1}

In this arrangement, the two $\alpha$ are adjacent. See the tile $P_1$ on the left of Figure \ref{2a1b1c1dA}. By AVC, we get $V_{123}=V_{134}=\alpha^3$ and $V_{126}=\beta\gamma\delta$. Thus $A_{2,16}=\beta$ or $\gamma$, and is adjacent to $\alpha$ in $P_2$. Therefore $A_{2,16}=\beta$. Together with $A_{2,13}=\alpha$, we get all the angles of $P_2$. Then $A_{2,37}=\alpha$ implies $V_{237}=\alpha^3$, and there are three $\alpha$ in $P_3$, a contradiction.

\begin{figure}[htp]
\centering
    \begin{tikzpicture}[scale=1]

	\foreach \x in {1,...,5}
    \draw (-54+72*\x:1) -- (18+72*\x:1);

    \draw
    		(90:1) -- (90:1.75) -- (54:2.5) -- (18:1.75) -- (18:1)
    		(90:1.75) -- (126:2.5) -- (162:1.75) -- (162:1)
		(234:1) -- (234:1.75) -- (198:2.5) -- (162:1.75)
		(-54:1) -- (-54:1.75) -- (-18:2.5) -- (18:1.75)
		(54:2.5) -- (54:3.3) -- (126:3.3) -- (126:2.5);

    \node at (0:0) {\small $1$};
    \node at (54:1.5) {\small $2$};
    \node at (126:1.5) {\small $3$};
    \node at (198:1.5) {\small $4$};
    \node at (-18:1.5) {\small $6$};
    \node at (90:2.4) {\small $7$};

    \node at (18:0.7) {\small $\delta$};
    \node at (28:1.1) {\small $\beta$};

    \node at (90:0.7) {\small $\alpha$};
    \node at (80:1.1) {\small $\alpha$};
    \node at (100:1.1) {\small $\alpha$};

    \node at (162:0.7) {\small $\alpha$};
    \node at (152:1.1) {\small $\alpha$};
    \node at (172:1.1) {\small $\alpha$};

    \node at (234:0.7) {\small $\beta$};
    \node at (-54:0.7) {\small $\gamma$};

    \node at (90:1.9) {\small $\alpha$};
    \node at (84:1.6) {\small $\alpha$};
    \node at (96:1.6) {\small $\alpha$};
    
    \node at (24:1.6) {\small $\gamma$};
    
    \node at (54:2.2) {\small $\delta$};
    
    \begin{scope}[xshift=7cm]

    \foreach \x in {1,...,5}
    \draw
    		(-54+72*\x:1) -- (18+72*\x:1)
    		(-54+72*\x:1) -- (-54+72*\x:1.75) -- (-18+72*\x:2.5) -- (18+72*\x:1.75)
		(-18+72*\x:2.5) -- (-18+72*\x:3.3) -- (54+72*\x:3.3) -- (54+72*\x:2.5);

    \node at (0:0) {\small $1$};
    \node at (54:1.5) {\small $2$};
    \node at (126:1.5) {\small $3$};
    \node at (198:1.5) {\small $4$};
    \node at (-90:1.5) {\small $5$};
    \node at (-18:1.5) {\small $6$};
    \node at (90:2.4) {\small $7$};
    \node at (162:2.4) {\small $8$};
    \node at (234:2.4) {\small $9$};
    \node at (-54:2.4) {\small $10$};
    \node at (18:2.4) {\small $11$};
    \node at (240:3.5){\small $12$};

    \node at (18:0.7) {\small $\alpha$};
    \node at (8:1.1) {\small $\alpha$};
    \node at (28:1.1) {\small $\alpha$};
    
    \node at (90:0.7) {\small $\delta$};
    \node at (80:1.1) {\small $\gamma$};
    \node at (100:1.1) {\small $\beta$};
    
    \node at (162:0.7) {\small $\alpha$};
    \node at (152:1.1) {\small $\alpha$};
    \node at (172:1.1) {\small $\alpha$};
    
    \node at (234:0.7) {\small $\beta$};
    \node at (224:1.1) {\small $\delta$};
    \node at (244:1.1) {\small $\gamma$};
    
    \node at (-54:0.7) {\small $\gamma$};
    \node at (-64:1.1) {\small $\beta$};
    \node at (-44:1.1) {\small $\delta$};

    \node at (18:1.9) {\small $\beta$};
    \node at (12:1.6) {\small $\gamma$};
    \node at (24:1.6) {\small $\delta$};
    
    \node at (90:1.9) {\small $\delta$};
    \node at (84:1.6) {\small $\beta$};
    \node at (96:1.6) {\small $\gamma$};
    
    \node at (162:1.9) {\small $\gamma$};
    \node at (156:1.6) {\small $\delta$};
    \node at (168:1.6) {\small $\beta$};
    
    \node at (234:1.9) {\small $\alpha$};
    \node at (228:1.6) {\small $\alpha$};
    \node at (240:1.6) {\small $\alpha$};
    
    \node at (-54:1.9) {\small $\alpha$};
    \node at (-48:1.6) {\small $\alpha$};
    \node at (-60:1.6) {\small $\alpha$};

    \node at (54:2.2) {\small $\alpha$};
    \node at (50:2.6) {\small $\alpha$};
    \node at (58:2.6) {\small $\alpha$};
    
    \node at (126:2.2) {\small $\alpha$};
    \node at (122:2.6) {\small $\alpha$};
    \node at (130:2.6) {\small $\alpha$};
    
    \node at (198:2.2) {\small $\gamma$};
    \node at (194:2.6) {\small $\beta$};
    \node at (202:2.6) {\small $\delta$};

    \node at (-90:2.2) {\small $\delta$};
    \node at (-94:2.6) {\small $\gamma$};
    \node at (-86:2.6) {\small $\beta$};
    
    \node at (-18:2.2) {\small $\beta$};
    \node at (-22:2.6) {\small $\delta$};
    \node at (-14:2.6) {\small $\gamma$};

    \node at (54:3.5) {\small $\gamma$};
    \node at (51:3) {\small $\delta$};
    \node at (57:3) {\small $\beta$};

    \node at (126:3.5) {\small $\beta$};
    \node at (123:3) {\small $\gamma$};
    \node at (129:3) {\small $\delta$};

    \node at (198:3.5) {\small $\alpha$};
    \node at (195:3) {\small $\alpha$};
    \node at (201:3) {\small $\alpha$};

    \node at (-90:3.5) {\small $\delta$};
    \node at (-93:3) {\small $\beta$};
    \node at (-87:3) {\small $\gamma$};

    \node at (-18:3.5) {\small $\alpha$};
    \node at (-15:3) {\small $\alpha$};
    \node at (-21:3) {\small $\alpha$};


	\draw[line width=0.05cm]
		(90:1) -- (90:1.75)
		(54:3.3) -- (126:3.3)
		(18:1.75) -- (-18:2.5)
		(162:1.75) -- (198:2.5)
		(-90:2.5) -- (-90:3.3)
        	(-54:1) -- (234:1);
	
		\end{scope}

    \end{tikzpicture}
\caption{Tilings for the angle combination $\alpha^2\beta\gamma\delta$.}
\label{2a1b1c1dA}
\end{figure}
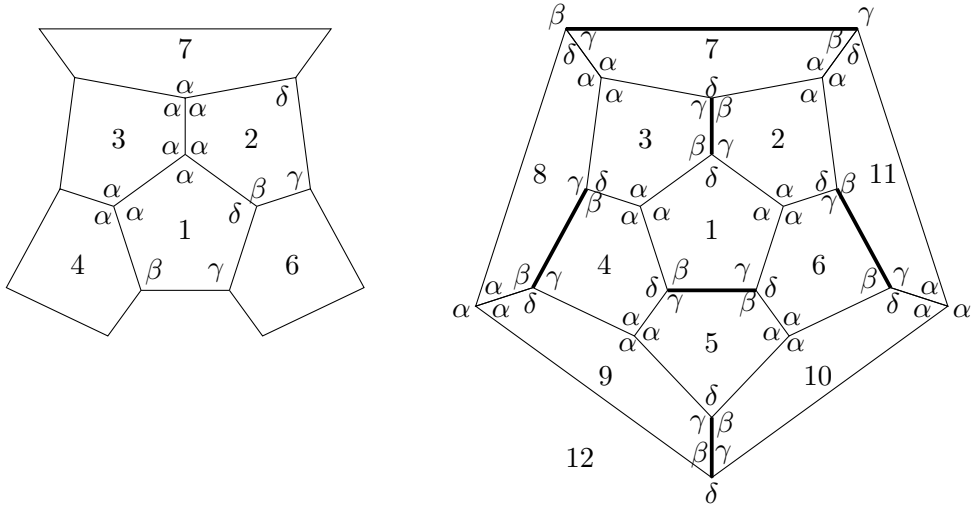

\medskip

{\bf Arrangement 2}

In this arrangement, the two $\alpha$ are not adjacent. See the tile $P_1$ on the right of Figure \ref{2a1b1c1dA}. By AVC, we get $V_{126}=V_{134}=\alpha^3$ and $V_{123}=\beta\gamma\delta$. If $A_{2,13}=\gamma$, $A_{3,12}=\beta$, then we get all the angles of $P_2$, $P_3$ and all the angles at $V_{237}$, $V_{27\overline{11}}$, $V_{378}$. If $A_{2,13}=\beta$, $A_{3,12}=\gamma$, then we get the similar result, except that the four angles $\beta,\gamma$ around $E_{23}$ are exchanged. Such exchange does not affect the subsequent discussion. 

By $A_{3,48}=\delta$, we know $A_{4,38}$, $A_{8,34}$ are $\beta,\gamma$. We may make either choice for $A_{4,38}$, $A_{8,34}$ and then get all the angles of $P_4$, $P_8$. Similar discussions can then be successively carried out around $E_{6\overline{11}}$, $E_{9\overline{10}}$, $E_{7\overline{12}}$ and determine all the angles.
\end{proof}

\begin{proposition}\label{angle_pattern5}
If the angles in a spherical tiling by $12$ angle congruent pentagons are $\alpha,\beta,\gamma,\delta,\epsilon$, with $\alpha=\frac{2\pi}{3}$ and all angles distinct, then up to symmetry, the tiling must have anglewise vertex combination $\{2\alpha^3,6\alpha\gamma\epsilon,6\beta^2\gamma,6\delta^2\epsilon\}$ and is given by one of the tilings on the left of Figures \ref{abcde2A}, \ref{abcde3B}, \ref{abcde7B}, \ref{abcde8B}.
\end{proposition}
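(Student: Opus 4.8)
The plan is to feed the numerical classification of Proposition~\ref{angle} into the combinatorial skeleton of Proposition~\ref{tile_pattern}, and then to run a vertex-propagation argument of the same type used in the proofs of Propositions~\ref{angle_pattern3} and \ref{angle_pattern4}. By the fifth case of Proposition~\ref{angle} we may take $\gamma=3\alpha-2\beta$, $\delta=2\alpha-\beta$, $\epsilon=2\beta-\alpha$, every vertex one of $\alpha^3$, $\alpha\gamma\epsilon$, $\alpha\beta\delta$, $\beta^2\gamma$, $\delta^2\epsilon$, and the anglewise vertex combination one of the three listed there. Two remarks set up the bookkeeping. First, since the five angles are distinct, all tiles carry the same cyclic angle sequence up to reflection, so a tiling is encoded by the cyclic arrangement of the tile, one of at most twelve up to the dihedral symmetry of the pentagon, together with a reflection ``orientation'' on each of the twelve faces of Figure~\ref{tile_pattern_pic}; moreover, as soon as the propagation fixes any two consecutive angles of a tile, that tile's orientation is determined. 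Second, because each pentagon contains exactly one $\alpha$, the number of $\alpha^3$-vertices counted three times, plus the number of other vertices carrying an $\alpha$, equals $12$, and each of $\beta,\gamma,\delta,\epsilon$ occupies exactly $12$ corners overall; also, by Lemmas~\ref{atype} and \ref{aexclude}, the $\beta$-corner of every tile lies at a $\beta^2\gamma$- or $\alpha\beta\delta$-vertex, the $\delta$-corner at a $\delta^2\epsilon$- or $\alpha\beta\delta$-vertex, the $\gamma$-corner at a $\beta^2\gamma$- or $\alpha\gamma\epsilon$-vertex, and the $\epsilon$-corner at a $\delta^2\epsilon$- or $\alpha\gamma\epsilon$-vertex.

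Next, for each of the (at most twelve) cyclic arrangements of the tile and each of the three candidate vertex combinations, I would place the arrangement on the centre tile $P_1$ of Figure~\ref{tile_pattern_pic} and propagate outward. At every vertex of $P_1$ the assumed combination forces the unordered pair of angles carried there by the two adjacent tiles; since those tiles have the same arrangement up to reflection, knowing one of their angles at that vertex leaves at most two orientations for the tile, and a neighbouring vertex pins the choice, after which one proceeds to the outer tiles and finally to $P_{12}$. A short run of this already eliminates most of the twelve arrangements (a forced tile whose angle cycle is not the reference one), and I expect that for the combinations $\{4\alpha\beta\delta,8\alpha\gamma\epsilon,4\beta^2\gamma,4\delta^2\epsilon\}$ and $\{\alpha^3,2\alpha\beta\delta,7\alpha\gamma\epsilon,5\beta^2\gamma,5\delta^2\epsilon\}$ every surviving branch likewise runs into a contradiction: a vertex whose forced angle sum is not $2\pi$, or a count of angle-corners or of vertex-types that cannot be met. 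Hence neither of these combinations occurs. For $\{2\alpha^3,6\alpha\gamma\epsilon,6\beta^2\gamma,6\delta^2\epsilon\}$ the propagation instead closes up consistently, and the finitely many consistent completions are exactly the four tilings drawn on the left of Figures~\ref{abcde2A}, \ref{abcde3B}, \ref{abcde7B}, and \ref{abcde8B}.

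The hard part is the bookkeeping rather than any single idea: ruling out the first two combinations and enumerating inside the third both branch considerably, and the real danger is dropping a case. What keeps the analysis finite and checkable is the rigidity from the angles being distinct (two consecutive angles of a tile fix its orientation), the pairing forced by Lemma~\ref{aexclude} --- at a vertex $\beta$ can stand only opposite $\beta,\gamma$ or opposite $\alpha,\delta$, and $\delta$ only opposite $\delta,\epsilon$ or opposite $\alpha,\beta$ --- which pushes a rigid distribution of $\beta$'s and $\gamma$'s (and of $\delta$'s and $\epsilon$'s) across the sphere, and the global angle-corner and vertex-type counts, which are ultimately what exclude the two bad combinations and pin down the four surviving tilings.
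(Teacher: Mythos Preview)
Your plan is essentially the paper's own: place a fixed arrangement on $P_1$, propagate using the admissible vertex types, and let contradictions prune the tree. Two organisational choices in the paper would tighten your execution considerably. First, the paper does not run the three anglewise vertex combinations separately; it works throughout with the single AVC ``every vertex is one of $\alpha^3,\alpha\beta\delta,\alpha\gamma\epsilon,\beta^2\gamma,\delta^2\epsilon$'', which covers all three, and simply observes at the end that the four surviving tilings all happen to realise $\{2\alpha^3,6\alpha\gamma\epsilon,6\beta^2\gamma,6\delta^2\epsilon\}$---so the other two combinations are excluded as a by-product rather than by a separate argument. Second, the paper cuts your twelve cyclic arrangements to eight via the simultaneous swap $\beta\leftrightarrow\delta$, $\gamma\leftrightarrow\epsilon$, which is a symmetry of the AVC, and then anchors each case by setting $V_{123}=\alpha\gamma\epsilon$ (this vertex type occurs in all three combinations, so the anchor is legitimate without knowing which combination is in play), branching only on whether $A_{2,13}=\gamma$ or $\epsilon$. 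With these two devices the case analysis becomes eight arrangements times two sub-cases, each of which either dies quickly or closes up into one of the four figures; your version would work but roughly triples the bookkeeping you already flag as the danger point.
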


\begin{proof}
We are in the last case of Proposition \ref{angle}. The following AVC condition applies to all three possible anglewise vertex combinations in the case.

\medskip

{\bf AVC}(for $\alpha\beta\gamma\delta\epsilon$): Any vertex is one of $\alpha^3$, $\alpha\beta\delta$, $\alpha\gamma\epsilon$, $\beta^2\gamma$, $\delta^2\epsilon$. 

\medskip

\noindent A consequence we will often use is that $\beta\epsilon$, $\gamma\delta$, $\gamma^2$, $\epsilon^2$ are forbidden at any vertex.

All three anglewise vertex combinations are symmetric with respect to the simultaneous exchange of $\beta$ with $\delta$ and $\gamma$ with $\epsilon$. Modulo such symmetry, there are eight possible arrangements of the angles in the pentagon, described in Figure \ref{abcdeangle}. We discuss each case by assuming the angles of $P_1$ in Figure \ref{tile_pattern_pic} are arranged as in Figure \ref{abcdeangle}. Since the vertex $\alpha\gamma\epsilon$ appears in all three anglewise vertex combinations, we will also assume $V_{123}=\alpha\gamma\epsilon$. This leads to the possibilities $A_{2,13}=\gamma$, $A_{3,12}=\epsilon$ and $A_{2,13}=\epsilon$, $A_{3,12}=\gamma$.

\begin{figure}[htp]
\centering
    \begin{tikzpicture}[scale=1]

	\foreach \x in {1,...,5}
	\foreach \y in {0,...,3}
	\foreach \z in {0,-1}
    \draw[shift={(3*\y cm, 2.5*\z cm)}]
    		(-54+72*\x:1) -- (18+72*\x:1);


	\node at (18:0.7) {\small $\epsilon$};
	\node at (90:0.7) {\small $\alpha$};
	\node at (162:0.7) {\small $\beta$};
	\node at (234:0.7) {\small $\gamma$};
	\node at (-54:0.7) {\small $\delta$};

	\begin{scope}[xshift=3cm]
	\node at (18:0.7) {\small $\gamma$};
	\node at (90:0.7) {\small $\alpha$};
	\node at (162:0.7) {\small $\beta$};
	\node at (234:0.7) {\small $\epsilon$};
	\node at (-54:0.7) {\small $\delta$};
	\end{scope}	
		
	
	\begin{scope}[xshift=6cm]
	\node at (18:0.7) {\small $\gamma$};
	\node at (90:0.7) {\small $\alpha$};
	\node at (162:0.7) {\small $\beta$};
	\node at (234:0.7) {\small $\delta$};
	\node at (-54:0.7) {\small $\epsilon$};
	\end{scope}
		
	\begin{scope}[xshift=9cm]
	\node at (18:0.7) {\small $\epsilon$};
	\node at (90:0.7) {\small $\alpha$};
	\node at (162:0.7) {\small $\beta$};
	\node at (234:0.7) {\small $\delta$};
	\node at (-54:0.7) {\small $\gamma$};
	\end{scope}

	\begin{scope}[yshift=-2.5cm]

	\node at (18:0.7) {\small $\delta$};
	\node at (90:0.7) {\small $\alpha$};
	\node at (162:0.7) {\small $\beta$};
	\node at (234:0.7) {\small $\gamma$};
	\node at (-54:0.7) {\small $\epsilon$};
	
	\begin{scope}[xshift=3cm]
	\node at (18:0.7) {\small $\delta$};
	\node at (90:0.7) {\small $\alpha$};
	\node at (162:0.7) {\small $\beta$};
	\node at (234:0.7) {\small $\epsilon$};
	\node at (-54:0.7) {\small $\gamma$};
	\end{scope}	
	
	\begin{scope}[xshift=6cm]
	\node at (18:0.7) {\small $\epsilon$};
	\node at (90:0.7) {\small $\alpha$};
	\node at (162:0.7) {\small $\gamma$};
	\node at (234:0.7) {\small $\beta$};
	\node at (-54:0.7) {\small $\delta$};
	\end{scope}
	
    \begin{scope}[xshift=9cm]
	\node at (18:0.7) {\small $\gamma$};
	\node at (90:0.7) {\small $\alpha$};
	\node at (162:0.7) {\small $\epsilon$};
	\node at (234:0.7) {\small $\beta$};
	\node at (-54:0.7) {\small $\delta$};
	\end{scope}
	
	\end{scope}

    \end{tikzpicture}
\caption{Eight possible arrangements for the angle combination $\alpha\beta\gamma\delta\epsilon$.}
\label{abcdeangle}
\end{figure}
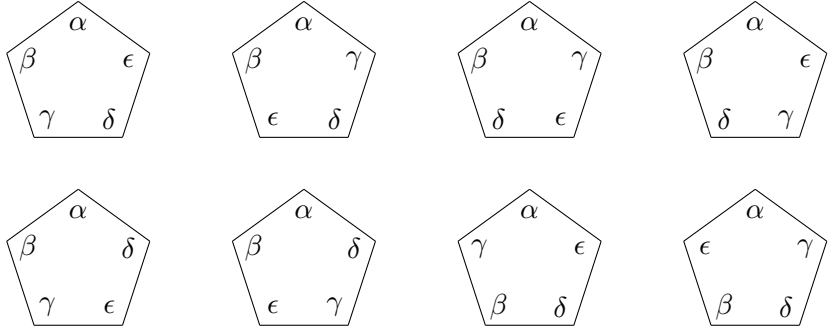

Since all angles are distinct, there is no ambiguity for us to call the orientation of angles given by Figure \ref{abcdeangle} {\em counterclockwise}, and call the other orientation {\em clockwise}.

\medskip

{\bf Arrangement 1, Case 1}: $A_{2,13}=\gamma$, $A_{3,12}=\epsilon$.

The case is the left of Figure \ref{abcde1}. Since $A_{2,16}$ is adjacent to $A_{2,13}=\gamma$ in $P_2$, we get $A_{2,16}=\beta$ or $\delta$. By AVC, we get $A_{2,16}=\delta$ and all the angles of $P_2$. 

By $A_{1,26}=\epsilon$, $A_{2,16}=\delta$ and AVC, we get $A_{6,12}=\delta$. Since $A_{6,15}$ is adjacent to $\delta$ in $P_6$, we get $A_{6,15}=\gamma$ or $\epsilon$. By AVC, we get $A_{6,15}=\epsilon$ and all the angles of $P_6$. 

By $A_{1,56}=\delta$, $A_{6,15}=\epsilon$ and AVC, we get $A_{5,16}=\delta$. Since $A_{5,14}$ is adjacent to $\delta$ in $P_5$, we get $A_{5,14}=\gamma$ or $\epsilon$. By AVC, we get $A_{5,14}=\epsilon$ and all the angles of $P_5$.

By $A_{1,45}=\gamma$, $A_{5,14}=\epsilon$ and AVC, we get $A_{4,15}=\alpha$. Since $A_{4,13}$ is adjacent to $\alpha$ in $P_4$, we get $A_{4,13}=\beta$ or $\epsilon$. By AVC, we get $A_{4,13}=\beta$ and all the angles of $P_4$.

By AVC, we further get $A_{3,14}=\gamma$, which is adjacent to $A_{3,12}=\epsilon$ in $P_3$. This is a contradiction.

We remark that the conclusion of the case is the following: If the angles in a tile (of first arrangement) are counterclockwise oriented, and the vertex at $\alpha$ is $\alpha\gamma\epsilon$, then $\alpha,\gamma,\epsilon$ must be clockwise oriented at the vertex. By symmetry, we also know that if the angles in a tile are clockwise oriented, then $\alpha,\gamma,\epsilon$ must be counterclockwise oriented at the vertex.

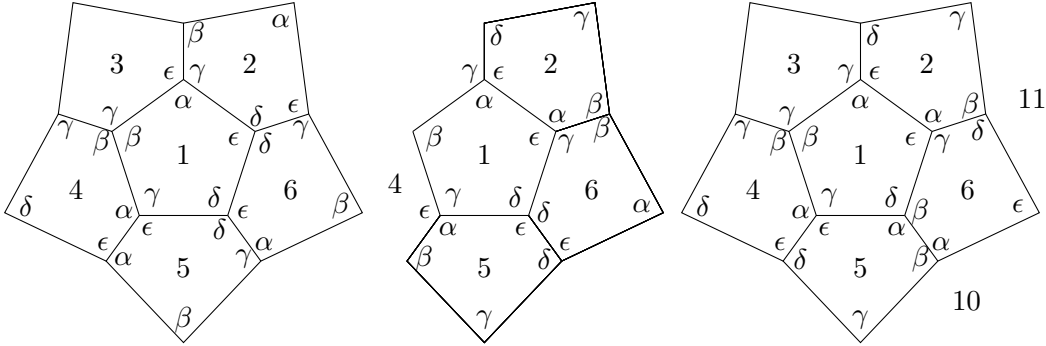
\begin{figure}[htp]
\centering
    \begin{tikzpicture}[scale=1]


	\foreach \x in {1,...,5}
    \draw 
    		(-54+72*\x:1) -- (18+72*\x:1)
    		(-54+72*\x:1) -- (-54+72*\x:1.75) -- (-18+72*\x:2.5) -- (18+72*\x:1.75);

    \node at (0:0) {\small $1$};
    \node at (54:1.5) {\small $2$};
    \node at (126:1.5) {\small $3$};
    \node at (198:1.5) {\small $4$};
    \node at (-90:1.5) {\small $5$};
    \node at (-18:1.5) {\small $6$};

    \node at (18:0.7) {\small $\epsilon$};
    \node at (8:1.1) {\small $\delta$};
    \node at (28:1.1) {\small $\delta$};
    
    \node at (90:0.7) {\small $\alpha$};
    \node at (80:1.1) {\small $\gamma$};
    \node at (100:1.1) {\small $\epsilon$};
    
    \node at (162:0.7) {\small $\beta$};
    \node at (152:1.1) {\small $\gamma$};
    \node at (172:1.1) {\small $\beta$};
    
    \node at (234:0.7) {\small $\gamma$};
    \node at (224:1.1) {\small $\alpha$};
    \node at (244:1.1) {\small $\epsilon$};
    
    \node at (-54:0.7) {\small $\delta$};
    \node at (-44:1.1) {\small $\epsilon$};
    \node at (-64:1.1) {\small $\delta$};

    \node at (12:1.6) {\small $\gamma$};
    \node at (24:1.6) {\small $\epsilon$};
    
    \node at (84:1.6) {\small $\beta$};
    
    \node at (168:1.6) {\small $\gamma$};
    
    \node at (228:1.6) {\small $\epsilon$};
    \node at (240:1.6) {\small $\alpha$};
    
    \node at (-48:1.6) {\small $\alpha$};
    \node at (-60:1.6) {\small $\gamma$};

    \node at (54:2.2) {\small $\alpha$};
    
    
    \node at (198:2.2) {\small $\delta$};

    \node at (-90:2.2) {\small $\beta$};
    
    \node at (-18:2.2) {\small $\beta$};

\begin{scope}[xshift=4cm]
	

	\foreach \x in {1,...,5}
    \draw 
    		(-54+72*\x:1) -- (18+72*\x:1)
		(-126:1) -- (-126:1.75) -- (-90:2.5) -- (-54:1.75)
		(-54:1) -- (-54:1.75) -- (-18:2.5) -- (18:1.75)
		(18:1) -- (18:1.75) -- (54:2.5) -- (90:1.75) -- (90:1);

    \node at (0:0) {\small $1$};
    \node at (54:1.5) {\small $2$};
    \node at (198:1.25) {\small $4$};
    \node at (-90:1.5) {\small $5$};
    \node at (-18:1.5) {\small $6$};
        
    \node at (18:0.7) {\small $\epsilon$};
    \node at (8:1.1) {\small $\gamma$};
    \node at (28:1.1) {\small $\alpha$};
    
    \node at (90:0.7) {\small $\alpha$};
    \node at (80:1.1) {\small $\epsilon$};
    \node at (100:1.1) {\small $\gamma$};
    
    \node at (162:0.7) {\small $\beta$};
    
    \node at (234:0.7) {\small $\gamma$};
    \node at (224:1.1) {\small $\epsilon$};
    \node at (244:1.1) {\small $\alpha$};
    
    \node at (-54:0.7) {\small $\delta$};
    \node at (-44:1.1) {\small $\delta$};
    \node at (-64:1.1) {\small $\epsilon$};

    \node at (12:1.6) {\small $\beta$};
    \node at (24:1.6) {\small $\beta$};
    
    \node at (84:1.6) {\small $\delta$};
    
    
    \node at (240:1.6) {\small $\beta$};
    
    \node at (-48:1.6) {\small $\epsilon$};
    \node at (-60:1.6) {\small $\delta$};

    \node at (54:2.2) {\small $\gamma$};
    
    

    \node at (-90:2.2) {\small $\gamma$};
    
    \node at (-18:2.2) {\small $\alpha$};

    \end{scope}
    
	\begin{scope}[xshift=9cm]
	

	\foreach \x in {1,...,5}
    \draw 
    		(-54+72*\x:1) -- (18+72*\x:1)
    		(-54+72*\x:1) -- (-54+72*\x:1.75) -- (-18+72*\x:2.5) -- (18+72*\x:1.75);

    \node at (0:0) {\small $1$};
    \node at (54:1.5) {\small $2$};
    \node at (126:1.5) {\small $3$};
    \node at (198:1.5) {\small $4$};
    \node at (-90:1.5) {\small $5$};
    \node at (-18:1.5) {\small $6$};
    \node at (-54:2.4) {\small $10$};
    \node at (18:2.4) {\small $11$};
    
    \node at (18:0.7) {\small $\epsilon$};
    \node at (8:1.1) {\small $\gamma$};
    \node at (28:1.1) {\small $\alpha$};
    
    \node at (90:0.7) {\small $\alpha$};
    \node at (80:1.1) {\small $\epsilon$};
    \node at (100:1.1) {\small $\gamma$};
    
    \node at (162:0.7) {\small $\beta$};
    \node at (152:1.1) {\small $\gamma$};
    \node at (172:1.1) {\small $\beta$};
    
    \node at (234:0.7) {\small $\gamma$};
    \node at (224:1.1) {\small $\alpha$};
    \node at (244:1.1) {\small $\epsilon$};
    
    \node at (-54:0.7) {\small $\delta$};
    \node at (-44:1.1) {\small $\beta$};
    \node at (-64:1.1) {\small $\alpha$};

    \node at (12:1.6) {\small $\delta$};
    \node at (24:1.6) {\small $\beta$};
    
    \node at (84:1.6) {\small $\delta$};
    
    \node at (168:1.6) {\small $\gamma$};
    
    \node at (228:1.6) {\small $\epsilon$};
    \node at (240:1.6) {\small $\delta$};
    
    \node at (-48:1.6) {\small $\alpha$};
    \node at (-60:1.6) {\small $\beta$};

    \node at (54:2.2) {\small $\gamma$};
    
    
    \node at (198:2.2) {\small $\delta$};

    \node at (-90:2.2) {\small $\gamma$};
    
    \node at (-18:2.2) {\small $\epsilon$};

    \end{scope}

    \end{tikzpicture}
\caption{Tilings for the first arrangement.}
\label{abcde1}
\end{figure}

\medskip

{\bf Arrangement 1, Case 2}: $A_{2,13}=\epsilon$, $A_{3,12}=\gamma$.

The case is the middle and the right of Figure \ref{abcde1}. Since $A_{2,16}$ is adjacent to $\epsilon$ in $P_2$, we get $A_{2,16}=\alpha$ or $\delta$. 

If $A_{2,16}=\delta$, then we get all the angles of $P_2$ and $A_{2,6\overline{11}}=\gamma$ in particular. By AVC, we also get $A_{6,12}=\delta$. Being adjacent to $A_{6,12}=\delta$ in $P_6$, one of $A_{6,15}$, $A_{6,2\overline{11}}$ is $\gamma$. This contradicts to AVC at $V_{156}$ or $V_{26\overline{11}}$.

Therefore we must have $A_{2,16}=\alpha$. Then we get all the angles of $P_2$ and $A_{6,12}=\gamma$. Now consider the two possible orientations of $P_6$. 

In the middle of Figure \ref{abcde1}, $P_6$ is counterclockwise oriented. By AVC, we get $A_{5,16}=\epsilon$, and the angle $A_{5,14}$ adjacent to $\epsilon$ in $P_5$ must be $\alpha$. This determines all the angles of $P_5$. Now $P_5$ is counterclockwise oriented, and $\alpha,\gamma,\epsilon$ is also counterclockwise oriented at $V_{145}$. This contradicts to the conclusion of Case 1.

On the right of Figure \ref{abcde1}, $P_6$ is clockwise oriented. By AVC, we get $A_{5,16}=\alpha$. Then the angle $A_{5,6\overline{10}}$ adjacent to $\alpha$ in $P_5$ is $\beta$ or $\epsilon$. If $A_{5,6\overline{10}}=\epsilon$, then $P_6$ is clockwise oriented, and $\alpha,\gamma,\epsilon$ are also clockwise oriented at $V_{56\overline{10}}$. This contradicts to the conclusion of Case 1. Therefore we must have $A_{5,6\overline{10}}=\beta$. Then we get all the angles of $P_5$ and by AVC, we further get $A_{4,15}=\alpha$. The clockwise orientation of $\alpha,\gamma,\epsilon$ at $V_{145}$ and the conclusion of Case 1 imply that $P_4$ must be counterclockwise oriented. This determines all the angles of $P_4$. Then by AVC, we get $A_{3,14}=\gamma$, and there are two $\gamma$ in $P_3$, a contradiction.

\medskip

{\bf Arrangement 2, Case 1}: $A_{2,13}=\gamma$, $A_{3,12}=\epsilon$.

Consider the two possible orientations of $P_3$. The counterclockwise case is the left of Figure \ref{abcde2A}. By AVC, we may successively determine all the angles of $P_2$, $P_7$, $P_{11}$, $P_6$, $P_5$, $P_{10}$, $P_{12}$, $P_8$, $P_4$, $P_9$. There is no contradiction, and we get an angle congruent tiling with anglewise vertex combination $\{2\alpha^3,6\alpha\gamma\epsilon,6\beta^2\gamma,6\delta^2\epsilon\}$.

On the right of Figure \ref{abcde2A}, $P_3$ is clockwise oriented. By AVC, we get $A_{4,13}=\alpha$, then get all the angles of $P_4$, and further get $A_{5,14}=\alpha$. Now one of $A_{5,16}$, $A_{5,49}$ adjacent to $\alpha$ in $P_5$ is $\gamma$. This contradicts to AVC at $V_{156}$ or $V_{459}$.

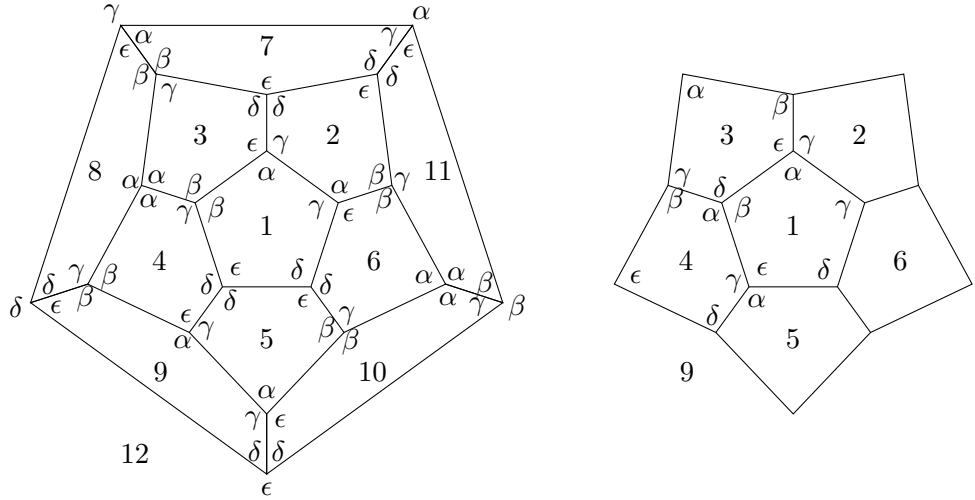
\begin{figure}[htp]
\centering
    \begin{tikzpicture}[scale=1]

\foreach \x in {1,...,5}
    \draw
    		(-54+72*\x:1) -- (18+72*\x:1)
    		(-54+72*\x:1) -- (-54+72*\x:1.75) -- (-18+72*\x:2.5) -- (18+72*\x:1.75)
		(-18+72*\x:2.5) -- (-18+72*\x:3.3) -- (54+72*\x:3.3) -- (54+72*\x:2.5);

    \node at (0:0) {\small $1$};
    \node at (54:1.5) {\small $2$};
    \node at (126:1.5) {\small $3$};
    \node at (198:1.5) {\small $4$};
    \node at (-90:1.5) {\small $5$};
    \node at (-18:1.5) {\small $6$};
    \node at (90:2.4) {\small $7$};
    \node at (162:2.4) {\small $8$};
    \node at (234:2.4) {\small $9$};
    \node at (-54:2.4) {\small $10$};
    \node at (18:2.4) {\small $11$};
    \node at (240:3.5){\small $12$};

    \node at (18:0.7) {\small $\gamma$};
    \node at (8:1.1) {\small $\epsilon$};
    \node at (28:1.1) {\small $\alpha$};

    \node at (90:0.7) {\small $\alpha$};
    \node at (80:1.1) {\small $\gamma$};
    \node at (100:1.1) {\small $\epsilon$};

    \node at (162:0.7) {\small $\beta$};
    \node at (152:1.1) {\small $\beta$};
    \node at (172:1.1) {\small $\gamma$};

    \node at (234:0.7) {\small $\epsilon$};
    \node at (224:1.1) {\small $\delta$};
    \node at (244:1.1) {\small $\delta$};

    \node at (-54:0.7) {\small $\delta$};
    \node at (-44:1.1) {\small $\delta$};
    \node at (-64:1.1) {\small $\epsilon$};

    \node at (18:1.9) {\small $\gamma$};
    \node at (12:1.6) {\small $\beta$};
    \node at (24:1.6) {\small $\beta$};

    \node at (90:1.9) {\small $\epsilon$};
    \node at (84:1.6) {\small $\delta$};
    \node at (96:1.6) {\small $\delta$};

    \node at (162:1.9) {\small $\alpha$};
    \node at (156:1.6) {\small $\alpha$};
    \node at (168:1.6) {\small $\alpha$};

    \node at (234:1.9) {\small $\alpha$};
    \node at (228:1.6) {\small $\epsilon$};
    \node at (240:1.6) {\small $\gamma$};

    \node at (-54:1.9) {\small $\beta$};
    \node at (-48:1.6) {\small $\gamma$};
    \node at (-60:1.6) {\small $\beta$};

    \node at (54:2.2) {\small $\epsilon$};
    \node at (50:2.6) {\small $\delta$};
    \node at (58:2.6) {\small $\delta$};

    \node at (126:2.2) {\small $\gamma$};
    \node at (122:2.6) {\small $\beta$};
    \node at (130:2.6) {\small $\beta$};

    \node at (198:2.2) {\small $\beta$};
    \node at (194:2.6) {\small $\gamma$};
    \node at (202:2.6) {\small $\beta$};

    \node at (-90:2.2) {\small $\alpha$};
    \node at (-94:2.6) {\small $\gamma$};
    \node at (-86:2.6) {\small $\epsilon$};

    \node at (-18:2.2) {\small $\alpha$};
    \node at (-22:2.6) {\small $\alpha$};
    \node at (-14:2.6) {\small $\alpha$};

    \node at (54:3.5) {\small $\alpha$};
    \node at (51:3) {\small $\epsilon$};
    \node at (57:3) {\small $\gamma$};

    \node at (126:3.5) {\small $\gamma$};
    \node at (123:3) {\small $\alpha$};
    \node at (129:3) {\small $\epsilon$};

    \node at (198:3.5) {\small $\delta$};
    \node at (195:3) {\small $\delta$};
    \node at (201:3) {\small $\epsilon$};

    \node at (-90:3.5) {\small $\epsilon$};
    \node at (-93:3) {\small $\delta$};
    \node at (-87:3) {\small $\delta$};

    \node at (-18:3.5) {\small $\beta$};
    \node at (-15:3) {\small $\beta$};
    \node at (-21:3) {\small $\gamma$};

	\begin{scope}[xshift=7cm]
	
	\foreach \x in {1,...,5}
    \draw
    		(-54+72*\x:1) -- (18+72*\x:1)
    		(-54+72*\x:1) -- (-54+72*\x:1.75) -- (-18+72*\x:2.5) -- (18+72*\x:1.75);

    \node at (0:0) {\small $1$};
    \node at (54:1.5) {\small $2$};
    \node at (126:1.5) {\small $3$};
    \node at (198:1.5) {\small $4$};
    \node at (-90:1.5) {\small $5$};
    \node at (-18:1.5) {\small $6$};
    \node at (234:2.4) {\small $9$};

    \node at (18:0.7) {\small $\gamma$};

    \node at (90:0.7) {\small $\alpha$};
    \node at (80:1.1) {\small $\gamma$};
    \node at (100:1.1) {\small $\epsilon$};

    \node at (162:0.7) {\small $\beta$};
    \node at (152:1.1) {\small $\delta$};
    \node at (172:1.1) {\small $\alpha$};

    \node at (234:0.7) {\small $\epsilon$};
    \node at (224:1.1) {\small $\gamma$};
    \node at (244:1.1) {\small $\alpha$};

    \node at (-54:0.7) {\small $\delta$};


    \node at (96:1.6) {\small $\beta$};

    \node at (156:1.6) {\small $\gamma$};
    \node at (168:1.6) {\small $\beta$};

    \node at (228:1.6) {\small $\delta$};



    \node at (126:2.2) {\small $\alpha$};

    \node at (198:2.2) {\small $\epsilon$};



    \end{scope}

    \end{tikzpicture}
\caption{Tilings for the second arrangement.}
\label{abcde2A}
\end{figure}

\medskip

{\bf Arrangement 2, Case 2}: $A_{2,13}=\epsilon$, $A_{3,12}=\gamma$.

By AVC, we get all the angles of $P_2$ and $A_{6,12}=\beta$. If $P_6$ is counterclockwise oriented, as on the left of Figure \ref{abcde2B}, then by AVC, we get $A_{11,26}=\alpha$. Now one of $A_{11,27}$, $A_{11,6\overline{10}}$ adjacent to $\alpha$ in $P_{11}$ is $\gamma$. This contradicts to AVC at $V_{27\overline{11}}$ or $V_{6\overline{10}\overline{11}}$. If $P_6$ is clockwise oriented, as on the right of Figure \ref{abcde2B}, then by AVC, we get all the angles of $P_5,P_4$ and further get $A_{3,14}=\alpha$. Together with $A_{3,12}=\gamma$, we get $A_{3,84}=\beta$. This contradicts to AVC at $V_{348}$.

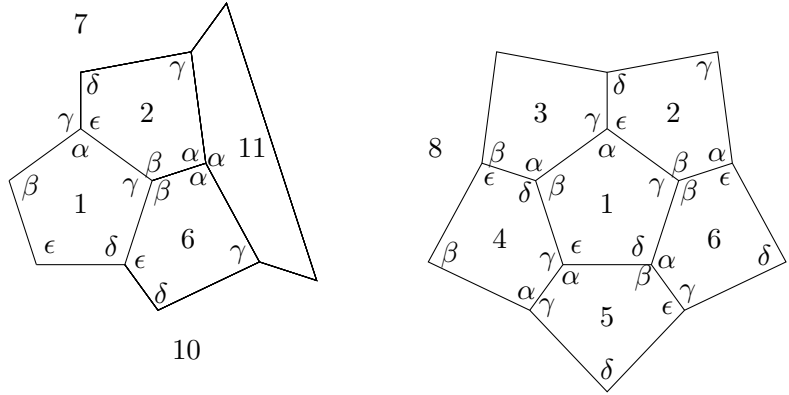
\begin{figure}[htp]
\centering
    \begin{tikzpicture}[scale=1]

\foreach \x in {1,...,5}
    \draw
    		(-54+72*\x:1) -- (18+72*\x:1)
		(-54:1) -- (-54:1.75) -- (-18:2.5) -- (18:1.75)
		(18:1) -- (18:1.75) -- (54:2.5) -- (90:1.75) -- (90:1)
		(-18:2.5) -- (-18:3.3) -- (54:3.3) -- (54:2.5);

    \node at (0:0) {\small $1$};
    \node at (54:1.5) {\small $2$};
    \node at (-18:1.5) {\small $6$};
    \node at (90:2.4) {\small $7$};
    \node at (-54:2.4) {\small $10$};
    \node at (18:2.4) {\small $11$};
    
    \node at (18:0.7) {\small $\gamma$};
    \node at (8:1.1) {\small $\beta$};
    \node at (28:1.1) {\small $\beta$};

    \node at (90:0.7) {\small $\alpha$};
    \node at (80:1.1) {\small $\epsilon$};
    \node at (100:1.1) {\small $\gamma$};

    \node at (162:0.7) {\small $\beta$};

    \node at (234:0.7) {\small $\epsilon$};

    \node at (-54:0.7) {\small $\delta$};
    \node at (-44:1.1) {\small $\epsilon$};

    \node at (18:1.9) {\small $\alpha$};
    \node at (12:1.6) {\small $\alpha$};
    \node at (24:1.6) {\small $\alpha$};

    \node at (84:1.6) {\small $\delta$};



    \node at (-48:1.6) {\small $\delta$};

    \node at (54:2.2) {\small $\gamma$};




    \node at (-18:2.2) {\small $\gamma$};

	\begin{scope}[xshift=7cm]

	\foreach \x in {1,...,5}
    \draw
    		(-54+72*\x:1) -- (18+72*\x:1)
    		(-54+72*\x:1) -- (-54+72*\x:1.75) -- (-18+72*\x:2.5) -- (18+72*\x:1.75);

    \node at (0:0) {\small $1$};
    \node at (54:1.5) {\small $2$};
    \node at (126:1.5) {\small $3$};
    \node at (198:1.5) {\small $4$};
    \node at (-90:1.5) {\small $5$};
    \node at (-18:1.5) {\small $6$};
    \node at (162:2.4) {\small $8$};

    \node at (18:0.7) {\small $\gamma$};
    \node at (8:1.1) {\small $\beta$};
    \node at (28:1.1) {\small $\beta$};

    \node at (90:0.7) {\small $\alpha$};
    \node at (80:1.1) {\small $\epsilon$};
    \node at (100:1.1) {\small $\gamma$};

    \node at (162:0.7) {\small $\beta$};
    \node at (152:1.1) {\small $\alpha$};
    \node at (172:1.1) {\small $\delta$};

    \node at (234:0.7) {\small $\epsilon$};
    \node at (224:1.1) {\small $\gamma$};
    \node at (244:1.1) {\small $\alpha$};

    \node at (-54:0.7) {\small $\delta$};
    \node at (-44:1.1) {\small $\alpha$};
    \node at (-64:1.1) {\small $\beta$};

    \node at (12:1.6) {\small $\epsilon$};
    \node at (24:1.6) {\small $\alpha$};

    \node at (84:1.6) {\small $\delta$};

    \node at (156:1.6) {\small $\beta$};
    \node at (168:1.6) {\small $\epsilon$};

    \node at (228:1.6) {\small $\alpha$};
    \node at (240:1.6) {\small $\gamma$};

    \node at (-48:1.6) {\small $\gamma$};
    \node at (-60:1.6) {\small $\epsilon$};

    \node at (54:2.2) {\small $\gamma$};


    \node at (198:2.2) {\small $\beta$};

    \node at (-90:2.2) {\small $\delta$};

    \node at (-18:2.2) {\small $\delta$};
    
    \end{scope}

    \end{tikzpicture}
\caption{Tilings for the second arrangement, continued.}
\label{abcde2B}
\end{figure}

\medskip

{\bf Arrangement 3, Case 1}: $A_{2,13}=\gamma$, $A_{3,12}=\epsilon$. 

If $P_2$ is clockwise oriented, as on the upper right of Figure \ref{abcde3B}, then by AVC, we may successively determine all the angles of $P_6$, $P_5$, $P_4$ and get $A_{3,14}=\beta$. Thus we find $\beta$ and $\epsilon$ adjacent in $P_3$, a contradiction. Similar augment shows that $P_3$ cannot be counterclockwise oriented. Thus we get all the angles of $P_2$, $P_3$ as described on the left of Figure \ref{abcde3B}. Then by AVC, we may successively determine all the angles of $P_7$, $P_{11}$, $P_6$, $P_{10}$, $P_5$, $P_4$, $P_8$, $P_9$, $P_{12}$. There is no contradiction, and we get an angle congruent tiling with anglewise vertex combination $\{2\alpha^3,6\alpha\gamma\epsilon,6\beta^2\gamma,6\delta^2\epsilon\}$.

\medskip

{\bf Arrangement 3, Case 2}: $A_{2,13}=\epsilon$, $A_{3,12}=\gamma$. 

The case is the lower right of Figure \ref{abcde3B}. By AVC, we get all the angles of $P_3$, $P_4$ and further get $A_{5,14}=\delta$. Then the angle $A_{5,16}$ adjacent to $\delta$ in $P_5$ is either $\beta$ or $\epsilon$. Either way contradicts to AVC at $V_{156}$. 

\begin{figure}[htp]
\centering
    \begin{tikzpicture}[scale=1]

    \foreach \x in {1,...,5}
    \draw
    	(-54+72*\x:1) -- (18+72*\x:1)
    	(-54+72*\x:1) -- (-54+72*\x:1.75) -- (-18+72*\x:2.5) -- (18+72*\x:1.75)
		(-18+72*\x:2.5) -- (-18+72*\x:3.3) -- (54+72*\x:3.3) -- (54+72*\x:2.5);

    \node at (0:0) {\small $1$};
    \node at (54:1.5) {\small $2$};
    \node at (126:1.5) {\small $3$};
    \node at (198:1.5) {\small $4$};
    \node at (-90:1.5) {\small $5$};
    \node at (-18:1.5) {\small $6$};
    \node at (90:2.4) {\small $7$};
    \node at (162:2.4) {\small $8$};
    \node at (234:2.4) {\small $9$};
    \node at (-54:2.4) {\small $10$};
    \node at (18:2.4) {\small $11$};
    \node at (240:3.5){\small $12$};

    \node at (18:0.7) {\small $\gamma$};
    \node at (8:1.1) {\small $\epsilon$};
    \node at (28:1.1) {\small $\alpha$};

    \node at (90:0.7) {\small $\alpha$};
    \node at (80:1.1) {\small $\gamma$};
    \node at (100:1.1) {\small $\epsilon$};

    \node at (162:0.7) {\small $\beta$};
    \node at (152:1.1) {\small $\gamma$};
    \node at (172:1.1) {\small $\beta$};

    \node at (234:0.7) {\small $\delta$};
    \node at (224:1.1) {\small $\delta$};
    \node at (244:1.1) {\small $\epsilon$};

    \node at (-54:0.7) {\small $\epsilon$};
    \node at (-44:1.1) {\small $\delta$};
    \node at (-64:1.1) {\small $\delta$};

    \node at (18:1.9) {\small $\beta$};
    \node at (12:1.6) {\small $\gamma$};
    \node at (24:1.6) {\small $\beta$};

    \node at (90:1.9) {\small $\delta$};
    \node at (84:1.6) {\small $\epsilon$};
    \node at (96:1.6) {\small $\delta$};

    \node at (162:1.9) {\small $\alpha$};
    \node at (156:1.6) {\small $\alpha$};
    \node at (168:1.6) {\small $\alpha$};

    \node at (234:1.9) {\small $\alpha$};
    \node at (228:1.6) {\small $\epsilon$};
    \node at (240:1.6) {\small $\gamma$};

    \node at (-54:1.9) {\small $\gamma$};
    \node at (-48:1.6) {\small $\beta$};
    \node at (-60:1.6) {\small $\beta$};

    \node at (54:2.2) {\small $\delta$};
    \node at (50:2.6) {\small $\delta$};
    \node at (58:2.6) {\small $\epsilon$};

    \node at (126:2.2) {\small $\beta$};
    \node at (122:2.6) {\small $\beta$};
    \node at (130:2.6) {\small $\gamma$};

    \node at (198:2.2) {\small $\gamma$};
    \node at (194:2.6) {\small $\beta$};
    \node at (202:2.6) {\small $\beta$};

    \node at (-90:2.2) {\small $\alpha$};
    \node at (-94:2.6) {\small $\gamma$};
    \node at (-86:2.6) {\small $\epsilon$};

    \node at (-18:2.2) {\small $\alpha$};
    \node at (-22:2.6) {\small $\alpha$};
    \node at (-14:2.6) {\small $\alpha$};

    \node at (54:3.5) {\small $\alpha$};
    \node at (51:3) {\small $\epsilon$};
    \node at (57:3) {\small $\gamma$};

    \node at (126:3.5) {\small $\gamma$};
    \node at (123:3) {\small $\alpha$};
    \node at (129:3) {\small $\epsilon$};

    \node at (198:3.5) {\small $\epsilon$};
    \node at (195:3) {\small $\delta$};
    \node at (201:3) {\small $\delta$};

    \node at (-90:3.5) {\small $\delta$};
    \node at (-93:3) {\small $\epsilon$};
    \node at (-87:3) {\small $\delta$};

    \node at (-18:3.5) {\small $\beta$};
    \node at (-15:3) {\small $\gamma$};
    \node at (-21:3) {\small $\beta$};

\begin{scope}[shift={(7cm,1.5cm)}]
    
	\foreach \x in {1,...,5}
    \draw
    		(-54+72*\x:1) -- (18+72*\x:1)
    		(-54+72*\x:1) -- (-54+72*\x:1.75) -- (-18+72*\x:2.5) -- (18+72*\x:1.75);

    \node at (0:0) {\small $1$};
    \node at (54:1.5) {\small $2$};
    \node at (126:1.5) {\small $3$};
    \node at (198:1.5) {\small $4$};
    \node at (-90:1.5) {\small $5$};
    \node at (-18:1.5) {\small $6$};

    \node at (18:0.7) {\small $\gamma$};
    \node at (8:1.1) {\small $\alpha$};
    \node at (28:1.1) {\small $\epsilon$};

    \node at (90:0.7) {\small $\alpha$};
    \node at (80:1.1) {\small $\gamma$};
    \node at (100:1.1) {\small $\epsilon$};

    \node at (162:0.7) {\small $\beta$};
    \node at (152:1.1) {\small $\beta$};
    \node at (172:1.1) {\small $\gamma$};

    \node at (234:0.7) {\small $\delta$};
    \node at (224:1.1) {\small $\alpha$};
    \node at (244:1.1) {\small $\beta$};

    \node at (-54:0.7) {\small $\epsilon$};
    \node at (-44:1.1) {\small $\gamma$};
    \node at (-64:1.1) {\small $\alpha$};

    \node at (12:1.6) {\small $\beta$};
    \node at (24:1.6) {\small $\delta$};

    \node at (84:1.6) {\small $\alpha$};

    \node at (168:1.6) {\small $\epsilon$};

    \node at (228:1.6) {\small $\beta$};
    \node at (240:1.6) {\small $\delta$};

    \node at (-48:1.6) {\small $\epsilon$};
    \node at (-60:1.6) {\small $\gamma$};

    \node at (54:2.2) {\small $\beta$};


    \node at (198:2.2) {\small $\delta$};

    \node at (-90:2.2) {\small $\epsilon$};

    \node at (-18:2.2) {\small $\delta$};
     
    \end{scope}    
    
    \begin{scope}[shift={(7cm,-3cm)}]

	\foreach \x in {1,...,5}
    \draw
    	(-54+72*\x:1) -- (18+72*\x:1)
    	(90:1) -- (90:1.75) -- (126:2.5) -- (162:1.75)
		(162:1) -- (162:1.75) -- (198:2.5) -- (234:1.75) -- (234:1)
		(306:1.75) -- (306:1);

    \node at (0:0) {\small $1$};
    \node at (126:1.5) {\small $3$};
    \node at (198:1.5) {\small $4$};
    \node at (-90:1.5) {\small $5$};
    \node at (-18:1.5) {\small $6$};
    
    \node at (18:0.7) {\small $\gamma$};

    \node at (90:0.7) {\small $\alpha$};
    \node at (80:1.1) {\small $\epsilon$};
    \node at (100:1.1) {\small $\gamma$};

    \node at (162:0.7) {\small $\beta$};
    \node at (152:1.1) {\small $\alpha$};
    \node at (172:1.1) {\small $\delta$};

    \node at (234:0.7) {\small $\delta$};
    \node at (224:1.1) {\small $\epsilon$};
    \node at (244:1.1) {\small $\delta$};

    \node at (-54:0.7) {\small $\epsilon$};


    \node at (96:1.6) {\small $\epsilon$};

    \node at (156:1.6) {\small $\beta$};
    \node at (168:1.6) {\small $\beta$};

    \node at (228:1.6) {\small $\gamma$};



    \node at (126:2.2) {\small $\delta$};

    \node at (198:2.2) {\small $\alpha$};



    \end{scope}

    \end{tikzpicture}
\caption{Tilings for the third arrangement.}
\label{abcde3B}
\end{figure}
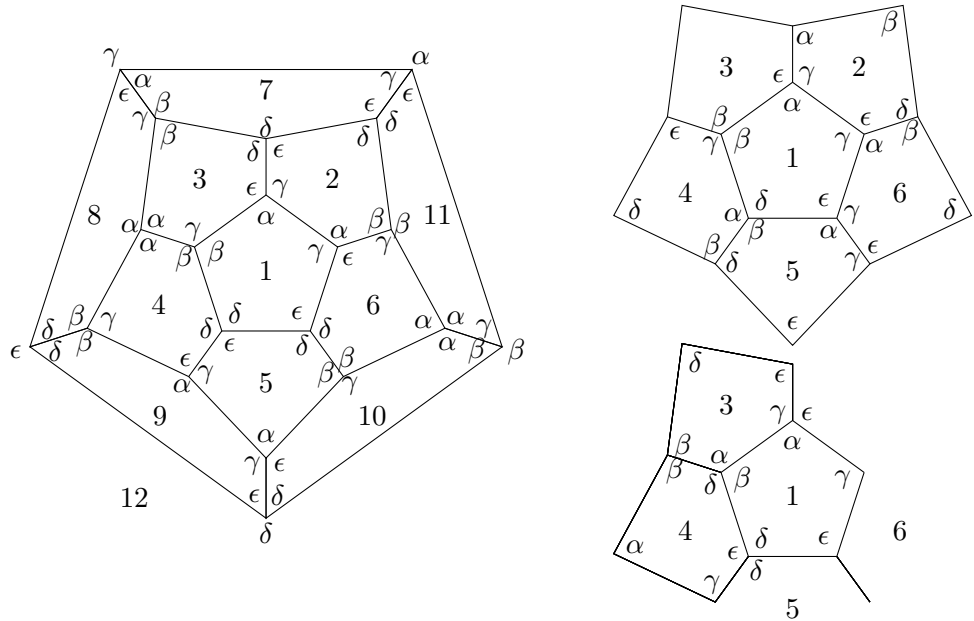

\medskip

{\bf Arrangement 4, Case 1}: $A_{2,13}=\gamma$, $A_{3,12}=\epsilon$. 

By AVC, we get all the angles of $P_2$, $P_6$ and $A_{5,16}=\beta$. If $P_5$ is counterclockwise oriented, as on the left of Figure \ref{abcde4A}, then by AVC, we get all the angles of $P_4$ and $A_{3,14}=\beta$. Then $\beta$ and $\epsilon$ are adjacent in $P_3$, a contradiction. If $P_5$ is clockwise oriented, as on the right of Figure \ref{abcde4A}, then by AVC, we get all the angles of $P_{10}$ and $A_{11,6\overline{10}}=\delta$. Then one of $A_{11,26}$, $A_{11,\overline{10}\,\overline{12}}$ adjacent to $A_{11,6\overline{10}}$ in $P_{11}$ must be $\gamma$. This contradicts to AVC at $V_{26\overline{11}}$ or $A_{\overline{10}\,\overline{11}\,\overline{12}}$.

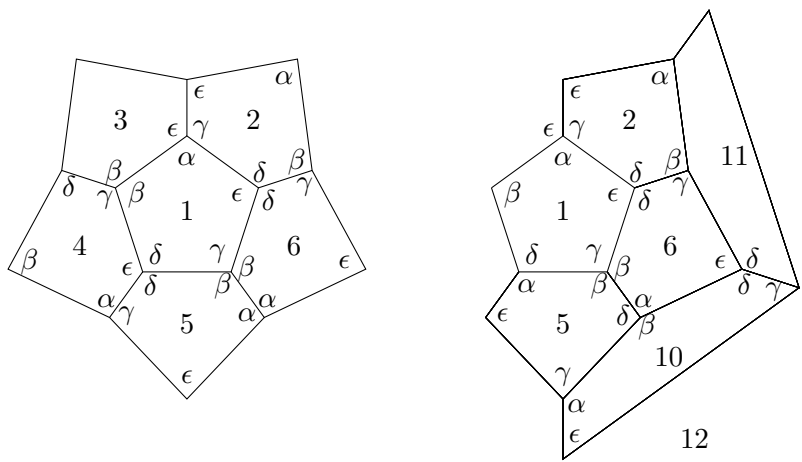
\begin{figure}[htp]
\centering
    \begin{tikzpicture}[scale=1]

\foreach \x in {1,...,5}
    \draw
    		(-54+72*\x:1) -- (18+72*\x:1)
    		(-54+72*\x:1) -- (-54+72*\x:1.75) -- (-18+72*\x:2.5) -- (18+72*\x:1.75);

    \node at (0:0) {\small $1$};
    \node at (54:1.5) {\small $2$};
    \node at (126:1.5) {\small $3$};
    \node at (198:1.5) {\small $4$};
    \node at (-90:1.5) {\small $5$};
    \node at (-18:1.5) {\small $6$};

    \node at (18:0.7) {\small $\epsilon$};
    \node at (8:1.1) {\small $\delta$};
    \node at (28:1.1) {\small $\delta$};

    \node at (90:0.7) {\small $\alpha$};
    \node at (80:1.1) {\small $\gamma$};
    \node at (100:1.1) {\small $\epsilon$};

    \node at (162:0.7) {\small $\beta$};
    \node at (152:1.1) {\small $\beta$};
    \node at (172:1.1) {\small $\gamma$};

    \node at (234:0.7) {\small $\delta$};
    \node at (224:1.1) {\small $\epsilon$};
    \node at (244:1.1) {\small $\delta$};

    \node at (-54:0.7) {\small $\gamma$};
    \node at (-44:1.1) {\small $\beta$};
    \node at (-64:1.1) {\small $\beta$};

    \node at (12:1.6) {\small $\gamma$};
    \node at (24:1.6) {\small $\beta$};

    \node at (84:1.6) {\small $\epsilon$};

    \node at (168:1.6) {\small $\delta$};

    \node at (228:1.6) {\small $\alpha$};
    \node at (240:1.6) {\small $\gamma$};

    \node at (-48:1.6) {\small $\alpha$};
    \node at (-60:1.6) {\small $\alpha$};

    \node at (54:2.2) {\small $\alpha$};


    \node at (198:2.2) {\small $\beta$};

    \node at (-90:2.2) {\small $\epsilon$};

    \node at (-18:2.2) {\small $\epsilon$};

	\begin{scope}[xshift=5cm]

	\foreach \x in {1,...,5}
    \draw
    	(-54+72*\x:1) -- (18+72*\x:1)
    	(-126:1) -- (-126:1.75) -- (-90:2.5) -- (-54:1.75)
		(-54:1) -- (-54:1.75) -- (-18:2.5) -- (18:1.75)
		(18:1) -- (18:1.75) -- (54:2.5) -- (90:1.75) -- (90:1)
		(-18:2.5) -- (-18:3.3) -- (54:3.3) -- (54:2.5)
		(-90:2.5) -- (-90:3.3) -- (-18:3.3);

    \node at (0:0) {\small $1$};
    \node at (54:1.5) {\small $2$};
    \node at (-90:1.5) {\small $5$};
    \node at (-18:1.5) {\small $6$};
    \node at (-54:2.4) {\small $10$};
    \node at (18:2.4) {\small $11$};
    \node at (300:3.5){\small $12$};

    \node at (18:0.7) {\small $\epsilon$};
    \node at (8:1.1) {\small $\delta$};
    \node at (28:1.1) {\small $\delta$};

    \node at (90:0.7) {\small $\alpha$};
    \node at (80:1.1) {\small $\gamma$};
    \node at (100:1.1) {\small $\epsilon$};

    \node at (162:0.7) {\small $\beta$};

    \node at (234:0.7) {\small $\delta$};
    \node at (244:1.1) {\small $\alpha$};

    \node at (-54:0.7) {\small $\gamma$};
    \node at (-44:1.1) {\small $\beta$};
    \node at (-64:1.1) {\small $\beta$};

    \node at (12:1.6) {\small $\gamma$};
    \node at (24:1.6) {\small $\beta$};

    \node at (84:1.6) {\small $\epsilon$};


    \node at (240:1.6) {\small $\epsilon$};

    \node at (-54:1.9) {\small $\beta$};
    \node at (-48:1.6) {\small $\alpha$};
    \node at (-60:1.6) {\small $\delta$};

    \node at (54:2.2) {\small $\alpha$};



    \node at (-90:2.2) {\small $\gamma$};
    \node at (-86:2.6) {\small $\alpha$};

    \node at (-18:2.2) {\small $\epsilon$};
    \node at (-22:2.6) {\small $\delta$};
    \node at (-14:2.6) {\small $\delta$};




    \node at (-87:3) {\small $\epsilon$};

    \node at (-21:3) {\small $\gamma$};

    \end{scope}

    \end{tikzpicture}
\caption{Tilings for the fourth arrangement.}
\label{abcde4A}
\end{figure}

\medskip

{\bf Arrangement 4, Case 2}: $A_{2,13}=\epsilon$, $A_{3,12}=\gamma$. 

If $P_2$ is counterclockwise oriented, as on the left of Figure \ref{abcde4B}, then by AVC, we successively get all the angles of $P_3$, $P_6$, $P_{11}$ and $A_{7,23}=\alpha$, $A_{7,2\overline{11}}=\delta$. We find $\alpha$ and $\delta$ adjacent in $P_7$, a contradiction. If $P_2$ is clockwise oriented, as on the right of Figure \ref{abcde4B}, then by AVC, we successively get all the angles of $P_3$, $P_7$, $P_8$ and $A_{4,13}=A_{4,38}=\alpha$. We find two $\alpha$ in $P_4$, a contradiction.

\begin{figure}[htp]
\centering
    \begin{tikzpicture}[scale=1]

	\foreach \x in {1,...,5}
    \draw
    		(-54+72*\x:1) -- (18+72*\x:1)
    		(-54:1) -- (-54:1.75) -- (-18:2.5) -- (18:1.75)
		(18:1) -- (18:1.75) -- (54:2.5) -- (90:1.75)
		(90:1) -- (90:1.75) -- (126:2.5) -- (162:1.75) -- (162:1)
		(-18:2.5) -- (-18:3.3) -- (54:3.3)
		(54:2.5) -- (54:3.3) -- (126:3.3) -- (126:2.5);

    \node at (0:0) {\small $1$};
    \node at (54:1.5) {\small $2$};
    \node at (126:1.5) {\small $3$};
    \node at (-18:1.5) {\small $6$};
    \node at (90:2.4) {\small $7$};
    \node at (18:2.4) {\small $11$};

    \node at (18:0.7) {\small $\epsilon$};
    \node at (8:1.1) {\small $\gamma$};
    \node at (28:1.1) {\small $\alpha$};

    \node at (90:0.7) {\small $\alpha$};
    \node at (80:1.1) {\small $\epsilon$};
    \node at (100:1.1) {\small $\gamma$};

    \node at (162:0.7) {\small $\beta$};
    \node at (152:1.1) {\small $\delta$};

    \node at (234:0.7) {\small $\delta$};

    \node at (-54:0.7) {\small $\gamma$};
    \node at (-44:1.1) {\small $\epsilon$};

    \node at (18:1.9) {\small $\alpha$};
    \node at (12:1.6) {\small $\delta$};
    \node at (24:1.6) {\small $\beta$};

    \node at (90:1.9) {\small $\alpha$};
    \node at (84:1.6) {\small $\gamma$};
    \node at (96:1.6) {\small $\epsilon$};

    \node at (156:1.6) {\small $\beta$};


    \node at (-48:1.6) {\small $\alpha$};

    \node at (54:2.2) {\small $\delta$};
    \node at (50:2.6) {\small $\epsilon$};
    \node at (58:2.6) {\small $\delta$};

    \node at (126:2.2) {\small $\alpha$};



    \node at (-18:2.2) {\small $\beta$};
    \node at (-14:2.6) {\small $\beta$};

    \node at (51:3) {\small $\gamma$};




    \node at (-15:3) {\small $\delta$};

	\begin{scope}[xshift=7cm]

	\foreach \x in {1,...,5}
    \draw
    		(-54+72*\x:1) -- (18+72*\x:1)
		(18:1) -- (18:1.75) -- (54:2.5) -- (90:1.75)
		(90:1) -- (90:1.75) -- (126:2.5) -- (162:1.75)
		(162:1) -- (162:1.75) -- (198:2.5) -- (234:1.75) -- (234:1)
		(54:2.5) -- (54:3.3) -- (126:3.3) 
		(126:2.5) -- (126:3.3) -- (198:3.3) -- (198:2.5);

    \node at (0:0) {\small $1$};
    \node at (54:1.5) {\small $2$};
    \node at (126:1.5) {\small $3$};
    \node at (198:1.5) {\small $4$};
    \node at (90:2.4) {\small $7$};
    \node at (162:2.4) {\small $8$};

    \node at (18:0.7) {\small $\epsilon$};
    \node at (28:1.1) {\small $\gamma$};

    \node at (90:0.7) {\small $\alpha$};
    \node at (80:1.1) {\small $\epsilon$};
    \node at (100:1.1) {\small $\gamma$};

    \node at (162:0.7) {\small $\beta$};
    \node at (152:1.1) {\small $\delta$};
    \node at (172:1.1) {\small $\alpha$};

    \node at (234:0.7) {\small $\delta$};

    \node at (-54:0.7) {\small $\gamma$};

    \node at (24:1.6) {\small $\delta$};

    \node at (90:1.9) {\small $\gamma$};
    \node at (84:1.6) {\small $\alpha$};
    \node at (96:1.6) {\small $\epsilon$};

    \node at (162:1.9) {\small $\delta$};
    \node at (156:1.6) {\small $\beta$};
    \node at (168:1.6) {\small $\alpha$};



    \node at (54:2.2) {\small $\beta$};
    \node at (58:2.6) {\small $\delta$};

    \node at (126:2.2) {\small $\alpha$};
    \node at (122:2.6) {\small $\epsilon$};
    \node at (130:2.6) {\small $\gamma$};

    \node at (194:2.6) {\small $\beta$};



    \node at (57:3) {\small $\beta$};

    \node at (123:3) {\small $\alpha$};
    \node at (129:3) {\small $\epsilon$};

    \node at (195:3) {\small $\alpha$};



    \end{scope}

    \end{tikzpicture}

\caption{Tilings for the fourth arrangement, continued.}
\label{abcde4B}
\end{figure}
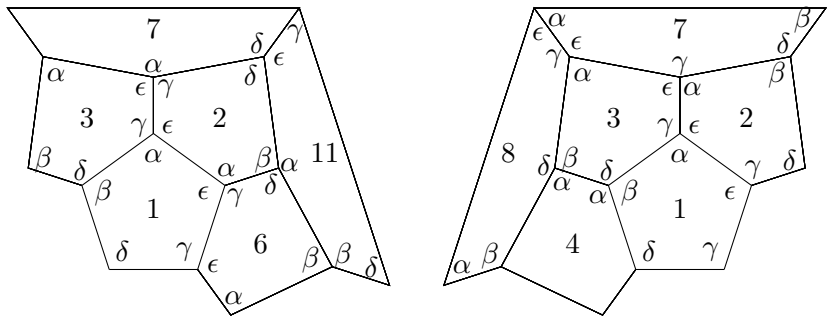

\medskip

{\bf Arrangement 5, Case 2}: $A_{2,13}=\epsilon$, $A_{3,12}=\gamma$. 

We treat the second case first because the conclusion will be useful for the first case. By AVC, we get all the angles of $P_2$, $P_3$. If $P_4$ is clockwise oriented, as on the left of Figure \ref{abcde5A}, then by AVC, we successively get all the angles of $P_5$, $P_6$, $P_9$, $P_{10}$. Then we get a contradiction to AVC at $V_{6\overline{10}\,\overline{11}}$. 

In the argument above, we started from clockwise $P_4$ and derived counterclockwise $P_6$. The same argument can be reversed, so that starting from counterclockwise $P_6$, we can derive clockwise $P_4$. On the other hand, if in the left of Figure \ref{abcde5A}, we exchange $\beta$ with $\delta$, $\epsilon$ with $\gamma$, and then flip the picture horizontally, we see that the argument also tells us that $P_6$ cannot be clockwise. We conclude that $P_6$ can be neither counterclockwise nor clockwise, a contradiction.

We remark that the conclusion of the case is the following: If the angles in a tile (of fifth arrangement) are counterclockwise oriented, and the vertex at $\alpha$ is $\alpha\gamma\epsilon$, then $\alpha,\gamma,\epsilon$ must be counterclockwise oriented at the vertex.

\medskip

{\bf Arrangement 5, Case 1}: $A_{2,13}=\gamma$, $A_{3,12}=\epsilon$. 

If $P_2$ is counterclockwise oriented, as on the upper right of Figure \ref{abcde5A}, then by AVC, we get all the angles of $P_6$ and $A_{5,16}=\gamma$. Now $P_6$ is counterclockwise oriented and $\alpha,\gamma,\epsilon$ are clockwise oriented at $V_{156}$. This contradicts to the conclusion of Case 2. If $P_2$ is clockwise oriented, as on the lower right of Figure \ref{abcde5A}, then by AVC, we successively get all the angles of $P_6$, $P_5$, $P_4$ and $A_{3,14}=\alpha$. We find $\alpha,\epsilon$ adjacent in $P_3$, a contradiction.

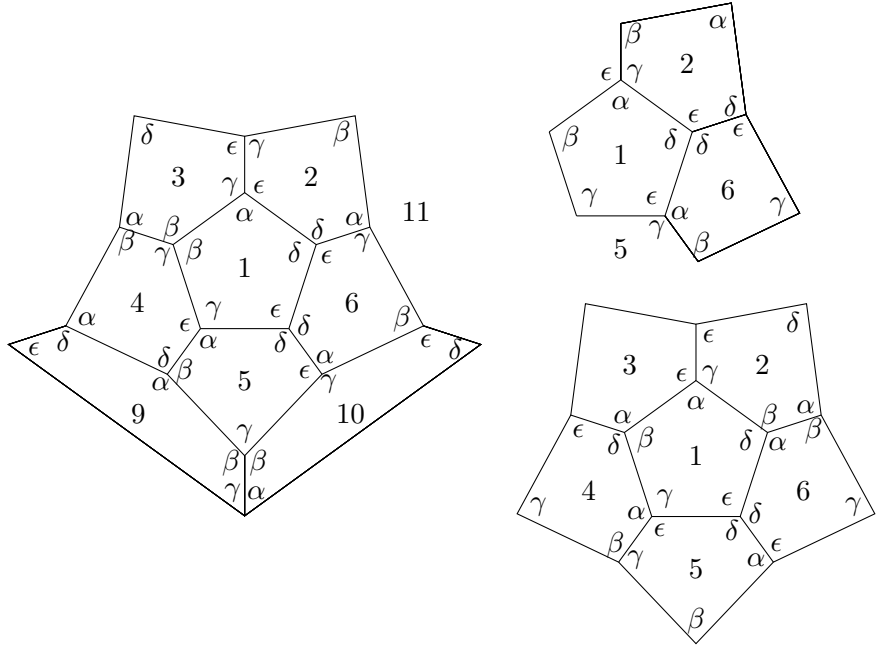
\begin{figure}[htp]
\centering
    \begin{tikzpicture}[scale=1]

	\foreach \x in {1,...,5}
    \draw
    		(-54+72*\x:1) -- (18+72*\x:1)
    		(-54+72*\x:1) -- (-54+72*\x:1.75) -- (-18+72*\x:2.5) -- (18+72*\x:1.75)
		(198:2.5) -- (198:3.3) -- (270:3.3) 
		(270:2.5) -- (270:3.3) -- (-18:3.3) -- (-18:2.5);

    \node at (0:0) {\small $1$};
    \node at (54:1.5) {\small $2$};
    \node at (126:1.5) {\small $3$};
    \node at (198:1.5) {\small $4$};
    \node at (-90:1.5) {\small $5$};
    \node at (-18:1.5) {\small $6$};
    \node at (234:2.4) {\small $9$};
    \node at (-54:2.4) {\small $10$};
    \node at (18:2.4) {\small $11$};

    \node at (18:0.7) {\small $\delta$};
    \node at (8:1.1) {\small $\epsilon$};
    \node at (28:1.1) {\small $\delta$};

    \node at (90:0.7) {\small $\alpha$};
    \node at (80:1.1) {\small $\epsilon$};
    \node at (100:1.1) {\small $\gamma$};

    \node at (162:0.7) {\small $\beta$};
    \node at (152:1.1) {\small $\beta$};
    \node at (172:1.1) {\small $\gamma$};

    \node at (234:0.7) {\small $\gamma$};
    \node at (224:1.1) {\small $\epsilon$};
    \node at (244:1.1) {\small $\alpha$};

    \node at (-54:0.7) {\small $\epsilon$};
    \node at (-44:1.1) {\small $\delta$};
    \node at (-64:1.1) {\small $\delta$};

    \node at (12:1.6) {\small $\gamma$};
    \node at (24:1.6) {\small $\alpha$};

    \node at (84:1.6) {\small $\gamma$};
    \node at (96:1.6) {\small $\epsilon$};

    \node at (156:1.6) {\small $\alpha$};
    \node at (168:1.6) {\small $\beta$};

    \node at (234:1.9) {\small $\alpha$};
    \node at (228:1.6) {\small $\delta$};
    \node at (240:1.6) {\small $\beta$};

    \node at (-54:1.9) {\small $\gamma$};
    \node at (-48:1.6) {\small $\alpha$};
    \node at (-60:1.6) {\small $\epsilon$};

    \node at (54:2.2) {\small $\beta$};

    \node at (126:2.2) {\small $\delta$};

    \node at (198:2.2) {\small $\alpha$};
    \node at (202:2.6) {\small $\delta$};

    \node at (-90:2.2) {\small $\gamma$};
    \node at (-94:2.6) {\small $\beta$};
    \node at (-86:2.6) {\small $\beta$};

    \node at (-18:2.2) {\small $\beta$};
    \node at (-22:2.6) {\small $\epsilon$};



    \node at (201:3) {\small $\epsilon$};

    \node at (-93:3) {\small $\gamma$};
    \node at (-87:3) {\small $\alpha$};

    \node at (-21:3) {\small $\delta$};

\begin{scope}[shift={(5cm,1.5cm)}]
    
    	\foreach \x in {1,...,5}
    \draw
    		(-54+72*\x:1) -- (18+72*\x:1)
    		(-54:1) -- (-54:1.75) -- (-18:2.5) -- (18:1.75)
		(18:1) -- (18:1.75) -- (54:2.5) -- (90:1.75) -- (90:1);

    \node at (0:0) {\small $1$};
    \node at (54:1.5) {\small $2$};
    \node at (-90:1.25) {\small $5$};
    \node at (-18:1.5) {\small $6$};

    \node at (18:0.7) {\small $\delta$};
    \node at (8:1.1) {\small $\delta$};
    \node at (28:1.1) {\small $\epsilon$};

    \node at (90:0.7) {\small $\alpha$};
    \node at (80:1.1) {\small $\gamma$};
    \node at (100:1.1) {\small $\epsilon$};

    \node at (162:0.7) {\small $\beta$};

    \node at (234:0.7) {\small $\gamma$};

    \node at (-54:0.7) {\small $\epsilon$};
    \node at (-44:1.1) {\small $\alpha$};
    \node at (-64:1.1) {\small $\gamma$};

    \node at (12:1.6) {\small $\epsilon$};
    \node at (24:1.6) {\small $\delta$};

    \node at (84:1.6) {\small $\beta$};



    \node at (-48:1.6) {\small $\beta$};

    \node at (54:2.2) {\small $\alpha$};




    \node at (-18:2.2) {\small $\gamma$};

    \end{scope}

	\begin{scope}[shift={(6cm,-2.5cm)}]
	
	\foreach \x in {1,...,5}
    \draw
    		(-54+72*\x:1) -- (18+72*\x:1)
    		(-54+72*\x:1) -- (-54+72*\x:1.75) -- (-18+72*\x:2.5) -- (18+72*\x:1.75);

    \node at (0:0) {\small $1$};
    \node at (54:1.5) {\small $2$};
    \node at (126:1.5) {\small $3$};
    \node at (198:1.5) {\small $4$};
    \node at (-90:1.5) {\small $5$};
    \node at (-18:1.5) {\small $6$};

    \node at (18:0.7) {\small $\delta$};
    \node at (8:1.1) {\small $\alpha$};
    \node at (28:1.1) {\small $\beta$};

    \node at (90:0.7) {\small $\alpha$};
    \node at (80:1.1) {\small $\gamma$};
    \node at (100:1.1) {\small $\epsilon$};

    \node at (162:0.7) {\small $\beta$};
    \node at (152:1.1) {\small $\alpha$};
    \node at (172:1.1) {\small $\delta$};

    \node at (234:0.7) {\small $\gamma$};
    \node at (224:1.1) {\small $\alpha$};
    \node at (244:1.1) {\small $\epsilon$};

    \node at (-54:0.7) {\small $\epsilon$};
    \node at (-44:1.1) {\small $\delta$};
    \node at (-64:1.1) {\small $\delta$};

    \node at (12:1.6) {\small $\beta$};
    \node at (24:1.6) {\small $\alpha$};

    \node at (84:1.6) {\small $\epsilon$};

    \node at (168:1.6) {\small $\epsilon$};

    \node at (228:1.6) {\small $\beta$};
    \node at (240:1.6) {\small $\gamma$};

    \node at (-48:1.6) {\small $\epsilon$};
    \node at (-60:1.6) {\small $\alpha$};

    \node at (54:2.2) {\small $\delta$};


    \node at (198:2.2) {\small $\gamma$};

    \node at (-90:2.2) {\small $\beta$};

    \node at (-18:2.2) {\small $\gamma$};

    \end{scope}

    \end{tikzpicture}

\caption{Tilings for the fifth arrangement.}
\label{abcde5A}
\end{figure}

\medskip

{\bf Arrangement 6, Case 1}: $A_{2,13}=\gamma$, $A_{3,12}=\epsilon$. 

If $P_2$ is counterclockwise oriented, as on the left of Figure \ref{abcde6}, then by AVC, we successively get all the angles of $P_6$, $P_5$, $P_4$ and $A_{3,14}=\alpha$. We find $\alpha,\epsilon$ adjacent in $P_3$, a contradiction. If $P_2$ is clockwise oriented, as in the middle of Figure \ref{abcde6}, then by AVC, we successively get all the angles of $P_6$, $P_5$, $P_{10}$ and $A_{11,26}=\beta$, $A_{11,6\overline{10}}=\delta$. We find $\beta,\delta$ adjacent in $P_{11}$, a contradiction.

\begin{figure}[htp]
\centering
    \begin{tikzpicture}[scale=1]

\foreach \x in {1,...,5}
    \draw
    		(-54+72*\x:1) -- (18+72*\x:1)
    		(-54+72*\x:1) -- (-54+72*\x:1.75) -- (-18+72*\x:2.5) -- (18+72*\x:1.75);

    \node at (0:0) {\small $1$};
    \node at (54:1.5) {\small $2$};
    \node at (126:1.5) {\small $3$};
    \node at (198:1.5) {\small $4$};
    \node at (-90:1.5) {\small $5$};
    \node at (-18:1.5) {\small $6$};

    \node at (18:0.7) {\small $\delta$};
    \node at (8:1.1) {\small $\epsilon$};
    \node at (28:1.1) {\small $\delta$};

    \node at (90:0.7) {\small $\alpha$};
    \node at (80:1.1) {\small $\gamma$};
    \node at (100:1.1) {\small $\epsilon$};

    \node at (162:0.7) {\small $\beta$};
    \node at (152:1.1) {\small $\alpha$};
    \node at (172:1.1) {\small $\delta$};

    \node at (234:0.7) {\small $\epsilon$};
    \node at (224:1.1) {\small $\gamma$};
    \node at (244:1.1) {\small $\alpha$};

    \node at (-54:0.7) {\small $\gamma$};
    \node at (-44:1.1) {\small $\beta$};
    \node at (-64:1.1) {\small $\beta$};

    \node at (12:1.6) {\small $\gamma$};
    \node at (24:1.6) {\small $\alpha$};

    \node at (84:1.6) {\small $\epsilon$};

    \node at (168:1.6) {\small $\alpha$};

    \node at (228:1.6) {\small $\epsilon$};
    \node at (240:1.6) {\small $\delta$};

    \node at (-48:1.6) {\small $\alpha$};
    \node at (-60:1.6) {\small $\epsilon$};

    \node at (54:2.2) {\small $\beta$};


    \node at (198:2.2) {\small $\beta$};

    \node at (-90:2.2) {\small $\gamma$};

    \node at (-18:2.2) {\small $\delta$};

	\begin{scope}[xshift=4cm]

	\foreach \x in {1,...,5}
    \draw
    		(-54+72*\x:1) -- (18+72*\x:1)
    		(-126:1) -- (-126:1.75) -- (-90:2.5) -- (-54:1.75)
		(-54:1) -- (-54:1.75) -- (-18:2.5) -- (18:1.75)
		(18:1) -- (18:1.75) -- (54:2.5) -- (90:1.75) -- (90:1)
		(-18:2.5) -- (-18:3.3) -- (54:3.3) -- (54:2.5)
		(-90:2.5) -- (-90:3.3) -- (-18:3.3);

    \node at (0:0) {\small $1$};
    \node at (54:1.5) {\small $2$};
    \node at (-90:1.5) {\small $5$};
    \node at (-18:1.5) {\small $6$};
    \node at (-54:2.4) {\small $10$};
    \node at (18:2.4) {\small $11$};
    \node at (300:3.5){\small $12$};

    \node at (18:0.7) {\small $\delta$};
    \node at (8:1.1) {\small $\delta$};
    \node at (28:1.1) {\small $\epsilon$};

    \node at (90:0.7) {\small $\alpha$};
    \node at (80:1.1) {\small $\gamma$};
    \node at (100:1.1) {\small $\epsilon$};

    \node at (162:0.7) {\small $\beta$};

    \node at (234:0.7) {\small $\epsilon$};
    \node at (244:1.1) {\small $\gamma$};

    \node at (-54:0.7) {\small $\gamma$};
    \node at (-44:1.1) {\small $\alpha$};
    \node at (-64:1.1) {\small $\epsilon$};

    \node at (18:1.9) {\small $\beta$};
    \node at (12:1.6) {\small $\gamma$};
    \node at (24:1.6) {\small $\beta$};

    \node at (84:1.6) {\small $\delta$};


    \node at (240:1.6) {\small $\delta$};

    \node at (-54:1.9) {\small $\gamma$};
    \node at (-48:1.6) {\small $\beta$};
    \node at (-60:1.6) {\small $\beta$};

    \node at (54:2.2) {\small $\alpha$};



    \node at (-90:2.2) {\small $\alpha$};
    \node at (-86:2.6) {\small $\epsilon$};

    \node at (-18:2.2) {\small $\epsilon$};
    \node at (-22:2.6) {\small $\delta$};
    \node at (-14:2.6) {\small $\delta$};




    \node at (-87:3) {\small $\beta$};

    \node at (-21:3) {\small $\alpha$};

    \end{scope}

    \begin{scope}[xshift=10cm]

	\foreach \x in {1,...,5}
    \draw
    		(-54+72*\x:1) -- (18+72*\x:1)
    		(-54+72*\x:1) -- (-54+72*\x:1.75) -- (-18+72*\x:2.5) -- (18+72*\x:1.75);

    \node at (0:0) {\small $1$};
    \node at (54:1.5) {\small $2$};
    \node at (126:1.5) {\small $3$};
    \node at (198:1.5) {\small $4$};
    \node at (-90:1.5) {\small $5$};
    \node at (-18:1.5) {\small $6$};

    \node at (18:0.7) {\small $\delta$};
    \node at (8:1.1) {\small $\alpha$};
    \node at (28:1.1) {\small $\beta$};

    \node at (90:0.7) {\small $\alpha$};
    \node at (80:1.1) {\small $\epsilon$};
    \node at (100:1.1) {\small $\gamma$};

    \node at (162:0.7) {\small $\beta$};
    \node at (152:1.1) {\small $\delta$};
    \node at (172:1.1) {\small $\alpha$};

    \node at (234:0.7) {\small $\epsilon$};
    \node at (224:1.1) {\small $\delta$};
    \node at (244:1.1) {\small $\delta$};

    \node at (-54:0.7) {\small $\gamma$};
    \node at (-44:1.1) {\small $\beta$};
    \node at (-64:1.1) {\small $\beta$};

    \node at (12:1.6) {\small $\delta$};
    \node at (24:1.6) {\small $\alpha$};

    \node at (84:1.6) {\small $\gamma$};
    \node at (96:1.6) {\small $\epsilon$};

    \node at (156:1.6) {\small $\alpha$};
    \node at (168:1.6) {\small $\beta$};

    \node at (228:1.6) {\small $\gamma$};

    \node at (-48:1.6) {\small $\epsilon$};

    \node at (54:2.2) {\small $\delta$};

    \node at (126:2.2) {\small $\beta$};

    \node at (198:2.2) {\small $\epsilon$};


    \node at (-18:2.2) {\small $\gamma$};

    \end{scope}

    \end{tikzpicture}
\caption{Tilings for the sixth arrangement.}
\label{abcde6}
\end{figure}
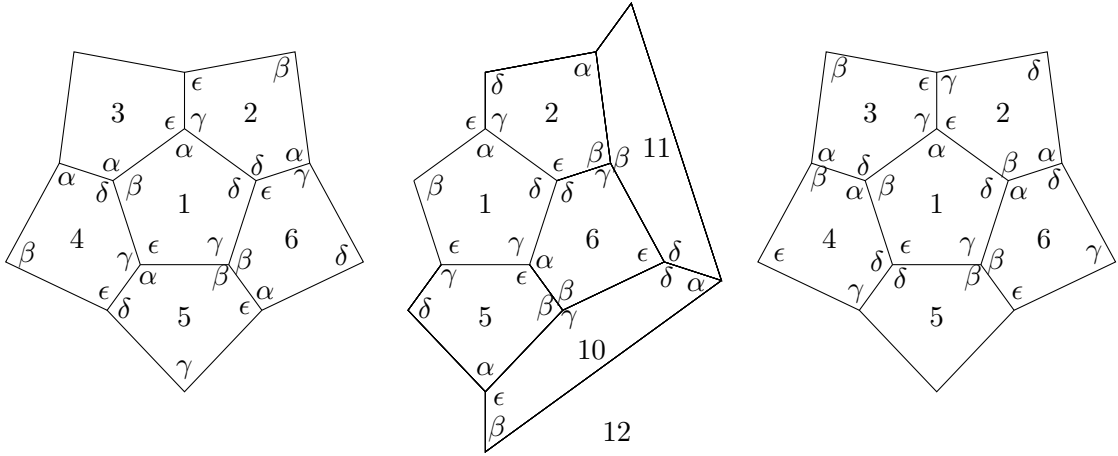

\medskip

{\bf Arrangement 6, Case 2}: $A_{2,13}=\epsilon$, $A_{3,12}=\gamma$. 

The case is the right of Figure \ref{abcde6}. By AVC, we successively get all the angles of $P_2$, $P_3$, $P_4$, $P_6$ and $A_{5,14}=\delta$, $A_{5,16}=\beta$. We find $\beta,\delta$ adjacent in $P_5$, a contradiction.

\medskip

{\bf Arrangement 7, Case 1}: $A_{2,13}=\gamma$, $A_{3,12}=\epsilon$.

The case is the upper right of Figure \ref{abcde7B}. By AVC, we successively get all the angles of $P_2$, $P_3$, $P_4$, $P_6$ and $A_{5,14}=A_{5,16}=\alpha$. We find two $\alpha$ in $P_5$, a contradiction.

\medskip

{\bf Arrangement 7, Case 2}: $A_{2,13}=\epsilon$, $A_{3,12}=\gamma$.

If $P_2$ is clockwise oriented, as on the left of Figure \ref{abcde7B}, then by AVC, we get all the angles of $P_6$, $P_5$ and $A_{11,26}=A_{10,56}=\gamma$. Since $\gamma,\epsilon$ are not adjacent, neither $A_{10,6\overline{11}}$ nor $A_{11,6\overline{10}}$ can be $\epsilon$. Then by $A_{6,\overline{10}\,\overline{11}}=\gamma$ and AVC, we get $A_{10,6\overline{11}}=A_{11,6\overline{10}}=\beta$ and then all the angles of $P_{10}$, $P_{11}$. By the angles we know so far and AVC, we successively get all the angles of $P_9$, $P_4$, $P_3$, $P_7$, $P_8$, $P_{12}$. There is no contradiction, and we get an angle congruent tiling with anglewise vertex combination $\{2\alpha^3,6\alpha\gamma\epsilon,6\beta^2\gamma,6\delta^2\epsilon\}$.

By the same argument (in the left picture, exchanging $\beta$ with $\delta$, $\epsilon$ with $\gamma$, and then flipping horizontally), we also conclude that, if $P_3$ is clockwise oriented, then we get the same angle congruent tiling on the left of Figure \ref{abcde7B}. Therefore the only remaining case is that both $P_2,P_3$ are counterclockwise oriented, as on the lower right of Figure \ref{abcde7B}. By AVC, we get all the angles of $P_7$ and $A_{8,37}=\delta$. Since $A_{8,34}$ is adjacent to $\alpha$ in $P_8$, we get $A_{8,34}=\beta$ or $\epsilon$. Either way contradicts to AVC at $V_{348}$.

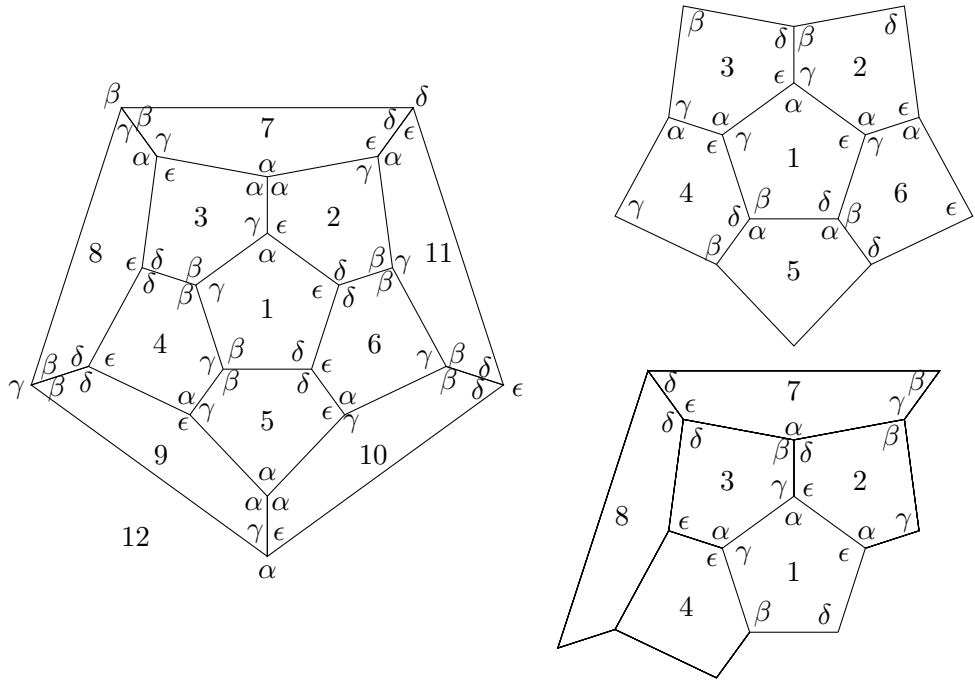
\begin{figure}[htp]
\centering
    \begin{tikzpicture}[scale=1]

	\foreach \x in {1,...,5}
    \draw
    		(-54+72*\x:1) -- (18+72*\x:1)
    		(-54+72*\x:1) -- (-54+72*\x:1.75) -- (-18+72*\x:2.5) -- (18+72*\x:1.75)
		(-18+72*\x:2.5) -- (-18+72*\x:3.3) -- (54+72*\x:3.3) -- (54+72*\x:2.5);

    \node at (0:0) {\small $1$};
    \node at (54:1.5) {\small $2$};
    \node at (126:1.5) {\small $3$};
    \node at (198:1.5) {\small $4$};
    \node at (-90:1.5) {\small $5$};
    \node at (-18:1.5) {\small $6$};
    \node at (90:2.4) {\small $7$};
    \node at (162:2.4) {\small $8$};
    \node at (234:2.4) {\small $9$};
    \node at (-54:2.4) {\small $10$};
    \node at (18:2.4) {\small $11$};
    \node at (240:3.5){\small $12$};

    \node at (18:0.7) {\small $\epsilon$};
    \node at (8:1.1) {\small $\delta$};
    \node at (28:1.1) {\small $\delta$};

    \node at (90:0.7) {\small $\alpha$};
    \node at (80:1.1) {\small $\epsilon$};
    \node at (100:1.1) {\small $\gamma$};

    \node at (162:0.7) {\small $\gamma$};
    \node at (152:1.1) {\small $\beta$};
    \node at (172:1.1) {\small $\beta$};

    \node at (234:0.7) {\small $\beta$};
    \node at (224:1.1) {\small $\gamma$};
    \node at (244:1.1) {\small $\beta$};

    \node at (-54:0.7) {\small $\delta$};
    \node at (-44:1.1) {\small $\epsilon$};
    \node at (-64:1.1) {\small $\delta$};

    \node at (18:1.9) {\small $\gamma$};
    \node at (12:1.6) {\small $\beta$};
    \node at (24:1.6) {\small $\beta$};

    \node at (90:1.9) {\small $\alpha$};
    \node at (84:1.6) {\small $\alpha$};
    \node at (96:1.6) {\small $\alpha$};

    \node at (162:1.9) {\small $\epsilon$};
    \node at (156:1.6) {\small $\delta$};
    \node at (168:1.6) {\small $\delta$};

    \node at (234:1.9) {\small $\epsilon$};
    \node at (228:1.6) {\small $\alpha$};
    \node at (240:1.6) {\small $\gamma$};

    \node at (-54:1.9) {\small $\gamma$};
    \node at (-48:1.6) {\small $\alpha$};
    \node at (-60:1.6) {\small $\epsilon$};

    \node at (54:2.2) {\small $\gamma$};
    \node at (50:2.6) {\small $\alpha$};
    \node at (58:2.6) {\small $\epsilon$};

    \node at (126:2.2) {\small $\epsilon$};
    \node at (122:2.6) {\small $\gamma$};
    \node at (130:2.6) {\small $\alpha$};

    \node at (198:2.2) {\small $\epsilon$};
    \node at (194:2.6) {\small $\delta$};
    \node at (202:2.6) {\small $\delta$};

    \node at (-90:2.2) {\small $\alpha$};
    \node at (-94:2.6) {\small $\alpha$};
    \node at (-86:2.6) {\small $\alpha$};

    \node at (-18:2.2) {\small $\gamma$};
    \node at (-22:2.6) {\small $\beta$};
    \node at (-14:2.6) {\small $\beta$};

    \node at (54:3.5) {\small $\delta$};
    \node at (51:3) {\small $\epsilon$};
    \node at (57:3) {\small $\delta$};

    \node at (126:3.5) {\small $\beta$};
    \node at (123:3) {\small $\beta$};
    \node at (129:3) {\small $\gamma$};

    \node at (198:3.5) {\small $\gamma$};
    \node at (195:3) {\small $\beta$};
    \node at (201:3) {\small $\beta$};

    \node at (-90:3.5) {\small $\alpha$};
    \node at (-93:3) {\small $\gamma$};
    \node at (-87:3) {\small $\epsilon$};

    \node at (-18:3.5) {\small $\epsilon$};
    \node at (-15:3) {\small $\delta$};
    \node at (-21:3) {\small $\delta$};

\begin{scope}[shift={(7cm,2cm)}]

    \foreach \x in {1,...,5}
    \draw
    		(-54+72*\x:1) -- (18+72*\x:1)
    		(-54+72*\x:1) -- (-54+72*\x:1.75) -- (-18+72*\x:2.5) -- (18+72*\x:1.75);

    \node at (0:0) {\small $1$};
    \node at (54:1.5) {\small $2$};
    \node at (126:1.5) {\small $3$};
    \node at (198:1.5) {\small $4$};
    \node at (-90:1.5) {\small $5$};
    \node at (-18:1.5) {\small $6$};

    \node at (18:0.7) {\small $\epsilon$};
    \node at (8:1.1) {\small $\gamma$};
    \node at (28:1.1) {\small $\alpha$};

    \node at (90:0.7) {\small $\alpha$};
    \node at (80:1.1) {\small $\gamma$};
    \node at (100:1.1) {\small $\epsilon$};

    \node at (162:0.7) {\small $\gamma$};
    \node at (152:1.1) {\small $\alpha$};
    \node at (172:1.1) {\small $\epsilon$};

    \node at (234:0.7) {\small $\beta$};
    \node at (224:1.1) {\small $\delta$};
    \node at (244:1.1) {\small $\alpha$};

    \node at (-54:0.7) {\small $\delta$};
    \node at (-44:1.1) {\small $\beta$};
    \node at (-64:1.1) {\small $\alpha$};

    \node at (12:1.6) {\small $\alpha$};
    \node at (24:1.6) {\small $\epsilon$};

    \node at (84:1.6) {\small $\beta$};
    \node at (96:1.6) {\small $\delta$};

    \node at (156:1.6) {\small $\gamma$};
    \node at (168:1.6) {\small $\alpha$};

    \node at (228:1.6) {\small $\beta$};

    \node at (-48:1.6) {\small $\delta$};

    \node at (54:2.2) {\small $\delta$};

    \node at (126:2.2) {\small $\beta$};

    \node at (198:2.2) {\small $\gamma$};


    \node at (-18:2.2) {\small $\epsilon$};
    
\end{scope}

\begin{scope}[shift={(7cm,-3.5cm)}]
    
    \foreach \x in {1,...,5}
    \draw
    		(-54+72*\x:1) -- (18+72*\x:1)
		(18:1) -- (18:1.75) -- (54:2.5) -- (90:1.75)
		(90:1) -- (90:1.75) -- (126:2.5) -- (162:1.75)
		(162:1) -- (162:1.75) -- (198:2.5) -- (234:1.75) -- (234:1)
		(54:2.5) -- (54:3.3) -- (126:3.3) 
		(126:2.5) -- (126:3.3) -- (198:3.3) -- (198:2.5);

    \node at (0:0) {\small $1$};
    \node at (54:1.5) {\small $2$};
    \node at (126:1.5) {\small $3$};
    \node at (198:1.5) {\small $4$};
    \node at (90:2.4) {\small $7$};
    \node at (162:2.4) {\small $8$};

    \node at (18:0.7) {\small $\epsilon$};
    \node at (28:1.1) {\small $\alpha$};

    \node at (90:0.7) {\small $\alpha$};
    \node at (80:1.1) {\small $\epsilon$};
    \node at (100:1.1) {\small $\gamma$};

    \node at (162:0.7) {\small $\gamma$};
    \node at (152:1.1) {\small $\alpha$};
    \node at (172:1.1) {\small $\epsilon$};

    \node at (234:0.7) {\small $\beta$};

    \node at (-54:0.7) {\small $\delta$};

    \node at (24:1.6) {\small $\gamma$};

    \node at (90:1.9) {\small $\alpha$};
    \node at (84:1.6) {\small $\delta$};
    \node at (96:1.6) {\small $\beta$};

    \node at (156:1.6) {\small $\epsilon$};



    \node at (54:2.2) {\small $\beta$};
    \node at (58:2.6) {\small $\gamma$};

    \node at (126:2.2) {\small $\delta$};
    \node at (122:2.6) {\small $\epsilon$};
    \node at (130:2.6) {\small $\delta$};




    \node at (57:3) {\small $\beta$};

    \node at (123:3) {\small $\delta$};




    \end{scope}

    \end{tikzpicture}
\caption{Tilings for the seventh arrangement.}
\label{abcde7B}
\end{figure}

\medskip

{\bf Arrangement 8, Case 1}: $A_{2,13}=\gamma$, $A_{3,12}=\epsilon$.

The case is the left of Figure \ref{abcde8A}. By AVC, we successively get all the angles of $P_2$, $P_3$, $P_4$, $P_6$ and $A_{5,14}=A_{5,16}=\alpha$. We find two $\alpha$ in $P_5$, a contradiction. 

\begin{figure}[htp]
\centering
    \begin{tikzpicture}[scale=1]

    \foreach \x in {1,...,5}
    \draw
    		(-54+72*\x:1) -- (18+72*\x:1)
    		(-54+72*\x:1) -- (-54+72*\x:1.75) -- (-18+72*\x:2.5) -- (18+72*\x:1.75);

    \node at (0:0) {\small $1$};
    \node at (54:1.5) {\small $2$};
    \node at (126:1.5) {\small $3$};
    \node at (198:1.5) {\small $4$};
    \node at (-90:1.5) {\small $5$};
    \node at (-18:1.5) {\small $6$};

    \node at (18:0.7) {\small $\gamma$};
    \node at (8:1.1) {\small $\epsilon$};
    \node at (28:1.1) {\small $\alpha$};

    \node at (90:0.7) {\small $\alpha$};
    \node at (80:1.1) {\small $\gamma$};
    \node at (100:1.1) {\small $\epsilon$};

    \node at (162:0.7) {\small $\epsilon$};
    \node at (152:1.1) {\small $\alpha$};
    \node at (172:1.1) {\small $\gamma$};

    \node at (234:0.7) {\small $\beta$};
    \node at (224:1.1) {\small $\delta$};
    \node at (244:1.1) {\small $\alpha$};

    \node at (-54:0.7) {\small $\delta$};
    \node at (-44:1.1) {\small $\beta$};
    \node at (-64:1.1) {\small $\alpha$};

    \node at (12:1.6) {\small $\alpha$};
    \node at (24:1.6) {\small $\epsilon$};

    \node at (84:1.6) {\small $\delta$};
    \node at (96:1.6) {\small $\beta$};

    \node at (156:1.6) {\small $\gamma$};
    \node at (168:1.6) {\small $\alpha$};

    \node at (228:1.6) {\small $\beta$};

    \node at (-48:1.6) {\small $\delta$};

    \node at (54:2.2) {\small $\beta$};

    \node at (126:2.2) {\small $\delta$};

    \node at (198:2.2) {\small $\epsilon$};


    \node at (-18:2.2) {\small $\gamma$};

\begin{scope}[xshift=7cm]
	

	\foreach \x in {1,...,5}
    \draw
    		(-54+72*\x:1) -- (18+72*\x:1)
    		(-54+72*\x:1) -- (-54+72*\x:1.75) -- (-18+72*\x:2.5) -- (18+72*\x:1.75)
		(-18:2.5) -- (-18:3.3) -- (54:3.3)
		(54:2.5) -- (54:3.3) -- (126:3.3) 
		(126:2.5) -- (126:3.3) -- (198:3.3) -- (198:2.5);

    \node at (0:0) {\small $1$};
    \node at (54:1.5) {\small $2$};
    \node at (126:1.5) {\small $3$};
    \node at (198:1.5) {\small $4$};
    \node at (-90:1.5) {\small $5$};
    \node at (-18:1.5) {\small $6$};
    \node at (90:2.4) {\small $7$};
    \node at (162:2.4) {\small $8$};
    \node at (18:2.4) {\small $11$};

    \node at (18:0.7) {\small $\gamma$};
    \node at (8:1.1) {\small $\beta$};
    \node at (28:1.1) {\small $\beta$};

    \node at (90:0.7) {\small $\alpha$};
    \node at (80:1.1) {\small $\epsilon$};
    \node at (100:1.1) {\small $\gamma$};

    \node at (162:0.7) {\small $\epsilon$};
    \node at (152:1.1) {\small $\alpha$};
    \node at (172:1.1) {\small $\gamma$};

    \node at (234:0.7) {\small $\beta$};
    \node at (224:1.1) {\small $\delta$};
    \node at (244:1.1) {\small $\alpha$};

    \node at (-54:0.7) {\small $\delta$};
    \node at (-44:1.1) {\small $\epsilon$};
    \node at (-64:1.1) {\small $\delta$};

    \node at (18:1.9) {\small $\epsilon$};
    \node at (12:1.6) {\small $\delta$};
    \node at (24:1.6) {\small $\delta$};

    \node at (90:1.9) {\small $\beta$};
    \node at (84:1.6) {\small $\alpha$};
    \node at (96:1.6) {\small $\delta$};

    \node at (162:1.9) {\small $\gamma$};
    \node at (156:1.6) {\small $\epsilon$};
    \node at (168:1.6) {\small $\alpha$};

    \node at (228:1.6) {\small $\beta$};

    \node at (-48:1.6) {\small $\alpha$};

    \node at (54:2.2) {\small $\gamma$};
    \node at (50:2.6) {\small $\alpha$};
    \node at (58:2.6) {\small $\epsilon$};

    \node at (126:2.2) {\small $\beta$};
    \node at (122:2.6) {\small $\delta$};
    \node at (130:2.6) {\small $\alpha$};

    \node at (198:2.2) {\small $\epsilon$};
    \node at (194:2.6) {\small $\delta$};


    \node at (-18:2.2) {\small $\gamma$};
    \node at (-14:2.6) {\small $\beta$};

    \node at (51:3) {\small $\gamma$};
    \node at (57:3) {\small $\alpha$};

    \node at (123:3) {\small $\gamma$};
    \node at (129:3) {\small $\epsilon$};

    \node at (195:3) {\small $\beta$};


    \node at (-15:3) {\small $\delta$};

    \end{scope}

    \end{tikzpicture}
\caption{Tilings for the eigth arrangement.}
\label{abcde8A}
\end{figure}

\medskip

{\bf Arrangement 8, Case 2}: $A_{2,13}=\epsilon$, $A_{3,12}=\gamma$.

If $P_2$ is counterclockwise oriented and $P_3$ is clockwise oriented, as on the right of Figure \ref{abcde8A}, then by AVC, we successively get all the angles of $P_7$, $P_8$, $P_{11}$, $P_4$, $P_6$ and $A_{5,14}=\alpha$, $A_{5,16}=\delta$. We find $\alpha,\delta$ adjacent in $P_5$, a contradiction. By the same argument (in the right of Figure \ref{abcde8A}, exchanging $\beta$ with $\delta$, $\epsilon$ with $\gamma$, and then flipping horizontally), we also conclude that it is impossible for $P_2$ to be clockwise oriented and $P_3$ to be counterclockwise oriented. Therefore $P_2$ and $P_3$ must have the same orientation.

If both $P_2$, $P_3$ are counterclockwise oriented, as on the left of Figure \ref{abcde8B}, then by AVC, we successively get all the angles of $P_7$, $P_8$, $P_{11}$, $P_4$, $P_6$, $P_5$, $P_9$, $P_{10}$, $P_{12}$. There is no contradiction, and we get an angle congruent tiling with anglewise vertex combination $\{2\alpha^3,6\alpha\gamma\epsilon,6\beta^2\gamma,6\delta^2\epsilon\}$.

If both $P_2$, $P_3$ are clockwise oriented, as on the right of Figure \ref{abcde8B}, then by AVC, we successively get all the angles of $P_7$, $P_8$, $P_{11}$, $P_4$, $P_6$, $P_5$ and $A_{9,45}=\alpha$, $A_{9,48}=\beta$. We find $\alpha,\beta$ adjacent in $P_9$, a contradiction.
\end{proof}

\begin{figure}[htp]
\centering
    \begin{tikzpicture}[scale=1]

    \foreach \x in {1,...,5}
    \draw
    		(-54+72*\x:1) -- (18+72*\x:1)
    		(-54+72*\x:1) -- (-54+72*\x:1.75) -- (-18+72*\x:2.5) -- (18+72*\x:1.75)
		(-18+72*\x:2.5) -- (-18+72*\x:3.3) -- (54+72*\x:3.3) -- (54+72*\x:2.5);

    \node at (0:0) {\small $1$};
    \node at (54:1.5) {\small $2$};
    \node at (126:1.5) {\small $3$};
    \node at (198:1.5) {\small $4$};
    \node at (-90:1.5) {\small $5$};
    \node at (-18:1.5) {\small $6$};
    \node at (90:2.4) {\small $7$};
    \node at (162:2.4) {\small $8$};
    \node at (234:2.4) {\small $9$};
    \node at (-54:2.4) {\small $10$};
    \node at (18:2.4) {\small $11$};
    \node at (240:3.5){\small $12$};

    \node at (18:0.7) {\small $\gamma$};
    \node at (8:1.1) {\small $\beta$};
    \node at (28:1.1) {\small $\beta$};

    \node at (90:0.7) {\small $\alpha$};
    \node at (80:1.1) {\small $\epsilon$};
    \node at (100:1.1) {\small $\gamma$};

    \node at (162:0.7) {\small $\epsilon$};
    \node at (152:1.1) {\small $\delta$};
    \node at (172:1.1) {\small $\delta$};

    \node at (234:0.7) {\small $\beta$};
    \node at (224:1.1) {\small $\gamma$};
    \node at (244:1.1) {\small $\beta$};

    \node at (-54:0.7) {\small $\delta$};
    \node at (-44:1.1) {\small $\epsilon$};
    \node at (-64:1.1) {\small $\delta$};

    \node at (18:1.9) {\small $\epsilon$};
    \node at (12:1.6) {\small $\delta$};
    \node at (24:1.6) {\small $\delta$};

    \node at (90:1.9) {\small $\alpha$};
    \node at (84:1.6) {\small $\alpha$};
    \node at (96:1.6) {\small $\alpha$};

    \node at (162:1.9) {\small $\gamma$};
    \node at (156:1.6) {\small $\beta$};
    \node at (168:1.6) {\small $\beta$};

    \node at (234:1.9) {\small $\gamma$};
    \node at (228:1.6) {\small $\alpha$};
    \node at (240:1.6) {\small $\epsilon$};

    \node at (-54:1.9) {\small $\epsilon$};
    \node at (-48:1.6) {\small $\alpha$};
    \node at (-60:1.6) {\small $\gamma$};

    \node at (54:2.2) {\small $\gamma$};
    \node at (50:2.6) {\small $\alpha$};
    \node at (58:2.6) {\small $\epsilon$};

    \node at (126:2.2) {\small $\epsilon$};
    \node at (122:2.6) {\small $\gamma$};
    \node at (130:2.6) {\small $\alpha$};

    \node at (198:2.2) {\small $\epsilon$};
    \node at (194:2.6) {\small $\delta$};
    \node at (202:2.6) {\small $\delta$};

    \node at (-90:2.2) {\small $\alpha$};
    \node at (-94:2.6) {\small $\alpha$};
    \node at (-86:2.6) {\small $\alpha$};

    \node at (-18:2.2) {\small $\gamma$};
    \node at (-22:2.6) {\small $\beta$};
    \node at (-14:2.6) {\small $\beta$};

    \node at (54:3.5) {\small $\beta$};
    \node at (51:3) {\small $\gamma$};
    \node at (57:3) {\small $\beta$};

    \node at (126:3.5) {\small $\delta$};
    \node at (123:3) {\small $\delta$};
    \node at (129:3) {\small $\epsilon$};

    \node at (198:3.5) {\small $\gamma$};
    \node at (195:3) {\small $\beta$};
    \node at (201:3) {\small $\beta$};

    \node at (-90:3.5) {\small $\alpha$};
    \node at (-93:3) {\small $\epsilon$};
    \node at (-87:3) {\small $\gamma$};

    \node at (-18:3.5) {\small $\epsilon$};
    \node at (-15:3) {\small $\delta$};
    \node at (-21:3) {\small $\delta$};

\begin{scope}[xshift=7cm]
	

	\foreach \x in {1,...,5}
    \draw
    		(-54+72*\x:1) -- (18+72*\x:1)
    		(-54+72*\x:1) -- (-54+72*\x:1.75) -- (-18+72*\x:2.5) -- (18+72*\x:1.75)
		(-18+72*\x:2.5) -- (-18+72*\x:3.3) -- (54+72*\x:3.3) -- (54+72*\x:2.5);

    \node at (0:0) {\small $1$};
    \node at (54:1.5) {\small $2$};
    \node at (126:1.5) {\small $3$};
    \node at (198:1.5) {\small $4$};
    \node at (-90:1.5) {\small $5$};
    \node at (-18:1.5) {\small $6$};
    \node at (90:2.4) {\small $7$};
    \node at (162:2.4) {\small $8$};
    \node at (234:2.4) {\small $9$};
    \node at (-54:2.4) {\small $10$};
    \node at (18:2.4) {\small $11$};

    \node at (18:0.7) {\small $\gamma$};
    \node at (8:1.1) {\small $\epsilon$};
    \node at (28:1.1) {\small $\alpha$};

    \node at (90:0.7) {\small $\alpha$};
    \node at (80:1.1) {\small $\epsilon$};
    \node at (100:1.1) {\small $\gamma$};

    \node at (162:0.7) {\small $\epsilon$};
    \node at (152:1.1) {\small $\alpha$};
    \node at (172:1.1) {\small $\gamma$};

    \node at (234:0.7) {\small $\beta$};
    \node at (224:1.1) {\small $\alpha$};
    \node at (244:1.1) {\small $\delta$};

    \node at (-54:0.7) {\small $\delta$};
    \node at (-44:1.1) {\small $\alpha$};
    \node at (-64:1.1) {\small $\beta$};

    \node at (18:1.9) {\small $\beta$};
    \node at (12:1.6) {\small $\beta$};
    \node at (24:1.6) {\small $\gamma$};

    \node at (90:1.9) {\small $\alpha$};
    \node at (84:1.6) {\small $\beta$};
    \node at (96:1.6) {\small $\delta$};

    \node at (162:1.9) {\small $\delta$};
    \node at (156:1.6) {\small $\epsilon$};
    \node at (168:1.6) {\small $\delta$};

    \node at (234:1.9) {\small $\alpha$};
    \node at (228:1.6) {\small $\epsilon$};
    \node at (240:1.6) {\small $\gamma$};

    \node at (-48:1.6) {\small $\gamma$};
    \node at (-60:1.6) {\small $\epsilon$};

    \node at (54:2.2) {\small $\delta$};
    \node at (50:2.6) {\small $\delta$};
    \node at (58:2.6) {\small $\epsilon$};

    \node at (126:2.2) {\small $\beta$};
    \node at (122:2.6) {\small $\gamma$};
    \node at (130:2.6) {\small $\beta$};

    \node at (198:2.2) {\small $\beta$};
    \node at (194:2.6) {\small $\gamma$};
    \node at (202:2.6) {\small $\beta$};

    \node at (-90:2.2) {\small $\alpha$};

    \node at (-18:2.2) {\small $\delta$};
    \node at (-14:2.6) {\small $\epsilon$};

    \node at (51:3) {\small $\gamma$};
    \node at (57:3) {\small $\beta$};

    \node at (123:3) {\small $\delta$};
    \node at (129:3) {\small $\epsilon$};

    \node at (195:3) {\small $\alpha$};


    \node at (-15:3) {\small $\alpha$};

    \end{scope}

    \end{tikzpicture}
\caption{Tilings for the eigth arrangement, continued.}
\label{abcde8B}
\end{figure}

\section{Proof of Main Theorem}
\label{proofmain}

We have studied the spherical tilings by $12$ congruent pentagons from the purely edge length viewpoint and the purely angle viewpoint. In this section, we combine the two together to prove the main theorem. 

The following result shows that certain combinations of edge lengths and angles cannot happen for geometrical reasons.

\begin{lemma}\label{pcombo}
Suppose in the spherical pentagon on the left of Figure \ref{pentcombo}, three of the following four equalities hold
\[
a=b,\quad
c=d,\quad
\beta=\gamma,\quad
\delta=\epsilon,
\]
then all four equalities hold.
\end{lemma}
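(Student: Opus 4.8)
The plan is to cut the pentagon of Figure~\ref{pentcombo} by a suitable diagonal and reduce each of the four cases (sorted by which of the four equalities is the one to be \emph{deduced}) to two elementary facts: one about isosceles spherical triangles and one about reflection-symmetric spherical quadrilaterals. Label the vertices $V_1,\dots ,V_5$ so that the angle $\alpha$ sits at $V_1$, the angles $\beta,\gamma$ at its neighbours $V_2,V_5$, the angles $\delta,\epsilon$ at $V_3,V_4$, with edges $V_1V_2=a$, $V_5V_1=b$, $V_2V_3=c$, $V_4V_5=d$ (and the fifth edge $V_3V_4$). The four equalities $a=b$, $c=d$, $\beta=\gamma$, $\delta=\epsilon$ together say precisely that the pentagon is invariant under the reflection interchanging $V_2\leftrightarrow V_5$ and $V_3\leftrightarrow V_4$. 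Draw the diagonal $V_2V_5$: it splits the pentagon into the triangle $T=\triangle V_1V_2V_5$ and the quadrilateral $Q=V_2V_3V_4V_5$, and one has the gluing relations $\beta=\angle V_1V_2V_5+\angle V_3V_2V_5$ and $\gamma=\angle V_1V_5V_2+\angle V_4V_5V_2$.

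I record the two facts. First, $T$ is isosceles with $V_1V_2=V_1V_5$ if and only if its base angles $\angle V_1V_2V_5$ and $\angle V_1V_5V_2$ are equal; also, since $T$ is an SAS triangle in $a,\alpha$, fixing $a$ and $\alpha$ determines $T$ and hence the diagonal length $\ell=V_2V_5$. Second, for $Q$ the sides $V_2V_3$ and $V_4V_5$ flank the side $V_5V_2$, and if $c=d$ then (by the reflection in the perpendicular bisector of $V_5V_2$, respectively of $V_3V_4$) the following are equivalent: $Q$ is mirror-symmetric, $\angle V_3V_2V_5=\angle V_4V_5V_2$, and $\angle V_3=\angle V_4$; in particular $c=d$ together with $\angle V_3=\angle V_4$ already forces $\angle V_3V_2V_5=\angle V_4V_5V_2$. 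With these in hand, three cases are immediate. If $a=b,c=d,\beta=\gamma$ hold: the isosceles $T$ gives equal base angles, the gluing relations and $\beta=\gamma$ give $\angle V_3V_2V_5=\angle V_4V_5V_2$, and with $c=d$ the second fact gives $\delta=\epsilon$. If $a=b,c=d,\delta=\epsilon$ hold: $c=d$ and $\angle V_3=\angle V_4$ give $\angle V_3V_2V_5=\angle V_4V_5V_2$, $T$ isosceles gives $\angle V_1V_2V_5=\angle V_1V_5V_2$, and the gluing relations give $\beta=\gamma$. If $c=d,\beta=\gamma,\delta=\epsilon$ hold: $c=d$ and $\angle V_3=\angle V_4$ give $\angle V_3V_2V_5=\angle V_4V_5V_2$, and the gluing relations with $\beta=\gamma$ give equal base angles in $T$, hence $a=b$.

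The remaining case, deducing $c=d$ from $a=b$, $\beta=\gamma$, $\delta=\epsilon$, is the genuine obstacle, since now the conclusion is an edge length and the convenient direction of the quadrilateral fact is unavailable. Here $a=b$ and the fixed $\alpha$ make $T$ rigid, so $\ell=V_2V_5$ is determined and the base angles are equal; the gluing relations with $\beta=\gamma$ give $\angle V_3V_2V_5=\angle V_4V_5V_2=:\psi$, and $\delta=\epsilon$ gives $\angle V_3=\angle V_4=:\delta$. Thus $Q$ is a spherical quadrilateral with one side of known length $\ell$, both angles at its endpoints equal to $\psi$, and both remaining angles equal to $\delta$. (A merely palindromic angle sequence would not force symmetry — already in the plane one gets non-isosceles trapezoids — so the fixed values of $\ell$ and $\psi$ must really be used.) The plan is to show such a $Q$ is unique, hence equals the symmetric ``isosceles trapezoid'' with these data and $V_2V_3=V_4V_5$: placing $V_3$ at distance $c$ on the ray from $V_2$ making angle $\psi$ with $V_2V_5$ and $V_4$ at distance $d$ on the mirror ray from $V_5$, the pair $(\angle V_2V_3V_4,\angle V_3V_4V_5)$ becomes a function of $(c,d)$ carrying the symmetry $(c,d)\mapsto(d,c)$, which swaps the two outputs; one then checks, by a monotonicity argument (or directly, applying the spherical law of cosines to the diagonal $V_2V_4$ and comparing with $c=d$), that this function is injective on the relevant range, so $(\delta,\delta)$ has the unique preimage $c=d$. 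Making this monotonicity estimate rigorous — and verifying it still holds for the degenerate and non-convex configurations that can arise among the tiles — is the delicate point; the other three cases, by contrast, need only SAS-congruence and the mirror reflections above.
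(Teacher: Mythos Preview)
Your decomposition along the diagonal $V_2V_5$ matches the paper's first auxiliary line, and your handling of three of the four cases is fine. The gap is exactly where you flag it: deducing $c=d$ from $a=b$, $\beta=\gamma$, $\delta=\epsilon$. Your sketched uniqueness/monotonicity argument for the quadrilateral $Q$ is not carried out, and as you yourself observe, a palindromic angle sequence alone does not force symmetry; as written this case is incomplete.

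The paper closes this gap with one extra construction you did not make: extend the sides $V_2V_3$ and $V_5V_4$ past $V_3,V_4$ until the great arcs meet at a point $T$. This replaces your quadrilateral $Q$ by the two triangles $\triangle TV_2V_5$ and $\triangle TV_3V_4$, and your hard case becomes a two-line computation. From $a=b$ the triangle $\triangle V_1V_2V_5$ has equal base angles; combined with $\beta=\gamma$ via your gluing relations, the base angles of $\triangle TV_2V_5$ are equal, so $|TV_2|=|TV_5|$. From $\delta=\epsilon$ the base angles of $\triangle TV_3V_4$ (which are $\pi-\delta$ and $\pi-\epsilon$) are equal, so $|TV_3|=|TV_4|$. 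Subtracting, $c=|TV_2|-|TV_3|=|TV_5|-|TV_4|=d$. The same device handles all four deductions uniformly with isosceles triangles only, so the paper never needs a quadrilateral symmetry argument at all; in particular the case you found delicate is, with the auxiliary point $T$, no harder than the other three.
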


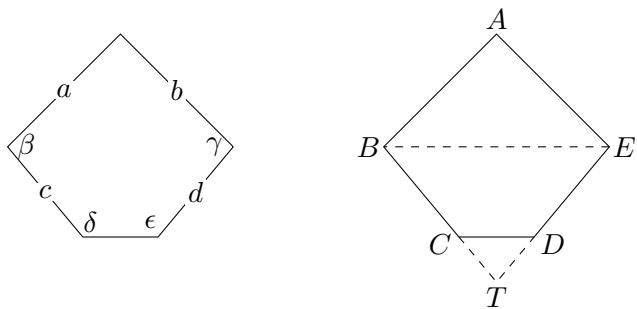
\begin{figure}[htp]
\centering
    \begin{tikzpicture}[scale=0.5]

	\draw (3,0) --node[fill=white,inner sep=1] {\small $b$} (0,3)
            --node[fill=white,inner sep=1] {\small $a$} (-3,0)
            --node[fill=white,inner sep=1] {\small $c$} (-1,-2.4)
            -- (1,-2.4)
            --node[fill=white,inner sep=1] {\small $d$} (3,0);
    
    \node at (-2.5,0) {\small $\beta$};
    \node at (2.5,0) {\small $\gamma$};
    \node at (-0.8,-2) {\small $\delta$};
    \node at (0.8,-2) {\small $\epsilon$};

    \begin{scope}[xshift=10cm]
   
    \draw (3,0) -- (0,3) -- (-3,0) -- (-1,-2.4) -- (1,-2.4) -- (3,0);
    \draw[dashed] 
    		(-1,-2.4) -- (0,-3.6) -- (1,-2.4)
    		(3,0) -- (-3,0);
    
    \node at (0,3.4) {\small $A$};
    \node at (-3.4,0) {\small $B$};
    \node at (3.4,0) {\small $E$};
    \node at (-1.5,-2.6) {\small $C$};
    \node at (1.5,-2.6) {\small $D$};    
    \node at (0,-4) {\small $T$}; 
    
    \end{scope}

    \end{tikzpicture}
\caption{Geometrical constraint.}
\label{pentcombo}
\end{figure}

\begin{proof}
Add auxiliary lines as on the right of Figure \ref{pentcombo}. 

Assume $a=b$ and $\beta=\gamma$. Then $a=b$ implies $\triangle ABE$ is an isosceles triangle, and $\beta=\gamma$ further implies that $\triangle TBE$ is also an isosceles triangle. Then $c=d$ if and only if $TC$ and $TD$ have the same length. This is further equivalent to $\triangle TCD$ being an isosceles triangle, which is the same as $\delta=\epsilon$.

Assume $c=d$ and $\epsilon=\delta$. Then $\epsilon=\delta$ implies $\triangle TCD$ is an isosceles triangle, and $c=d$ further implies $\triangle TBE$ is also  an isosceles triangle. Then $\beta=\gamma$ is equivalent to $\angle ABE=\angle AEB$, which means $a=b$.
\end{proof}

We recall the notations $P_i,E_{ij},V_{ijk}$ and the names $a$-edge, $abc$-vertex introduced in Section \ref{edgetile2}. We recall the names $\alpha\beta\gamma$-vertex, $\alpha\beta\gamma$-type vertex introduced in Section \ref{angletile1}. We also recall the notation $A_{i,jk}$ and the names $\alpha$-angle, $\alpha\beta$-edge in a tile introduced in Section \ref{angletile2}.

We can also name an angle by the two edges around the angle. For example, the angle $A_{1,23}$ in Figure \ref{generalclass} is an $ab$-angle, and $A_{1,34}$ is an $a^2$-angle.

We use $a,b,c,\dotsc$ to denote distinct edge lengths. We use $\alpha,\beta,\gamma,\dotsc$ to denote distinct angles. Moreover, $\alpha$ will always be $\frac{2\pi}{3}$.

\begin{proof}[Proof of Main Theorem]
The proof is divided according to the edge length combination in the pentagon. 

\medskip

{\bf Case} $a^5$.

If the angle combination is $\alpha^5$, then the pentagon is regular, and the tiling is the regular dodecahedron.

Now consider the middle three cases in Proposition \ref{angle}. Up to symmetry, the angle combination $\alpha^3\beta\gamma$ can be arranged as the first or the second in Figure \ref{case5a1}. By Proposition \ref{angle_pattern3} and up to symmetry, the angle combination $\alpha^2\beta^2\gamma$ can be arranged as the third or the fourth in Figure \ref{case5a1}. By Proposition \ref{angle_pattern4} and up to symmetry, the angle combination $\alpha^2\beta\gamma\delta$ can be arranged as the fifth in Figure \ref{case5a1}.

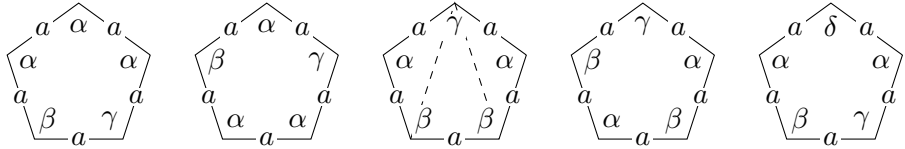
\begin{figure}[htp]
\centering
    \begin{tikzpicture}[scale=1]
    
\foreach \x in {0,...,4}
    \draw[xshift=2.5*\x cm] 
    	    (18:1) -- node[fill=white,inner sep=1] {\small $a$}
		(90:1) -- node[fill=white,inner sep=1] {\small $a$}
		(162:1) -- node[fill=white,inner sep=1] {\small $a$}
		(234:1) -- node[fill=white,inner sep=1] {\small $a$}
		(306:1) -- node[fill=white,inner sep=1] {\small $a$} (18:1);


    \node at (18:0.7) {\small $\alpha$};
    \node at (90:0.7) {\small $\alpha$};
    \node at (162:0.7) {\small $\alpha$};
    \node at (234:0.7) {\small $\beta$};
    \node at (306:0.7) {\small $\gamma$};

       
    \begin{scope}[xshift=2.5cm]
    
    \node at (18:0.7) {\small $\gamma$};
    \node at (90:0.7) {\small $\alpha$};
    \node at (162:0.7) {\small $\beta$};
    \node at (234:0.7) {\small $\alpha$};
    \node at (306:0.7) {\small $\alpha$};
    
    \end{scope}

       
    \begin{scope}[xshift=5cm]
    
    \draw[dashed] (234:1) -- (90:1) -- (306:1);
    
    \node at (18:0.7) {\small $\alpha$};
    \node[fill=white,inner sep=1] at (90:0.7) {\small $\gamma$};
    \node at (162:0.7) {\small $\alpha$};
    \node[fill=white,inner sep=1] at (234:0.7) {\small $\beta$};
    \node[fill=white,inner sep=1] at (306:0.7) {\small $\beta$};
    
  	\end{scope}

       
    \begin{scope}[xshift=7.5cm]
    
    \node at (18:0.7) {\small $\alpha$};
    \node at (90:0.7) {\small $\gamma$};
    \node at (162:0.7) {\small $\beta$};
    \node at (234:0.7) {\small $\alpha$};
    \node at (306:0.7) {\small $\beta$};
    
    \end{scope}

       
    \begin{scope}[xshift=10cm]
    
    \node at (18:0.7) {\small $\alpha$};
    \node at (90:0.7) {\small $\delta$};
    \node at (162:0.7) {\small $\alpha$};
    \node at (234:0.7) {\small $\beta$};
    \node at (306:0.7) {\small $\gamma$};
    
    \end{scope}

    \end{tikzpicture}
\caption{Combinations of $a^5$ with $\alpha^3\beta\gamma$, $\alpha^2\beta^2\gamma$, $\alpha^2\beta\gamma\delta$.}
\label{case5a1}
\end{figure}

By Lemma \ref{pcombo}, we have $\beta=\gamma$ in all except the third arrangement. In the third arrangement, divide the pentagon into three triangles. Note that the three triangles (and therefore the pentagon also) are completely determined by $a$. Moreover, the area of the two triangles on the side are strictly increasing in $a$. The length of the two dividing lines are strictly increasing in $a$, and the area of the triangle at the middle is also strictly increasing in $a$. Therefore the area of the pentagon is strictly increasing in $a$. So there is a unique $a$, such that the area of the pentagon is $\frac{1}{12}$ of the area of the sphere. Since this area is reached by the pentagon in the regular dodecahedron, we conclude that the third arrangement is the regular one.

Finally, for the angle combination $\alpha\beta\gamma\delta\epsilon$, we get four possible tilings from Proposition \ref{angle_pattern5}. Any one exists as long as there is a relevant pentagon with all sides equal. After suitable permutation among $\beta,\gamma,\delta,\epsilon$, these are included in the first four tilings in Figure \ref{completeclassify}. 

\medskip

{\bf Case} $a^4b$.

Recall that the edgewise vertex combination is $\{8a^3,12a^2b\}$. We will not use the complete classification in Proposition \ref{edge_pattern2}. Instead, we will study the edge-angle combinations directly.

\medskip

{\bf Subcase} $\alpha^5$.

In this case, $b$ is completely determined by $a$, and the area of the pentagon is strictly increasing in  $a$. Therefore there is only one $a$ such that the area is $\frac{1}{12}$ of the area of the sphere. This is the regular dodecahedron tiling. Therefore we get $a=b$, a contradiction.

\medskip

{\bf Subcase} $\alpha^3\beta\gamma$.

In addition to the edgewise vertex combination $\{8a^3,12a^2b\}$, Proposition \ref{angle} also tells us the anglewise vertex combination $\{8\alpha^3,12\alpha\beta\gamma\}$, which implies the following condition.

\medskip

{\bf AVC}(for $\alpha^3\beta\gamma$): Any vertex is either $\alpha^3$ or $\alpha\beta\gamma$. 

\medskip

If $\beta$ and $\gamma$ are separated in the pentagon, as on the left of Figure \ref{22combo}, then up to symmetry, there are three ways of assigning $b$, as the $\alpha^2$-edge or as one of two $\alpha\beta$-edges. Lemma \ref{pcombo} implies that two such assignments lead to $\beta=\gamma$, a contradiction. The only assignment that does not lead to a contradiction is the left of Figure \ref{22combo}. Similarly, if $\beta$ and $\gamma$ are adjacent in the pentagon, then among three possible ways (up to symmetry) of assigning $b$, one leads to a contradiction and we are left with the middle and the right of Figure \ref{22combo}.

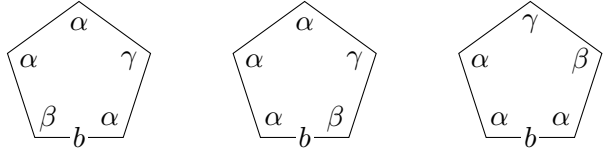
\begin{figure}[htp]
\centering
    \begin{tikzpicture}[scale=1]
	
	\foreach \x in {0,1,2}
    \draw[xshift=3*\x cm] (18:1) --	 (90:1) -- (162:1) -- (234:1) -- node[fill=white,inner sep=1] {\small $b$} (306:1) -- cycle;

	\node at (18:0.7) {\small $\gamma$};
	\node at (90:0.7) {\small $\alpha$};
	\node at (162:0.7) {\small $\alpha$};
	\node at (234:0.7) {\small $\beta$};
	\node at (-54:0.7) {\small $\alpha$};
	
	\begin{scope}[xshift=3cm]
	
	\node at (18:0.7) {\small $\gamma$};
	\node at (90:0.7) {\small $\alpha$};
	\node at (162:0.7) {\small $\alpha$};
	\node at (234:0.7) {\small $\alpha$};
	\node at (-54:0.7) {\small $\beta$};
	
	\end{scope}
	
	\begin{scope}[xshift=6cm]
	
	\node at (18:0.7) {\small $\beta$};
	\node at (90:0.7) {\small $\gamma$};
	\node at (162:0.7) {\small $\alpha$};
	\node at (234:0.7) {\small $\alpha$};
	\node at (-54:0.7) {\small $\alpha$};
		
	\end{scope}

    \end{tikzpicture}
\caption{Combinations of $a^4b$ and $\alpha^3\beta\gamma$.}
\label{22combo}
\end{figure}

We start the further discussion by assuming the tile $P_1$ in Figure \ref{tile_pattern_pic} is given by one of the pentagons in Figure \ref{22combo}.

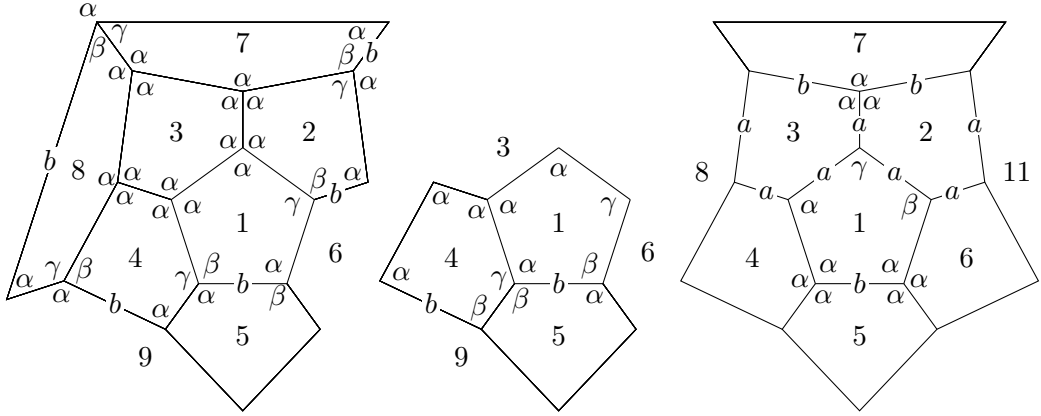
\begin{figure}[htp]
\centering
    \begin{tikzpicture}[scale=1]

\foreach \x in {1,...,5}
    \draw 
    		(-54+72*\x:1) -- (18+72*\x:1)
		(18:1) -- (18:1.75) -- (54:2.5) -- (90:1.75)
		(90:1) -- (90:1.75) -- (126:2.5) -- (162:1.75)
		(162:1) -- (162:1.75) -- (198:2.5) -- (234:1.75) 
		(234:1) -- (234:1.75) -- (-90:2.5) -- (-54:1.75) -- (-54:1) 
		(54:2.5) -- (54:3.3) -- (126:3.3) 
		(126:2.5) -- (126:3.3) -- (198:3.3) -- (198:2.5);

    \node at (0:0) {\small $1$};
    \node at (54:1.5) {\small $2$};
    \node at (126:1.5) {\small $3$};
    \node at (198:1.5) {\small $4$};
    \node at (-90:1.5) {\small $5$};
    \node at (-18:1.3) {\small $6$};
    \node at (90:2.4) {\small $7$};
    \node at (162:2.3) {\small $8$};
    \node at (234:2.2) {\small $9$};

	\node[fill=white,inner sep=1] at (-90:0.8) {\small $b$};
	\node[fill=white,inner sep=1] at (212:2) {\small $b$};
	\node[fill=white,inner sep=1] at (162:2.7) {\small $b$};
	\node[fill=white,inner sep=1] at (53:2.85) {\small $b$};
	\node[fill=white,inner sep=1] at (18:1.3) {\small $b$};

    \node at (18:0.7) {\small $\gamma$};
    \node at (29:1.15) {\small $\beta$};
    
    \node at (90:0.7) {\small $\alpha$};
    \node at (80:1.1) {\small $\alpha$};
    \node at (100:1.1) {\small $\alpha$};
    
    \node at (162:0.7) {\small $\alpha$};
    \node at (152:1.1) {\small $\alpha$};
    \node at (172:1.1) {\small $\alpha$};
    
    \node at (234:0.7) {\small $\beta$};
    \node at (224:1.1) {\small $\gamma$};
    \node at (244:1.1) {\small $\alpha$};
    
    \node at (-54:0.7) {\small $\alpha$};
    \node at (-65:1.1) {\small $\beta$};

    \node at (24:1.6) {\small $\alpha$};
    
    \node at (90:1.9) {\small $\alpha$};
    \node at (84:1.6) {\small $\alpha$};
    \node at (96:1.6) {\small $\alpha$};
    
    \node at (162:1.9) {\small $\alpha$};
    \node at (156:1.6) {\small $\alpha$};
    \node at (168:1.6) {\small $\alpha$};
    
    \node at (227:1.6) {\small $\alpha$};

    \node at (54:2.2) {\small $\gamma$};
    \node at (48:2.5) {\small $\alpha$};
    \node at (58:2.6) {\small $\beta$};
    
    \node at (126:2.2) {\small $\alpha$};
    \node at (122:2.6) {\small $\alpha$};
    \node at (130:2.6) {\small $\alpha$};
    
    \node at (197:2.2) {\small $\beta$};
    \node at (194:2.6) {\small $\gamma$};
    \node at (202:2.6) {\small $\alpha$};
   
    \node at (59:2.95) {\small $\alpha$};

    \node at (126:3.5) {\small $\alpha$};
    \node at (123:3) {\small $\gamma$};
    \node at (130:3) {\small $\beta$};

    \node at (195:3) {\small $\alpha$};

    \begin{scope}[xshift=4.2cm]
    
        \foreach \x in {1,...,5}
    \draw 
    		(-54+72*\x:1) -- (18+72*\x:1)
		(162:1) -- (162:1.75) -- (198:2.5) -- (234:1.75)
		(234:1) -- (234:1.75) -- (270:2.5) -- (306:1.75) -- (306:1);

    \node at (0:0) {\small $1$};
    \node at (126:1.25) {\small $3$};
    \node at (198:1.5) {\small $4$};
    \node at (-90:1.5) {\small $5$};
    \node at (-18:1.25) {\small $6$};
    \node at (234:2.2) {\small $9$};

	\node[fill=white,inner sep=1] at (-90:0.8) {\small $b$};
	\node[fill=white,inner sep=1] at (212:2) {\small $b$};

    \node at (18:0.7) {\small $\gamma$};
    
    \node at (90:0.7) {\small $\alpha$};
    
    \node at (162:0.7) {\small $\alpha$};
    \node at (172:1.1) {\small $\alpha$};
    
    \node at (234:0.7) {\small $\alpha$};
    \node at (224:1.1) {\small $\gamma$};
    \node at (244:1.15) {\small $\beta$};
    
    \node at (-54:0.7) {\small $\beta$};
    \node at (-64:1.1) {\small $\alpha$};
    
    \node at (168:1.6) {\small $\alpha$};
    
    \node at (227:1.55) {\small $\beta$};
    
    \node at (198:2.2) {\small $\alpha$};
    
    \end{scope}
  
    \begin{scope}[xshift=8.2cm]
    
	\foreach \x in {1,...,5}
    \draw 
    		(-54+72*\x:1) -- (18+72*\x:1)
    		(-54+72*\x:1) -- (-54+72*\x:1.75) -- (-18+72*\x:2.5) -- (18+72*\x:1.75)
		(54:2.5) -- (54:3.3) -- (126:3.3) -- (126:2.5);

    \node at (0:0) {\small $1$};
    \node at (54:1.5) {\small $2$};
    \node at (126:1.5) {\small $3$};
    \node at (198:1.5) {\small $4$};
    \node at (-90:1.5) {\small $5$};
    \node at (-18:1.5) {\small $6$};
    \node at (90:2.4) {\small $7$};
    \node at (162:2.2) {\small $8$};
    \node at (18:2.2) {\small $11$};

	\node[fill=white,inner sep=1] at (-90:0.8) {\small $b$};
	\node[fill=white,inner sep=1] at (54:0.8) {\small $a$};
	\node[fill=white,inner sep=1] at (126:0.8) {\small $a$};
	
	\node[fill=white,inner sep=1] at (18:1.3) {\small $a$};
	\node[fill=white,inner sep=1] at (90:1.3) {\small $a$};
	\node[fill=white,inner sep=1] at (162:1.3) {\small $a$};
	
	\node[fill=white,inner sep=1] at (40:2) {\small $a$};
	\node[fill=white,inner sep=1] at (68:2) {\small $b$};
	\node[fill=white,inner sep=1] at (112:2) {\small $b$};
	\node[fill=white,inner sep=1] at (140:2) {\small $a$};

    \node at (18:0.7) {\small $\beta$};
    
    \node at (90:0.7) {\small $\gamma$};
    
    \node at (162:0.7) {\small $\alpha$};
    
    \node at (234:0.7) {\small $\alpha$};
    \node at (224:1.1) {\small $\alpha$};
    \node at (244:1.1) {\small $\alpha$};
    
    \node at (-54:0.7) {\small $\alpha$};
    \node at (-64:1.1) {\small $\alpha$};
    \node at (-44:1.1) {\small $\alpha$};

    \node at (90:1.9) {\small $\alpha$};
    \node at (84:1.6) {\small $\alpha$};
    \node at (96:1.6) {\small $\alpha$};

    \end{scope}

    \end{tikzpicture}
\caption{Tilings for the combinations of $a^4b$ and $\alpha^3\beta\gamma$.}
\label{22comboA}
\end{figure}

The {\em first combination} is the left of Figure \ref{22comboA}. Since the $b$-edge is also an $\alpha\beta$-edge of $P_5$, the angles $A_{5,14}$, $A_{5,16}$ must be $\alpha,\beta$. By the AVC condition above, we cannot have $\beta^2$ at a vertex. Therefore we get $A_{5,14}=\alpha$, $A_{5,16}=\beta$. By AVC, we further get $A_{4,15}=\gamma$, so that the $b$-edge of $P_4$ is either $E_{34}$ or $E_{49}$. We note that, for the current combination, the choice of the $b$-edge together with the location of $\gamma$ determine all the angles of a tile. The choice $E_{34}=b$ gives all the angles of $P_4$ and $A_{4,13}=\alpha$, $A_{4,38}=\beta$. By AVC, we get $A_{3,14}=\alpha$, and since $b$ is an $\alpha\beta$-edge of $P_3$, we further get $A_{3,48}=\beta$. But $A_{4,38}=A_{3,48}=\beta$ contradicts to AVC at $V_{348}$. Therefore we must have $E_{49}=b$. Together with $A_{4,15}=\gamma$, we get all the angles of $P_4$, as described in the picture. 

So we see $P_1$ completely determines $P_4$. Note that the edge $E_{14}$ shared by the two tiles is opposite to $\gamma$ in $P_1$. Using this as a guidance, $P_4$ determines $P_8$, $P_8$ determines $P_7$, and $P_7$ determines $P_2$. Then by AVC, all vertices of $P_3$ are $\alpha^3$-vertices, so that all the angles of $P_3$ are $\alpha$, a contradiction.

The {\em second combination} is the middle of Figure \ref{22comboA}. Similar to the first arrangement, by AVC and the fact that the $b$-edge is an $\alpha\beta$-edge, we get $A_{5,16}=\alpha$, $A_{5,14}=\beta$. By AVC again, we get $A_{4,15}=\gamma$, and the $b$-edge of $P_4$ is either $E_{34}$ or $E_{49}$. If $E_{34}=b$, then we get all the angles of $P_4$ and $A_{4,13}=\beta$. By AVC, we get $A_{3,14}=\gamma$, contradicting to the fact that $b$ is an $\alpha\beta$-edge in $P_3$. Therefore we must have $E_{49}=b$. This determines all the angles of $P_4$, as described in the picture.

So we see $P_1$ completely determines $P_4$, just like the first arrangement. By the same guidance, we can successively get all the angles of $P_8,P_7,P_2$ and find all the angles of $P_3$ to be $\alpha$, a contradiction.

The {\em third combination} is the right of Figure \ref{22comboA}. The combination has the special property that $\gamma$ and $b$ are opposite to each other in a tile and determine the location of each other. Moreover, since the $b$-edge is an $\alpha^2$-edge in $P_5$, we get $A_{5,14}=A_{5,16}=\alpha$. By AVC, this implies that both ends of any $b$-edge are $\alpha^3$-vertices. By the edgewise vertex combination, this implies that all $\alpha\beta\gamma$-vertices are $a^3$-vertices. By AVC, we get $V_{123}=V_{126}=\alpha\beta\gamma$, so that $V_{123}=V_{126}=a^3$. Moreover, the angles $A_{2,13}$, $A_{3,12}$ are $\alpha,\beta$, so that the opposite edges $E_{2\overline{11}}=E_{38}=a$. From the four known $a$-edges of $P_2$, we get $E_{27}=b$. As an end of this $b$-edge, we get $V_{237}=\alpha^3$. As the edge opposite to $A_{3,27}=\alpha$, we get $E_{34}=a$. The four known $a$-edges of $P_3$ imply $E_{37}=b$. Then we find two $b$-edges in $P_7$, a contradiction.

\medskip

{\bf Subcase} $\alpha^2\beta^2\gamma$.

We need to assign the $b$-edges to the two tilings in Figure \ref{2a2b1cB}. The $b$-edge has to be a $\rho\tau$-edge for some (not necessarily distinct) angles $\rho$ and $\tau$. Therefore the angles around the $b$-edge has to appear as one of the two scenarios in Figure \ref{b_edge} (different $b$-edges may follow different scenario).

\begin{figure}[htp]
\centering
    \begin{tikzpicture}[scale=1]
    
\foreach \x in {0,4}
\draw[xshift=\x cm]
	(-1.3,0.5) -- (-1,0) -- (-1.3,-0.5)
	(1.3,0.5) -- (1,0) -- (1.3,-0.5)
	(-1,0) -- node[fill=white] {$b$} (1,0);
	
\node at (-0.8,0.2) {$\rho$};
\node at (-0.8,-0.2) {$\rho$};
\node at (0.8,0.2) {$\tau$};
\node at (0.8,-0.2) {$\tau$};

\node[xshift=4cm] at (-0.8,0.2) {$\rho$};
\node[xshift=4cm] at (-0.8,-0.2) {$\tau$};
\node[xshift=4cm] at (0.8,0.2) {$\tau$};
\node[xshift=4cm] at (0.8,-0.2) {$\rho$};

    \end{tikzpicture}
\caption{Angles around $b$-edge.}
\label{b_edge}
\end{figure}
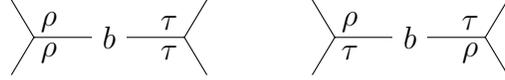

For the tiling on the left of Figure \ref{2a2b1cB}, the only edge in $P_1$ fitting the scenarios in Figure \ref{b_edge} is the $\beta^2$-edge. By assigning the $\beta^2$-edge as $b$ in all the tiles, we get a special case of the tiling $T_5$ in Figure \ref{completeclassify} and Figure \ref{generalclass}, after suitable change of symbols.

For the tiling on the right of Figure \ref{2a2b1cB}, the thick edge (which is the $\beta\gamma$-edge) cannot be the $b$-edge because this would imply $\beta=\gamma$ by Lemma \ref{pcombo}. On the other hand, any ordinary edge is adjacent to the thick edge in some tile. Therefore in some tile, the $b$-edge is adjacent to the thick edge. Since the congruence between tiles preserves the thick edge (because it is the $\beta\gamma$-edge) and the $b$-edge, the $b$-edge is adjacent to the thick edge in every tile. Then $E_{12}$, $E_{13}$ cannot be the $b$-edge because they are not adjacent to the thick edge $E_{15}$ of $P_1$. Moreover, $E_{14}$, $E_{16}$ cannot be the $b$-edge because they are not adjacent to the thick edges in $P_4$, $P_6$. Therefore no edge of $P_1$ can be the $b$-edge, a contradiction.

\medskip

{\bf Subcase} $\alpha^2\beta\gamma\delta$.

We need to assign the $b$-edges to the right of Figure \ref{2a1b1c1dA}. Again the angles around the $b$-edge must fit Figure \ref{b_edge}. The only such possible edge is the $\beta\gamma$-edge. However, $E_{15}=b$ implies $\beta=\gamma$ by Lemma \ref{pcombo}. We get a contradiction.

\medskip

{\bf Subcase} $\alpha\beta\gamma\delta\epsilon$.

We need to assign the $b$-edges to the four tilings given by Proposition \ref{angle_pattern5}. The angles around the $b$-edge must fit Figure \ref{b_edge}.

In the tiling on the left of Figure \ref{abcde2A}, no edge of $P_3$ fits Figure \ref{b_edge}. 

In the tilings on the left of Figures \ref{abcde3B}, \ref{abcde7B}, \ref{abcde8B}, by looking for edges of $P_3$ in the first two cases and of $P_1$ in the third case, we find the only edge fitting Figure \ref{b_edge} is the $\beta\delta$-edge. By assigning $b$ to all the $\beta\delta$-edges and suitably changing the symbols for the angles, we get the tilings $T_2$, $T_3$, $T_4$ of Figure \ref{completeclassify}. 

\medskip

{\bf Case} $a^3b^2$.

The tiling is given on the left of Figure \ref{case3a2b} and has the edgewise vertex combination $\{4a^3,4b^3,12a^2b\}$. The pentagon has $2$ $a^2$-angles $\theta_1$ and $\theta_2$, $2$ $ab$-angles $\rho_1$ and $\rho_2$, and $1$ $b^2$-angle $\tau$, described on the left of Figure \ref{a3b2_angle}. Then $a^3$-vertices, $a^2b$-vertices, $b^3$-vertices are respectively $\theta_i\theta_j\theta_k$-vertices, $\theta_i\rho_j\rho_k$-vertices, $\tau^3$-vertices. All such vertices exist.

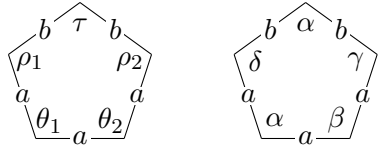
\begin{figure}[htp]
\centering
    \begin{tikzpicture}[scale=1]
   
    \foreach \y in {0,3}
    {
    \begin{scope}[xshift=\y cm]
    
    \foreach \x in {1,...,5}
    \draw 
    		(-54+72*\x:1) -- (18+72*\x:1);
		
    	\node[fill=white,inner sep=1] at (54:0.8) {\small $b$};
	\node[fill=white,inner sep=1] at (126:0.8) {\small $b$};
	\node[fill=white,inner sep=1] at (198:0.8) {\small $a$};
	\node[fill=white,inner sep=1] at (-90:0.8) {\small $a$};
	\node[fill=white,inner sep=1] at (-18:0.8) {\small $a$};
	
	\end{scope}
	}

    \node at (18:0.7) {\small $\rho_2$};  
    \node at (90:0.7) {\small $\tau$};
    \node at (162:0.7) {\small $\rho_1$};    
    \node at (234:0.7) {\small $\theta_1$};     
    \node at (-54:0.7) {\small $\theta_2$};
    
    \node[xshift=3cm] at (18:0.7) {\small $\gamma$};  
    \node[xshift=3cm] at (90:0.7) {\small $\alpha$};
    \node[xshift=3cm] at (162:0.7) {\small $\delta$};    
    \node[xshift=3cm] at (234:0.7) {\small $\alpha$};     
    \node[xshift=3cm] at (-54:0.7) {\small $\beta$};

    \end{tikzpicture}
\caption{Pentagon for the combination of $a^3b^2$ and $\alpha^2\beta\gamma\delta$.}
\label{a3b2_angle}
\end{figure}

We have $\theta_1+\theta_2+\rho_1+\rho_2+\tau=5\alpha$. Since there are $\tau^3$-vertices, we have $3\tau=3\alpha$. Therefore 
\[
\tau=\alpha,\quad
\theta_1+\theta_2+\rho_1+\rho_2=4\alpha.
\]

By Lemma \ref{pcombo}, we have $\theta_1=\theta_2$ if and only if $\rho_1=\rho_2$. In this case, we have $\theta_1+2\rho_1=3\alpha$ at a $\theta_i\rho_j\rho_k$-vertex. Combined with $2\theta_1+2\rho_1=4\alpha$, we conclude that all the angles are equal to $\alpha$. By Lemma \ref{pcombo} again, we must have $a=b$, a contradiction. Therefore we have $\theta_1\ne\theta_2$ and $\rho_1\ne\rho_2$.

At a $\theta_i\theta_j\theta_k$-vertex, we have $\theta_i+\theta_j+\theta_k=3\alpha$. Without loss of generality, we may assume the equality is either $2\theta_1+\theta_2=3\alpha$ or $3\theta_1=3\alpha$.

\medskip

{\bf Subcase} $2\theta_1+\theta_2=3\alpha$, $\theta_1\ne \theta_2$.

Since $\theta_1\ne \theta_2$, the only $a^3$-vertices (same as $\theta_i\theta_j\theta_k$-vertices) are $\theta_1^2\theta_2$-vertices. There are $4$ $a^3$-vertices, which involve $8$ angle $\theta_1$ and $4$ angle $\theta_2$. Among the total of $12$ angle $\theta_1$ and $12$ angle $\theta_2$, the remaining $4$ angle $\theta_1$ and $8$ angle $\theta_2$ must appear in the remaining $12$ $a^2b$-vertices (same as $\theta_i\rho_j\rho_k$-vertices). 

Note that a $\theta_1\rho_1\rho_2$-vertex implies $\theta_1+\rho_1+\rho_2=3\alpha$. Combined with $\theta_1+\theta_2+\rho_1+\rho_2=4\alpha$ and $2\theta_1+\theta_2=3\alpha$, we get $\theta_1=\alpha=\theta_2$, a contradiction. Similarly, there is no $\theta_2\rho_1\rho_2$-vertex. Therefore $a^2b$-vertices are $\theta_i\rho_j^2$-vertices, and we have $4$ $\theta_1\rho_j^2$-vertices and $8$ $\theta_2\rho_k^2$-vertices. Since $j=k$ will lead to $\theta_1=3\alpha-2\rho_j=3\alpha-2\rho_k=\theta_2$, we must have $j\ne k$. Without loss of generality, we may assume $j=1$, $k=2$ and get $\theta_1+2\rho_1=3\alpha$ and $\theta_2+2\rho_2=3\alpha$. Combined with $\theta_1+\theta_2+\rho_1+\rho_2=4\alpha$ and $2\theta_1+\theta_2=3\alpha$, we get all the angles equal to $\alpha$, a contradiction.

\medskip

{\bf Subcase} $3\theta_1=3\alpha$, $\theta_1\ne \theta_2$.

We have $\theta_1=\alpha$ and $\theta_2+\rho_1+\rho_2=3\alpha$. If any of $\theta_1\theta_2^2$, $\theta_1^2\theta_2$, $\theta_2^3$ is a vertex, then we will always get $\theta_2=\alpha=\theta_1$, a contradiction. Therefore the only $a^3$-vertices (same as $\theta_i\theta_j\theta_k$-vertices) are $\theta_1^3$-vertices. Since all $12$ angle $\theta_1$ are concentrated at the $4$ $a^3$-vertices, the $12$ $a^2b$-vertices (same as $\theta_i\rho_j\rho_k$-vertices) must be $\theta_2\rho_j\rho_k$-vertices. The possible $\theta_2\rho_j\rho_k$-vertices are $\theta_2\rho_1^2$, $\theta_2\rho_2^2$, $\theta_2\rho_1\rho_2$. If any two appear as vertices, then we get $\rho_1=\rho_2$, a contradiction. Since both $\rho_1$ and $\rho_2$ must appear, we conclude that all $12$ $a^2b$-vertices are $\theta_2\rho_1\rho_2$-vertices.

Denote $\theta_2=\beta$, $\rho_1=\delta$, $\rho_2=\gamma$. Then $\alpha\ne \beta$ and $\delta\ne\gamma$, and the pentagon is given by the right of Figure \ref{a3b2_angle}. The discussion in the last paragraph tells us that, as far as vertices are concerned, we have
\[
a^3=\alpha^3,\quad 
a^2b=\beta\gamma\delta. \qquad (\text{note that } \theta_2\rho_1\rho_2=\beta\gamma\delta)
\]
Now we try to fit the right of Figure \ref{a3b2_angle} into the left of Figure \ref{case3a2b}, the only edge congruent tiling for $a^3b^2$. The two $a^2$-angles $A_{1,45}$ and $A_{1,56}$ in $P_1$ are $\theta_1=\alpha$ and $\theta_2=\beta$. Since $V_{145}=a^3=\alpha^3$ and $V_{156}=a^2b=\beta\gamma\delta$, we get $A_{1,45}=\alpha$ and $A_{1,56}=\beta$. We also know the $b^2$-angle of $P_1$ is $A_{1,23}=\tau=\alpha$. Knowing three angles of $P_1$ determines the other two angles. By the same argument, we get all the angles of all tiles. After rotating the result by angle $\frac{\pi}{5}$ in clockwise orientation and letting $c=a$, we get the tiling $T_5$ in Figure \ref{completeclassify} and Figure \ref{generalclass}.

\medskip

{\bf Case} $a^2b^2c$.

The tiling is given on the left of Figure \ref{case2a2bc}. Since $a,b,c$ are distinct, we can uniquely define $\alpha_1,\alpha_2,\beta,\gamma,\delta$ as the $a^2$-angle, the $b^2$-angle, the $ac$-angle, the $bc$-angle, the $ab$-angle in the pentagon. After assigning these angles, we find $3\alpha_1=3\alpha_2=2\pi$ at $b^3$-vertices. Therefore $\alpha_1=\alpha_2=\alpha$, and we get the tiling $T_5$ in Figure \ref{completeclassify} and Figure \ref{generalclass}.

This completes the proof of the main classification theorem.
\end{proof}

\end{document}